\newenvironment{customthm}[1]
  {\innercustomthm}
  {\endinnercustomthm}
\newtheorem{theorem}{Theorem}[section]
\newtheorem{proposition}[theorem]{Proposition}
\newtheorem{lemma}[theorem]{Lemma}
\newtheorem{corollary}[theorem]{Corollary}
\newtheorem*{convention}{Convention}
\newtheorem*{question}{Question}
\theoremstyle{definition}
\newtheorem{definition}[theorem]{Definition}
\newtheorem{example}[theorem]{Example}
\theoremstyle{remark}
\newtheorem{remark}[theorem]{Remark}
\numberwithin{equation}{section}
\newcommand{\cD}{\mathcal{D}}
\newcommand{\cF}{\mathcal{F}}
\newcommand{\cA}{\mathcal{A}}
\newcommand{\cT}{\mathcal{T}}
\newcommand{\cK}{\mathcal{K}}
\newcommand{\cV}{\mathcal{V}}
\newcommand{\cU}{\mathcal{U}}
\newcommand{\Lie}[1]{\mathrm{#1}}
\newcommand{\G}{\Lie{G}}
\newcommand{\GL}{\Lie{GL}}
\newcommand{\bH}{\mathbb{H}}
\newcommand{\bC}{\mathbb{C}}
\newcommand{\bR}{\mathbb{R}}
\newcommand{\bN}{\mathbb{N}}
\newcommand{\lie}[1]{\mathfrak{#1}}
\newcommand{\mfa}{\lie{a}}
\newcommand{\mfg}{\lie{g}}
\newcommand{\mfh}{\lie{h}}
\newcommand{\mfk}{\lie{k}}
\newcommand{\mfn}{\lie{n}}
\newcommand{\mfs}{\lie{s}}
\newcommand{\mfu}{\lie{u}}
\newcommand{\spa}[1]{\mathrm{span}(#1)}
\DeclareMathOperator{\im}{Im}
\DeclareMathOperator{\Gr}{Gr}
\DeclareMathOperator{\End}{End}
\DeclareMathOperator{\Hom}{Hom}
\DeclareMathOperator{\ad}{ad}
\DeclareMathOperator{\diag}{diag}
\DeclareMathOperator{\id}{id}
\DeclareMathOperator{\tr}{tr}
\DeclarePairedDelimiterX{\inp}[2]{\langle}{\rangle}{#1,#2}
\setlist{nosep}
\begin{document}

\title{Torsion-free $H$-structures on almost Abelian solvmanifolds}

\author{Marco Freibert}

\address{Fachbereich Mathematik\\
Universit\"at Hamburg\\
Bundesstra\ss e 55 \\
D-20146 Hamburg \\
Germany}

\email{marco.freibert@uni-hamburg.de}

\thanks{}

\begin{abstract}
In this article, we provide a general set-up for arbitrary linear Lie groups $H\leq \GL(n,\bR)$ which allows to characterise the almost Abelian Lie algebras admitting a torsion-free $H$-structure. In more concrete terms, using that an $n$-dimensional almost Abelian Lie algebra $\mfg=\mfg_f$ is fully determined by an endomorphism $f$ of $\bR^{n-1}$, we give a description of the subspace $\cF_{\mfh}$ of all $f\in\End(\bR^{n-1})$ for which $\mfg_f$ admits a ``special'' torsion-free $H$-structure in terms of the image of a certain linear map. For large classes of linear Lie groups $H$, we are able to explicitly compute $\cF_{\mfh}$ and so give characterisations of the almost Abelian Lie algebras admitting a torsion-free $H$-structure. 

Our results reprove all the known characterisations of the almost Abelian Lie algebras admitting a torsion-free $H$-structure for different single linear Lie groups $H$ and extends them to big classes of linear Lie groups $H$. For example, we are able to provide characterisations in the case $n=2m$, $H\leq \GL(m,\bC)$ and $H$ either being a complex Lie group or being totally real, or in the case that $H$ preserves a pseudo-Riemannian metric. In many cases, we show that the space $\cF_{\mfh}$ coincides with what we call the \emph{characteristic subalgebra} $\tilde{\mfk}_{\mfh}$ associated to $\mfh$, and that then the torsion-free condition is equivalent to the left-invariant flatness condition. In particular, we prove this to be the case if $H$ is a complex linear Lie group or if $\mfh$ does not contain any elements of rank one or two and is either metric or totally real.
\end{abstract}
\maketitle
\section*{Introduction}
\emph{Almost Abelian} Lie groups, i.e. real Lie groups having a codimension one normal subgroup, and \emph{almost Abelian solvmanifolds} $\Gamma\backslash G$, i.e. quotients of a $1$-connected almost Abelian Lie group $G$ by a cocompact lattice $\Gamma$, form distinguished and very accessible classes of Lie groups or compact manifolds, respectively, and cover, in particular, in low dimensions, many well-known examples of Lie groups or compact manifolds, respectively. In three dimensions, all Lie groups $G$ but those having associated Lie algebra isomorphic to either $\mathfrak{so}(3)$ or $\mathfrak{so}(2,1)$, and so, in particular, all solvable ones, are almost Abelian. Thus, not surprisingly, five of the eight Thurston geometries can be modeled as left-invariant metrics on a three-dimensional almost Abelian Lie group. Coming to four dimensions, all but two solvable unimodular four-dimensional Lie algebras are almost Abelian. As unimodularity is a necessary condition for the existence of a lattice, some of the most important compact four-dimensional manifolds are almost Abelian solvmanifolds. This applies, e.g,  to compact complex surfaces where complex tori, hyperelliptic surfaces, primary Kodaira surfaces and Inoue surfaces of type $S^0$ are all almost Abelian solvmanifolds. Moreover, in higher dimensions, the compact complex manifolds introduced in \cite{EP} as analogues of Inoue surfaces are almost Abelian solvmanifolds. Besides, note that, in contrast to arbitrary $1$-connected solvable Lie groups, there is a relatively easy necessary and sufficient criterion for the existence of a cocompact lattice for $1$-connected almost Abelian Lie group $G$ \cite{Bo} and that the famous criterion of Mal'cev \cite{Ma} shows that lattices always exist if $G$ is even nilpotent.

Coming back to the above mentioned compact complex manifolds which are almost Abelian solvmanifolds $\Gamma\backslash G$, we observe that the complex structure on these manifolds is always an \emph{invariant} one, i.e. comes from a left-invariant one on $G$. Invariant geometric structures on almost Abelian solvmanifolds have played also an important role in providing famous counterexamples, e.g. the first example of a compact manifold admitting both a complex and a symplectic structure but no K\"ahler structure \cite{T} and the first example of a seven-dimensional compact manifold admitting a closed but no torsion-free $\G_2$-structure \cite{Fe} are almost Abelian solvmanifolds endowed with invariant geometric structures.

Now invariant geometric structures on solvmanifolds may solely be studied at the level of the associated solvable Lie algebra $\mfg$. Even more when $\mfg$ is almost Abelian, then $\mfg=\mfg_f$ is fully determined by one endomorphism $f\in \End(\mfu)\cong \End(\bR^{n-1})$ of the codimension one Abelian ideal $\mfu$. Thus, the existence question of a certain type of invariant geometric structure on a given almost Abelian solvmanifold $\Gamma\backslash G$ may be reformulated in terms of the endomorphism $f$ having specific kinds of properties, which makes the investigation which almost Abelian solvmanifolds admit a certain kind of invariant geometric structure highly approachable.

Hence, it is not too surprising that the last years have seen a rising interest in the study of these kinds of Lie algebras, Lie groups and solvmanifolds, and of invariant geometric structures on them: In a series of papers, Avetisyan and coauthors have investigated algebraic properties of almost Abelian Lie algebras over arbitrary fields \cite{A}, algebraic and topological properties of real and complex almost Abelian Lie group \cite{AABMRYZZ}, \cite{ABPR} and left-invariant Hermitian structures on complex almost Abelian Lie groups \cite{ABBMW}.  Moreover, the left-invariant positive Hermitian curvature flow on complex almost Abelian Lie groups has been studied in \cite{St}. Restricting for the rest of the paper to the case of \emph{real} almost Abelian Lie groups $G$, we note that various kinds of left-invariant geometric flows on this class of Lie groups have been investigated in the literature cf., e.g., \cite{Ar}, \cite{ArL}, \cite{BaFi},\cite{FrSchW}, \cite{L}, \cite{LRV}, \cite{LW}. Moreover, for different types of geometric structures, the almost Abelian Lie algebras admitting that type of geometric structure have been characterised or even classified, cf, e.g., \cite{AB1}, \cite{AB2}, \cite{AT}, \cite{ArBDGH},\cite{ArL}, \cite{BFrLT}, \cite{BeFi}, \cite{FiP1}, \cite{FiP2} \cite{Fr1}, \cite{Fr2}, \cite{LRV}, \cite{LW}, \cite{Mo}, \cite{P}. Here, charactisation usually means characterising those endomorphism $f$ of $\bR^{n-1}$ for which the associated almost Abelian Lie algebra $\mfg_f$ admits the geometric structure in question while classification usually refers to giving all possible Jordan normal forms of these $f$s.

Now many of the considered geometric structures may be described as torsion-free $H$-structures for a specific linear Lie group $H\leq \GL(n,\bR)$ and in these cases, the characterisation is usually given by determining the subspace of $\End(\bR^{n-1})$ for which $\mfg_f$ admits a torsion-free $H$-structure. In more detail, such characterisations have been obtained for $H=\mathrm{Sp}(2m,\bR)$ (``symplectic structures'') and for $H=\mathrm{U}(m)$ (``K\"ahler structures'') in \cite[Proposition 4.1]{LW} and following remarks, for $H=\mathrm{Gl}(m,\bC)$ (``complex structures'') in \cite[Lemma 6.1]{LRV}, and classifications for these linear subgroups $H$ have been obtained in the nilpotent case in \cite{ArBDGH}. Moreover, for $H=\mathrm{Sp}(2k,\bC)$ (``complex symplectic structures'') characterisations and classifications have been provided in \cite{BFrLT} and for $H\in \{\mathrm{G}_2,\mathrm{G}_2^*\}$ in \cite{Fr2}. Furthermore, characterisations for $H=\mathrm{Sp}(k)$ (``hyperk\"ahler structures'') have been given in \cite[Proposition 3.2]{BDFi} and for $H=\mathrm{GL}(k,\bH)$ in \cite[Theorem 3.2]{AB1} (``hypercomplex structures'') and classifications in the latter case for nilpotent $\mfg$ or $\mfg$ having dimension $12$ have been obtained in \cite{AB2}.

In this paper, we reprove all the just mentioned characterisation results and extend them vastly to big classes of linear subgroups. For this purpose, we show that the subspace $\cF_{\mfh}$ of all $f\in \End(\bR^{n-1})$ for which $\mfg_f$ admits a torsion-free \emph{special} $H$-structure $P$ is equal to the image $\cT(\cT^{-1}(\End(\bR^n)))$ of a certain linear map $\cT=\cT_{\mfh}:\cD_{\mfh}\rightarrow \Hom(\bR^{n-1},\bR^n)$. Here, \emph{special} means that $P$ admits a \emph{special} adapted basis $(X_1,\ldots,X_n)$, i.e. one such that $(X_1,\ldots,X_{n-1})$ is a basis of a codimension one Abelian ideal and $\End(\bR^{n-1})$ denotes the subspace of $\Hom(\bR^{n-1},\bR^n)$ consisting of those homomorphisms $f$ for which the image $\im(f)$ lies entirely in $\bR^{n-1}=\bR^{n-1}\times \{0\}\subseteq \bR^n$. Moreover, $\cD_{\mfh}$ is a certain subspace of $(\bR^n)^*\otimes (\bR^n)^*\otimes \bR^n$, which, given a special $H$-structure $P$ on an almost Abelian Lie algebra $\mfg$, can be thought of being the space of all $H$-connections on $\mfg$ which are torsion-free on the codimension one Abelian ideal $\mfu$ if one identifies $\bR^n$ with $\mfg$ via a special adapted basis. Moreover, we prove that $\cF_{\mfh}$ always contains the \emph{characteristic subalgebra} $\tilde{\mfk}_{\mfh}$ of $\mfh$ defined as
\begin{equation*}
\tilde{\mfk}_{\mfh}:=\left\{\left. F|_{\bR^{n-1}}\right| F\in \mfh,\ F(\bR^{n-1})\subseteq \bR^{n-1}\right\}\subseteq \End(\bR^{n-1}),
\end{equation*}
and that for any $f\in \tilde{\mfk}_{\mfh}$ the almost Abelian Lie algebra $\mfg_f$ admits a \emph{left-invariantly flat} special $H$-structure $P$. Here, \emph{left-invariantly flat} means that $G_f$ admits a left-invariant flat torsion-free connection compatible with $P$. We observe that if $\cF_{\mfh}=\tilde{\mfk}_{\mfh}$, then a special $H$-structure $P$ is left-invariantly flat if and only if $P$ is torsion-free.

To address also non-special $H$-structures $P$ on a given almost Abelian Lie algebra $\mfg$, we prove that $\mfu$ determines a unique $H$-orbit $[U]$ in $\mathrm{Grass}_{n-1}(\bR^n)$ and we then call $P$ of \emph{type} $[U]$. Moreover, we observe that for any $T\in \GL(n,\bR)$, the $H'=THT^{-1}$-structure $P'=PT^{-1}$ is torsion-free if and only if $P$ is torsion-free, and that we may always choose $T$ so that $P'$ is special. Hence, to determine all $f\in \End(\bR^{n-1})$ for which $\mfg_f$ admits any kind of $H$-structure, we first need to classify all $H$-orbits in $\mathrm{Gr}_{n-1}(\bR^n)$ and then compute for each of these orbits the associated subspace $\cF_{\mfh'}$ of $\End(\bR^{n-1})$. We remark that we will carry out both steps only in a few specific cases and mostly concentrate on determining the subspaces $\cF_{\mfh}$ for large classes of linear subalgebras $\mfh$ since on the one hand, the determination of $\cF_{\mfh}$ for a given linear subalgebra $\mfh$ is already of much interest, and secondly, for conjugated linear subalgebras $\mfh$ and $\mfh'$, the subspaces $\cF_{\mfh}$ and $\cF_{\mfh'}$ look often very similarly if $\mfh$ and $\mfh'$ share some common properties.

For the purpose of determining $\cF_{\mfh}$ for special classes of linear subalgebras $\mfh$, we start in Section \ref{sec:mfhcommutingwithendo} by considering subalgebras $\mfh$ of $\End(\bR^n)$ which commute with a given endomorphism $A$ of $\bR^n$. We first obtain full characterisations of those $f$ for which $\mfg_f$ admits a product or tangent structure of any possible type and show, in particular:
\begin{customthm}{1}
Any almost Abelian Lie algebra $\mfg$ admits product structures of any possible signature and also tangent structures.
\end{customthm}
In the mentioned section, we then prove:
\begin{customthm}{2}
Let $\mfh$ be a linear subalgebra of $\mathfrak{gl}(2m,\bR)$ which is also a Lie algebra over the complex numbers. Then $\cF_{\mfh}=\tilde{\mfk}_{\mfh}$.
\end{customthm}
We note that this theorem reproves, in particular, the characterisations of almost Abelian Lie algebras admitting torsion-free $H$-structures for the cases $H=\GL(m,\bC)$ and $H=\mathrm{Sp}(2k,\bC)$ from \cite{LRV} and \cite{BFrLT}, respectively.

In Section \ref{sec:mfhcommutingwithendo}, we finally consider \emph{totally real} subalgebras $\mfh$, i.e. $\mfh$ is a real subalgebra of $\mathfrak{gl}(J)\cong \mathfrak{gl}(m,\bC)$ for some complex structure $J$ on $\bR^{2m}$ with $\mfh\cap J\mfh=\{0\}$. For these subalgebras, Theorem \ref{th:totallyreal} gives (almost) a complete description of the associated subspace $\cF_{\mfh}$. As a special case of this theorem, we obtain
\begin{customthm}{3}
Let $\mfh$ be a \emph{super-elliptic}, i.e. $\mfh$ does not contain any endomorphisms of rank one or two, totally real linear subalgebra. Then $\cF_{\mfh}=\tilde{\mfk}_{\mfh}$. This applies, in particular, to $\mfh$ being a hypercomplex subalgebra or to $\mfh$ being a hyperparacomplex subalgebra which is induced by an elliptic subalgebra.
\end{customthm}
This theorem reproves the characterisations of almost Abelian Lie algebras admitting torsion-free $H$-structures for the cases $H=\mathrm{GL}(k,\bH)$ and $H=\mathrm{Sp}(k)$ from \cite{AB1} and \cite{BDFi}, respectively. 
We note that we also completely characterise in Theorem \ref{th:hyperparacomplexclass} the almost Abelian Lie algebras admitting a hyperparacomplex structure and in Corollary \ref{co:flathyperparacomplex} identify the flat ones among them. Finally, we compute in Theorem \ref{th:unitarysubalgebras} the subspace $\cF_{\mfh}$ for a unitary subalgebra, giving back the characterisation of the almost Abelian Lie algebras admitting a K\"ahler structure from \cite{LW}.

In the computation of $\cF_{\mfh}$ for totally real subalgebras, we will use decisively that the \emph{first prolongation} $\cK_{\mfh}^{(1)}$ of the \emph{associated} tableau $\cK_{\mfh}:=\left\{\left. F|_{\bR^{n-1}}\right|F\in \mfh \right\}\subseteq \Hom(\bR^{n-1},\bR^n)$ is of a very special form. Although an investigation of all linear subalgebras $\mfh$ for which $\cK_{\mfh}^{(1)}$ is of this special form seems to be too complicated, cf. the discussion at the beginning of Section \ref{sec:cK(1)specialtype}, we study linear subalgebras $\mfh$ with $\cK_{\mfh}^{(1)}$ of special types in Section \ref{sec:cK(1)specialtype}. First, we obtain in Theorem \ref{th:cK1=0} the following result, where we refer to the definition of the subspace $W_{\mfh}$ to the mentioned section:
\begin{customthm}{4}
Let $\mfh$ be a linear subalgebra with $\cK^{(1)}_{\mfh}=\{0\}$. Then 
\begin{equation*}
		\cF_{\mfh}=\tilde{\mfk}_{\mfh}+ (\bR^{n-1})^*\otimes W_{\mfh}.
	\end{equation*}
If $\mfh$ is super-elliptic, then $\cF_{\mfh}=\tilde{\mfk}_{\mfh}$, which applies to super-elliptic metric subalgebras.
\end{customthm}
We note that this theorem reproves the characterisation of the almost Abelian Lie algebras admitting a torsion-free $\G_2$-structure from \cite{Fr2}.

Afterwards, we consider in Subsections \ref{subsec:cK1=S2Uv} and \ref{subsec:cK1=S2Uw} the case that $\cK_{\mfh}^{(1)}=S^2\cU\otimes z$ for a subspace $\cU$ of $(\bR^{n-1})^*$ and $z\notin \bR^{n-1}$ or $0\neq z\in \bR^{n-1}$, respectively. This is motivated by the fact that for a metric subalgebra $\mfh$, the first prolongation $\cK_{\mfh}^{(1)}$ is of exactly of one of these two forms, where it depends on whether $\bR^{n-1}$ is non-degenerate or degenerate which of these forms it takes. We prove in both subsections that $\mfh$ always contains a subalgebra which is metric for a suitably chosen pseudo-metric on $\bR^n$ and obtain some structural results for the linear subalgebra $\mfh$. As for the computation of $\cF_{\mfh}$, we obtain in Subsection \ref{subsec:cK1=S2Uv} the following result, cf. Theorem \ref{th:cK1=S2cUv}:
\begin{customthm}{5}
Let $\mfh$ be a linear subalgebra with $\cK_{\mfh}^{(1)}=S^2 \cU\otimes v$ for some non-zero subspace $\cU$ of $(\bR^{n-1})^*$ and some $v\in \bR^{n}\setminus \bR^{n-1}$. Then:
\begin{enumerate}[(a)]
\item
If $\mfh$ is not elliptic, then $\cF_{\mfh}=\tilde{\mfk}_{\mfh}$.
\item
If $\mfh$ is elliptic, then
\begin{equation*}
\cF_{\mfh}=\tilde{\mfk}_{\mfh}+ \spa{\alpha\otimes \nu(\alpha)|\alpha\in \cU}.
\end{equation*}
This applies in particular to non-degenerate metric subalgebras, also those with $\cU\neq \{0\}$.
\end{enumerate}
\end{customthm}
For the definition of $\cU$ and $\nu$, we refer to Subsection \ref{subsec:cK1=S2Uv}. Moreover, we note that this reproves the charactersion of those almost Abelian Lie algebras admitting a torsion-free $\G_2^*$-structure with $\mfu$ being non-degenerate from \cite{Fr2}.

In Subsection \ref{subsec:cK1=S2Uw}, we obtain in Theorem \ref{th:cK1=S2cUwfirst} and Theorem \ref{th:cK1=S2cUwsecond} a full description of $\cF_{\mfh}$ for $\mfh$ being an elliptic linear subagebra with $\cK_{\mfh}^{(1)}=S^2 \cU\otimes w$ for $w\in \bR^{n-1}$. These theorems also apply to degenerate metric subalgebras and give back the characterisation of the almost Abelian Lie algebras admitting a torsion-free $\G_2^*$-structure with degenerate $\mfu$ from \cite{Fr2}.

Finally, let us note that in all considered cases, the super-ellipticity of $\mfh$ implied $\cF_{\mfh}=\tilde{\mfk}_{\mfh}$, leading to the question.
\begin{question}
Does any super-elliptic linear subalgebra $\mfh$ satisfy $\cF_{\mfh}=\tilde{\mfk}_{\mfh}$?
\end{question}
 \section{$H$-structures and almost Abelian Lie groups}

\subsection{$H$-structures}
We start with the basic definition of an $H$-structure on a manifold $M$:
\begin{definition}
Let $M$ be an $n$-dimensional manifold and $H$ be a Lie subgroup of $\GL(n,\bR)$. An \emph{$H$-structure} $P$ on $M$ is a reduction of the frame bundle $\cF(M)$ to $H$, i.e. an $H$-principal subbundle of $\cF(M)$. A (local) section of $P$ is also called an \emph{adapted (local) frame} of $P$ and an element $u\in P_x$ is called an \emph{adapted basis (of $P$ at $x\in M$)}. Here and in the following, we consider elements $u\in \cF(M)_x$ depending on the context either as a basis $u=(X_1,\ldots,X_m)$ of $T_x M$ or as a linear isomorphism $u:\bR^n\rightarrow T_x M$.
\end{definition}
Next, we recall the definition of a connection compatible with a given $H$-structure:
\begin{definition}
Let $M$ be an $n$-dimensional manifold, $H$ be a Lie subgroup of $\GL(n,\bR)$ and $P$ be a $H$-structure. Let $\nabla$ be a connection on $M$. We call $\nabla$ \emph{compatible with $P$} or simply an \emph{$H$-conection} if for every $p\in M$ there exists a neighbourhood $U$ and a local adapted frame $(X_1,\ldots,X_m)$ of $P$ on $U$ such that the associated connection one-form $\omega=(\omega_{ij})_{i,j=1,\ldots,n}\in \Omega^1(U,\mathfrak{gl}(n,\bR))$ implicitly defined by
\begin{equation*}
\nabla X_i=\sum_{j=1}^n \omega_{ji} X_j
\end{equation*}
for $i=1,\ldots,n$ has values in $\mathfrak{h}=\mathrm{Lie}(H)\subseteq \mathfrak{gl}(n,\bR)$.
\end{definition}
\begin{remark}
Note that if $\nabla$ is compatible with an $H$-structure $P$, then the associated matrix-valued one-form $\omega$ has values in $\mfh$ for \emph{any} adapted local frame of $P$.
\end{remark}
With these definitions at hand, we define now when an $H$-structure is called \emph{torsion-free} and \emph{flat}:
\begin{definition}
An $H$-structure $P$ on a manifold $M$ is called
\begin{itemize}[wide]
\item
\emph{flat} if for every point $p\in M$ there exists an open neighbourhood $U$ of $p$ and a chart $\varphi=(x_1,\ldots,x_n):U\rightarrow \varphi(U)\subseteq \bR^n$ such that $(\partial_{x_1},\ldots,\partial_{x_n})$ is a local section of $P$
\item
and \emph{torsion-free} if $M$ admits a torsion-free $H$-connection $\nabla$.
\end{itemize}
\end{definition}
We recall the following well-known equivalent definition of a flat $H$-structure and that flatness implies torsion-freeness.
\begin{lemma}\label{le:flatequivalent}
Let $P$ be an $H$-structure on $M$. Then $P$ is flat if and only if locally around every point there exists a flat torsion-free $H$-connection.

Consequently, any flat $H$-structure $P$ is torsion-free.
\end{lemma}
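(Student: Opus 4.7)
The plan is to prove the two implications of the biconditional separately, and then globalize via a partition of unity to obtain the ``consequently'' part.

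For the direction where $P$ is flat (in the sense of the definition) implying the existence of a local flat torsion-free $H$-connection, I would work in a chart $\varphi=(x_1,\ldots,x_n)\colon U\to\varphi(U)$ guaranteed by flatness, so that $(\partial_{x_1},\ldots,\partial_{x_n})$ is an adapted local frame of $P$. I would then define $\nabla$ on $U$ by $\nabla\partial_{x_i}=0$ for all $i$, which is clearly torsion-free (the $\partial_{x_i}$ commute and $\nabla_{\partial_{x_i}}\partial_{x_j}-\nabla_{\partial_{x_j}}\partial_{x_i}=0$) and flat. Its connection one-form in this frame is identically zero, which trivially lies in $\mfh$, so $\nabla$ is an $H$-connection.

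For the converse, suppose that on a neighborhood $U$ of $p$ there exists a flat torsion-free $H$-connection $\nabla$. The connection $\nabla$ induces a principal connection on the $H$-bundle $P|_U$, and flatness of $\nabla$ means the horizontal distribution on $P|_U$ is involutive. Thus, possibly after shrinking $U$, there is a local horizontal section $(Y_1,\ldots,Y_n)$ of $P$ around $p$, i.e.\ an adapted local frame consisting of $\nabla$-parallel vector fields. Because $\nabla$ is torsion-free and each $Y_i$ is parallel, $[Y_i,Y_j]=\nabla_{Y_i}Y_j-\nabla_{Y_j}Y_i=0$. The simultaneous straightening theorem for pairwise commuting pointwise linearly independent vector fields then yields a chart $(x_1,\ldots,x_n)$ around $p$ with $\partial_{x_i}=Y_i$, which is an adapted coordinate frame, so $P$ is flat. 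The step which takes the most care is producing the parallel adapted frame: it requires identifying ``flat $H$-connection'' with ``flat principal connection on $P$'' so that local horizontal sections exist, rather than merely obtaining some parallel frame that need not be adapted.

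For the final assertion, if $P$ is flat then by the equivalence just proved every point admits a neighborhood $U_\alpha$ on which a torsion-free $H$-connection $\nabla_\alpha$ is defined. Choose a partition of unity $\{\rho_\alpha\}$ subordinate to $\{U_\alpha\}$ and set $\nabla:=\sum_\alpha\rho_\alpha\nabla_\alpha$. Since the space of connections is affine over tensor fields and $\sum_\alpha\rho_\alpha=1$, this is a globally defined connection on $M$; since each $\nabla_\alpha$ is torsion-free, so is $\nabla$; and since in any local adapted frame the connection one-forms of the $\nabla_\alpha$ take values in the linear subspace $\mfh\subseteq\mathfrak{gl}(n,\bR)$, so does that of $\nabla$. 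Hence $\nabla$ is a global torsion-free $H$-connection, proving that $P$ is torsion-free.
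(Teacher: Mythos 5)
Your proof is correct and follows essentially the same route as the paper: the forward direction and the partition-of-unity globalization are identical, and your converse (flat principal connection on $P|_U$ has involutive horizontal distribution, hence a local horizontal, i.e.\ parallel, adapted frame, which commutes by torsion-freeness and is then straightened to coordinates) is exactly the paper's Frobenius argument on $U\times H$ packaged as the standard fact about flat principal connections. No gaps.
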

\begin{proof}
First, let $P$ be flat and $(\partial_1,\ldots,\partial_n)$ be a local section of $P$ by coordinate vector fields. Let $\nabla$ be the local connection uniquely defined by $\nabla_{\partial_i}\partial_j=0$ for all $i,j=1,\ldots,n$. Surely, $\nabla$ is an $H$-connection which is, moreover, torsion-free and flat since obviously $T^{\nabla}(\partial_i,\partial_j)=0$ and $R^{\nabla}(\partial_i,\partial_j)\partial_k=0$ for all $i,j,k=1,\ldots,n$.

Conversely, assume that $\nabla$ is a flat torsion-free $H$-connection on an open subset $U$ of $M$ and let $\omega\in \Omega^1(U,\mfh)$ be the connection one-form for some local section $e=(X_1,\ldots,X_n)$ of $P$ on $U$. The flatness of $\nabla$ implies $d\omega+\omega\wedge \omega=0$. We need to find another local section $e'=(X_1',\ldots,X_n')$ of $P$ on $U$ for which the connection form $\omega$ vanishes as then the torsion-freeness shows that $[X_i',X_j']=0$ for all $i,j=1,\ldots,n$ and so that $e'$ is locally a section by coordinate fields.
For this, note that giving another section $e'$ of $P$ on $U$ is equivalent to giving some $A\in C^{\infty}(U,H)$ such that $e'=e\cdot A$ and that then the associated connection one-form $\omega'$ satisfies $\omega'=A^{-1}\omega A+A^{-1} dA$. Considering $\rho:=A^{-1}\omega A+A^{-1} dA$ as a one-form on $U\times H$ with values in $\mathfrak{h}$, the condition
$d\omega+\omega\wedge \omega=0$ implies that the ideal in $\Omega(U\times H)$ generated by the components of $\rho$ is a differential ideal and so the kernel of $\rho$ defines an integrable distribution $D$ on $U\times H$. As any integral manifold $N$ of $D$ may locally be written as a graph over $U$, we obtain around every point in $U$ a smooth function $V\ni x\mapsto A(x)\in H$ such that for any $v\in T_x V$, we have
\begin{equation*}
0=\rho_{(x,A(x))}(v,dA_x (v))=A^{-1}(x)\,\omega_x(v)\, A(x)+A^{-1}(x) dA_x(v)=\omega'_x(v)	
\end{equation*}
i.e. the associated connection one-form $\omega'$ vanishes.

Finally, assume that $P$ is flat. Then, for every $p\in M$, we have a flat torsion-free $H$-connections $\nabla^p$ defined on a neighbourhood $U_p$ around $p\in M$. Using a partition of unity subordinate to the cover $(U_p)_{p\in M}$, we can patch the connections $\nabla^p$ together to obtain a connection $\nabla$ on the entire manifold $M$. Since $\mfh$ is a linear subspace of $\mathfrak{gl}(n,\bR)$, the connection $\nabla$ is an $H$-connection and this connection is again torsion-free as torsion-freeness is a linear condition.
\end{proof}
\begin{remark}
If $H$ is closed, then there is a one-to-one correspondence between $H$-structures $P$ and sections $\sigma:M\rightarrow \cF(M)/H$ of the fibre bundle $\cF(M)/H$. In this case, one can show, cf., e.g., \cite{Br1}, that an $H$-structure $P$ is torsion-free if and only if the associated section $\sigma_P:M\rightarrow \cF(M)/H$ is locally flat up to second order terms, i.e. there exists locally a section $\sigma$ of $\cF(M)/H$ corresponding to a (locally defined) flat $H$-structure such that the $1$-jets of $\sigma_P$ and $\sigma$ coincide. This this the reason why torsion-free $H$-structures are also called \emph{$1$-flat}.
\end{remark}
If $H'$ is conjugated to $H$ within $\GL(n,\bR)$, then there is a bijection between $H$- and $H'$-structures which preserves both the torsion-free and flatness condition: 
\begin{lemma}\label{le:conjugacyclassesofH}
	Let $H$ be a Lie subgroup of $\GL(n,\bR)$ and $T\in \GL(n,\bR)$ be given. Then the map $P\mapsto P\, T^{-1}$ provides a bijection between $H$-structures and $THT^{-1}$-structures which preserves both the torsion-free and the flatness condition.
\end{lemma}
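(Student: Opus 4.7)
The plan is to unwind the definitions, verify the bundle-theoretic bijection, and then transport connections via the constant gauge transformation $T^{-1}$.

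First I would check that the right action of $\GL(n,\bR)$ on $\cF(M)$ (via $u\mapsto u\circ A$) makes $PT^{-1}:=\{u\circ T^{-1}\mid u\in P\}$ a principal $THT^{-1}$-subbundle: for $u\in P$ and $h\in H$ one computes $(u\circ T^{-1})\circ(ThT^{-1})=(u\circ h)\circ T^{-1}\in PT^{-1}$, so $PT^{-1}$ is stable under the $THT^{-1}$-action, and it is clearly a smooth subbundle of $\cF(M)$. The map $P\mapsto PT^{-1}$ has inverse $P'\mapsto P'T$, giving the asserted bijection.

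Next I would show that an arbitrary connection $\nabla$ on $M$ is an $H$-connection for $P$ if and only if it is a $THT^{-1}$-connection for $PT^{-1}$. Given a local adapted frame $e=(X_1,\ldots,X_n)$ of $P$, the frame $e':=e\cdot T^{-1}$ is a local adapted frame of $PT^{-1}$. Since $T^{-1}$ is \emph{constant}, the standard transformation rule for connection one-forms gives $\omega'=T\,\omega\,T^{-1}$, and $\omega$ takes values in $\mfh$ iff $\omega'$ takes values in $T\mfh T^{-1}=\mathrm{Lie}(THT^{-1})$. Torsion-freeness is a property of $\nabla$ itself, not of the reduction, so it is obviously preserved; thus $P$ is torsion-free iff $PT^{-1}$ is.

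Finally I would handle flatness. If $P$ is flat and $\varphi=(x_1,\ldots,x_n)$ is a local chart with $(\partial_{x_1},\ldots,\partial_{x_n})$ a section of $P$, define $\psi:=T\circ\varphi$, i.e.\ $\psi_i=\sum_j T_{ij} x_j$. Since $T$ is constant this is a chart, and the chain rule yields $\partial_{\psi_i}=\sum_j (T^{-1})_{ji}\,\partial_{x_j}$, which is exactly the $i$-th component of $(\partial_{x_1},\ldots,\partial_{x_n})\cdot T^{-1}$, hence a section of $PT^{-1}$. So $PT^{-1}$ is flat; the converse follows by exchanging the roles of $T$ and $T^{-1}$. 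Alternatively, combining the preservation of torsion-free $H$-connections established above with Lemma \ref{le:flatequivalent} gives flatness directly.

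There is no real obstacle here; the only point requiring care is the observation that because $T$ is constant we have $dT=0$, so the inhomogeneous term in the gauge transformation of connection one-forms drops out and the $\mfh$-valued condition transforms cleanly into a $T\mfh T^{-1}$-valued one.
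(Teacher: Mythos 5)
Your proof is correct and follows essentially the same route as the paper: the key step in both is that for the constant gauge change $e\mapsto e\cdot T^{-1}$ the connection one-form transforms as $\omega'=T\omega T^{-1}$ (no inhomogeneous term), so $H$-connections become $THT^{-1}$-connections and torsion-freeness and flatness are preserved. Your explicit coordinate-chart verification of the flatness claim (via $\psi=T\circ\varphi$) just spells out what the paper leaves as ``this directly implies''.
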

\begin{proof}
The map $P\mapsto P\, T^{-1}$ is obviously a bijection between $H$-structures and $THT^{-1}$-structures. Next, let $\nabla$ be an $H$-connection. Then the connection one-form $\omega$ of $\nabla$ with respect to some local adapted frame $e$ of $P$ has values in $\mfh$ and so the connection one-form $\omega'$ of $\nabla$ with respect to $e\cdot T^{-1}$, which is a local adapted frame of $P T^{-1}$, is given by $\omega'=T\omega T^{-1}$. Thus, $\omega'$ has values in $T\mfh T^{-1}$ and so $\nabla$ is an $THT^{-1}$-connection. This directly implies that both the torsion-free and the flatness condition is preserved by the map $P\mapsto P\, T^{-1}$.
\end{proof}
In this article, we will consider $H$-structures $P$ on Lie groups, more exactly those which are \emph{left-invariant} in the following sense:
\begin{definition}
Let $G$ be a Lie group with Lie algebra $\mfg$ and $P$ be an $H$-structure on $G$. Then $P$ is called \emph{left-invariant} if $P$ admits a global adapted frame \linebreak $(X_1,\ldots,X_n)$ consisting of left-invariant vector fields $(X_1,\ldots,X_n)$. If $P$ is left-invariant, we obtain an associated $H$-orbit $(X_1,\ldots,X_n)\cdot H$ in the frame bundle $\cF(\mfg)$ of the Lie algebra $\mfg$ and, conversely, such an orbit uniquely determines a left-invariant $H$-structure. Hence, we identify left-invariant $H$-structures with $H$-orbits in $\cF(\mfg)$ and speak also of an \emph{$H$-structure} on the Lie algebra $\mfg$.
\end{definition}
By definition, an $H$-structure $P$ is torsion-free if and only if it admits a torsion-free $H$-connection $\nabla$. If $P$ is left-invariant, we will show below that we may then choose $\nabla$ to be \emph{left-invariant} in the following sense:
\begin{definition}
A connection $\nabla$ on a Lie group $G$ is said to be \emph{left-invariant} if $\nabla_X Y$ is left-invariant for all left-invariant vector fields $X,Y$. Note that then $\nabla$ may be identified with a bilinear map $\nabla:\mfg\times \mfg\rightarrow \mfg$ 
\end{definition} 
\begin{remark}
Let $G$ be a Lie group.
\begin{itemize}[wide]
\item
Note that a connection $\nabla$ on $G$ is left-invariant if and only if for all $g\in G$ and all $X,Y\in \mathcal{X}(G)$, we have
\begin{equation*}
	\nabla_{l_g^*X} l_g^*Y=l_g^*(\nabla_X Y)
\end{equation*}
\item
If $P$ is a left-invariant $H$-structure on $G$ and $\nabla$ is left-invariant as well, then $\nabla$ is an $H$-connection if and only if for some, and hence any, $u\in P$, i.e. $u:\bR^n\rightarrow \mfg$, we have $u^*\nabla:=u^{-1}\circ \nabla_{u(\cdot)} u(\cdot)\in (\bR^n)^*\otimes \mfh$.
\end{itemize}
\end{remark}
\begin{lemma}\label{le:torsionfreeleftinvariant}
Let $G$ be a Lie group and $P$ be a left-invariant $H$-structure on $G$. Then $P$ is torsion-free if and only if $P$ admits a left-invariant torsion-free $H$-connection.
\end{lemma}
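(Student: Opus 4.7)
The ``if'' direction is immediate: a left-invariant torsion-free $H$-connection is, in particular, a torsion-free $H$-connection, so $P$ is torsion-free by definition.

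For the ``only if'' direction, the plan is to take an arbitrary torsion-free $H$-connection $\nabla$ on $G$ and ``freeze it at the identity.'' More precisely, since $P$ is left-invariant, pick a global adapted frame $(X_1,\ldots,X_n)$ consisting of left-invariant vector fields, and write $\nabla_{X_i} X_j=\sum_k \Gamma_{ij}^k X_k$ for smooth functions $\Gamma_{ij}^k\in C^\infty(G)$. Define a new connection $\nabla^{\mathrm{inv}}$ by
\begin{equation*}
\nabla^{\mathrm{inv}}_{X_i} X_j := \sum_{k=1}^n \Gamma_{ij}^k(e)\, X_k
\end{equation*}
and extending tensorially/by the Leibniz rule. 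By construction $\nabla^{\mathrm{inv}}$ has constant Christoffel symbols with respect to a left-invariant frame, and the characterisation of left-invariant connections in the preceding remark shows that $\nabla^{\mathrm{inv}}$ is left-invariant.

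Next I would verify compatibility and torsion-freeness. For the $H$-connection property: the associated connection one-form $\omega^{\mathrm{inv}}$ with respect to $(X_1,\ldots,X_n)$ satisfies $\omega^{\mathrm{inv}}(X_i)=\omega(X_i)|_e\in\mfh$, where $\omega$ is the (original) $\mfh$-valued connection one-form of $\nabla$ in the frame $(X_1,\ldots,X_n)$; hence $\omega^{\mathrm{inv}}$ has values in $\mfh$. For torsion-freeness, set $T^{\mathrm{inv}}(Y,Z):=\nabla^{\mathrm{inv}}_Y Z-\nabla^{\mathrm{inv}}_Z Y-[Y,Z]$. Since $\nabla^{\mathrm{inv}}$ and the Lie bracket of left-invariant vector fields are left-invariant, $T^{\mathrm{inv}}(X_i,X_j)$ is a left-invariant vector field, so it is determined by its value at $e$. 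Evaluating at $e$ and using that $\nabla$ is torsion-free gives
\begin{equation*}
T^{\mathrm{inv}}(X_i,X_j)_e=\sum_k\bigl(\Gamma_{ij}^k(e)-\Gamma_{ji}^k(e)\bigr)(X_k)_e-[X_i,X_j]_e=\bigl(\nabla_{X_i}X_j-\nabla_{X_j}X_i-[X_i,X_j]\bigr)_e=0,
\end{equation*}
so $T^{\mathrm{inv}}\equiv 0$ on all left-invariant vector fields and therefore, by tensoriality, vanishes identically.

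The construction has no real obstacle: the decisive point is simply that the torsion tensor of the new connection is left-invariant (as a difference of left-invariant objects), so its vanishing at one point forces its vanishing everywhere, and the $\mfh$-valued condition on $\omega$ is a pointwise linear condition which is preserved under restriction to the identity.
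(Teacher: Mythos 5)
Your proposal is correct and is essentially the paper's own argument: the connection $\nabla^{\mathrm{inv}}$ with Christoffel symbols frozen at the identity is exactly the paper's $\nabla^L$ defined by $(\nabla^L_{X_i}X_j)(g)=d(l_g)_e((\nabla_{X_i}X_j)(e))$, and both proofs verify the $H$-compatibility via the constancy of the connection one-form and the torsion-freeness by evaluating the (left-invariant) torsion at $e$. No gaps.
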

\begin{proof}
As the direction ``$\Leftarrow$'' is clear by definition, let us assume that $P$ is torsion-free. By definition, we then have a torsion-free $H$-connection $\nabla$. Let $(X_1,\ldots,X_n)$ be a global left-invariant section of $P$ and let $\nabla^L$ be the connection on $G$ uniquely defined by
\begin{equation*}
(\nabla^L_{X_i} X_j)(g):=d(l_g)_e((\nabla_{X_i} X_j)(e)).
\end{equation*}
Then the connection one-form $\omega^L$ of $\nabla^L$ with respect to $(X_1,\ldots,X_n)$ satisfies $\omega^L(g)=\omega(e)$, where $\omega$ is the connection form of $\nabla$ with respect to $(X_1,\ldots,X_n)$. As $\omega$ has values in $\mfh$, the same is true for $\omega^L$, i.e. $\nabla^L$ is an $H$-connection. Moreover, using the torsion-free property of $\nabla$ and the fact that commutators of left-invariant vector fields are again left-invariant, we get
\begin{equation*}
\begin{split}
(\nabla^L_{X_i} X_j)(g)-(\nabla^L_{X_j} X_i)(g)&=d(l_g)_e\bigl((\nabla_{X_i} X_j)(e)-(\nabla_{X_j} X_i)(e)\bigr)=d(l_g)_e\left([X_i,X_j](e)\right)\\
&=[X_i,X_j](g),
\end{split}
\end{equation*}
and so that $\nabla^L$ is also torsion-free. This finishes the proof.
\end{proof}
One may wonder if the same is true in the flat case, i.e. if an $H$-structure $P$ on a Lie algebra $\mfg$ is flat if and only if $P$ admits a compatible left-invariant flat torsion-free connection. This is not the case. A counterexample may be given by the $\GL(n,\bR)$-structure $\cF(\mfg)$ of a Lie algebra $\mfg$ which does not admit a flat torsion-free connection. Such examples may be found, e.g., in \cite{Be}. 

Thus, these kind of counterexamples justify to give those $H$-structures on Lie algebras $\mfg$ which admit a left-invariant flat torsion-free $H$-connection a special name:
\begin{definition}
An $H$-structure $P$ on a Lie algebra $\mfg$ is called \emph{left-invariantly flat} if $P$ admits a compatible left-invariant flat torsion-free connection.
\end{definition}

\subsection{Almost Abelian Lie groups and associated solvmanifolds}
We start with the basic definitions of almost Abelian Lie algebras and Lie groups and of almost Abelian solvmanifolds:
\begin{definition}
\begin{itemize}[wide]
\item
A Lie algebra $\mfg$ is called \emph{almost Abelian} if it admits a codimension one Abelian ideal $\mfu$.
\item
A Lie group $G$ is called \emph{almost Abelian} if its associated Lie algebra is almost Abelian.
\item
A compact manifold $M$ is called an \emph{almost Abelian solvmanifold} if it is diffeomorphic to a quotient $\Gamma \backslash G$, where $G$ is a simply-connected almost Abelian Lie group and $\Gamma$ is a \emph{cocompact lattice}, i.e. a discrete subgroup of $G$ such that the quotient $\Gamma\backslash G$ is compact.
\end{itemize}
\end{definition}
\begin{remark}
\begin{itemize}[wide]
\item
By the proof of \cite[Proposition 1]{Fr1}, an $n$-dimensional almost Abelian Lie algebra $\mfg$ has a unique codimension one Abelian ideal $\mfu$ provided that $\mfg$ is neither Abelian nor isomorphic to $\mfh_3\oplus \bR^{n-3}$.
\item
Choosing any $X\in \mfg\setminus \mfu$, the entire Lie bracket of the almost Abelian Lie algebra $\mfg$ is encoded into the endomorphism $f:=\ad(X)|_{\mfu}\in \End(\mfu)$ of the codimension one Abelian ideal.  Note that $f$ is uniquely determined up to non-zero scaling. 
\item Any $n$-dimensional almost Abelian Lie algebra is isomorphic to a semi-direct product $\mfg_f:=\bR^{n-1}\rtimes_f \bR$ for some $f\in \End(\bR^{n-1})$, where $t\in \bR$ acts on $\bR^{n-1}$ by $t\cdot f$. By  \cite[Proposition 1]{Fr1}, two such Lie algebras $\mfg_{f_1}$ and $\mfg_{f_2}$ are isomorphic if and only if $f_2$ is similar to $f_1$ up to non-zero scaling. Hence, $n$-dimensional almost Abelian Lie algebras are classified by the possible Jordan normal forms of $f\in \End(\mfu)\cong \End(\bR^{n-1})$ up to scaling.
\item
Since the Jordan normal form of a nilpotent endomorphism contains only $0$ and $1$ as entries, the Mal'cev criterion \cite{Ma} shows that any simply-connected nilpotent almost Abelian Lie group $G$ admits a cocompact lattice.
\item
If $G$ is any simply-connected almost Abelian Lie group, there is a necessary and sufficient criterion of Bock \cite{Bo} for the existence of a cocompact lattice $\Gamma$ in $G$, namely $G$ admits such a cocompact lattice if and only if there is some $t_0\in \bR\setminus \{0\}$ such that with respect to some basis of $\mfu$, $\exp(t_0 f)\in \GL(\mfu)$ has only integer entries.
\end{itemize}
\end{remark}
We now turn to $H$-structures $P$ on almost Abelian Lie algebras and note that $P$ determines a unique $H$-orbit in $\Gr_{n-1}(\bR^n)$ allowing to define different \emph{types} of $H$-structures:
\begin{definition}
Let $\mfg$ be an $n$-dimensional almost Abelian Lie algebra with codimension one Abelian ideal $\mfu$ and $P$ be an $H$-structure on $\mfg$. Then for any $u\in P$, the $(n-1)$-dimensional subspace $u^{-1}(\mfu)$ of $\bR^n$ determines the same $H$-orbit $[u^{-1}(\mfu)]=u^{-1}(\mfu)\cdot H$ in the Grassmannian $\Gr_{n-1}(\bR^n)$ of hyperplanes in $\bR^n$.

In this situation, we say that $P$ is \emph{of type $[u^{-1}(\mfu)]$}. If $P$ is of type $[\bR^{n-1}]$ with $\bR^{n-1}:=\bR^{n-1}\times \{0\}\subseteq \bR^n$, we call the $H$-structure $P$ \emph{special}. So for a special $H$-structure $P$ there exist adapted frames $u\in P$ with $u^{-1}(\mfu)=\bR^{n-1}$. We call such an adapted frame $u\in P$ then also \emph{special}.
\end{definition}
\begin{remark}
Note that all $H$-structures on an almost Abelian Lie algebra are special if and only if $H$ acts transitively on the Grassmannian $\mathrm{Grass}_{n-1}(\bR^n)$ of $(n-1)$-dimensional subspaces. The connected Lie subgroups of $\GL(n,\bR)$ acting transitively on $\mathrm{Grass}_{n-1}(\bR^n)$ may be found in \cite[6.1. Theorem, 6.17. Theorem]{Kr}:
\end{remark}
\subsection{Torsion-free $H$-structures on almost Abelian Lie algebras}
In this subsection, we define a subspace $\cF_{\mfh}$ of $\End(\bR^{n-1})$ such that a given special $H$-structure $P$ is torsion-free if and only if, under a suitable identification of $\mfu$ with $\bR^{n-1}$, the endomorphims $f$ of $\End(\mfu)$ is in $\cF_{\mfh}$. As not every $H$-structure is special, the following observation will be useful to obtain results also for non-special $H$-structures:
\begin{lemma}\label{le:reductiontospecialHstructure}
Let $\mfg$ be an $n$-dimensional almost Abelian Lie algebra and $P$ be an $H$-structure on $\mfg$ which is of type $[U]$. Choose some $T\in \GL(\mfu)$ with $T(U)=\bR^{n-1}$. Then the $THT^{-1}$-structure $P\circ T^{-1}$ is special. Moreover, $P$ is torsion-free or flat, respectively, if and only if $P\circ T^{-1}$ is torsion-free or flat, respectively.
\end{lemma}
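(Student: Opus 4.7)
My plan is to reduce everything to Lemma~\ref{le:conjugacyclassesofH} by exhibiting an explicit special adapted frame for $P\circ T^{-1}$. The only point where the statement needs a mild reinterpretation is the ambient space of $T$: since both $U$ and $\bR^{n-1}$ are subspaces of $\bR^n$, I will understand $T$ as an element of $\GL(n,\bR)$ sending $U$ to $\bR^{n-1}$. The operation $P\mapsto P\circ T^{-1}$, viewed as right multiplication of each frame $u\colon\bR^n\to\mfg$ in $P$ by $T^{-1}\in\GL(n,\bR)$, is then precisely the map considered in Lemma~\ref{le:conjugacyclassesofH}. That lemma immediately tells us $P\circ T^{-1}$ is a $THT^{-1}$-structure and that torsion-freeness and flatness are preserved in both directions, so the only thing genuinely left to argue is the specialness.

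For the specialness, I would first produce some $u\in P$ with $u^{-1}(\mfu)$ equal to $U$ exactly, not merely in the orbit $[U]$. Starting from an arbitrary $\tilde u\in P$, the assumption that $P$ is of type $[U]$ provides $h\in H$ with $\tilde u^{-1}(\mfu)=h(U)$. Setting $u:=\tilde u\circ h\in P$ (using that $P$ is stable under the right $H$-action on frames) gives $u^{-1}(\mfu)=h^{-1}(h(U))=U$. Then $u':=u\circ T^{-1}\in P\circ T^{-1}$ satisfies
\begin{equation*}
(u')^{-1}(\mfu)=T\bigl(u^{-1}(\mfu)\bigr)=T(U)=\bR^{n-1},
\end{equation*}
so $u'$ is a special adapted frame for the $THT^{-1}$-structure $P\circ T^{-1}$.

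I do not foresee any serious obstacle: once Lemma~\ref{le:conjugacyclassesofH} is invoked, the remainder of the proof is essentially bookkeeping. The most delicate point is simply keeping track of the $H$-action on $\Gr_{n-1}(\bR^n)$, namely the identity $[V]=\{h(V):h\in H\}$, so that the element $h$ used to adjust $\tilde u$ is chosen in the direction consistent with the right $H$-action on adapted frames.
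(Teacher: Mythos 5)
Your proof is correct and takes the same route as the paper, which simply states that the lemma ``follows directly from Lemma \ref{le:conjugacyclassesofH}''; you additionally spell out the specialness of $P\circ T^{-1}$ by adjusting a frame via the right $H$-action, which is exactly the intended bookkeeping. Your reinterpretation of $T$ as an element of $\GL(n,\bR)$ (rather than $\GL(\mfu)$, evidently a typo in the statement) is the correct reading.
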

\begin{proof}
Follows directly from Lemma \ref{le:conjugacyclassesofH}.
\end{proof}
Due to the last lemma, we may restrict for the moment to special $H$-structures. We first will define a subalgebra $\tilde{\mfk}_{\mfh}$ of $\End(\bR^{n-1})$ naturally associated to $\mfh$:
\begin{definition}
Let $\mfh$ be a Lie subalgebra of $\End(\bR^n)$. Then we let
\begin{equation*}
\mfk_{\mfh}:=\left\{F\in \mfh,\ F(\bR^{n-1})\subseteq \bR^{n-1}\right\}\subseteq \mfh
\end{equation*}
denote the subalgebra of $\mfh$ preserving $\bR^{n-1}$ and define the Lie subalgebra $\tilde{\mfk}_{\mfh}$ of $\End(\bR^{n-1})$ by
\begin{equation*}
\tilde{\mfk}_{\mfh}:=\left\{\left. F|_{\bR^{n-1}}\right| F\in \mfk_{\mfh} \right\}.
\end{equation*}
We call $\tilde{\mfk}_{\mfh}$ the \emph{characteristic} subalgebra of $\End(\bR^{n-1})$ associated to $\mfh$ and denote by $K_{\mfh}$ or $\tilde{K}_{\mfh}$, respectively, the associated connected Lie subgroups of $H$ or $\GL(n-1,\bR)$, respectively.
\end{definition}
\begin{example}
Table \ref{table:charsubalgs} lists the characteristic subalgebra $\tilde{\mfk}_{\mfh}$ for various linear subalgebras $\mfh$ of $\End(\bR^n)$. Here, $\mfh$ and $\tilde{\mfk}_{\mfh}$ are embedded into $\End(\bR^n)$ or $\End(\bR^{n-1})$, respectively, in the ``standard way''. In the case $\mfh=\mfg_2^*$, there are essentially $3$ different ways to embed $\mfg_2^*$ into $\bR^7$, namely one where $\bR^6$ has signature $(3,3)$, one where $\bR^6$ has signature $(2,4)$ and one where $\bR^6$ is degenerate, and we indicate which case we consider in the table. In the degenerate case, we use the (implicit) definition of $\mathfrak{g}_2^*$ from \cite{Fr2} as the stabiliser of a certain three-form on $\bR^7$ and we set $J_0:=\left(\begin{smallmatrix} 0 & -1 \\ 1 & 0 \end{smallmatrix} \right)$.
\begin{longtable}[ht]{ccc}
	\caption{Characteristic subalgebras of some linear sualgebras} \\ \hline
	$n$ & $\mfh$ & $\tilde{\mfk}_{\mfh}$ \\
	\hline
	\endhead
	\label{table:charsubalgs}
$n$ & $\mathfrak{so}(n)$ & $\mathfrak{so}(n-1)$ \\
$2m$ & $\mathfrak{sp}(2m,\bR)$ & $\left\{\left.\left(\begin{smallmatrix} A & 0 \\
	                                                                                    w^t & a\end{smallmatrix}\right)\right|A\in \mathfrak{sp}(2m-2,\bR),\, w\in \bR^{2m-2},\, a\in \bR\right\}$ \\
$2m$ & $\mathfrak{gl}(m,\bC)$ & $\left\{\left.\left(\begin{smallmatrix} A & v \\
	                                                                                    0 & a\end{smallmatrix}\right)\right|A\in \mathfrak{gl}(m-1,\bC),\, v\in \bR^{2m-2},\, a\in \bR\right\}$ \\	                                                                                    
$2m$ & $\mathfrak{u}(m)$ & $\mathfrak{u}(m-1)\times \{0\}$ \\
$2m$ & $\mathfrak{su}(m)$ & $\mathfrak{su}(m-1)\times \{0\}$ \\
$4k$ & $\mathfrak{sp}(k)$ & $\mathfrak{sp}(k-1)\times \{0\}^3$ \\
$4k$ & $\mathfrak{sp}(k)\oplus \mathfrak{sp}(1)$ & $\left(\mathfrak{sp}(k-1)\oplus \mathfrak{sp}(1)\right)\times \{0\}^3$ \\
$7$ & $\mfg_2$ & $\mathfrak{su}(3)$ \\
$7$ & $\mfg_2^*$ & $\mathfrak{sl}(3,\bR)$ (signature $(3,3)$) \\
 & & $\mathfrak{su}(1,2)$ (signature $(2,4)$) \\
 & & $\left\{\left.\left(\begin{smallmatrix} -\tr(A) & -2 b & v^t & w^t \\ 0 & 0 & 0 & v^t \\ 0 & -J_0 v & A-\tr(A) I_2  & b I_2 \\ 0 & 0 & 0 & A
	                                                                                    \end{smallmatrix}\right)\right|A\in \mathfrak{gl}(2,\bR),\, v,w\in \bR^{2},\, b\in \bR\right\}$  (degenerate) \\
 $8$ & $\mathfrak{spin}(7)$ & $\mfg_2$
  \end{longtable}
 \end{example}
Next, we define the mentioned subspace $\cF_{\mfh}$:
\begin{definition}
Let $\mfh$ be a Lie subalgebra of $\mathfrak{gl}(n,\bR)$. We first set
\begin{equation*}
\cD_{\mfh}:=\left\{\nabla\in (\bR^n)^*\otimes \mfh\left| \nabla|_{\bR^{n-1}\times \bR^{n-1}}\in S^2 (\bR^{n-1})^*\otimes \bR^n\right.\right\}.
\end{equation*}
For a special $H$-structure $P$, $\cD_{\mfh}$ contains the ``pull-backs'' of $H$-connections along a special adapted frame with the torsion-free condition already being imposed on the codimension one Abelian ideal. Hence, we denote elements in $\cD_{\mfh}$ like connections with $\nabla$ and write $\nabla_u w$ instead of $\nabla(u,w)$,

Next, we choose some $v\in \bR^n\setminus \bR^{n-1}$ and define a map
\begin{equation*}
\cT:=\cT_v:\cD_{\mfh}\rightarrow \Hom(\bR^{n-1},\bR^n),\quad \cT(\nabla):=\left(\nabla_v-\nabla v\right)|_{\bR^{n-1}}=\left(\nabla_v-\nabla v\right)\circ \iota,
\end{equation*}
where $\iota:\bR^{n-1}\rightarrow \bR^n$ is the natural inclusion. Moreover, we identify $\End(\bR{n-1})$ naturally with the subspace of $\Hom(\bR^{n-1},\bR^n)$ consisting of those elements $f\in \Hom(\bR^{n-1},\bR^n)$ for which $\im(f)\subseteq \bR^{n-1}\subseteq \bR^n$ and finally set
\begin{equation*}
\cF_{\mfh}:=\cT(\cT^{-1}(\End(\bR^{n-1})))\subseteq \End(\bR^{n-1}).
\end{equation*}
We note that different choices of $v$ result in an overall scaling of the map $\cT_v$ and so do neither affect the preimage $\cT^{-1}(\End(\bR^{n-1}))$ nor the subspace $\cF_{\mfh}=\cT(\cT^{-1}(\End(\bR^{n-1})))$.
\end{definition}
We remark that $\cF_{\mfh}$ is a $\tilde{K}_{\mfh}$-module:
\begin{proposition}\label{pro:tildeKmodule}
Let $H$ be a Lie subgroup of $\GL(n,\bR)$ with Lie algebra $\mfh$. Then $\cF_{\mfh}$ is a $\tilde{K}_{\mfh}$-submodule of $\End(\bR^{n-1})$. 
\end{proposition}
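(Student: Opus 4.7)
The plan is to reduce the Lie group statement to an infinitesimal one: since $\tilde{K}_{\mfh}$ is connected with Lie algebra $\tilde{\mfk}_{\mfh}$ and acts on $\End(\bR^{n-1})$ by conjugation, it suffices to show that the induced commutator action preserves $\cF_{\mfh}$, i.e.\ that $[k,f]\in \cF_{\mfh}$ for every $k\in\tilde{\mfk}_{\mfh}$ and $f\in \cF_{\mfh}$. By the very definition of $\tilde{\mfk}_{\mfh}$, any such $k$ admits a lift $A\in\mfh$ with $A(\bR^{n-1})\subseteq\bR^{n-1}$ and $A|_{\bR^{n-1}}=k$, and I will fix such a lift for the computation.

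Next, I will let $A$ act infinitesimally by conjugation on $\cD_{\mfh}\subseteq(\bR^n)^*\otimes\mfh$, namely
\begin{equation*}
(A\cdot\nabla)_u w := [A,\nabla_u]w-\nabla_{Au}w.
\end{equation*}
Two routine verifications are needed. First, $A\cdot\nabla$ takes values in $\mfh$, since $[A,\nabla_u]\in[\mfh,\mfh]\subseteq\mfh$ and $\nabla_{Au}\in\mfh$. Second, $(A\cdot\nabla)|_{\bR^{n-1}\times\bR^{n-1}}$ remains symmetric: for $u_1,u_2\in\bR^{n-1}$ all three terms
$A\nabla_{u_1}u_2$, $\nabla_{u_1}(Au_2)$, $\nabla_{Au_1}u_2$
involve only arguments in $\bR^{n-1}$ (because $A$ preserves $\bR^{n-1}$), and hence may be symmetrised via the symmetry of $\nabla$ on $\bR^{n-1}\times\bR^{n-1}$. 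Thus $A\cdot\nabla\in\cD_{\mfh}$.

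The core of the proof is a direct computation of $\cT(A\cdot\nabla)(u)$ for $u\in\bR^{n-1}$. Expanding and grouping terms, and using that $Au\in\bR^{n-1}$ to rewrite $\nabla_v(Au)-\nabla_{Au}v=\cT(\nabla)(Au)$, one obtains
\begin{equation*}
\cT(A\cdot\nabla)(u)=A\,\cT(\nabla)(u)-\cT(\nabla)(Au)+\bigl(\nabla_u(Av)-\nabla_{Av}u\bigr).
\end{equation*}
To handle the bracketed term I decompose $Av=\alpha v+w$ with $\alpha\in\bR$ and $w\in\bR^{n-1}$; the $v$-part contributes $-\alpha\,\cT(\nabla)(u)$, while the $w$-part $\nabla_uw-\nabla_wu$ vanishes because both arguments lie in $\bR^{n-1}$ and $\nabla\in\cD_{\mfh}$ is symmetric there. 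This yields the key identity
\begin{equation*}
\cT(A\cdot\nabla)(u)=A\,\cT(\nabla)(u)-\cT(\nabla)(Au)-\alpha\,\cT(\nabla)(u).
\end{equation*}

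Finally, assuming $\cT_2(\nabla)=0$ so that $\cT(\nabla)=f:=\cT_1(\nabla)$ maps $\bR^{n-1}$ into itself, each term on the right lies in $\bR^{n-1}$ (since $A|_{\bR^{n-1}}=k$), which gives at once $\cT_2(A\cdot\nabla)=0$ and
\begin{equation*}
\cT_1(A\cdot\nabla)=k\circ f-f\circ k-\alpha f=[k,f]-\alpha f\in\cF_{\mfh}.
\end{equation*}
Since $\alpha f\in\cF_{\mfh}$ as well, I conclude $[k,f]\in\cF_{\mfh}$, and the $\tilde{K}_{\mfh}$-invariance follows by connectedness. The one subtle point will be the presence of the extra correction term $-\alpha f$, which arises because a lift $A$ of $k$ need not preserve the chosen complement $\spa{v}$; fortunately this correction is again a multiple of $f\in\cF_{\mfh}$, so it is absorbed into the subspace and the conjugation invariance is unspoiled.
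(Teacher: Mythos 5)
Your proof is correct, but it takes a genuinely different route from the paper's. The paper argues at the group level: given $\tilde{k}\in\tilde{K}_{\mfh}$ it picks a lift $h\in H$ with $h(\bR^{n-1})=\bR^{n-1}$ and $h|_{\bR^{n-1}}=\tilde{k}$, pulls back $\nabla$ to $h^*\nabla=h^{-1}(\nabla_{h(\cdot)}h(\cdot))\in\cD_{\mfh}$, and computes $\cT_{h^{-1}(e_n)}(h^*\nabla)=\tilde{k}^{-1}\circ F\circ\tilde{k}$ in three lines, obtaining the conjugation invariance directly. You instead differentiate this action: you lift only at the Lie algebra level (where the existence of $A\in\mfh$ with $A(\bR^{n-1})\subseteq\bR^{n-1}$ and $A|_{\bR^{n-1}}=k$ is immediate from the definition of $\tilde{\mfk}_{\mfh}$), define the infinitesimal action $(A\cdot\nabla)_u=[A,\nabla_u]-\nabla_{Au}$ on $\cD_{\mfh}$, verify it preserves $\cD_{\mfh}$ and $\ker(\cT_2)$, and extract $[k,f]\in\cF_{\mfh}$ before integrating via connectedness of $\tilde{K}_{\mfh}$. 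Your computation is sound: the identity $\cT(A\cdot\nabla)(u)=A\,\cT(\nabla)(u)-\cT(\nabla)(Au)-\alpha\,\cT(\nabla)(u)$ is correct, and the correction term $-\alpha f$ (arising because $A$ need not preserve $\spa{v}$) is legitimately absorbed since $\cF_{\mfh}=\cT_1(\ker\cT_2)$ is a linear subspace containing $f$. What each approach buys: the paper's is shorter and yields the group statement in one step, though it quietly uses that every element of the connected group $\tilde{K}_{\mfh}$ admits a lift to $H$ preserving $\bR^{n-1}$ (true because $\tilde{K}_{\mfh}$ is generated by exponentials $\exp(F|_{\bR^{n-1}})$ with $F\in\mfh$ preserving $\bR^{n-1}$, each of which lifts to $\exp(F)\in H$); your infinitesimal argument sidesteps that lifting issue entirely, at the cost of a longer computation and the final appeal to connectedness.
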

\begin{proof}
Let $\tilde{k}\in \tilde{K}_{\mfh}$ and $F\in \cF_{\mfh}$. By definition, there is $h\in H$ with $h(\bR^{n-1})=\bR^{n-1}$ and $h|_{\bR^{n-1}}=\tilde{k}$.

Again, by definition, there is $\nabla\in \cD_{\mfh}$ such that $\cT_{e_n}(\nabla)\in \End(\bR^{n-1})$ and $\cT_{e_n}(\nabla)=F$. Then, since $h$ preserves $\bR^{n-1}$, the pullback $h^*\nabla=h^{-1}(\nabla_{h(\cdot)} h(\cdot))$ is in $\cD_{\mfh}$ as well and, choosing $v=h^{-1}(e_n)$, we have 
\begin{equation*}
\begin{split}
\cT_v(h^*\nabla)(u)&=h^{-1}(\nabla_{h(v)} h(u)-\nabla_{h(u)} h(v))=h^{-1}(\nabla_{e_n} \tilde{k}(u)-\nabla_{\tilde{k}(u)} e_n)\\
&=h^{-1}(\cT_{e_n}(\nabla)(\tilde{k}(u)))=(\tilde{k}^{-1}\circ F\circ \tilde{k})(u)
\end{split}
\end{equation*}
for all $u\in \bR^{n-1}$. Hence, also $\tilde{k}^{-1}\circ F\circ \tilde{k}$ is in $\cF_{\mfh}$ and $\cF_{\mfh}$ is a $\tilde{K}_{\mfh}$-module.
\end{proof}
Next, we show that a special $H$-structure $P$ is torsion-free if and only if $f$ is in $\cF_{\mfh}$ and give also a sufficient criterion for the left-invariant flatness of $P$:
\begin{proposition}\label{pro:specialHstrstf}
Let $P$ be a special $H$-structure on an almost Abelian Lie algebra $\mfg$ with codimension one Abelian ideal $\mfu$. Moreover, let $X\in \mfg\setminus \mfu$, set $f:=\ad(X)|_{\mfu}$ and use a fixed special adapted frame to identify $\mfg$ with $\bR^n$, $\mfu$ with $\bR^{n-1}$ and so $f$ with an endomorphism of $\bR^{n-1}$. Then:
\begin{enumerate}[(a)]
\item
If $f\in \tilde{\mfk}_{\mfh}$, then $P$ is left-invariantly flat.
\item
$P$ is torsion-free if and only if $f\in \cF_{\mfh}$.
\end{enumerate}
\end{proposition}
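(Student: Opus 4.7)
The overall plan is to unpack the definitions, working throughout with the identifications furnished by the fixed special adapted frame $(X_1,\ldots,X_n)$, so that $\mfg\cong\bR^n$, $\mfu\cong\bR^{n-1}$, and a left-invariant connection $\nabla$ on $\mfg$ becomes a bilinear map $\nabla\colon\bR^n\times\bR^n\to\bR^n$, i.e.\ an element of $(\bR^n)^*\otimes\End(\bR^n)$. Under this identification, $\nabla$ is a left-invariant $H$-connection iff $\nabla\in(\bR^n)^*\otimes\mfh$. By Lemma \ref{le:torsionfreeleftinvariant}, $P$ is torsion-free iff such a left-invariant $\nabla$ exists that is moreover torsion-free, and $P$ is left-invariantly flat iff additionally $\nabla$ can be chosen flat. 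I will take $X=X_n$ throughout; replacing $X_n$ by another element of $\mfg\setminus\mfu$ only rescales $f$ by a nonzero factor and shifts it by an element of $\ad(\mfu)|_\mfu=\{0\}$, which does not affect the membership $f\in\tilde{\mfk}_{\mfh}$ or $f\in\cF_{\mfh}$ since both are linear subspaces of $\End(\bR^{n-1})$.

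For part (a), given $f\in\tilde{\mfk}_{\mfh}$, I pick $F\in\mfh$ with $F(\bR^{n-1})\subseteq\bR^{n-1}$ and $F|_{\bR^{n-1}}=f$, and define $\nabla$ by $\nabla_{X_i}:=0$ for $i<n$ and $\nabla_{X_n}:=F$. That this is an $H$-connection is immediate. Torsion-freeness splits into two trivial checks: for $u,w\in\bR^{n-1}$ both sides of $\nabla_u w-\nabla_w u=[u,w]$ vanish; for $u\in\bR^{n-1}$, $\nabla_{X_n}u-\nabla_u X_n=Fu-0=f(u)=[X_n,u]$. Flatness follows because $\nabla_w=0$ for every $w\in\bR^{n-1}$: the curvature $R(X_i,X_j)$ is zero whenever $i,j<n$ since both $\nabla_{X_i},\nabla_{X_j}$ and $[X_i,X_j]$ vanish, and for $i<n$, $R(X_i,X_n)=\nabla_{X_i}\nabla_{X_n}-\nabla_{X_n}\nabla_{X_i}-\nabla_{[X_i,X_n]}=0\cdot F-F\cdot 0-\nabla_{-f(X_i)}=0$, using $f(X_i)\in\bR^{n-1}$ and hence $\nabla_{f(X_i)}=0$.

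For part (b), I decompose the torsion-free condition along $\bR^n=\bR^{n-1}\oplus\spa{X_n}$. On $\bR^{n-1}\times\bR^{n-1}$, $[\any,\any]=0$ since $\mfu$ is Abelian, so torsion-freeness there is equivalent to the symmetry of $\nabla|_{\bR^{n-1}\times\bR^{n-1}}$, i.e.\ to $\nabla|_{\bR^{n-1}\times\bR^{n-1}}\in S^2(\bR^{n-1})^*\otimes\bR^n$. Combined with the $H$-connection condition, this is precisely membership in $\cD=\cD_{\mfh}$. The only remaining torsion-free equation is $\nabla_{X_n}u-\nabla_u X_n=[X_n,u]=f(u)$ for $u\in\bR^{n-1}$, which, choosing $v=X_n$, is exactly $\cT(\nabla)(u)=f(u)$. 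Since $f$ takes values in $\bR^{n-1}$, splitting according to $\bR^n=\bR^{n-1}\oplus\spa{X_n}$ turns this single equation into $\cT_1(\nabla)=f$ together with $\cT_2(\nabla)=0$. Therefore a left-invariant torsion-free $H$-connection exists iff $f\in\cT_1(\ker\cT_2)=\cF_{\mfh}$, which together with Lemma \ref{le:torsionfreeleftinvariant} gives the claim.

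There is no serious technical obstacle; the only subtle points are the bookkeeping of identifications (frame versus abstract Lie algebra, bilinear map versus $(\bR^n)^*\otimes\End(\bR^n)$) and verifying that the torsion equation involving $X_n$ is exactly captured by $\cT$ once one recognises that the target-space splitting $\bR^n=\bR^{n-1}\oplus\spa{X_n}$ is what cleanly separates $\cT_1$ from $\cT_2$. Once these identifications are clear, both parts are direct.
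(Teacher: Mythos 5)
Your proof is correct and follows essentially the same route as the paper: in (a) the same connection $\nabla_u=0$ for $u\in\bR^{n-1}$, $\nabla_{X_n}=F$ (the paper keeps a general $X\in\mfg\setminus\mfu$ and inserts the scalar $\lambda$ with $\ad(e_n)|_{\bR^{n-1}}=\lambda f$, which you avoid by normalising $X=X_n$ and noting that rescaling is harmless), and in (b) the same reduction via Lemma \ref{le:torsionfreeleftinvariant} of torsion-freeness to the existence of $\nabla\in\cD_{\mfh}$ with $\cT(\nabla)=f$, split into $\cT_1(\nabla)=f$, $\cT_2(\nabla)=0$. If anything you are slightly more explicit than the paper, which only asserts $R^{\nabla}=0$ where you verify $\nabla_{[X_i,X_n]}=0$.
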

\begin{proof}
\begin{enumerate}
\item[(a)]
First of all, since $f\in \tilde{\mfk}_h$, there is by definition of the characteristic subalgebra of $\End(\bR^{n-1})$ an element $F\in \mfh$ with $F(\bR^{n-1})\subseteq \bR^{n-1}$ and
\begin{equation*}
F|_{\bR^{n-1}}=f
\end{equation*}
We define now a left-invariant connection $\nabla$ on $\mfg\cong \bR^n$ by setting $\nabla_u:=0$ for all $u\in \bR^{n-1}$ and
\begin{equation*}
\nabla_{e_n}:=\lambda\, F
\end{equation*}
for $\lambda\in \bR^*$ being the unique non-zero scalar such that $\ad(e_n)|_{\bR^{n-1}}=\lambda\, f$.

Obviously, with this definition, $\nabla$ is an $H$-connection and $R^{\nabla}=0$, i.e. $\nabla$ is flat. Next, $T^{\nabla}(u_1,u_2)=0$ for all $u_1,u_2\in \bR^{n-1}$ as $\bR^{n-1}$ is Abelian and we also have
\begin{equation*}
\begin{split}
T^{\nabla}(e_n,u)&=\nabla_{e_n}(u)-[e_n,u]=\lambda\, F(u)-\lambda\, f(u)=\lambda\, f(u)-\lambda\, f(u)=0.
\end{split}
\end{equation*}
Thus, $\nabla$ is also torsion-free and so a left-invariant flat torsion-free $H$-connection, i.e. $P$ is left-invariantly flat.
\item[(b)]
First, let $P$ be torsion-free. By Lemma \ref{le:torsionfreeleftinvariant}, there is a left-invariant torsion-free $H$-connecton $\nabla$. Using the fixed special adapted frame, $\nabla$ is an element of $(\bR^n)^*\otimes \mfh$. Since $\nabla$ is torsion-free, we have
\begin{equation*}
0=T^{\nabla}(u_1,u_2)=\nabla_{u_1} u_2-\nabla_{u_2} u_1-[u_1,u_2]=\nabla_{u_1} u_2-\nabla_{u_2} u_1
\end{equation*}
for all $u_1,u_2\in \bR^{n-1}$, i.e. $\nabla\in \cD_{\mfh}$. Using our identifications, we have $X\in\bR^n\setminus \bR^{n-1}$ and so
\begin{equation*}
\cT_X(\nabla)(u)=\nabla_X u-\nabla_u X=T^{\nabla}(X,u)+[X,u]=f(u),
\end{equation*}
for all $u\in \bR^{n-1}$, i.e. $\cT_X(\nabla)=f\in \End(\bR^{n-1})$. Thus, $f\in \cF_{\mfh}$.

Conversely, assume that $f\in \cF_{\mfh}$. Then $f=\cT_X(\nabla)$ for some $\nabla\in \cD_{\mfh}$ with $\cT_X(\nabla)\subseteq \End(\bR^{n-1})$. The condition $\nabla\in (\bR^n)^*\otimes \mfh$ says that $\nabla$ is an $H$-connection and the condition $f=\cT_X(\nabla)$ implies
\begin{equation*}
[X,u]=f(u)=\cT_X(\nabla)(u)=\nabla_X u-\nabla_u X,
\end{equation*}
and so $T^{\nabla}(X,u)=0$ for all $u\in \bR^{n-1}$. Since $\nabla|_{\bR^{n-1}\times \bR^{n-1}}\in S^2 (\bR^{n-1})^*\otimes \bR^n$, we also have
\begin{equation*}
T^{\nabla}(u_1,u_2)=\nabla_{u_1} u_2-\nabla_{u_2} u_1-[u_1,u_2]=0
\end{equation*}
for all $u_1,u_2\in \bR^{n-1}$. Thus, in total, $T^{\nabla}=0$. Consequently, $\nabla$ is a torsion-free $H$-connection on $\mfg\cong \bR^n$ and so $P$ is torsion-free.
\end{enumerate}
\end{proof}
Proposition \ref{pro:specialHstrstf} and Lemma \ref{le:flatequivalent} directly imply 
\begin{corollary}\label{co:flat=tf}
Let $H$ be a linear subgroup of $\GL(n,\bR)$. Then always $\tilde{\mfk}_{\mfh}\subseteq \cF_{\mfh}$. Moreover, if $ \cF_{\mfh}\subseteq \tilde{\mfk}_{\mfh}$, then $\cF_{\mfh}=\tilde{\mfk}_{\mfh}$ and a special $H$-structure $P$ on an almost Abelian Lie algebra $\mfg$ is torsion-free if and only if it is left-invariantly flat, which is the case if and only if $f\in \End(\mfu)\cong \End(\bR^{n-1})$ defined as in Proposition \ref{pro:specialHstrstf} is contained in $\tilde{\mfk}_{\mfh}$.
\end{corollary}
 We note that there are linear subgroups $H$ for which the flatness and the torsion-free condition for a (special) $H$-structure on almost Abelian Lie algebras is equivalent without having $\tilde{\mfk}_{\mfh}= \cF_{\mfh}$, cf. Example \ref{ex:tf=flatbutbigger} below. 
 
We end this section by computing $\cF_{\mfh}$ for the case of the real symplectic Lie algebra $\mathfrak{sp}(2m,\bR)$:
\begin{example}
We show that
\begin{equation*}
\cF_{\mathfrak{sp}(2m,\bR)}=\tilde{\mfk}_{\mathfrak{sp}(2m,\bR)}=\left\{\left.\begin{pmatrix} A & 0 \\
	                                                                                    w^t & a\end{pmatrix}\right|A\in \mathfrak{sp}(2m-2,\bR),\ w\in \bR^{2m-2},\ a\in \bR\right\}.
\end{equation*}
where $\mathfrak{sp}(2m,\bR)=\left\{\left. A\in \bR^{2m\times 2m}\right|A.\omega_0=0\right\}$ for $\omega_0:=e^{12}+\ldots+e^{2m-1\, 2m}$, thus reproving the well-known characterisation of symplectic almost Abelian Lie algebras, cf., e.g., \cite[Proposition 4.1]{LW}.

For this, let $\nabla\in \cT^{-1}(\End(\bR^{n-1}))\subseteq\cD_{\mfh}$ be given. Then
\begin{equation*}
\omega_0(\nabla_u e_{2m},v)=-\omega_0(e_{2m},\nabla_u v)=-\omega_0(e_{2m},\nabla_v u)=\omega_0(\nabla_v e_{2m},u)=-\omega_0(u,\nabla_v e_{2m})
\end{equation*}
for all $u,v\in \bR^{2m-1}$. Thus,
\begin{equation*}
\nabla e_{2m}=\begin{pmatrix}
	          A_2 & J_0 z_2 & x_2\\
	          w_2^t & a_2 & b_2\\
	          z_2^t & c_2 &  d_2
\end{pmatrix}
\end{equation*}
for certain $A_2\in \mathfrak{sp}(2m-2,\bR)$, $w_2,x_2,z_2\in \bR^{2m-2}$ and $a_2,b_2,c_2,d_2\in \bR$ and where $J_0$ is the standard complex structure on $\bR^{2m-2}$. As $\nabla_{e_{2m}}\in \mathfrak{sp}(2m,\bR)$ and $\nabla_{e_{2m}}-\nabla e_{2m}$ preserves $\bR^{2m-1}$, we have 
\begin{equation*}
	\nabla_{e_{2m}}=\begin{pmatrix}
		A_1 & J_0 z_2 & -J_0 w_1\\
		w_1^t & a_1 & b_1\\
		z_2^t & c_2 &  -a_1
	\end{pmatrix}
\end{equation*}
for certain $A_1\in \mathfrak{sp}(2m-2,\bR)$, $w_1\in \bR^{2m-2}$ and $a_1,b_1\in \bR$. Then
\begin{equation*}
\cT(\nabla)=\begin{pmatrix}
	A_1-A_2 & 0 \\
	(w_1-w_2)^t & a_1-a_2
\end{pmatrix}
\end{equation*}
is in $\tilde{\mfk}_{\mathfrak{sp}(2m,\bR)}$ and so $\cF_{\mathfrak{sp}(2m,\bR)}\subseteq\tilde{\mfk}_{\mathfrak{sp}(2m,\bR)}$. Hence, by Corollary \ref{co:flat=tf}, we have $\cF_{\mathfrak{sp}(2m,\bR)}=\tilde{\mfk}_{\mathfrak{sp}(2m,\bR)}$ as claimed.
\end{example}
\begin{remark}\label{re:almostnilpotent}
A natural extension of the class of almost Abelian Lie algebras is given by the class of \emph{almost nilpotent} Lie algebras $\mfg$, i.e. $\mfg$ has a nilpotent codimension one ideal $\mfn$. For this class of Lie algebras, we have the following strong sufficient condition ensuring that a given \emph{special} $H$-structure $P$ is torsion-free, where special here means again that one has an adapted basis $(X_1,\ldots,X_n)$ such that $(X_1,\ldots,X_{n-1})$ is a basis of $\mfn$:

If $(X_1,\ldots,X_{n-1})\cdot B$ is a torsion-free $B$-structure on $\mfn$, where
\begin{equation*}
	B:=\left\{\left. h|_{\bR^{n-1}}\right|h\in H,\ h(\bR^{n-1})\subseteq \bR^{n-1},\ h(e_n)=1\right\}\subseteq \GL(n-1,\bR),
\end{equation*}
and $f:=\ad(X_n)|_{\mfn}\in \mathrm{Der}(\mfn)$ may be identified via $(X_1,\ldots,X_{n-1})$ with an element in $\tilde{\mfk}_{\mfh}$, then $P$ is a torsion-free $H$-structure.

This result simply follows from that fact that if $\tilde{\nabla}$ is a torsion-free $B$-connection on $\mfn\cong \bR^{n-1}$, one easily checks that $\nabla$, defined by
\begin{equation*}
\nabla_{u_1} u_2:=\tilde{\nabla}_{u_1} u_2,\ \nabla_{u_1} X_n:=0,\ \nabla_{X_n}:=F
\end{equation*}
for all $u_1,u_2\in \bR^{n-1}$, is a torsion-free $H$-connection on $\mfg\cong \bR^n$. Here, $F\in H$ is chosen such that $F(\bR^{n-1})\subseteq \bR^{n-1}$ and $F|_{\bR^{n-1}}=f$,
\end{remark}
\begin{convention}
From now on, if we have given an almost Abelian Lie algebra $\mfg$, we always denote by $\mfu$ a codimension one Abelian ideal, fix always some $X\in \mfg\setminus \mfu$ and set $f:=\ad(X)|_{\mfu}\in \End(\mfu)$. We recall that $f$ is only defined uniquely up to non-zero scaling, i.e. as an element in $\mathbb{P}(\End(\mfu))$, however, this kind of non-uniqueness will play no role in our considerations.
\end{convention}
Now if an $H$-structure is no longer special, Proposition \ref{pro:specialHstrstf} together with Lemma \ref{le:reductiontospecialHstructure} yields the following result on the flatness and torsion-free condition for $P$:
\begin{theorem}\label{th:arbrtyHstrtfflat}
	Let $P$ be an $H$-structure of type $[U]$ on an almost Abelian Lie algebra $\mfg$. Choose $T\in \GL(n,\bR)$ with $T(U)=\bR^{n-1}$ and $u\in P$ with $u(U)=\mfu$. If we use $u\circ T^{-1}$ to identify $\bR^{n}$ with $\mfg$ and $\bR^{n-1}$ with $\mfu$, then the following assertions are true:
	\begin{enumerate}[(a)]
		\item If $f\in \tilde{\mfk}_{T\mfh T^{-1}}$, then $P$ is left-invariantly flat.
		\item $P$ is torsion-free if and only if $f\in \cF_{T\mfh T^{-1}}$.
	\end{enumerate}
\end{theorem}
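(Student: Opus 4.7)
The plan is to reduce the statement about the general $H$-structure $P$ of type $[U]$ to the already-established special case, namely Proposition \ref{pro:specialHstrstf}, by means of the reduction mechanism of Lemma \ref{le:reductiontospecialHstructure} (which in turn rests on Lemma \ref{le:conjugacyclassesofH}). Concretely, put $P' := P \circ T^{-1}$; since $T(U) = \bR^{n-1}$, Lemma \ref{le:reductiontospecialHstructure} says that $P'$ is a special $(THT^{-1})$-structure on $\mfg$ and that $P$ is torsion-free (resp. flat) if and only if $P'$ is torsion-free (resp. flat).

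Next, I would observe that the frame $u \circ T^{-1}$ used in the statement to identify $\bR^n$ with $\mfg$ is precisely a \emph{special} adapted frame of $P'$, because
\begin{equation*}
(u \circ T^{-1})(\bR^{n-1}) \;=\; u(T^{-1}(\bR^{n-1})) \;=\; u(U) \;=\; \mfu.
\end{equation*}
Hence, under this identification, $\bR^{n-1}$ corresponds to $\mfu$, and the endomorphism $f = \ad(X)|_{\mfu}$ becomes exactly the element of $\End(\bR^{n-1})$ that Proposition \ref{pro:specialHstrstf} refers to when applied to the \emph{special} structure $P'$ whose Lie algebra is $T\mfh T^{-1}$. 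Applying Proposition \ref{pro:specialHstrstf}(b) now gives part (b): $P'$ is torsion-free if and only if $f \in \cF_{T\mfh T^{-1}}$, and by the first step this is equivalent to $P$ being torsion-free.

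For part (a), Proposition \ref{pro:specialHstrstf}(a) says that $f \in \tilde{\mfk}_{T\mfh T^{-1}}$ implies that $P'$ is left-invariantly flat, i.e. admits a left-invariant flat torsion-free $(THT^{-1})$-connection $\nabla$. The very same $\nabla$ is then a left-invariant flat torsion-free $H$-connection on $P$: by Lemma \ref{le:conjugacyclassesofH} the connection form of $\nabla$ with respect to a left-invariant adapted frame of $P$ is obtained from the one for $P'$ by conjugation with the constant element $T$, so it takes values in $T^{-1}(T\mfh T^{-1})T = \mfh$, while left-invariance is unaffected by a constant frame change. Therefore $P$ is left-invariantly flat.

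I do not anticipate any real obstacle; the proof is essentially a bookkeeping exercise, combining Lemmas \ref{le:conjugacyclassesofH} and \ref{le:reductiontospecialHstructure} with Proposition \ref{pro:specialHstrstf}. The only points that deserve explicit verification are (i) that the specific identification $u \circ T^{-1}$ chosen in the statement matches what is needed to invoke Proposition \ref{pro:specialHstrstf} (so that the ``$f$'' appearing on both sides is literally the same endomorphism), and (ii) that both reduction lemmas preserve the \emph{left-invariance} of connections — which is immediate since a constant frame change commutes with left translations.
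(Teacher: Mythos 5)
Your proposal is correct and follows exactly the route the paper intends: the paper gives no separate proof of Theorem \ref{th:arbrtyHstrtfflat}, stating only that it follows from Proposition \ref{pro:specialHstrstf} together with Lemma \ref{le:reductiontospecialHstructure}, which is precisely the reduction you carry out. Your additional check that left-invariance survives the constant frame change $T$ (needed for part (a), since Lemma \ref{le:reductiontospecialHstructure} only mentions torsion-freeness and flatness) is a correct and welcome piece of bookkeeping that the paper leaves implicit.
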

\begin{remark}\label{re:nonspecial}
We notice that if we use the basis $(T^{-1}(e_1),\ldots,T^{-1}(e_{n-1}))$ to identify $U$ with $\bR^{n-1}$, then we have the identity
\begin{equation*}
\tilde{\mfk}_{T\mfh T^{-1}}=\left\{F|_U\in \End(U)\cong \bR^{(n-1)\times (n-1)}\left| F\in \mfh,\quad F(U)\subseteq U\right.\right\}.
\end{equation*}
Similarly, we may use $T$ to identify $\cD_{T\mfh T^{-1}}$ with
\begin{equation*}
\left\{\left.\nabla\in (\bR^n)^*\otimes \mfh\right| \nabla_{U\times U}\in S^2 U^*\otimes \bR^n\right\}.
\end{equation*}
Then, for any $v\in \bR^n\setminus U$, we may define the map $\cT=\cT_v:\cD_{T\mfh T^{-1}}\rightarrow \End(U)$ analogously to above and then identify $\cF_{T\mfh T^{-1}}$ with $\cT(\cT^{-1}(\End(\bR^{n-1})))$.
\end{remark}
So far we always had given a specific $H$-structure $P$ on an almost Abelian Lie algebra $\mfg$ and wanted to know if $P$ is torsion-free. One may, however not specify $P$ in front and simply ask the question whether a given almost Abelian Lie algebra $\mfg$ admits a torsion-free $H$-structure $P$ (of any type) or not. For this kind of question, we deduce from Theorem \ref{th:arbrtyHstrtfflat} the following result:
\begin{theorem}\label{th:almostAbeliantf}
	Let $\mfg$ be an almost Abelian Lie algebra. Choose a set of representatives $(U_{\alpha})_{\alpha\in A}$ for the orbits of the action of $H$ on $\mathrm{Gr}_{n-1}(\bR^{n})$. Moreover, choose for each $\alpha\in A$ some $T_{\alpha}\in \GL(n,\bR)$ with $T_{\alpha}(U_{\alpha})=\bR^{n-1}$.
	
	Then $\mfg$ admits a torsion-free $H$-structure if and only if there exists $v\in \mathrm{Iso}(\bR^n,\mfg)$ such that
	\begin{equation*}
		f\in \bigcup_{\alpha\in A} v\circ \cF_{T_{\alpha} H T_{\alpha}^{-1}}\circ v^{-1}|_{\bR^{n-1}}.
	\end{equation*}
	Moreover, if $f \in   v\circ \cF_{T_{\alpha_0} H T_{\alpha_0}^{-1}}\circ v^{-1}|_{\bR^{n-1}}$ for some $v\in \mathrm{Iso}(\bR^n,\mfg)$ and some $\alpha_0\in A$, then a torsion-free $H$-structure $P$ on $\mfg$ is given by
	\begin{equation*}
		P:=(v\circ T_{\alpha_0})\cdot H.
	\end{equation*}
\end{theorem}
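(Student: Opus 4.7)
The plan is to reduce the statement to Theorem \ref{th:arbrtyHstrtfflat}, which already characterises torsion-free $H$-structures of a single fixed type $[U]$. Since every $H$-structure on $\mfg$ has a unique type, and the types are by hypothesis exhausted by $\{[U_\alpha]\}_{\alpha\in A}$, ranging the known characterisation over $\alpha\in A$ should yield the claim. All that is really needed is a careful translation between the data used in Theorem \ref{th:arbrtyHstrtfflat} (a pair $(u,T)$ with $u\in P$, $u(U)=\mfu$ and $T(U)=\bR^{n-1}$) and the single isomorphism $v\in \mathrm{Iso}(\bR^n,\mfg)$ that appears in the present statement. Throughout, the expression $v\circ \cF_{T_\alpha\mfh T_\alpha^{-1}}\circ v^{-1}|_{\bR^{n-1}}$ is interpreted as the set of endomorphisms of $\mfu$ of the form $u\mapsto v(F(v^{-1}(u)))$ with $F\in \cF_{T_\alpha\mfh T_\alpha^{-1}}\subseteq \End(\bR^{n-1})$, which tacitly forces $v(\bR^{n-1})=\mfu$.

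For the ``$\Rightarrow$'' direction, assume $P$ is a torsion-free $H$-structure on $\mfg$. Pick any $u_0\in P$; its type $[u_0^{-1}(\mfu)]$ coincides with a unique $[U_{\alpha_0}]$, so there exists $h\in H$ with $h(u_0^{-1}(\mfu))=U_{\alpha_0}$. Replacing $u_0$ by $u:=u_0\circ h^{-1}\in P$, we now have $u(U_{\alpha_0})=\mfu$. Setting $v:=u\circ T_{\alpha_0}^{-1}\in \mathrm{Iso}(\bR^n,\mfg)$, we get $v(\bR^{n-1})=u(U_{\alpha_0})=\mfu$, and $u=v\circ T_{\alpha_0}$. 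Theorem \ref{th:arbrtyHstrtfflat}(b), applied with this $u$ and $T=T_{\alpha_0}$, gives $v^{-1}\circ f\circ v|_{\bR^{n-1}}\in \cF_{T_{\alpha_0}\mfh T_{\alpha_0}^{-1}}$, which is exactly the membership of $f$ claimed in the theorem.

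For the ``$\Leftarrow$'' direction and the moreover statement, assume $v\in \mathrm{Iso}(\bR^n,\mfg)$ (hence $v(\bR^{n-1})=\mfu$) and $\alpha_0\in A$ satisfy the stated condition on $f$. Define $u:=v\circ T_{\alpha_0}$ and $P:=u\cdot H$. Then $u(U_{\alpha_0})=v(T_{\alpha_0}(U_{\alpha_0}))=v(\bR^{n-1})=\mfu$, so $P$ is an $H$-structure of type $[U_{\alpha_0}]$. The hypothesis rewrites as $v^{-1}\circ f\circ v|_{\bR^{n-1}}\in \cF_{T_{\alpha_0}\mfh T_{\alpha_0}^{-1}}$, and another application of Theorem \ref{th:arbrtyHstrtfflat}(b) shows $P$ is torsion-free. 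No substantial obstacle is anticipated: the argument is essentially bookkeeping with identifications, the only mild subtlety being that the notation $v\circ \cF\circ v^{-1}|_{\bR^{n-1}}$ implicitly pins down $v$ to satisfy $v(\bR^{n-1})=\mfu$, and that right-translation by $h^{-1}\in H$ preserves the $H$-orbit $P$, which is immediate from $P$ being an $H$-principal subbundle of the frame bundle.
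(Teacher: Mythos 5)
Your proposal is correct and follows essentially the same route as the paper: both directions are obtained by translating the membership condition on $f$ into the statement of Theorem \ref{th:arbrtyHstrtfflat}(b) via the identification $v=u\circ T_{\alpha_0}^{-1}$. Your extra step of right-translating $u_0$ by $h^{-1}$ to arrange $u(U_{\alpha_0})=\mfu$ is a slightly more careful version of the paper's ``choose any $u\in P$'', but the argument is the same.
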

\begin{proof}
If $\mfg$ admits a torsion-free $H$-structure $P$, then there is some $\alpha_0\in A$ such that $P$ is of type $[U_{\alpha_0}]$ and so Theorem \ref{th:arbrtyHstrtfflat} (b) tells us that if we choose any $u\in P \subseteq \mathrm{Iso}(\bR^n,\mfg)$ and use $v:=u\circ T_{\alpha}^{-1} \in\mathrm{Iso}(\bR^n,\mfg)$ to identify $\bR^n$ with $\mfg$ and $\bR^{n-1}$ with $\mfu$, $f$ has to lie in $\cF_{T_{\alpha_0}\mfh T_{\alpha_0}^{-1}}$, which means exactly that $v^{-1}\circ f\circ v|_{\bR^{n-1}}\in \cF_{T_{\alpha} H T_{\alpha}^{-1}}$, i.e.
 $f\in v\circ \cF_{T_{\alpha_0} H T_{\alpha_0}^{-1}}\circ v^{-1}|_{\bR^{n-1}}$.
 
Conversely, if $f\in \bigcup_{\alpha\in A} v\circ \cF_{T_{\alpha} H T_{\alpha}^{-1}}\circ v^{-1}|_{\bR^{n-1}}$,
 then there exists some $\alpha_0\in A$ such that $f\in v\circ \cF_{T_{\alpha_0} H T_{\alpha_0}^{-1}}\circ v^{-1}|_{\bR^{n-1}}$, i.e. $v^{-1}\circ f\circ v|_{\bR^{n-1}}\in  \cF_{T_{\alpha_0} H T_{\alpha_0}^{-1}}$. Now $u:=v\circ T_{\alpha_0}$ is an adapted frame of the $H$-structure $P:=(v\circ T_{\alpha_0})\cdot H$ and $u\circ T_{\alpha_0}^{-1}=v$, so that, by Theorem \ref{th:arbrtyHstrtfflat} (b), $P$ is torsion-free.
\end{proof}
\section{Subalgebras commuting with an endomorphism}\label{sec:mfhcommutingwithendo}
In this section, we consider linear subalgebras $\mfh$ which commute with an endomorphism $A$ of $\bR^n$, i.e.
\begin{equation*}
\mfh\subseteq \mathfrak{gl}(A):=\left\{F\in \mathfrak{gl}(n,\bR)\left| AF=FA\right. \right\}.
\end{equation*}
\begin{remark}
If $A$ is an endomorphism of $\bR^n$, then a $\GL(A)$-structure $P$ on an $n$-dimensional manifold $M$ is equivalent to an endomorphism field $\mathcal{A}$ on $M$ which is pointwise isomorphic to $A$. In this case, a connection $\nabla$ is compatible with $P$ if and only if $\nabla \mathcal{A}=0$. Moreover, $P$ is flat if and only if there exist local coordinates around every point identifying $\mathcal{A}$ with $A$.

Properties of flat (usually called \emph{integrable}) endomorphism fields $\mathcal{A}$ as above have been studied intensively in the literature and it has been shown that flatness is always equivalent to torsion-freeness in this situation and that flatness is also equivalent to the vanishing of the \emph{Nijenhuis tensor} $N_{\mathcal{A}}$
\begin{equation*}
N_{\mathcal{A}}(X,Y)=[\cA(X),\cA(Y)]-\cA([\cA(X),Y]+[X,\cA(Y)])-\cA^2([X,Y])
\end{equation*}
of $\cA$ together with the integrability of certain subbundles defined using the nilpotent part of $\mathcal{A}$, cf., e.g. \cite{LL} or \cite{Tho}.
\end{remark}
Many $\GL(A)$-structures have special names and we recall four of them here:
\begin{definition}
Let $\mathcal{A}$ be an endomorphism field on an $n$-dimensional manifold $M$ pointwise isomorphic to $A\in \End(\bR^n)$. We say that $\mathcal{A}$ is
\begin{itemize}
\item an \emph{almost complex structure} if $A=J_0:=\diag(M_i,\ldots, M_i)$, where $M_i:=\left(\begin{smallmatrix}0 & -1 \\ 1 & 0 \end{smallmatrix}\right)$,
\item 
an \emph{almost product structure of signature $(p,n-p)$} if $A=P_0:=\diag(I_p,I_{n-p})$ for $p,n-p\geq 1$,
\item 
an \emph{almost para-complex structure} if $n=2m$ and $\mathcal{A}$ is an almost product structure of type $(m,m)$,
\item
and an \emph{almost tangent structure} if $n=2m$ and $A=T_0:=\left(\begin{smallmatrix} 0 & 0 \\ I_m & 0 \end{smallmatrix}\right)$.
\end{itemize}
We skip the word ``almost'' if the underlying $\GL(A)$-structure $P$ is torsion-free, which, as said above, is equivalent to $P$ being flat, and in all considered cases also to the vanishing of the associated Nijenhuis tensor, i.e. to $N_{\mathcal{A}}=0$.

Note that if $M=V$ is a vector space and the tensor-field $A$ is constant, i.e. $A_v=A_w$ for all $v,w\in V$, then the torsion-freeness of the underlying $\GL(A)$-structure is trivially fulfilled and we also skip the word ``almost'' then.
\end{definition}
\begin{remark}
Note that almost complex structures may also be characterised by $\mathcal{A}^2=-\id_{TM}$, almost product structures by $\mathcal{A}^2=\id_{TM}$ and $\mathcal{A}\neq \pm \id_{TM}$ and almost tangent structures by $n=2m$, $\mathcal{A}^2=0$ and $\ker(\mathcal{A})=\mathrm{im}(\cA)$.
\end{remark}
We come now back to arbitrary $H$-structures with $H\subseteq \GL(A)$ and show now that the map $\cT$ simplifies a lot provided that $\bR^{n-1}$ is \emph{not} $A$-invariant, a condition automatically satisfied if $A$ defines an almost complex structure. We also prove that if $\mfh$ is even $A$-invariant, then we obtain $\tilde{\mfk}_{\mfh}=\cF_{\mfh}$, and so then a special $H$-structure is torsion-free if and only if it is flat if and only if $f\in \tilde{\mfk}_{\mfh}$:
\begin{theorem}\label{th:commutingendo}
Let $A$ be an endomorphism of $\bR^n$ and let $\mfh$ be a Lie subalgebra of $\mathfrak{gl}(A)$. Assume that $\bR^{n-1}$ is not $A$-invariant. Then:
\begin{enumerate}[(a)]
	\item The map $\cT=\cT_{Av}:\cD_{\mfh}\rightarrow (\bR^{n-1})^*\otimes \bR^n$ is given by
	\begin{equation*}
		\cT=\nabla_{Av}\circ \iota- A\circ \nabla_{v}\circ \iota
	\end{equation*}
	for some $v\in \bR^{n-1}$ with $Av\notin \bR^{n-1}$.
	\item 
	One has
	\begin{equation*}
		\tilde{\mfk}_\mfh\subseteq \cF_{\mfh}\subseteq \tilde{\mfk}_{\mfh+A\mfh}:=\left\{F|_{\bR^{n-1}}\left| F\in \mfh+A\mfh,\quad F(\bR^{n-1})\subseteq \bR^{n-1}\right.\right\}
	\end{equation*}
\item
If $\mfh=A\mfh$, then $\tilde{\mfk}_\mfh=\cF_{\mfh}$. Consequently, in this case, a special $H$-structure $P$ on an almost Abelian Lie algebra $\mfg$ is left-invariantly flat if and only if it is torsion-free if and only if $f\in\tilde{\mfk}_{\mfh}$.
\end{enumerate}
\begin{proof}
Let $\nabla\in \cD_{\mfh}$ be given. As $\bR^{n-1}$ is not $A$-invariant, there is some $v\in \bR^{n-1}$ with $Av\notin \bR^{n-1}$. Then, using that $\nabla_{u_1} u_2=\nabla_{u_2} u_1$ for all $u_1,u_2\in \bR^{n-1}$ and that $\nabla_w$ comutes with $A$ for all $w\in \bR^n$, we get
\begin{equation*}
\cT(\nabla)=\nabla_{Av}\circ \iota-\nabla Av\circ \iota=\nabla_{Av}\circ \iota- A\circ \nabla_{\iota(\cdot)} v=\nabla_{Av}\circ \iota- A\circ \nabla_{v} \iota(\cdot)=\nabla_{Av}\circ \iota- A\circ \nabla_{v}\circ \iota,
\end{equation*}
i.e. (a) holds. Now if $\nabla\in \cT^{-1}(\End(\bR^{n-1}))$, we have $\nabla_{Av},\nabla_v\in \mfh$ and so $\nabla_{Av}\circ \iota- A\circ \nabla_{v}\circ \iota \in \mfh+A\mfh$, which implies that $\cT(\nabla)\in \tilde{\mfk}_{\mfh+A\mfh}$, i.e. $\cF_{\mfh}\subseteq \tilde{\mfk}_{\mfh+A\mfh}$, thus proving (b). Finally, (c) follows directly from (b).
\end{proof}
\end{theorem}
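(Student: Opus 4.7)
The plan is to push through the definitions using the two structural inputs at my disposal: the symmetry $\nabla_{u_1} u_2 = \nabla_{u_2} u_1$ for $u_1,u_2 \in \bR^{n-1}$ inherited from $\nabla \in \cD_\mfh$, and the commutation $\nabla_w \circ A = A \circ \nabla_w$ for $w \in \bR^n$ inherited from $\mfh \subseteq \mathfrak{gl}(A)$. Part (a) is the engine: first I use the hypothesis that $\bR^{n-1}$ is not $A$-invariant to pick some $v \in \bR^{n-1}$ with $Av \notin \bR^{n-1}$, so that $Av$ is a legitimate choice for the transversal vector in the definition of $\cT$. Then, for $u \in \bR^{n-1}$, I rewrite
\begin{equation*}
\cT_{Av}(\nabla)(u) = \nabla_{Av}\, u - \nabla_u(Av) = \nabla_{Av}\, u - A(\nabla_u v),
\end{equation*}
where the second equality uses the commutation of $\nabla_u$ with $A$. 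Applying the symmetry $\nabla_u v = \nabla_v u$, which is legal precisely because both $u,v \in \bR^{n-1}$, yields the stated formula.

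For part (b), the inclusion $\tilde{\mfk}_\mfh \subseteq \cF_\mfh$ is already in hand from Remark \ref{re:flat=tf}, so I only need $\cF_\mfh \subseteq \tilde{\mfk}_{\mfh + A\mfh}$. Given $F \in \cF_\mfh$, I pick $\nabla \in \ker(\cT_2)$ with $\cT_1(\nabla) = F$; using the formula from (a), the endomorphism $G := \nabla_{Av} - A \circ \nabla_v$ of $\bR^n$ lies in $\mfh + A\mfh$, and the condition $\cT_2(\nabla) = 0$ is exactly the statement that $G(\bR^{n-1}) \subseteq \bR^{n-1}$, with $G|_{\bR^{n-1}} = F$. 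Hence $F \in \tilde{\mfk}_{\mfh + A\mfh}$.

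Part (c) is then a one-liner: under the hypothesis $A\mfh = \mfh$ one has $\mfh + A\mfh = \mfh$, so $\tilde{\mfk}_{\mfh+A\mfh} = \tilde{\mfk}_\mfh$ and the inclusions in (b) collapse to equality. The trichotomy ``flat iff torsion-free iff $f \in \tilde{\mfk}_\mfh$'' for a special $H$-structure $P$ then follows directly from Proposition \ref{pro:specialHstrstf} combined with Lemma \ref{le:flatequivalent}: membership in $\tilde{\mfk}_\mfh$ delivers left-invariant flatness, flatness implies torsion-freeness, and torsion-freeness places $f$ in $\cF_\mfh = \tilde{\mfk}_\mfh$.

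There is no genuine obstacle; the key conceptual move is to realise that choosing the transversal vector to be $Av$ with $v \in \bR^{n-1}$ unlocks both structural inputs at once, the $\cD_\mfh$-symmetry (because the ``inner'' slot $v$ now sits in $\bR^{n-1}$) and the $\mfh$--$A$ commutation (because the ``outer'' slot is $Av$). Without the non-invariance hypothesis on $\bR^{n-1}$ this two-for-one trick is unavailable, which is precisely why that hypothesis is where the theorem's strength is concentrated.
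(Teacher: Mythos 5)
Your proposal is correct and follows essentially the same route as the paper: the same choice of transversal vector $Av$ with $v\in\bR^{n-1}$, the same two-step rewriting using the $A$-commutation and the symmetry of $\nabla$ on $\bR^{n-1}\times\bR^{n-1}$ for part (a), and the same deductions for (b) and (c). No gaps.
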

\subsection{Product structures}
In this subsection, we provide a characterisation of product structures on almost Abelian Lie algebras. In fact, we show that any almost Abelian Lie algebra admits product structures of arbitrary signature. 

In order to apply our set-up to the existence question of product structures of signature $(p,n-p)$ on almost Abelian Lie algebras, we need to determine first the orbits of the $\GL(P_0)$-action on $\mathrm{Grass}_{n-1}(\bR^n)$.
\begin{lemma}\label{le:orbitsproductstr}
The $\GL(P_0)$-action on $\mathrm{Grass}_{n-1}(\bR^n)$ has three orbits represented by the following three $(n-1)$-dimensional subspaces of $\bR^n$:
\begin{equation*}
\begin{split}
U_1&:=\bR^{n-1},\quad U_2:=\spa{e_1,\ldots,e_{p-1},e_{p+1},\ldots, e_n},\\ U_3&:=\spa{e_1,\ldots,e_{p-1},e_{p+1},\ldots,e_{n-1},e_p+e_n}
\end{split}
\end{equation*}
\end{lemma}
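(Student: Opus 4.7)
The plan is to reduce the orbit classification to a pair of dimensional invariants. Since $P_0$ has eigenspaces $V_+:=\spa{e_1,\ldots,e_p}$ and $V_-:=\spa{e_{p+1},\ldots,e_n}$ (interpreting the sign convention so that $P_0$ is a non-trivial involution), its centraliser is $\GL(P_0)=\GL(V_+)\times \GL(V_-)$, acting block-diagonally on $\bR^n=V_+\oplus V_-$. For a hyperplane $U\subseteq \bR^n$, the integers $d_\pm(U):=\dim(U\cap V_\pm)$ are manifestly $\GL(P_0)$-invariant, so it suffices to determine the possible values of the pair $(d_+(U),d_-(U))$ and show that $\GL(V_+)\times \GL(V_-)$ acts transitively on hyperplanes with a fixed such pair.

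Applying the Grassmann identity $\dim(U\cap V_\pm)=\dim U+\dim V_\pm-\dim(U+V_\pm)$ with $\dim U=n-1$ gives $d_+(U)\in\{p-1,p\}$ and $d_-(U)\in\{n-p-1,n-p\}$. The combination $(d_+,d_-)=(p,n-p)$ is excluded since it would force $U\supseteq V_+\oplus V_-=\bR^n$, so only three cases remain: \textbf{(A)} $V_+\subseteq U$; \textbf{(B)} $V_-\subseteq U$; \textbf{(C)} neither containment holds. A direct inspection shows that $U_1$, $U_2$, $U_3$ fall into cases (A), (B), (C) respectively, hence the three orbits are non-empty and pairwise distinct.

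It remains to prove transitivity within each case. In case (A), write $U=V_+\oplus W$ for a hyperplane $W\subseteq V_-$; by transitivity of $\GL(V_-)$ on hyperplanes of $V_-$, we can send $W$ to $\spa{e_{p+1},\ldots,e_{n-1}}$, i.e.\ $U$ to $U_1$. Case (B) is symmetric. In case (C), set $W_\pm:=U\cap V_\pm$ and pick any $x\in U\setminus(W_+\oplus W_-)$; decomposing $x=v_++v_-$ with $v_\pm\in V_\pm$, one checks that $v_\pm\notin W_\pm$, for otherwise $v_\mp=x-v_\pm$ would lie in $U\cap V_\mp=W_\mp$, contradicting $x\notin W_+\oplus W_-$. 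Now choose $g_\pm\in\GL(V_\pm)$ sending the pair $(W_+,v_+)$ to $(\spa{e_1,\ldots,e_{p-1}},e_p)$ and $(W_-,v_-)$ to $(\spa{e_{p+1},\ldots,e_{n-1}},e_n)$; then $(g_+\oplus g_-)(U)=U_3$, finishing the proof.

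The main subtlety is exactly this case (C): the vector $x$ is only well-defined modulo $W_+\oplus W_-$, and hence the lifts $v_\pm$ only modulo $W_\pm$. The key observation that makes the argument work is that the product structure of $\GL(P_0)=\GL(V_+)\times \GL(V_-)$ permits us to normalise the flags $(W_\pm,v_\pm)$ \emph{independently} in $V_+$ and in $V_-$, and any change of lift $v_\pm\mapsto v_\pm+w_\pm$ (with $w_\pm\in W_\pm$) can be absorbed by adjusting the corresponding $g_\pm$ on $V_\pm\setminus W_\pm$. Thus the ambiguity in the choice of $x$ does not obstruct the normalisation to the canonical form $U_3$.
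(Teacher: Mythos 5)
Your proof is correct and follows essentially the same strategy as the paper's: the invariants $d_\pm=\dim(U\cap V_\pm)$, the enumeration of the three possible value pairs, and case-by-case normalisation using the block-diagonal form of $\GL(P_0)$. The only cosmetic difference is that in the third case the paper first normalises the two intersections and then rescales the residual vector $\lambda e_p+\mu e_n$, whereas you normalise the flags $(W_\pm,v_\pm)$ in one step; these are the same argument.
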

\begin{proof}
Let $U$ be an $(n-1)$-dimensional subspace of $\bR^n$. Note that the $\GL(P_0)$-action preserves the dimensions $d_{\pm}=\dim(U\cap (\bR^n)_{\pm})$, where $\bR^n_{\pm}$ is the $\pm$-eigenspace of $P_0$. Consequently, $U_1$, $U_2$ and $U_3$ define different orbits. Moreover, we know that $d_+\in \{p-1,p\}$, $d_-\in \{n-p-1,n-p\}$ and $d_+ + d_-\leq n-1$. Thus, $(d_+,d_-)\in \{(p,n-p-1),(p-1,n-p), (p-1,n-p-1)\}$ and we only need to show that, depending on the numbers $(d_+,d_-)$, the subspace $U_1$, $U_2$ or $U_3$, respectively, lie in the $\GL(P_0)$-orbit of $U$. For this goal, observe that
\begin{equation*}
\GL(P_0)=\left\{\left. \diag(A,B) \right|A\in \GL(p,\bR),\ B\in \GL(n-p,\bR)\right\}.
\end{equation*}
Now we consider the different possible values for $(d_+,d_-)$:

Assume first that $(d_+,d_-)=(p,n-p-1)$. Then $\bR^n_+=\spa{e_1,\ldots,e_p}\subseteq U$ and we may use the $\GL(P_0)$-action to bring the $n-p-1$-dimensional subspace $U\cap \bR^n_-$ of $\bR^n_-=\spa{e_{p+1},\ldots,e_n}$ to the space $\spa{e_{p+1},\ldots,e_{n-1}}$ and so $U_1$ is, in this case in the same orbit as $U$.

If $(d_+,d_-)=(p-1,n-p)$, then we may argue similarly to show that $U_2=\spa{e_2,\ldots,e_n}$ is in the same orbit as $U$.

Finally, if $(d_+,d_-)=(p-1,n-p-1)$, we may use the $\GL(P_0)$-action to assume that $U\cap \bR^{n}_{+}=\spa{e_1,\ldots,e_{p-1}}$ and $U\cap \bR^{n}_{-}=\spa{e_{p+1},\ldots,e_{n-1}}$. Thus,
$U=\spa{e_1,\ldots,e_{p-1},e_{p+1},\ldots,e_{n-1},\lambda e_p+\mu e_n}$ for certain $\lambda,\mu\in \bR^*$ and we may apply $A=\diag\left(I_{p-1},\tfrac{1}{\lambda},I_{n-p-1},\tfrac{1}{\mu}\right)\in \GL(P_0)$ to bring $U$ to $U_3$.
\end{proof}
We now obtain the following characterisation of product structures on almost Abelian Lie algebras:
\begin{theorem}\label{th:productstr}
	Let $\mfg$ be an $n$-dimensional almost Abelian Lie algebra. Then $\mfg$ admits a product structure of signature $(p,q)$ ($p+q=n$) and of type $[U_1]$, $[U_2]$ or $[U_3]$, respectively, if and only if there exists a basis $(X_1,\ldots,X_{n-1})$ of $\mfu$ such that with respect to that basis $f$ equals
	\begin{equation*}
		\begin{pmatrix} A_1 & 0 \\ B_1 & C_1 \end{pmatrix},\quad \begin{pmatrix} A_2 & B_2 \\ 0 & C_2 \end{pmatrix}\quad \mathrm{or}\quad	\begin{pmatrix} A_3 & 0 & u_3 \\
			0 & B_3 & w_3\\
			0& 0 & a_3 
		\end{pmatrix}
	\end{equation*}
	respectively, where $A_1\in \bR^{p\times p}$, $B_1\in \bR^{(q-1)\times p}$, $C_1\in \bR^{(q-1)\times (q-1)}$, $A_2\in \bR^{(p-1)\times (p-1)}$, $B_2\in \bR^{(p-1)\times q}$, $C_2\in \bR^{q\times q}$, $A_3\in \bR^{(p-1)\times (p-1)}$, $B_3\in \bR^{(q-1)\times (q-1)}$, $u_3\in \bR^{p-1}$, $w_3\in \bR^{q-1}$ and $a_3\in \bR$ are arbitrary. In these cases, $(X_1,\ldots,X_{n-1}, X)$,\linebreak $(X_1,\ldots,X_{p-1},X,X_{p+1},\ldots,X_{n-1})$ or $(X_1,\ldots,X_{p-1},X_{p+1},\ldots,X_{n-1},X_p+X)$,
	respectively, is an adapted basis for a product structure of signature $(p,n-p)$ and type $[U_1]$, $[U_2]$ or $[U_3]$, respectively.
\end{theorem}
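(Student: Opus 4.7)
The strategy is to apply Theorem \ref{th:arbrtyHstrtfflat}(b) together with the classification of $\GL(P_0)$-orbits on $\Gr_{n-1}(\bR^n)$ given in Lemma \ref{le:orbitsproductstr}. For the actual computations, however, the most transparent approach is to use the well-known equivalent characterization of torsion-free product structures: since a product structure $\mathcal{P}$ on $\mfg$ is torsion-free if and only if its two eigendistributions are Frobenius-integrable, and since on a Lie algebra this translates to the eigenspaces $\mfg^+,\mfg^-$ being Lie subalgebras, the question becomes purely algebraic. The three types $[U_1]$, $[U_2]$, $[U_3]$ correspond precisely to the three possible positions of the codimension-one abelian ideal $\mfu$ relative to the eigenspace decomposition $\mfg=\mfg^+\oplus\mfg^-$.

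For each type I would fix an adapted basis $(X_1,\ldots,X_n)$ with $\mfg^+=\spa{X_1,\ldots,X_p}$ and $\mfg^-=\spa{X_{p+1},\ldots,X_n}$, and exploit the abelianness of $\mfu$ to reduce the subalgebra conditions to linear constraints on $f=\ad(X)|_\mfu$ for a suitable $X\in\mfg\setminus\mfu$. In type $[U_1]$, $X_1,\ldots,X_{n-1}\in\mfu$ and $X:=X_n\notin\mfu$, so $\mfg^+\subseteq\mfu$ is automatically abelian and the only condition is $f(\spa{X_{p+1},\ldots,X_{n-1}})\subseteq\spa{X_{p+1},\ldots,X_{n-1}}$, which gives the form $\begin{pmatrix}A_1 & 0 \\ B_1 & C_1\end{pmatrix}$. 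In type $[U_2]$ the roles of $\mfg^\pm$ are symmetric and one obtains by the same argument the form $\begin{pmatrix}A_2 & B_2 \\ 0 & C_2\end{pmatrix}$. In type $[U_3]$, neither $X_p$ nor $X_n$ lies in $\mfu$ but $X_p+X_n$ does, so taking $X:=X_p$ one has $\ad(X_n)|_\mfu=-f$, and both subalgebra conditions now yield substantive constraints, which combine with respect to the basis $(X_1,\ldots,X_{p-1}, X_{p+1},\ldots,X_{n-1}, X_p+X_n)$ of $\mfu$ into the claimed block-triangular form with unconstrained last column.

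The converse in each case is a direct verification: starting from $f$ in the stated block form, defining $\mathcal{P}$ via the explicit adapted basis listed in the theorem makes both $\mfg^\pm$ subalgebras by reversing the above translation, and the type of the resulting structure is read off from the preimage of $\mfu$ under the adapted frame. The main technical point is type $[U_3]$: tracking how $f$, now viewed as the matrix of $\ad(X_p)|_\mfu$ with respect to the basis of $\mfu$ containing $X_p+X_n$, encodes the two simultaneous eigenspace conditions requires careful bookkeeping, with the key observation being that $[X_p,X_n]=f(X_p+X_n)$ is subject to no subalgebra constraint, which is what makes the last column of the matrix arbitrary.
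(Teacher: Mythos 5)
Your proposal is correct, but it follows a genuinely different route from the paper. The paper stays entirely inside its connection-theoretic framework: for type $[U_3]$ it observes that $U_3$ is not $P_0$-invariant and invokes Theorem \ref{th:commutingendo}~(c) to get $\cF_{T_3\mfh T_3^{-1}}=\tilde{\mfk}_{T_3\mfh T_3^{-1}}$, while for type $[U_1]$ it computes $\cD_{\mfh}$ and the map $\cT$ explicitly and exhibits, for each admissible $f$, a concrete $\nabla\in\ker(\cT_2)$ with $\cT_1(\nabla)=f$; type $[U_2]$ is then deduced from $[U_1]$ via $P\mapsto -P$. You instead replace the torsion-free condition by the classical equivalent one that both eigendistributions be involutive, i.e.\ that $\mfg^{\pm}$ be Lie subalgebras, and then read off the block form of $f$ from the position of $\mfu$ relative to $\mfg^{+}\oplus\mfg^{-}$ (which, via the invariants $d_{\pm}$ of Lemma \ref{le:orbitsproductstr}, is exactly what distinguishes the three types). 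Your case analysis is sound; in particular your treatment of $[U_3]$ with $X=X_p$, $\ad(X_n)|_{\mfu}=-f$, and the observation that the cross-bracket $[X_p,X_n]=f(X_p+X_n)$ is unconstrained correctly accounts for the arbitrary last column. The trade-off: your argument is shorter and more transparent for this particular $H$, but it leans on the external fact (only quoted in the paper, after the definition of $\mathfrak{gl}(A)$, from \cite{LL},\cite{Tho}) that for almost product structures torsion-freeness is equivalent to integrability of the two eigendistributions --- the direction ``integrable $\Rightarrow$ torsion-free'' still requires producing a compatible torsion-free connection, which is precisely what the paper's explicit $\nabla$ does. The paper's computation also has the side benefit of exhibiting $\cF_{\mfh}\neq\tilde{\mfk}_{\mfh}$ for type $[U_1]$ (used later in Example \ref{ex:tf=flatbutbigger}), which your argument does not produce.
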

\begin{proof}
For notational simplicity, we set $H:=\GL(P_0)$.

We start by considering the case that $P$ has type $[U_3]$ noting that this is the only case for which $U_j$ is not $P_0$-invariant. In this case, we define $T_3\in \End(\bR^n)$ by $T_3(e_i):=e_i$ for $i=1,\ldots,p-1,p+1,\ldots,n$ and $T_3(e_p+e_n):=e_p$ and note that then $T_3(U_3)=\bR^{n-1}$. Then, with respect to the basis 
\begin{equation*}
(e_1,\ldots,e_{p-1},e_{p+1},\ldots,e_{n-1},e_p+e_n,e_n),
\end{equation*}
the elements in $\mfh$ are exactly those $n\times n$-matrices which are of the form
\begin{equation*}
\begin{pmatrix} A_3 & 0 & u_3 &  0\\
	            0 & B_3 & w_3 & w_3\\
	            v_3^T & 0 & a_3 & 0 \\
	            -v_3^T & x_3^T & b_3-a_3 & b_3
\end{pmatrix}	            
\end{equation*}
for certain $A_3\in \bR^{(p-1)\times (p-1)}$, $u_3,v_3\in \bR^{p-1}$, $B_3\in \bR^{(q-1)\times (q-1)}$,  $w_3,x_3\in \bR^{q-1}$ and $a_3, b_3\in \bR$. Hence, using Remark \ref{re:nonspecial}, we see that the elements in $\tilde{\mfk}_{T_3\mfh T_3^{-1}}$ are of the form
\begin{equation*}
	\begin{pmatrix} A_3 & 0 & u_3 \\
		0 & B_3 & w_3\\
		0& 0 & a_3 
	\end{pmatrix}	            
\end{equation*}
with respect to the basis $(e_1,\ldots,e_{p-1},e_{p+1},\ldots,e_{n-1},e_p+e_n)$ of $U$ and so Theorem \ref{th:commutingendo} (c) yields the result in that case.

Next, we look at type $[U_1]$ and have to determine the space $\cF_{T_1 \mfh T_1^{-1}}=\cF_{\mfh}$ here. So let $\nabla\in \cD_{\mfh}$ be given. Since $\nabla_{e_n}\in \mfh$, we have
\begin{equation*}
	\nabla_{e_n}=\begin{pmatrix} A & 0 & 0 \\
		0 & \tilde{C} & v \\ 
		0 & w^t & b
	\end{pmatrix}
\end{equation*}
for certain $A\in \bR^{p\times p}$, $\tilde{C}\in \bR^{(q-1)\times (q-1)}$, $v,w\in \bR^{q-1}$, $b\in \bR$.
Next, let $Y\in \bR^n$ be arbitrary. Since $\nabla_Y$ commutes with $P_0$, $\nabla_Y$ preserves both $\bR^n_+$ and $\bR^n_-$ and so $\nabla_Y e_n\in \bR^n_-$. Consequently,
\begin{equation*}
	\nabla e_n=\begin{pmatrix}  0 & 0 & 0 \\
		\tilde{B} & \hat{B} & u \\ 
		x^t & y^t & c
	\end{pmatrix}
\end{equation*}
for certain $\tilde{B}\in \bR^{(q-1)\times p}$, $\hat{B}\in \bR^{(q-1)\times (q-1)}$, $x\in \bR^p$, $y,u\in \bR^{q-1}$, $c\in \bR$. Hence,
\begin{equation*}
\cT(\nabla)=
	\begin{pmatrix}
	 A & 0 \\
	 \tilde{B} & \tilde{C}-\hat{B} \\
	 x^t & w^t-y^t
\end{pmatrix},
\end{equation*}
so that
\begin{equation*}
\cF_{\mfh}\subseteq \left\{\left.\begin{pmatrix} A_1 & 0 \\ B_1 & C_1 \end{pmatrix}\right|A_1\in \bR^{p\times p}, \, B_1\in \bR^{(q-1)\times p}, \, C_1\in \bR^{(q-1)\times (q-1)} \right\}.
\end{equation*}
Conversely, if
\begin{equation*}
F:=\begin{pmatrix} A_1 & 0 \\ B_1 & C_1 \end{pmatrix}
\end{equation*}
with $A_1\in \bR^{p\times p}$, $B_1\in \bR^{(n-p-1)\times p}$ and $C_1\in \bR^{(n-p-1)\times (n-p-1)}$ is given,
we define $\nabla\in (\bR^n)^*\otimes \mfh$ by $\nabla_u v:=0$ for all $u\in \bR^{n-1}$, 
\begin{equation*}
\nabla_{e_i} e_n:=\left(\begin{smallmatrix}  0 \\ -B_1 e_i \\ 0 \end{smallmatrix}\right),\quad 
\nabla_{e_j} e_n:=\left(\begin{smallmatrix}  0 \\ -C_1 e_{j-p} \\ 0 \end{smallmatrix}\right),\quad
\nabla_{e_n} e_i:=\left(\begin{smallmatrix}  A_1 e_i \\ 0  \\ 0 \end{smallmatrix}\right),\quad \nabla_{e_n} e_j:=0,
\end{equation*}
for $i=1,\ldots,p$, $j=p+1,\ldots,n-1$ and $\nabla_{e_n} e_n:=0$. Then $\nabla\in \cD_{\mfh}$ and
\begin{equation*}
\cT(\nabla)=F\in \End(\bR^{n-1})
\end{equation*}
which proves the assertion in this case.

Finally, the assertion for type $[U_2]$ follows directly from the characterisation for type $[U_1]$ since $P$ is a product structure of signature $(p,q)$ and type $[U_2]$ if and only if $-P$ is a product structure of signature $(q,p)$ and type $[U_1]$.
\end{proof}
As an immediate corollary of Theorem \ref{th:productstr}, we get that any almost Abelian Lie algebra admits a product structure:
\begin{corollary}\label{co:alwaysproduct}
	An almost Abelian Lie algebra $\mfg$ admits product structure $P$ of any possible signature $(p,q)$ with $\mfu$ being $P$-invariant. In particular, any even-dimensional almost Abelian Lie algebra $\mfg$ admits a para-complex structure $P$ with $\mfu$ being $P$-invariant.
\end{corollary}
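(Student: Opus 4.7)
The plan is to deduce the corollary directly from Theorem \ref{th:productstr} by exhibiting, for every signature $(p,q)$ with $p+q=n$ and $p,q\geq 1$, either a $p$-dimensional or a $(p-1)$-dimensional $f$-invariant subspace of $\mfu$. Indeed, if $f$ admits a $p$-dimensional invariant subspace $W$, I would choose a basis $(X_1,\ldots,X_p)$ of $W$, complete it to a basis $(X_1,\ldots,X_{n-1})$ of $\mfu$, and observe that with respect to this basis $f$ takes the block form required by Theorem \ref{th:productstr} for type $[U_1]$, yielding a product structure of signature $(p,q)$ and type $[U_1]$. If instead $f$ admits only a $(p-1)$-dimensional invariant subspace (the case $p=1$ being trivial via the zero subspace), the analogous construction puts $f$ into the block form for type $[U_2]$ and provides a product structure of type $[U_2]$. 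In either case $U_j$ (for $j=1$ or $2$) is $P_0$-invariant, so the resulting $P$ satisfies $P(\mfu)=\mfu$.

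The remaining key step is to show that, for every $p\in\{1,\ldots,n-1\}$, at least one of $p-1$ and $p$ lies in the set
\[
S:=\{\dim W \mid W\subseteq\mfu\ \text{is an $f$-invariant subspace}\}.
\]
For this I would invoke the elementary fact that every endomorphism of a non-zero real vector space admits an invariant subspace of dimension $1$ or $2$: a real eigenvector produces a $1$-dimensional invariant subspace, whereas the real span of the real and imaginary parts of a complex eigenvector associated to a non-real eigenvalue gives a $2$-dimensional one. Applying this fact repeatedly to the induced endomorphisms on successive quotients of $\mfu$ by $f$-invariant subspaces, I would conclude that $S$ contains $0$ and $n-1$ and that consecutive elements of $S$ differ by at most $2$. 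Hence if both $p-1\notin S$ and $p\notin S$ one would obtain a gap in $S$ of size at least $3$, a contradiction.

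The para-complex part of the statement then follows by specialising to $n=2m$ and $p=q=m$, producing a product structure of signature $(m,m)$, i.e.\ a para-complex structure, with $\mfu$ invariant. I do not foresee any real technical obstacle beyond the gap bound on $S$, which is entirely elementary.
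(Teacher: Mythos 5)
Your reduction of the corollary to the statement ``for every $p$, at least one of $p-1$ and $p$ lies in $S$'' rests on a dictionary between invariant subspaces and the block forms of Theorem \ref{th:productstr} that is wrong in the type $[U_1]$ case. If $W=\spa{X_1,\ldots,X_p}$ is $f$-invariant and you complete its basis to a basis of $\mfu$, the matrix of $f$ is \emph{upper} block triangular, $\left(\begin{smallmatrix} A & B \\ 0 & C\end{smallmatrix}\right)$ with $A\in\bR^{p\times p}$; this is the type $[U_2]$ shape for signature $(p+1,q-1)$, not the type $[U_1]$ shape for signature $(p,q)$. The type $[U_1]$ form $\left(\begin{smallmatrix} A_1 & 0 \\ B_1 & C_1\end{smallmatrix}\right)$ with $A_1\in\bR^{p\times p}$ requires instead that the span of the \emph{last} $q-1$ basis vectors be $f$-invariant, i.e.\ that $q-1=n-1-p\in S$. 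So the condition the theorem actually asks for is ``$p-1\in S$ or $n-1-p\in S$'', whereas your gap bound delivers ``$p-1\in S$ or $p\in S$''. These are equivalent, but only via the further observation that $S$ is symmetric under $k\mapsto n-1-k$ (annihilators of invariant subspaces are invariant under the dual map, and $f$ is similar to its transpose); you neither state nor prove this, and since $p-1$ and $n-1-p$ are in general not consecutive integers, your gap bound cannot be applied to them directly. As written, the argument therefore does not close.

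The remaining ingredients are sound. Your chain of $f$-invariant subspaces $0=d_0<d_1<\cdots<d_k=n-1$ with jumps of size at most $2$, built from real eigenvectors and the real $2$-planes of non-real eigenvectors on successive quotients, correctly shows that no two consecutive integers are both missing from $S$; this is an elementary repackaging of what the paper extracts from the real Jordan normal form with the complex-pair blocks placed first, where the parity of $p$ decides whether to cut at position $p$ (lower block triangular, type $[U_1]$) or at position $p-1$ (upper block triangular, type $[U_2]$). Once you repair the dictionary --- either by inserting the symmetry $S=(n-1)-S$, or by aiming directly for a $(q-1)$-dimensional invariant subspace placed last in the basis for the type $[U_1]$ case --- your proof goes through and is essentially equivalent to the paper's.
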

\begin{proof}
Consider first the case that $p$ is even. In this case, we use the lower block triangular real Jordan normal form of $f$. Putting first at the upper left side, the Jordan blocks corresponding to pairs of non-real complex-conjugated eigenvalues and then the ones corresponding to real eigenvalues, the Jordan normal form of $f$ is of the form
\begin{equation*}
\begin{pmatrix} A_1 & 0 \\ B_1 & C_1 \end{pmatrix}
\end{equation*}
for certain $A_1\in \bR^{p\times p}$, $B_1\in \bR^{(n-p-1)\times (n-p)}$, $C_1\in \bR^{(n-p-1)\times (n-p-1)}$ and so $\mfg$ admits a product structure of signature $(p,n-p)$ and type $[U_1]$, and so with $P$-invariant $\mfu$, by Theorem \ref{th:productstr}.

Next, consider the case that $p-1$ is even. In this case, we use the upper block triangular real Jordan normal form of $f$. Putting again first at the upper left, the Jordan blocks corresponding to pairs of non-real complex-conjugated eigenvalues and then the Jordan blocks corresponding to real eigenvalues, the Jordan normal form of $f$ is of the form
 \begin{equation*}
	\begin{pmatrix} A_2 & B_2 \\ 0 & C_2 \end{pmatrix}
\end{equation*}
for certain $A_2\in \bR^{(p-1)\times (p-1)}$, $B_2\in \bR^{(p-1)\times (n-p)}$, $C_2\in \bR^{(n-p)\times (n-p)}$. Thus, in this case, Theorem \ref{th:productstr} yields the existence of a product structure $P$ of signature $(p,n-p)$ of type $[U_2]$, and so with $P$-invariant $\mfu$.
\end{proof}
\begin{remark}
In \cite{ABDO}, the authors classified all four-dimensional solvable Lie algebras admitting a para-complex structure. In particular, they showed that all four-dimensional almost Abelian Lie algebras admit a para-complex structure. In this sense, Corollary \ref{co:alwaysproduct} extends the result in the almost Abelian case to any (even) dimension.
\end{remark}
We note that almost product structures provide examples of $H$-structures on almost Abelian Lie algebras for which torsion-freeness is the same as flatness but $\cF_{\mfh}\neq \tilde{\mfk}_{\mfh}$:
\begin{example}\label{ex:tf=flatbutbigger}
Since in general an almost product structure is flat if and only if it is torsion-free, we know that a special $\GL(P_0)$-structure of signature $(p,n-p)$ on an almost Abelian Lie algebra is torsion-free if and only if it is flat. However, in this case, we have
\begin{equation*}
\begin{split}
\tilde{\mfk}_{\mfh}&=\left\{\left.\begin{pmatrix} A & 0 \\ 0 & C\end{pmatrix}\right|A\in \GL(p,\bR),\ C\in \GL(n-p-1,\bR)\right\}\\
&\neq \left\{\left.\begin{pmatrix} A & 0 \\ B & C \end{pmatrix}\right|A\in \GL(p,\bR),\ B\in \bR^{(n-p-1)\times p},\ C\in \GL(n-p-1,\bR)\right\}=\cF_{\mfh}
\end{split}
\end{equation*}
showing that the condition $\tilde{\mfk}_{\mfh}=\cF_{\mfh}$ is not necessary for torsion-freeness being equivalent to flatness.
\end{example}
\subsection{Tangent structures}
We start with the determination of all $\GL(T_0)$-orbits in $\mathrm{Grass}_{2m-1}(\bR^{2m})$:
\begin{lemma}\label{le:orbitstangentstrr}
	The $\GL(T_0)$-action on $\mathrm{Grass}_{2m-1}(\bR^{2m})$ has two orbits represented by the following two $(2m-1)$-dimensional subspaces of $\bR^{2m-1}$:
	\begin{equation*}
			U_1:=\bR^{2m-1},\quad U_2:=\spa{e_1,\ldots,e_{m-1},e_{m+1},\ldots,e_{2m}}
	\end{equation*}
\end{lemma}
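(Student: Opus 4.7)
The plan is to mirror the strategy used in Lemma \ref{le:orbitsproductstr} for product structures: identify a $\GL(T_0)$-invariant numerical quantity attached to a hyperplane $U$, compute it on the candidate representatives, and then realize the $\GL(T_0)$-action explicitly to bring every hyperplane with a given invariant to the relevant $U_j$.

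First I would set $K := \ker(T_0) = \mathrm{im}(T_0) = \spa{e_{m+1},\ldots,e_{2m}}$. Since any $A \in \GL(T_0)$ commutes with $T_0$, both $\ker(T_0)$ and $\mathrm{im}(T_0)$ are $A$-invariant, hence $K$ is preserved under the $\GL(T_0)$-action. Consequently, $d(U) := \dim(U \cap K)$ is a $\GL(T_0)$-invariant of any subspace $U$. For a hyperplane $U$, the dimension formula gives $d(U) \in \{m-1, m\}$, and a direct check shows $d(U_1) = m-1$ and $d(U_2) = m$, so $U_1$ and $U_2$ represent distinct orbits.

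Next I would make $\GL(T_0)$ explicit: writing a general matrix in $m + m$ blocks, the condition $A T_0 = T_0 A$ forces
\begin{equation*}
	\GL(T_0) = \left\{\left.\begin{pmatrix} B & 0 \\ C & B \end{pmatrix}\right| B \in \GL(m,\bR),\ C \in \bR^{m\times m}\right\}.
\end{equation*}
Now it suffices to show that any hyperplane $U$ with $d(U) = m$ lies in the $\GL(T_0)$-orbit of $U_2$, and any hyperplane with $d(U) = m-1$ lies in the orbit of $U_1$.

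If $d(U) = m$, i.e. $K \subseteq U$, then $U = V \oplus K$ for some hyperplane $V \subseteq \spa{e_1,\ldots,e_m}$. Taking $C=0$ and choosing $B \in \GL(m,\bR)$ so that $B(V) = \spa{e_1,\ldots,e_{m-1}}$ gives $A(U) = \spa{e_1,\ldots,e_{m-1}} \oplus K = U_2$ (noting $A|_K = B$ is invertible on $K$, so $A(K) = K$). If instead $d(U) = m-1$, then $U \cap K$ is an $(m-1)$-dimensional subspace of $K$, and using a suitable block $B$ acting on $K$ I may assume $U \cap K = \spa{e_{m+1},\ldots,e_{2m-1}}$. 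Since $U \not\supseteq K$, the projection $U \to \bR^{2m}/K$ is onto, so modulo $U \cap K$ the hyperplane $U$ is the graph of some linear map, i.e.\
\begin{equation*}
	U = \spa{e_{m+1},\ldots,e_{2m-1}} + \spa{e_1 + c_1 e_{2m},\ldots, e_m + c_m e_{2m}}
\end{equation*}
for certain $c_1,\ldots, c_m \in \bR$. Taking $B = I_m$ and $C \in \bR^{m\times m}$ with $C e_i = -c_i e_m$ (so that $A e_i = e_i - c_i e_{2m}$ while $A|_K = \id_K$) gives $A(e_i + c_i e_{2m}) = e_i$ and $A$ fixes $\spa{e_{m+1},\ldots,e_{2m-1}}$, whence $A(U) = U_1$.

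The main obstacle is the second case: one has to keep track of how a general hyperplane with $d(U)=m-1$ decomposes relative to $K$ and then cook up the matching off-diagonal block $C$. Once the normal form above is written down, however, the choice of $C$ is essentially forced and the verification is routine.
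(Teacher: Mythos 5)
Your proposal is correct and follows essentially the same route as the paper: the invariant $\dim(U\cap\ker(T_0))\in\{m-1,m\}$, the explicit block description of $\GL(T_0)$, and the case-by-case normalisation using the diagonal block to move a complement and the off-diagonal block to kill the graph part. The only cosmetic difference is that in the case $d(U)=m-1$ you normalise $U\cap\ker(T_0)$ first and then write $U$ as a graph, whereas the paper first moves a complement $W$ of $U\cap\ker(T_0)$ to $\spa{e_1,\ldots,e_m}$ and then adjusts $U\cap\ker(T_0)$; both orders work.
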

\begin{proof}
Let $U$ be an $(2m-1)$-dimensional subspace of $\bR^{2m}$ and observe that $d_U:=\dim(\ker(T_0)\cap U)\in \{m-1,m\}$ is an invariant of the $\GL(T_0)$-action and that $d_{U_1}=m-1$ while $d_{U_2}=m$. Moreover, observe that
\begin{equation*}
\GL(T_0)=\left\{\left.\begin{pmatrix} A & 0 \\ B & A\end{pmatrix}\right|A\in \GL(m,\bR),\quad B\in \bR^{m\times m}\right\}.
\end{equation*}
Now consider first the case $d_U=m-1$ and let $W$ be an $m$-dimensional complementary subspace of $\ker(T_0)\cap U$ in $U$. As $W\cap \ker(T_0)=\{0\}$, we may use the $\GL(T_0)$-action to bring $W$ to the subspace $\spa{e_1,\ldots,e_m}$ and may then use an element in $\GL(T_0)$ with $B=0$ to bring $U\cap \ker(T_0)$ to $\spa{e_{m+1},\ldots,e_{m-1}}$. Hence, then $U=\spa{e_1,\ldots,e_{2m-1}}=\bR^{2m-1}=U_1$.

If $d_U=m$, we have $\ker(T_0)=\ker(T_0)\cap U$ and there exists an $(m-1)$-dimensional complementary subspace $W$ of $\ker(T_0)$ in $U$. But then we may use the action of $\GL(T_0)$ to bring $W$ to the subspace $\spa{e_1,\ldots,e_{m-1}}$ and so have $U=\spa{e_1,\ldots,e_{m-1},e_{m+1},\ldots,e_{2m}}=U_2$.
\end{proof}
\begin{theorem}\label{th:tangentstr}
	Let $\mfg$ be an $n$-dimensional almost Abelian Lie algebra and $X\in \mfg\backslash \mfu$ arbitrary. Then $\mfg$ admits a tangent structure of type $[U_1]$ or $[U_2]$, respectively, if and only if there exists a basis $(X_1,\ldots,X_{n-1})$ of $\mfu$ such that with respect to that basis $f$ equals
	\begin{equation*}
		\begin{pmatrix} A_1 & v_1 & 0 \\ 0 & a_1 & 0 \\ B_1 & w_1 & A_1 \end{pmatrix}\ \mathrm{ or } \ \begin{pmatrix}
			A_2 &  0 & v_2 \\
			B_2 &  C_2 & w_2
		\end{pmatrix}
	\end{equation*}
	respectively, where $A_1\in \bR^{(m-1)\times (m-1)}$, $B_1\in \bR^{(m-1)\times (m-1)}$, $v_1,w_1\in \bR^{m-1}$, $a_1\in \bR$, $A_2\in \bR^{(m-1)\times (m-1)}$, $B_2,C_2\in \bR^{m\times (m-1)}$, $v_2\in \bR^{m-1}$, $w_2\in \bR^m$ are arbitrary. In these cases, $(X_1,\ldots,X_{2m-1}, X)$ or $(X_1,\ldots,X_{m-1},X,X_{m+1},\ldots,X_{2m-1})$,
	respectively, is an adapted basis for a tangent structure of type $[U_1]$ or $[U_2]$, respectively.
\end{theorem}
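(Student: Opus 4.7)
The plan is to apply Theorem~\ref{th:arbrtyHstrtfflat}\,(b) in each of the two cases produced by Lemma~\ref{le:orbitstangentstrr}, namely the orbits represented by $U_1=\bR^{2m-1}$ and $U_2=\spa{e_1,\ldots,e_{m-1},e_{m+1},\ldots,e_{2m}}$, and then to translate the resulting description of $\cF_{T\mfh T^{-1}}$ into a matrix form for $f$ in the claimed basis of $\mfu$. Throughout, $\mfh=\mathfrak{gl}(T_0)$ consists of matrices $\begin{pmatrix}A&0\\B&A\end{pmatrix}$ with $A,B\in\bR^{m\times m}$.

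For type $[U_1]$, $\bR^{2m-1}$ is not $T_0$-invariant since $T_0e_m=e_{2m}$, so Theorem~\ref{th:commutingendo} is available. Moreover $T_0\mfh\subseteq\mfh$: for any $F\in\mfh$, $T_0(T_0F)=T_0^2F=0=(T_0F)T_0$, so $T_0F$ commutes with $T_0$. Hence $\mfh+T_0\mfh=\mfh$ and part~(b) of Theorem~\ref{th:commutingendo} gives $\cF_\mfh=\tilde{\mfk}_\mfh$. Writing $F\in\mfh$ in the form above, preservation of $\bR^{2m-1}$ amounts to $A_{m,j}=0$ for $j<m$ together with $B_{m,j}=0$ for all $j$; restricting $F$ to $\bR^{2m-1}$ in the basis $(e_1,\ldots,e_{m-1},e_m,e_{m+1},\ldots,e_{2m-1})$ then yields exactly the claimed block form, with $A_1$ the upper-left $(m-1)\times(m-1)$ block of $A$, with $v_1,a_1$ the top $m-1$ entries and the $(m,m)$-entry of the $m$-th column of $A$, and with $B_1,w_1$ the top $m-1$ rows of $B$ split analogously.

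For type $[U_2]$, the subspace $U_2$ is $T_0$-invariant ($T_0e_i=e_{m+i}\in U_2$ for $i<m$, and $T_0$ kills $\spa{e_{m+1},\ldots,e_{2m}}$), so $\bR^{2m-1}$ remains invariant under any conjugation sending $U_2$ to $\bR^{2m-1}$ and Theorem~\ref{th:commutingendo} no longer applies; a direct calculation is required. Using Remark~\ref{re:nonspecial}, I choose $v=e_m\notin U_2$ and parametrise $\nabla\in\cD$ by linear maps $A,B\colon\bR^{2m}\to\bR^{m\times m}$ via $\nabla_w=\begin{pmatrix}A(w)&0\\B(w)&A(w)\end{pmatrix}$. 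Writing $u=(p,q)\in V_0\oplus\bR^m\cong U_2$ with $V_0:=\spa{e_1,\ldots,e_{m-1}}\subset\bR^m$, symmetry on $U_2\times U_2$ unpacks into
\begin{align*}
A(u_1)p_2&=A(u_2)p_1,\\
B(u_1)p_2+A(u_1)q_2&=B(u_2)p_1+A(u_2)q_1,
\end{align*}
for all $u_k=(p_k,q_k)\in U_2$.

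The crux is to show that, after refining $U_2=V_0\oplus\spa{e_{m+1},\ldots,e_{2m-1}}\oplus\spa{e_{2m}}$, the top $V_0$-part of $\cT_1(\nabla)(u)$ has no dependence on the middle summand $\tilde u\in\spa{e_{m+1},\ldots,e_{2m-1}}$. That component equals $\pi_{V_0}[A(v)p-A(u)_{:,m}]$, and the suspect dependence reduces to $\pi_{V_0}[A((0,(\tilde u,0)))_{:,m}]$. Setting $u_1=(0,q_1)$ in the first symmetry relation yields $A((0,q_1))\vert_{V_0}=0$, so only the last column of $A((0,q_1))$ can be non-zero; then the second symmetry relation with $u_1=(0,e_m^{(m)})$ and $u_2=(0,(\tilde u,0))$ gives $A((0,(\tilde u,0)))_{:,m}=A((0,e_m^{(m)}))(\tilde u,0)$, which vanishes by the previous step. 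Reading off the remaining blocks then identifies $A_2,v_2$ as the $V_0$-parts of $A(v)p$ and $-A((0,e_m^{(m)}))_{:,m}$, and $B_2,C_2,w_2$ as analogous combinations involving $B(v)$, $A(v)$, and $B((0,e_m^{(m)}))_{:,m}$; the condition $\cT_2(\nabla)=0$ constrains only the $m$-th row of $A(v)$ and imposes no further obstruction. For the reverse inclusion, one constructs $\nabla$ explicitly in the manner of Theorem~\ref{th:productstr} by distributing the free parameters of $A,B$ across the basis vectors $v=e_m$, $e_{2m}$, and the remaining basis of $U_2$ so as to realise any prescribed $A_2,B_2,C_2,v_2,w_2$ while respecting the two symmetry relations and $\cT_2(\nabla)=0$. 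The main obstacle is the derivation of the zero block in the type $[U_2]$ case, which relies on the combined use of both symmetry constraints simultaneously.
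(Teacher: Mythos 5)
Your proposal is correct and follows essentially the same route as the paper: Lemma~\ref{le:orbitstangentstrr} for the two orbits, Theorem~\ref{th:commutingendo} for type $[U_1]$ (you invoke part~(b) via $T_0\mfh\subseteq\mfh$, the paper cites part~(c); both give $\cF_{\mfh}=\tilde{\mfk}_{\mfh}$), and for type $[U_2]$ a direct computation whose key point --- the vanishing of the $(1,2)$-block, which you extract from the two symmetry relations specialised to $p_1=p_2=0$ --- is exactly the paper's argument that $\nabla_u e_m\in\ker(T_0)$ for $u\in\spa{e_{m+1},\ldots,e_{2m-1}}$, rewritten in block coordinates. The only piece you leave implicit is the explicit connection realising a prescribed matrix of the second form (the converse inclusion), which the paper writes out but which is routine given the constraints you have already isolated.
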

\begin{proof}
Let $H:=\GL(T_0)$. Note first that $U_1=\bR^{2m-1}$ is not $T_0$-invariant and so Theorem \ref{th:commutingendo} (c) yields that $\mfg$ admits a tangent structure of type $[U_1]$, i.e a special tangent structure if and only if $f$ may be identified with an element in $\tilde{\mfk}_{\mfh}$ with respect to some basis of $\mfu$. This is the claimed assertion in this case.

So let us now consider the case of type $[U_2]$ and let $\nabla\in \cD_{T_2\mfh T_2^{-1}}$ for $T_2\in \GL(2m,\bR)$ defined by $T_2(e_i):=e_i$ for $i=1,\ldots,m-1$, $T_2(e_j):=e_{j-1}$ for $j=m+1,\ldots,2m-1$, $T_2(e_m):=e_{2m}$ and $T_2(e_{2m}):=e_{2m-2}$. We use the identifications mentioned in Remark \ref{re:nonspecial} and observe first that $e_m\notin U_2$ and that $\nabla_{e_m}\in \mfh$ preserves $\ker(T_0)\subseteq U_2$. Thus, in particular, $\nabla_{e_m}$ maps $\spa{e_{m+1},\ldots,e_{2m-1}}$ into $\ker(T_0)$. Now we argue that also $\nabla e_m$ maps $\spa{e_{m+1},\ldots,e_{2m-1}}$ into $\ker(T_0)$. To show this assertion, let $u\in  \spa{e_{m+1},\ldots,e_{2m-1}}\subseteq U_2$. Then there is some $\tilde{u}\in \spa{e_1,\ldots,e_{m-1}}\subseteq U_2$ such that $T_0 \tilde{u}=u$. Hence,
\begin{equation*}
T_0\nabla_u e_m=\nabla_u e_{2m}=\nabla_{e_{2m}} T_0 \tilde{u}=T_0 \nabla_{e_{2m}} \tilde{u}=T_0 \nabla_{\tilde{u}} e_{2m}=T_0 \nabla_{\tilde{u}} T_0 e_{m}= T_0^2 \nabla_{\tilde{u}} e_{m}=0.
\end{equation*} 
Thus, $\nabla_u e_m\in \ker(T_0)$ and so any element in $\cT(\cT^{-1}(\End(\bR^{n-1})))$ for $\cT=\cT_{e_m}$ has the form
\begin{equation*}
\begin{pmatrix}
	A_2 &  0 & v_2 \\
	B_2 &  C_2 & w_2
\end{pmatrix}
\end{equation*}
for certain $A_2\in \bR^{(m-1)\times (m-1)}$, $B_2,C_2\in \bR^{m\times (m-1)}$, $v_2\in \bR^{m-1}$, $w_2\in \bR^m$
with respect to the basis $(e_1,\ldots,e_{m-1},e_{m+1},\ldots,e_{2m-1},e_{2m})$ of $U$.

Now let 
\begin{equation*}
	F:=\begin{pmatrix}
		A_2 &  0 & v_2 \\
		B_2 &  C_2 & w_2
	\end{pmatrix}
\end{equation*}
for certain $A_2\in \bR^{(m-1)\times (m-1)}$, $B_2,C_2\in \bR^{m\times (m-1)}$, $v_2\in \bR^{m-1}$, $w_2\in \bR^m$ be given. Define $\nabla\in \cD_{T_2 \mfh T_2^{-1}}$ by $\nabla_u v:=0$ for all $u,v\in \spa{e_1,\ldots,e_{m-1},e_{m+1},\ldots,e_{2m-1}}$,
\begin{equation*}
\nabla_{e_i} e_m:= \left(\begin{smallmatrix} -A_2 e_i \\ 0 \\ 0  \end{smallmatrix}\right),\  \nabla_{e_i} e_{2m}:=T_0 \nabla_{e_i} e_m,\
\nabla_{e_j} e_m:= \left(\begin{smallmatrix} 0 \\ 0 \\ -C_2 e_{j-m}  \end{smallmatrix}\right),\ \nabla_{e_j} e_{2m}:=0
\end{equation*}
for $i=1,\ldots,m-1$, $j=m+1,\ldots,2m-1$,
\begin{equation*}
\nabla_{e_{2m}} e_m :=  \left(\begin{smallmatrix} -v_2 \\ 0 \\ -w_2  \end{smallmatrix}\right),\quad \nabla_{e_{2m}} e_{2m}:=T_0 \nabla_{e_{2m}} e_m,\quad \nabla_{e_m} e_i:= \left(\begin{smallmatrix} 0 \\ 0 \\ B_2 e_i  \end{smallmatrix}\right)
\end{equation*}
for  $i=1,\ldots,m-1$ and $\nabla_{e_m} e_l:=0$ for $l=m,\ldots,2m$. Then $\cT(\nabla)=F$, which shows the assertion in the type $[U_2]$ case and finishes the proof.
\end{proof}
Theorem \ref{th:tangentstr} implies that a $2m$-dimensional almost Abelian Lie algebra admits a tangent structure $T$ with $T$-invariant codimension one Abelian ideal $\mfu$ if and only if $f$ is a block triangular matrix with one diagonal block of size $m$ and one of size $m-1$. Hence, argueing similarly to the proof of Corollary \ref{co:alwaysproduct}, we get:
\begin{corollary}\label{co:alwaystangent}
Any even-dimensional almost Abelian Lie algebra $\mfg$ admits a tangent structure $T$ with $T$-invariant codimension one Abelian ideal $\mfu$.
\end{corollary}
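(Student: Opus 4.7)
The plan is to reduce the claim to Theorem \ref{th:tangentstr} in type $[U_2]$ and show that the required block form can always be achieved via a real Jordan decomposition argument, in the spirit of Corollary \ref{co:alwaysproduct}. First I would check that among the two $\GL(T_0)$-orbits on $\mathrm{Grass}_{2m-1}(\bR^{2m})$ exhibited in Lemma \ref{le:orbitstangentstrr}, only $U_2 = \spa{e_1,\ldots,e_{m-1},e_{m+1},\ldots,e_{2m}}$ is $T_0$-invariant, since $T_0 e_i = e_{m+i} \in U_2$ for $i = 1,\ldots,m-1$ and $T_0 e_i = 0$ for $i \geq m+1$, whereas $T_0 e_m = e_{2m} \notin U_1$. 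So the corollary amounts to realising a tangent structure of type $[U_2]$.

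By Theorem \ref{th:tangentstr}, this reduces to finding a basis $(X_1,\ldots,X_{2m-1})$ of $\mfu$ in which $f$ has the form $\begin{pmatrix} A_2 & 0 & v_2 \\ B_2 & C_2 & w_2 \end{pmatrix}$. Reading off the matrix, the only non-trivial constraint is the vanishing of the top-middle block, which translates to $f(W) \subseteq W + \bR X_{2m-1}$ for $W := \spa{X_m,\ldots,X_{2m-2}}$. A clean sufficient condition is that $W$ be an $(m-1)$-dimensional $f$-invariant subspace of $\mfu$: given such a $W$, I would pick any $X_{2m-1} \in \mfu \setminus W$, choose $X_1,\ldots,X_{m-1}$ to be a basis of any complement of $W \oplus \bR X_{2m-1}$ in $\mfu$, and let $X_m,\ldots,X_{2m-2}$ be any basis of $W$; by construction $f$ then has the block form required.

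The main step, and the only genuine obstacle, is therefore the existence of an $(m-1)$-dimensional $f$-invariant subspace of $\mfu \cong \bR^{2m-1}$. This is guaranteed by parity: the real Jordan decomposition of $f$ consists of real eigenvalue blocks of sizes $a_1,\ldots,a_r$ and complex-conjugate pair blocks of real dimensions $2b_1,\ldots,2b_s$ with $\sum_i a_i + 2 \sum_j b_j = 2m-1$, and the oddness of this total forces $\sum_i a_i \geq 1$, i.e.\ at least one real block. A real block of size $k$ admits $f$-invariant subspaces of all dimensions $0,1,\ldots,k$ (the kernels of $(f-\lambda)^j$), while a complex pair block of real dimension $2k$ admits them of all even dimensions $0,2,\ldots,2k$. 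Taking direct sums of contributions from individual blocks, one realises any integer in $\{0,1,\ldots,2m-1\}$, in particular $m-1$, exactly as in the Jordan-form reorganisation used in the proof of Corollary \ref{co:alwaysproduct}.
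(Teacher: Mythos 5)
Your proposal is correct and follows essentially the same route as the paper: the paper's (very terse) argument likewise reduces via Theorem \ref{th:tangentstr} to producing the type-$[U_2]$ block form and then invokes the real-Jordan-form/parity argument of Corollary \ref{co:alwaysproduct}. Your write-up merely makes explicit what the paper leaves implicit, namely that the $[U_2]$ form follows from an $(m-1)$-dimensional $f$-invariant subspace of the odd-dimensional ideal $\mfu$, whose existence is forced by the presence of at least one real Jordan block.
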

\subsection{Complex structures}
\subsubsection{Complex subalgebras}
Finally, we consider the case that $A$ is a complex structure on $\bR^{2m}$ and then write $J$ instead of $A$. 
We begin with \emph{complex subalgebras}:
\begin{definition}
Let $J$ be a complex structure on $\bR^{2m}$. A $J$-invariant subalgebra $\mfh$ of $\mathfrak{gl}(2m,\bR)$ is called a \emph{complex subalgebra}. We note that complex subalgebras are exactly the $\bR$-Lie subalgebras of $\mathfrak{gl}(2m,\bR)$ which also have a compatible structure as a Lie algebra over $\bC$.
\end{definition}
For complex subalgebras, Theorem \ref{th:commutingendo} (c) yields the following result:
\begin{corollary}\label{co:complexsubalgebra}
	Let $\mfh$ be a complex subalgebra. Then an $H$-structure $P$ on $\mfg$ is flat if and only if it is torsion-free, which is the case if and only if for some $u\in P$ and some $T\in \GL(2m,\bR)$ with $(u\circ T^{-1})(\bR^{n-1})=\mfu$, one has $f\in \tilde{\mfk}_{T \mfh T^{-1}}$.
	\end{corollary}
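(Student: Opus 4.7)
The plan is to deduce this corollary as a direct application of Theorem \ref{th:commutingendo}(c), applied to the complex structure $J$ in the role of the commuting endomorphism $A$. The main steps are: (i) reduce from an arbitrary $H$-structure to a special one by conjugation, (ii) verify the two hypotheses of Theorem \ref{th:commutingendo}(c) are stable under this conjugation, (iii) read off the conclusion.

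First I would invoke Theorem \ref{th:arbrtyHstrtfflat} (which itself rests on Lemma \ref{le:reductiontospecialHstructure}). Given an $H$-structure $P$ of type $[U]$ on $\mfg$, choose $u\in P$ and $T\in \GL(2m,\bR)$ with $u(U)=\mfu$ and $T(U)=\bR^{2m-1}$. Then identifying $\bR^{2m}$ with $\mfg$ via $u\circ T^{-1}$, the problem reduces to analysing a \emph{special} $THT^{-1}$-structure, whose torsion-freeness/flatness is controlled by $\cF_{T\mfh T^{-1}}$ and $\tilde{\mfk}_{T\mfh T^{-1}}$ respectively.

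Next I would check that $T\mfh T^{-1}$ is a complex subalgebra of $\mathfrak{gl}(2m,\bR)$ with respect to the conjugated complex structure $J':=TJT^{-1}$. Indeed, since $\mfh$ commutes with $J$ and is $J$-invariant, $T\mfh T^{-1}$ commutes with $J'$ and satisfies $J'(T\mfh T^{-1})=T\mfh T^{-1}$, i.e.\ the hypothesis ``$\mfh=A\mfh$'' of Theorem \ref{th:commutingendo}(c) holds with $A=J'$. Moreover, $\bR^{2m-1}$ cannot be $J'$-invariant for purely dimensional reasons: any $J'$-invariant subspace of $(\bR^{2m},J')$ is a complex subspace and hence has even real dimension, whereas $\dim_\bR \bR^{2m-1}=2m-1$ is odd. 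Thus the remaining hypothesis of Theorem \ref{th:commutingendo} is also satisfied.

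Applying Theorem \ref{th:commutingendo}(c) to $T\mfh T^{-1}$ with commuting endomorphism $J'$ yields $\cF_{T\mfh T^{-1}}=\tilde{\mfk}_{T\mfh T^{-1}}$, and moreover a special $THT^{-1}$-structure is flat iff it is torsion-free iff the corresponding endomorphism of $\bR^{2m-1}$ lies in $\tilde{\mfk}_{T\mfh T^{-1}}$. Translating this back through the identification $u\circ T^{-1}$ via Theorem \ref{th:arbrtyHstrtfflat}(a),(b) gives exactly the stated equivalence for $P$. No step looks genuinely hard; the only potential pitfall is being careful that the reduction via $u\circ T^{-1}$ is compatible with how $J'=TJT^{-1}$ is viewed on the ``ambient'' $\bR^{2m}$, but this is tautological once one uses the same identification throughout.
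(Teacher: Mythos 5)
Your proposal is correct and follows essentially the same route as the paper: the corollary is obtained by applying Theorem \ref{th:commutingendo}(c) with $A$ the (conjugated) complex structure, after reducing to a special structure via Theorem \ref{th:arbrtyHstrtfflat}, the hypothesis $\mfh=J\mfh$ being the definition of a complex subalgebra and the non-$J$-invariance of $\bR^{2m-1}$ being automatic for parity reasons, exactly as the paper notes. Your explicit verification that these hypotheses persist under conjugation by $T$ is a detail the paper leaves implicit but is correctly handled.
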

Let us give a few examples, some which were known before and so we provide here a new proof of the characterisation of the torsion-free condition in these cases:
\begin{example}\label{ex:complexsubalg}
	\begin{itemize}
		\item[(a)]
		We start by taking $\mfh=\mathfrak{gl}(J_0)=:\mathfrak{gl}(m,\bC)$. Noting that $H=\GL(m,\bC)$ acts transitively on $\mathrm{Grass}_{2m-1}(\bR^{2m})$, any $\GL(m,\bC)$-structure is special. Observing that
		\begin{equation*}
			\cF_{\mathfrak{gl}(m,\bC)}=\tilde{\mfk}_{\mathfrak{gl}(m,\bC)}=\left\{\left.\begin{pmatrix} A & v \\ 0 & a \end{pmatrix}\right|A\in \mathfrak{gl}(m-1,\bC),\ v\in \bR^{2m-2},\ a\in \bR\right\},
		\end{equation*}
	 Corollary \ref{co:complexsubalgebra} gives back the characterisation of integrable almost complex structures on almost Abelian Lie algebras from \cite[Lemma 6.1]{LRV}.
		\item[(b)] $\mfh=\mathfrak{sl}(m,\bC)$: Note that a torsion-free $\mathrm{SL}(m,\bC)$-structure is nothing but a pair $(J,\nu)$ consisting of an integrable almost complex structure $J$ and a holomorphic volume form $\nu$. Moreover, observing that again $H=\mathrm{SL}(m,\bC)$ acts transitively on $\mathrm{Grass}_{2m-1}(\bR^{2m})$, Corollary \ref{co:complexsubalgebra} implies that an $\mathrm{SL}(m,\bC)$-structure $(J,\nu)$ on an almost Abelian Lie algebra $\mfg$ is torsion-free if and only if
		\begin{equation*}
			f\in \tilde{\mfk}_{\mathfrak{sl}(m,\bC)}=\left\{\left.\begin{pmatrix} A & v \\ 0 & -\tr(A) \end{pmatrix}\right|A\in \mathfrak{gl}(m-1,\bC),\quad \tr(JA)=0,\quad \ v\in \bR^{2m-2}\right\}.
		\end{equation*}
		\item[(c)]
		Next, let $m=2k$, i.e. $n=4k$, and take $\mfh=\mathfrak{sp}(J_0,\omega_0)=:\mathfrak{sp}(2k,\bC)$ with
		\begin{equation*}
			\omega_0:=\sum_{i=1}^k e^{4k-3}\wedge e^{4k}+ e^{4k-2}\wedge e^{4k-1},
		\end{equation*}
		i.e. with $(J_0,\omega_0)$ being the standard complex symplectic structure on the vector space $\bR^{4k}$.
		
		 Again, the group $H=\mathrm{Sp}(2k,\bC)$ acts transitively on $\mathrm{Grass}_{4k-1}(\bR^{4k})$ and here we have
		\begin{equation*}
			\begin{split}
			\cF_{\mathfrak{sp}(2k,\bC)}&=\tilde{\mfk}_{\mathfrak{sp}(2k,\bC)}\\
			&=\left\{\left.\begin{pmatrix} A & 0 & 0 & v \\ \omega(Ju,\cdot)  & a & 0 & b \\ \omega(u,\cdot) & 0 & a & c \\ 0 & 0 & 0 & -a \end{pmatrix}\right|A\in \mathfrak{sp}(2k-2,\bC),\ v\in \bR^{4k-4},\ a,b,c\in \bR\right\},
			\end{split}
		\end{equation*}
	
		Hence, Corollary \ref{co:complexsubalgebra} yields the characterisation of complex symplectic structures on almost Abelian Lie algebras from \cite[Theorem 3.10]{BFrLT}.
		\item[(d)]
		We note that Corollary \ref{co:complexsubalgebra} also applies to the complex pseudo-Riemannian Berger holonomy algebras, i.e. $\mathfrak{so}(n,\bC)$, $(\mfg_2)_{\bC}$, $\mathfrak{sp}(p,\bC)\oplus \mathfrak{sl}(2,\bC)$ and $\mathfrak{spin}(7)_{\bC}$ and gives, in particular, that a pseudo-Riemannian metric with holonomy in one of these groups is automatically flat.
	\end{itemize}
\end{example}
\subsubsection{Totally real subalgebras}
Here, we consider subalgebras of $\mathfrak{gl}(J)$ for some complex structure $J$ on $\bR^{2m}$ which are totally real in the following sense:
\begin{definition}
Let $J$ be a complex structure on $\bR^{2m}$. A real subalgebra $\mfh$ of $\mathfrak{gl}(J)$ is called \emph{totally real} if $\mfh\cap J\mfh=\{0\}$.
\end{definition}
We will give some important classes of examples of totally real subalgebras at the end of this subsection and in the following subsections. Before, we do this, we will provide a description of $\cF_{\mfh}$ for (almost) any totally real subalgebra $\mfh$ and for this and also later use will need to introduce a \emph{tableau} $\cK_{\mfh}$ naturally associated to $\mfh$, where \emph{tableau} simply means a linear subspace of some space of homomorphisms, and of the \emph{first prolongation} $\cK_{\mfh}^{(1)}$ of $\cK_{\mfh}$:
\begin{definition}
Let $\mfh$ be a subalgebra of $\mathfrak{gl}(n,\bR)$. The \emph{tableau} $\cK:=\cK_{\mfh}$ \emph{associated to} $\mfh$ is the linear subspace of $\Hom(\bR^{n-1},\bR^n)$ defined by
\begin{equation*}
 \cK_{\mfh}:=\left\{\left. F|_{\bR^{n-1}}\right|F\in \mfh\right\}.
\end{equation*}
The \emph{first prolongation} $\cK^{(1)}:=\cK_{\mfh}^{(1)}$ of $\cK_{\mfh}$ is defined as
\begin{equation*}
\cK_{\mfh}^{(1)}:=\left((\bR^{n-1})^*\otimes \cK\right)\cap (S^2 (\bR^{n-1})^*\otimes \bR^{n-1}).
\end{equation*}
\end{definition}
The importance of the first prolongation $\cK^{(1)}$ of the associated tableau stems from the following observation:
\begin{lemma}\label{le:cDincK1}
Let $\nabla\in \cD_{\mfh}$ for some subalgebra $\mfh$ of $\mathfrak{gl}(n,\bR)$. Then $\nabla|_{\bR^{n-1}\times \bR^{n-1}}\in \cK^{(1)}_{\mfh}$.
\end{lemma}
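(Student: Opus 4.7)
The proof will be a straightforward unpacking of the three definitions ($\cD_{\mfh}$, $\cK_{\mfh}$, $\cK^{(1)}_{\mfh}$), so there is no real obstacle; the only ``work'' is to check that $\nabla|_{\bR^{n-1}\times\bR^{n-1}}$ lies in each of the two subspaces whose intersection defines $\cK^{(1)}_{\mfh}$, and that both inclusions take place in the same ambient tensor space $(\bR^{n-1})^{*}\otimes (\bR^{n-1})^{*}\otimes \bR^{n}$.

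First I would check the inclusion $\nabla|_{\bR^{n-1}\times\bR^{n-1}}\in (\bR^{n-1})^{*}\otimes \cK_{\mfh}$. By the definition of $\cD_{\mfh}\subseteq (\bR^{n})^{*}\otimes\mfh$, for every $u\in\bR^{n}$ the endomorphism $\nabla_{u}$ lies in $\mfh\subseteq \End(\bR^{n})$. Restricting the first argument to $u\in\bR^{n-1}$ and then restricting the endomorphism $\nabla_{u}$ to the hyperplane $\bR^{n-1}$, the map $(\nabla_{u})|_{\bR^{n-1}}$ is of the form $F|_{\bR^{n-1}}$ with $F\in\mfh$ and so lies in $\cK_{\mfh}$ by the very definition of the associated tableau. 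Since $u\mapsto (\nabla_{u})|_{\bR^{n-1}}$ is the tensor $\nabla|_{\bR^{n-1}\times\bR^{n-1}}$ viewed as an element of $(\bR^{n-1})^{*}\otimes \Hom(\bR^{n-1},\bR^{n})$, this yields the first inclusion.

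Second I would note that the symmetry factor comes for free: the defining condition of $\cD_{\mfh}$ is precisely that $\nabla|_{\bR^{n-1}\times\bR^{n-1}}\in S^{2}(\bR^{n-1})^{*}\otimes\bR^{n}$, so the tensor is symmetric in its two $\bR^{n-1}$-arguments, and hence lies in the second factor appearing in the intersection that defines $\cK^{(1)}_{\mfh}$. Combining these two observations places $\nabla|_{\bR^{n-1}\times\bR^{n-1}}$ in both subspaces simultaneously, hence in $\cK^{(1)}_{\mfh}$.

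The only point requiring minimal care is that the intersection defining $\cK^{(1)}_{\mfh}$ is naturally formed inside $(\bR^{n-1})^{*}\otimes(\bR^{n-1})^{*}\otimes\bR^{n}$, and both summands described above live there; once this is observed the proof is complete in two lines. No further ingredient (such as any property of the subalgebra $\mfh$ beyond being a linear subspace, nor any use of the Lie bracket of $\mfh$) enters the argument.
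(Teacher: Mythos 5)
Your proof is correct and is exactly the intended argument: the paper states this lemma as an immediate observation and omits the proof, which is precisely the two-line unpacking of the definitions of $\cD_{\mfh}$, $\cK_{\mfh}$ and $\cK^{(1)}_{\mfh}$ that you give. One small point: the paper's displayed definition of $\cK^{(1)}_{\mfh}$ writes $S^2(\bR^{n-1})^*\otimes\bR^{n-1}$, but this must be read as $S^2(\bR^{n-1})^*\otimes\bR^{n}$ (a typo, as is clear from Lemma \ref{le:nondegmetriccK1}, where $\cK^{(1)}_{\mfh}=S^2\,\cU\otimes v$ with $v\notin\bR^{n-1}$), and your argument correctly and explicitly uses this reading of the ambient space.
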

In the case of a totally real subalgebra $\cK^{(1)}_{\mfh}$ has a very easy structure:
\begin{lemma}\label{le:cK1totallyreal}
Let $\mfh\subseteq \mathfrak{gl}(J)$ be a totally real subalgebra, where $J$ is a complex structure on $\bR^{2m}$. Then
\begin{equation*}
\cK^{(1)}_{\mfh}\subseteq S^2 (\bR^{2m-1}_J)^0\otimes \bR^n,
\end{equation*}
where $\bR^{2m-1}_J:=\bR^{2m-1}\cap J\bR^{2m-1}$ and $(\bR^{2m-1}_J)^0\subseteq (\bR^{2m-1})^*$ is the one-dimensional annihilator of $\bR^{2m-1}_J$.
\end{lemma}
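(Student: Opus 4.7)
The plan is to lift an arbitrary $\Phi \in \cK^{(1)}_\mfh$ to a linear map $\tilde\Phi : \bR^{2m-1} \to \mfh$, and then, for any $u \in \bR^{2m-1}_J$, derive the identity $\tilde\Phi(Ju) = J\,\tilde\Phi(u)$ inside $\mathfrak{gl}(2m,\bR)$. Since the left-hand side lies in $\mfh$ and the right-hand side lies in $J\mfh$, the totally real assumption $\mfh \cap J\mfh = \{0\}$ will force both to vanish, so $\Phi(u) = \tilde\Phi(u)|_{\bR^{2m-1}} = 0$, and by symmetry this is exactly the required vanishing $\Phi(u)(v) = 0$ on $\bR^{2m-1}_J \times \bR^{2m-1}$ that the inclusion $\cK^{(1)}_\mfh \subseteq S^2(\bR^{2m-1}_J)^0 \otimes \bR^n$ encodes.

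To justify the lift I would first observe that the restriction map $\mfh \to \cK_\mfh$, $F \mapsto F|_{\bR^{2m-1}}$, is a linear isomorphism. Surjectivity is built into the definition of $\cK_\mfh$, and injectivity follows because any $F \in \mfh$ vanishing on $\bR^{2m-1}$ also vanishes on $J\bR^{2m-1}$ by $FJ = JF$, hence on $\bR^{2m-1} + J\bR^{2m-1} = \bR^{2m}$; the sum exhausts $\bR^{2m}$ because $\bR^{2m-1}$ has odd dimension and therefore cannot be $J$-invariant. This step uses only $\mfh \subseteq \mathfrak{gl}(J)$ and lets me define $\tilde F_u := \tilde\Phi(u)$ as the unique preimage of $\Phi(u)$ under this isomorphism.

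The core step is then the identity $\tilde F_{Ju}(v) = J\tilde F_u(v)$ for every $u \in \bR^{2m-1}_J$ and every $v \in \bR^{2m-1}$. Because $Ju \in \bR^{2m-1}$ by definition of $\bR^{2m-1}_J$, one may evaluate $\Phi$ at $Ju$ and chain together the symmetry of $\cK^{(1)}_\mfh$, the $J$-commutativity of $\tilde F_v \in \mfh$, and the symmetry once more to get
\[
\tilde F_{Ju}(v)=\Phi(Ju)(v)=\Phi(v)(Ju)=\tilde F_v(Ju)=J\tilde F_v(u)=J\Phi(v)(u)=J\Phi(u)(v)=J\tilde F_u(v).
\]
Both $\tilde F_{Ju}$ and $J\tilde F_u$ commute with $J$, so the same spanning argument from the previous paragraph upgrades agreement on $\bR^{2m-1}$ to agreement on all of $\bR^{2m}$, and total reality of $\mfh$ then finishes the argument.

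I do not foresee a serious obstacle in the plan. The only delicate point is the well-definedness of the lift $\tilde\Phi$, but it rests only on the odd dimensionality of $\bR^{2m-1}$ together with $\mfh \subseteq \mathfrak{gl}(J)$; the totally real hypothesis enters in a single clean line at the very end through $\mfh \cap J\mfh = \{0\}$, and everything in between is a straightforward linear computation exploiting the symmetry of $\Phi$ and $J$-commutativity of elements of $\mfh$.
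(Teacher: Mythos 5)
Your proposal is correct and follows essentially the same route as the paper: lift $\Phi$ to an $\mfh$-valued map, use the symmetry of the first prolongation together with $J$-commutativity to derive $\tilde F_{Ju}=J\tilde F_u$ for $u\in\bR^{2m-1}_J$, and invoke $\mfh\cap J\mfh=\{0\}$ to conclude. Your extra care about the injectivity of the restriction $\mfh\to\cK_{\mfh}$ (via $\bR^{2m-1}+J\bR^{2m-1}=\bR^{2m}$) only makes explicit a step the paper leaves implicit.
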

\begin{proof}
Let $\tilde{\nabla}\in \cK^{(1)}_{\mfh}$ be given. Then there exists $\hat{\nabla}\in (\bR^{2m-1})^*\otimes \mfh$ with
$\tilde{\nabla} w=\hat{\nabla} w$ for all $w\in \bR^{2m-1}$. Let now $u\in (\bR^{2m-1})_J$ and $w\in \bR^{2m-1}$ be given. Then $Ju\in \bR^{2m-1}$ as well and we get
\begin{equation*}
\hat{\nabla}_{u} w=\hat{\nabla}_w u=-\hat{\nabla}_w J^2u=-J\hat{\nabla}_w Ju=-J\hat{\nabla}_{Ju} w,
\end{equation*}
which shows that $\hat{\nabla}_u=-J\hat{\nabla}_{Ju}$ and so $\hat{\nabla}_{u}\in \mfh\cap J\mfh=\{0\}$. Thus,
$\tilde{\nabla}_u=0$, which for symmetry reasons, shows $\cK^{(1)}_{\mfh}\subseteq S^2 (\bR^{2m-1}_J)^0\otimes \bR^n$.
\end{proof}
For the formulation of the result for $\cF_{\mfh}$ for a totally real subalgebra $\mfh$, we need to consider certain subalgebras of $\mfh$ which all vanish on $(\bR^{2m-1})_J$:
\begin{definition}
Let $J$ be a complex structure on $\bR^{2m}$ and $\mfh\subseteq \mathfrak{gl}(J)$ be a linear subalgebra. Then we set
\begin{equation*}
\begin{split}
\mfh_2&:=\left\{F\in \mfh\left|F|_{\bR^{2m-1}_J}=0\right.\right\},\quad \mfh_2^{\bR^{2m-1}}:=\left\{F\in \mfh\left|F|_{\bR^{2m-1}_J}=0,\ F(\bR^{2m-1})\subseteq \bR^{2m-1}\right.\right\},\\
\mfh_2^{\bR^{2m-1}_J}&:=\left\{F\in \mfh\left|F|_{\bR^{2m-1}_J}=0,\ F(\bR^{2m-1})\subseteq \bR_J^{2m-1} \right.\right\}.
\end{split}
\end{equation*}
Note that we have the inclusions
\begin{equation*}
\mfh_2^{\bR^{2m-1}_J}\subseteq  \mfh_2^{\bR^{2m-1}}\subseteq \mfh_2
\end{equation*}
and that the dimension may rise at most by $1$ in each inclusion. We further note that $J\mfh_2^{\bR^{2m-1}_J}$ preserves the subspace $\bR^{2m-1}$ and so $J\mfh_2^{\bR^{2m-1}_J}|_{\bR^{2m-1}}$ is a subspace of $\End(\bR^{2m-1})$.

Finally, we call $\mfh$ 
\begin{itemize}
\item \emph{of type $(I)$} if $\mfh_2=\mfh_2^{\bR^{2m-1}_J}$,
\item \emph{of type $(II)$} if $\mfh_2^{\bR^{2m-1}_J}\neq \mfh_2^{\bR^{2m-1}}$ and $\mfh_2^{\bR^{2m-1}}= \mfh_2$,
\item \emph{of type $(III)$} if $\mfh_2^{\bR^{2m-1}_J}= \mfh_2^{\bR^{2m-1}}$ and $\mfh_2^{\bR^{2m-1}}\neq \mfh_2$
\item and \emph{of type $(IV)$} if $\mfh_2^{\bR^{2m-1}_J}\neq\mfh_2^{\bR^{2m-1}}$ and $\mfh_2^{\bR^{2m-1}}\neq \mfh_2$.
\end{itemize}
\end{definition}
We are now able to prove the following result on $\cF_{\mfh}$ in the totally real case:
\begin{theorem}\label{th:totallyreal}
Let $J$ be a complex structure on $\bR^{2m}$ and $\mfh\subseteq \mathfrak{gl}(J)$ be a totally real linear subalgebra. Then:
\begin{enumerate}[(a)]
	\item If $\mfh$ is of type $(I)$, then 
	\begin{equation*}
		\cF_{\mfh}=\tilde{\mfk}_{\mfh}\oplus \left.J\mfh_2^{\bR^{2m-1}_J}\right|_{\bR^{2m-1}}.
	 \end{equation*}
	\item If $\mfh$ is of type $(II)$ and additionally any $F\in \mfh$ with $F(\bR^{2m-1}_J)\subseteq \bR^{2m-1}$ satisfies $F(\bR^{2m-1})\subseteq \bR^{2m-1}$, then 
		\begin{equation*}
		\cF_{\mfh}=\tilde{\mfk}_{\mfh}\oplus  \left.J\mfh_2^{\bR^{2m-1}_J}\right|_{\bR^{2m-1}}.
	\end{equation*}
    \item If $\mfh$ is of type $(III)$, then
    \begin{equation*}
    	\cF_{\mfh}=\left(\tilde{\mfk}_{\mfh}\oplus \left.J\mfh_2^{\bR^{2m-1}_J}\right|_{\bR^{2m-1}}\right)+\spa{(JF-\lambda F)|_{\bR^{2m-1}}}
    \end{equation*}
for $F\in \mfh_2\setminus\mfh_2^{\bR^{2m-1}}$ and $\lambda\in \bR$ such that $(JF-\lambda F)(\bR^{2m-1})\subseteq \bR^{2m-1}$.
	\item If $\mfh$ is of type $(IV)$, then
	\begin{equation*}
	\cF_{\mfh}=\left(\tilde{\mfk}_{\mfh}\oplus \left.J\mfh_2^{\bR^{2m-1}_J}\right|_{\bR^{2m-1}}\right)+\spa{(F_2-JF_1)|_{\bR^{2m-1}},JF_2|_{\bR^{2m-1}}}
	\end{equation*}
for $F_1\in \mfh_2^{\bR^{2m-1}}\setminus\mfh_2^{\bR^{2m-1}_J}$ and $F_2\in  \mfh_2 \setminus\mfh_2^{\bR^{2m-1}}$ such that
$(F_2-J F_1)(\bR^{2m-1})\subseteq \bR^{2m-1}$ and such that $JF_2(\bR^{2m-1})\subseteq \bR^{2m-1}$.
\end{enumerate}
\end{theorem}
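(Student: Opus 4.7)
The plan is to combine Theorem~\ref{th:commutingendo} (applied with $A = J$) with the rank-one structure of $\cK^{(1)}_\mfh$ from Lemma~\ref{le:cK1totallyreal}. Since an odd-dimensional subspace is never $J$-invariant, Theorem~\ref{th:commutingendo}(a) simplifies $\cT$ to $\cT_{Jv}(\nabla) = \nabla_{Jv}\circ\iota - J\circ\nabla_v\circ\iota$ for any $v \in \bR^{2m-1}$ with $Jv \notin \bR^{2m-1}$, and part~(b) already yields the sandwich $\tilde{\mfk}_\mfh \subseteq \cF_\mfh \subseteq \tilde{\mfk}_{\mfh+J\mfh}$. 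Fixing $\alpha \in (\bR^{2m-1}_J)^0$ with $\alpha(v) = 1$, Lemma~\ref{le:cK1totallyreal} writes $\nabla|_{\bR^{2m-1}\times\bR^{2m-1}}$ as $\alpha\otimes\alpha\otimes z$ with $z := \nabla_v v \in \bR^n$. Consequently $\nabla_u|_{\bR^{2m-1}} = 0$ for every $u \in \bR^{2m-1}_J$, while $\nabla_v$ lies in $\mfh_2$ with $\nabla_v|_{\bR^{2m-1}} = \alpha\otimes z$; hence the entire content of $\cT_1(\nabla)$ is encoded in $F := \nabla_{Jv} \in \mfh$ and $G := \nabla_v \in \mfh_2$, and the task is to use $\cT_2(\nabla) = 0$ to refine where $F$ and $G$ must sit.

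For the ``$\subseteq$'' direction, write $JG|_{\bR^{2m-1}} = \alpha\otimes Jz$ and project $\cT_2(\nabla) = 0$ onto $\spa{Jv}$ to read off the $Jv$-component of $\nabla_{Jv}|_{\bR^{2m-1}}$. In case~(I) the hypothesis $\mfh_2 = \mfh_2^{\bR^{2m-1}_J}$ forces $z \in \bR^{2m-1}_J$, hence $Jz \in \bR^{2m-1}$; the $\cT_2$-equation then collapses to $F(\bR^{2m-1})\subseteq\bR^{2m-1}$, so $F|_{\bR^{2m-1}}\in\tilde{\mfk}_\mfh$ and $-JG|_{\bR^{2m-1}}\in \left.J\mfh_2^{\bR^{2m-1}_J}\right|_{\bR^{2m-1}}$. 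Case~(II) is analogous: $JG|_{\bR^{2m-1}_J} = 0$ automatically, so the image condition gives $F(\bR^{2m-1}_J)\subseteq\bR^{2m-1}$, which the hypothesis of~(II) upgrades to $F(\bR^{2m-1})\subseteq\bR^{2m-1}$, and combined with $\mfh_2 = \mfh_2^{\bR^{2m-1}}$ this then pushes $G$ into $\mfh_2^{\bR^{2m-1}_J}$. In cases~(III) and~(IV) the vector $z$ genuinely has a nonzero $Jv$-component; I decompose $G = G_0 + \mu F$ (resp.\ $G = G_0 + \mu_1 F_1 + \mu_2 F_2$) with $G_0 \in \mfh_2^{\bR^{2m-1}_J}$, using that the quotients $\mfh_2/\mfh_2^{\bR^{2m-1}}$ and $\mfh_2^{\bR^{2m-1}}/\mfh_2^{\bR^{2m-1}_J}$ are at most one-dimensional, and subtract the correct $\mfh$-multiple ($\lambda\mu F$ in case~(III), $\mu_1 F_2$ in case~(IV)) from $\nabla_{Jv}$ to produce an element of $\mfh$ preserving $\bR^{2m-1}$. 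The remaining terms, after the identity $Jx_0 \equiv \alpha(x_0)\,Jv \pmod{\bR^{2m-1}}$ for $x_0 \in \bR^{2m-1}$, assemble into precisely the stated spanning elements $(JF-\lambda F)|_{\bR^{2m-1}}$ and $(F_2 - JF_1)|_{\bR^{2m-1}}$, $(JF_2)|_{\bR^{2m-1}}$.

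For ``$\supseteq$'' I construct $\nabla$ by hand: set $\nabla_u := 0$ for $u \in \bR^{2m-1}_J$, and prescribe $\nabla_v$, $\nabla_{Jv}$ as $\mfh$-valued combinations of the summand representatives (an element of $\mfh$ preserving $\bR^{2m-1}$ for the $\tilde{\mfk}_\mfh$-summand, an element of $\mfh_2^{\bR^{2m-1}_J}$ for the $\left.J\mfh_2^{\bR^{2m-1}_J}\right|_{\bR^{2m-1}}$-summand, and $F$, $F_1$, $F_2$ where needed). The symmetry condition $\nabla\in\cD_\mfh$ is automatic because $\alpha$ kills $\bR^{2m-1}_J$, and $\cT_2(\nabla) = 0$ is built into the definition of $\lambda$ in case~(III) and into the normalisation of $F_2$ (via $F_2 \mapsto F_2 + \mu F_1$, used to push the $v$-coefficient of $F_2(v)$ into $\bR^{2m-1}_J$, possible because $\alpha(F_1(v))\neq 0$) in case~(IV).

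Finally, the directness of the sum $\tilde{\mfk}_\mfh \oplus \left.J\mfh_2^{\bR^{2m-1}_J}\right|_{\bR^{2m-1}}$ in (a) and (b) uses the totally real hypothesis $\mfh \cap J\mfh = 0$: given $F \in \mfh$ preserving $\bR^{2m-1}$ and $G \in \mfh_2^{\bR^{2m-1}_J}$ with $F|_{\bR^{2m-1}} = JG|_{\bR^{2m-1}}$, the image constraints first place $F$ inside $\mfh_2^{\bR^{2m-1}_J}$; then $(F - JG)(Jv) = JF(v) + G(v) = J(JG(v)) + G(v) = 0$ using $F \in \mathfrak{gl}(J)$ and $F(v) = JG(v)$, so $F - JG$ vanishes on all of $\bR^n$ and therefore $F = JG \in \mfh\cap J\mfh = \{0\}$. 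The main obstacle I expect is the bookkeeping in case~(IV): choosing $F_1$ and $F_2$ so that both preservation conditions hold simultaneously, uniquely decomposing an arbitrary $G \in \mfh_2$ as $G_0 + \mu_1 F_1 + \mu_2 F_2$, and then tracking the resulting correction terms in $\cT_1(\nabla)$ without introducing spurious contributions---this is precisely where the one-dimensionality of the two relevant quotients becomes indispensable.
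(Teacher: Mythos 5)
Your proposal is correct and follows essentially the same route as the paper's proof: the simplified form of $\cT$ from Theorem \ref{th:commutingendo}(a) with $A=J$, the rank-one structure of $\cK^{(1)}_{\mfh}$ from Lemma \ref{le:cK1totallyreal} to place $\nabla_v$ in $\mfh_2$, the case-by-case decomposition of $\nabla_v$ and $\nabla_{Jv}$ along the filtration $\mfh_2^{\bR^{2m-1}_J}\subseteq\mfh_2^{\bR^{2m-1}}\subseteq\mfh_2$ with the $\cT_2=0$ constraint pinning the coefficients, and the same explicit construction of $\nabla$ for the reverse inclusion. Your explicit verification that the sum in (a) and (b) is direct is a small addition the paper leaves implicit, but it does not change the argument.
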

\begin{proof}
Choose $v\in \bR^{2m-1}\setminus \bR^{2m-1}_J$.

First of all, let $\tilde{H}\in\tilde{\mfk}_{\mfh}\oplus \left.J\mfh_2^{\bR^{2m-1}_J}\right|_{\bR^{2m-1}}$ be given. Then $\tilde{H}=\tilde{H}_1-J\tilde{H}_2$ with $\tilde{H_1}\in \tilde{\mfk}_{\mfh}$ and $\tilde{H}_2\in  \left.J\mfh_2^{\bR^{2m-1}_J}\right|_{\bR^{2m-1}}$ and there are $H_1,H_2\in \mfh$ with $H_1(\bR^{2m-1})\subseteq \bR^{2m-1}$ and $H_2\in \mfh_2^{\bR^{2m-1}_J}$ with $H_i|_{\bR^{2m-1}}=\tilde{H}_i$, $i=1,2$. Defining $\nabla\in (\bR^{2m})^*\otimes \mfh$ by $\nabla_u:=0$ for all $u\in \bR^{2m-1}_J$, $\nabla_v:=H_1$ and $\nabla_{Jv}:=H_2$, we see that $\nabla\in \cD_{\mfh}$ and that $\cT(\nabla)=\tilde{H}\in \End(\bR^{n-1})$. This shows that always
\begin{equation*}
\tilde{\mfk}_{\mfh}\oplus \left.J\mfh_2^{\bR^{2m-1}_J}\right|_{\bR^{2m-1}}\subseteq \cF_{\mfh}.
\end{equation*}

Now conversely, let $\tilde{H}\in \cF_{\mfh}$. Then, by Theorem \ref{th:commutingendo} (a), there is some $\nabla\in \cD_{\mfh}$ and some $v\in \bR^{2m-1}\setminus \bR^{2m-1}_J$ such that
\begin{equation*}
\tilde{H}=(\nabla_{Jv}-J\nabla_v)|_{\bR^{2m-1}}.
\end{equation*}
and $\tilde{H}(\bR^{2m-1})\subseteq \bR^{2m-1}$.

We set $H_1:=\nabla_{Jv}\in \End(\bR^n)$ and $H_2:=\nabla_v\in \End(\bR^n)$
By Lemma \ref{le:cDincK1} and Lemma \ref{le:cK1totallyreal}, we have $H_2(\bR^{2m-1}_J)=0$ so that $H_1(\bR^{2m-1}_J)\subseteq \bR^{2m-1}$ and so, since $H_1 J=JH_1$, even $H_1(\bR^{2m-1}_J)\subseteq \bR^{2m-1}_J$.
Moreover, note that $H_2\in \mfh_2$. We discuss now individually the different cases mentioned in the theorem:
\begin{enumerate}[(a)]
	\item Assume first that $\mfh_2=\mfh_2^{\bR^{2m-1}_J}$. Then $H_2\in \mfh_2^{\bR^{2m-1}_J}$ and so $JH_2$ preserves $\bR^{2m-1}$. Thus, also $H_1$ has to preserve $\bR^{2m-1}$, i.e. we have $H_1|_{\bR^{2m-1}}\in \tilde{\mfk}_{\mfh}$.
	Thus, $H=H_1|_{\bR^{2m-1}}-H_2|_{\bR^{2m-1}}\in  \tilde{\mfk}_{\mfh}\oplus J\mfh_2^{\bR^{2m-1}_J}$, which proves the assertion in this case.
	\item Assume now that $\mfh_2^{\bR^{2m-1}_J}\neq \mfh_2^{\bR^{2m-1}}$ but $\mfh_2^{\bR^{2m-1}}= \mfh_2$ and that, additionally, any $F\in \mfh$ with $F(\bR^{2m-1}_J)\subseteq \bR^{2m-1}_J$ satisfies $F(\bR^{2m-1})\subseteq \bR^{2m-1}$. Then $H_1(\bR^{2m-1})\subseteq \bR^{2m-1}$ and so we must have $JH_2(\bR^{2m-1})\subseteq \bR^{2m-1}$.
	Consequently, $H_2\in  \mfh_2^{\bR^{2m-1}_J}$ and the argumentation in part (a) shows $H\in \tilde{\mfk}_{\mfh}\oplus J\mfh_2^{\bR^{2m-1}_J}$, i.e. the statement holds in this case.
	\item Now assume that $\mfh_2^{\bR^{2m-1}_J}= \mfh_2^{\bR^{2m-1}}$ but $\mfh_2^{\bR^{2m-1}}\neq \mfh_2$ and let $F\in \mfh_2\setminus\mfh_2^{\bR^{2m-1}}$. Then, since $F(v)\notin \bR^{2m-1}$, there is some $\lambda_1\in \bR$ such that $(H_1-\lambda_1 F)(v)\in \bR^{2m-1}$ and so $G_1:=H_1-\lambda_1 F$ preserves $\bR^{2m-1}$ and $G_1|_{\bR^{2m-1}}\in \tilde{\mfk}_{\mfh}$. Moreover, there is some $\lambda\in \bR$ such that $(JF-\lambda F)(v)\in \bR^{2m-1}$, i.e. $JF-\lambda F$ preserves $\bR^{2m-1}$. As $H_2\in \mfh_2$, we may write
	$H_2=G_2+\lambda_2 F$ with $G_2\in \mfh_2^{\bR^{2m-1}_J}$ and $\lambda_2\in \bR$ and so
	\begin{equation*}
	\tilde{H}=G_1|_{\bR^{2m-1}}+\lambda_1 F|_{\bR^{2m-1}}-JG_2|_{\bR^{2m-1}}-\lambda_2 JF|_{\bR^{2m-1}}
	\end{equation*}
	and in order that the right hand side preserves $\bR^{2m-1}$, we must have $\lambda_1 F(v)-\lambda_2 JF(v)\in \bR^{2m-1}_J$, i.e. $\lambda_1=\lambda \lambda_2$. Consequently,
	\begin{equation*}
	\tilde{H}\in
\left(\tilde{\mfk}_{\mfh}\oplus \left.J\mfh_2^{\bR^{2m-1}_J}\right|_{\bR^{2m-1}}\right)+\spa{(JF-\lambda F)|_{\bR^{2m-1}}}
	\end{equation*}
and so
	\begin{equation*}
	\cF_{\mfh}\subseteq
	\left(\tilde{\mfk}_{\mfh}\oplus \left.J\mfh_2^{\bR^{2m-1}_J}\right|_{\bR^{2m-1}}\right)+\spa{(JF-\lambda F)|_{\bR^{2m-1}}}.
\end{equation*}
The converse inclusion follows easily by writing down, for any element $F$ of the space on the right hand side, an element $\nabla\in \cD_{\mfh}$ wich maps under $\cT$ to $F$.
\item 
Finally, assume that $\mfh_2^{\bR^{2m-1}_J}\neq\mfh_2^{\bR^{2m-1}}$ and $\mfh_2^{\bR^{2m-1}}\neq \mfh_2$. Choose any $F_1\in \mfh_2^{\bR^{2m-1}}\setminus\mfh_2^{\bR^{2m-1}_J}$ and $\tilde{F}_2\in \mfh_2 \setminus\mfh_2^{\bR^{2m-1}}$. By considering $F_2=\tilde{F}_2-\lambda F_1$ for appropriate $\lambda\in \bR$ instead of $\tilde{F}_2$, we may assume that $JF_2$ preserves $\bR^{2m-1}$. By scaling $F_1$ appropriately,
we may also assume that $F_2-J F_1$ maps $\bR^{2m-1}$ into $\bR^{2m-1}_J\subseteq \bR^{2m-1}$.
 Next, as in the proof of part (c), there exists some $\lambda_1\in \bR$ such that $G_1:=H_1-\lambda_1 F_2$ preserves $\bR^{2m-1}$ and so $G_1|_{\bR^{2m-1}}\in \tilde{\mfk}_{\mfh}$. Moreover, we may find $\mu_1,\mu_2\in \bR$ and $G_2\in \mfh_2^{\bR^{2m-1}_J}$ such that 
\begin{equation*}
H_2=G_2+\mu_1 F_1+\mu_2 F_2
\end{equation*}
Consequently,
\begin{equation*}
\tilde{H}=G_1|_{\bR^{2m-1}}+\lambda_1 F_2|_{\bR^{2m-1}}-JG_2|_{\bR^{2m-1}}-\mu_1 JF_1|_{\bR^{2m-1}}-\mu_2 JF_2|_{\bR^{2m-1}}.
\end{equation*}
As the right hand side has to preserve $\bR^{2m-1}$, we must have $\mu_1=\lambda_1$ and so get
	\begin{equation*}
	\tilde{H}\in
	\left(\tilde{\mfk}_{\mfh}\oplus \left.J\mfh_2^{\bR^{2m-1}_J}\right|_{\bR^{2m-1}}\right)+\spa{(F_2-J F_1)|_{\bR^{2m-1}},JF_2|_{\bR^{2m-1}}},
\end{equation*}
i.e.
\begin{equation*}
	\cF_{\mfh}\subseteq
\left(\tilde{\mfk}_{\mfh}\oplus \left.J\mfh_2^{\bR^{2m-1}_J}\right|_{\bR^{2m-1}}\right)+\spa{(F_2-JF_1)|_{\bR^{2m-1}},JF_2|_{\bR^{2m-1}}}.
\end{equation*}
The converse inclusion follows again easily by constructing explicitly, for any given element element $\tilde{H}$ of the space on the right hand side, an element $\nabla\in \cD_{\mfh}$ wich maps under $\cT$ to $\tilde{H}$. 
\end{enumerate}
\end{proof}
\begin{remark}
Theorem \ref{th:totallyreal} shows that if $\mfh$ is totally real and of type (I), (III) or (IV), or of type (II) with the additional property that any $F\in \mfh$ with $F(\bR^{2m-1}_J)\subseteq \bR^{2m-1}$ even satisfies $F(\bR^{2m-1})\subseteq \bR^{2m-1}$, then $\cF_{\mfh}\subseteq \tilde{\mfk}_{\mfh}\oplus (\bR^{2m-1}_J)^0\otimes \bR^{2m-1}$. So in these cases, the elements to be added to $\tilde{\mfk}_{\mfh}$ in order to obtain $\cF_{\mfh}$ are all of rank one and have common codimension one kernel $\bR^{2m-1}_J$. In Example \ref{ex:totallyrealtypes} below, we will see that this is, in general, not the case if $\mfh$ is a totally real subalgebra of type (II) without the just mentioned additional property.
\end{remark}
\begin{example}\label{ex:totallyrealtypes}
We give examples of totally real linear subalgebras $\mfh$ of all four types and in the case of type (II) also one which satisfy the additional property and one which does not satisfy this additional property in order to show that the types are not void. In all cases, let $J$ be the standard complex structure on $\bR^{2m}$ so that $\bR^{2m-1}_J=\bR^{2m-2}$:
\begin{itemize}
	\item Type (I): Let $\mfh_0$ be any totally real linear subalgebra of $(\bR^{2m-2},J)$, set $W:=\spa{e_1,e_3,\ldots,e_{2m-3}}$ and
	\begin{equation*}
	\mfh:=\left\{\left.\left(\begin{smallmatrix} A & w & Jw \\ 0 & 0 & 0 \\ 0 & 0 & 0 \end{smallmatrix}\right)\right|A\in \mfh_0,\ w\in W\right\}.
	\end{equation*}
Then $\mfh$ is a totally real subalgebra with
\begin{equation*}
	\mfh_2=\left\{\left.\left(\begin{smallmatrix} 0 & w & Jw \\ 0 & 0 & 0 \\ 0 & 0 & 0 \end{smallmatrix}\right)\right| w\in W \right\}=\mfh_2^{\bR^{2m-1}_J},\quad \tilde{\mfk}_{\mfh}=\left\{\left.\left(\begin{smallmatrix} A & w  \\ 0 & 0 \end{smallmatrix}\right)\right|A\in \mfh_0,\, w\in W \right\}.
\end{equation*}
Hence, $\mfh$ is of type (I) and
\begin{equation*}
\cF_{\mfh}=\left\{\left.\left(\begin{smallmatrix} A & v  \\ 0 & 0 \end{smallmatrix}\right)\right|A\in \mfh_0,\ v\in \bR^{2m-2} \right\}
\end{equation*}
by Theorem \ref{th:totallyreal} since $W\oplus JW=\bR^{2m-2}$.
\item Type (II) with additional property: Again, let $\mfh_0$ be any totally real linear subalgebra of $(\bR^{2m-2},J)$ and $W$ as before but now set
\begin{equation*}
	\mfh:=\left\{\left.\left(\begin{smallmatrix} A & w & Jw \\ 0 & \lambda & 0 \\ 0 & 0 & \lambda \end{smallmatrix}\right)\right|A\in \mfh_0,\ w\in W,\ \lambda\in \bR \right\}.
\end{equation*}
Then $\mfh$ is a totally real subalgebra with
\begin{equation*}
	\mfh_2=\left\{\left.\left(\begin{smallmatrix} 0 & w & Jw \\ 0 & \lambda & 0 \\ 0 & 0 & \lambda \end{smallmatrix}\right)\right| w\in W,\ \lambda\in \bR \right\}=\mfh_2^{\bR^{2m-1}}\neq \mfh_2^{\bR^{2m-1}_J}=\left\{\left.\left(\begin{smallmatrix} 0 & w & Jw \\ 0 & 0 & 0 \\ 0 & 0 & 0 \end{smallmatrix}\right)\right| w\in W\right\}.
\end{equation*}

Hence, $\mfh$ is of type (II) and the additional property in Theorem \ref{th:totallyreal} (b) is satisfied since any element $F\in \mfh$ preserves $\bR^{2m-1}$.
As
\begin{equation*}
\tilde{\mfk}_{\mfh}=\left\{\left.\left(\begin{smallmatrix} A & 0  \\ 0 & \lambda \end{smallmatrix}\right)\right|A\in \mfh_0,\, \lambda\in \bR \right\},
\end{equation*}
Theorem \ref{th:totallyreal} yields
\begin{equation*}
	\cF_{\mfh}=\left\{\left.\left(\begin{smallmatrix} A & v  \\ 0 & \lambda \end{smallmatrix}\right)\right|A\in \mfh_0,\ v\in \bR^{2m-2},\ \lambda\in \bR \right\}.
\end{equation*}
\item Type (II) without additional property: Let $J_0:=\left(\begin{smallmatrix} 0 & -1 \\ 1 & 0 \end{smallmatrix}\right)$, $m=2$ and consider
\begin{equation*}
\mfh=\spa{\diag(J_0,J_0),\diag(0,I_2)}.
\end{equation*}
Then $\mfh_2=\spa{(0,I_0)}=\mfh_2^{\bR^3}\neq \mfh_2^{\bR^3_J}=\{0\}$, i.e. $\mfh_2$ is of type (II). However, $F=\diag(J_0,J_0)$ satisfies $F(\bR^3_J)\subseteq \bR^3$ but not $F(\bR^3)\subseteq \bR^3$. Hence, we cannot apply Theorem \ref{th:totallyreal} to compute $\cF_{\mfh}$. In fact, we show now that in this case, 
\begin{equation*}
\cF_{\mfh}\neq \tilde{\mfk}_{\mfh}\oplus \left.J\mfh_2^{\bR^3_J}\right|_{\bR^3}
\end{equation*}
For this, we first note that 
\begin{equation*}
\tilde{\mfk}_{\mfh}\oplus \left.J\mfh_2^{\bR^3_J}\right|_{\bR^3}=\left\{\left.\left(\begin{smallmatrix} 0 & 0 & 0 \\ 0 & 0 & 0 \\ 0 & 0 & \lambda \end{smallmatrix}\right)\right|\lambda\in \bR \right\}.
\end{equation*}
Next, let $\nabla\in \cD_{\mfh}$ with $\nabla\in \cT^{-1}(\End(\bR^3))$ be given. As $\nabla\in (\bR^4)^*\otimes \mfh$, there exist $\alpha,\beta\in (\bR^4)^*$ such that
\begin{equation*}
\nabla=\alpha\otimes \diag(J_0,J_0)+\beta\otimes \diag(0,I_2).
\end{equation*}
Write now $\alpha=\alpha_0+\mu e^4$, $\beta=\beta_0+\nu e^4$ with $\alpha_0,\beta_0\in \spa{e^1,e^2,e^3}$, $\mu,\nu\in \bR$. Then the condition $\nabla\in \cD_{\mfh}$ yields
\begin{equation*}
-\alpha_0(e_1) e_1=\nabla_{e_1} e_2=\nabla_{e_2} e_1=\alpha_0(e_2) e_2,
\end{equation*}
i.e. $\alpha_0(e_1)=\alpha_0(e_2)=0$. But then
\begin{equation*}
\alpha_0(e_3) J_0 u=\nabla_{e_3} u=\nabla_u e_3=\beta_0(u) e_3
\end{equation*}
for all $u\in \spa{e_1,e_2}$, i.e. $\alpha_0(e_3)=0$ and $\beta_0(e_1)=\beta_0(e_2)=0$. Consequently, $\alpha_0=0$ and $\beta_0=\tau e^3$ for some $\tau\in \bR$ and so
\begin{equation*}
\nabla=\mu e^4\otimes \diag(J_0,J_0)+(\nu e^3+\tau e^4)\otimes \diag(0,I_2).
\end{equation*}
But then
\begin{equation*}
\begin{split}
\cT_{e_4}(\nabla)&=\mu \diag(J_0,J_0)|_{\bR^3}+\tau \diag(0,I_2)|_{\bR^3}+\mu e^4\otimes e_3|_{\bR^3}-(\nu e^3+\tau e^4)|_{\bR^3}\otimes e_4
\\
&=\left(\begin{smallmatrix} 0 & -\mu & 0 \\
                           \mu  & 0 & 0 \\
                            0 & 0 & \tau  \\
                            0 & 0 & \mu+\nu \end{smallmatrix}\right),
\end{split}
\end{equation*}
and the condition $\nabla\in \cT^{-1}(\End(\bR^3))$ forces $\nu=-\mu$. Thus,
\begin{equation*}
\cF_{\mfh}=\left\{\left.\left(\begin{smallmatrix} 0 & -\mu & 0 \\ \mu & 0 & 0 \\ 0 & 0 & \lambda \end{smallmatrix}\right)\right|\lambda,\mu\in \bR \right\}\neq\left\{\left.\left(\begin{smallmatrix} 0 & 0 & 0 \\ 0 & 0 & 0 \\ 0 & 0 & \lambda \end{smallmatrix}\right)\right|\lambda\in \bR \right\}= \tilde{\mfk}_{\mfh}\oplus \left.J\mfh_2^{\bR^3_J}\right|_{\bR^3}=\tilde{\mfk}_{\mfh}.
\end{equation*}
We note that the elements which are added to $\tilde{\mfk}_{\mfh}$ in order to obtain $\cF_{\mfh}$ are not of the form $e^3\otimes u$ for some $u\in \bR^3$. In fact, they even all have rank two.
\item Type (III): Here, take again any totally real subalgebra $\mfh_0$ of $(\bR^{2m-2},J)$ and set
\begin{equation*}
\mfh=\left\{\left.\diag(A,\lambda J_0)\right|A\in \mfh_0,\ \lambda\in \bR\right\}.
\end{equation*}
Then $\mfh_2=\spa{\diag(0,J_0)}\neq \{0\}=\mfh_2^{\bR^{2m-1}}=\mfh_2^{\bR^{2m-1}_J}$, i.e. $\mfh_2$ is of type (III). In this case, Theorem \ref{th:totallyreal} (c) yields 
\begin{equation*}
	\cF_{\mfh}=\left\{\left.\left(\begin{smallmatrix} A & 0  \\ 0 & \lambda \end{smallmatrix}\right)\right|A\in \mfh_0,\ \lambda\in \bR \right\}
\end{equation*}
by taking $F=\diag(0,J_0)$ and so $\lambda=0$.
\item Type (IV): Finally, take again any totally real subalgebra $\mfh_0$ of $(\bR^{2m-2},J)$ but now set
\begin{equation*}
	\mfh=\left\{\left.\diag(A,\lambda J_0+\mu I_2)\right|A\in \mfh_0,\ \lambda,\mu\in \bR\right\}.
\end{equation*}
Here, 
\begin{equation*}
\mfh_2=\spa{\diag(0,I_2),\diag(0,J_0)}\neq \mfh_2^{\bR^{2m-1}}=\spa{\diag(0,I_2)}\neq \{0\}=\mfh_2^{\bR^{2m-1}_J}
\end{equation*}
and so $\mfh$ is of type (IV). Taking $F_1=\diag(0,I_2)$ and $F_2=\diag(0,J_0)$, Theorem \ref{th:totallyreal} (d) gives us
\begin{equation*}
	\cF_{\mfh}=\left\{\left.\left(\begin{smallmatrix} A & 0  \\ 0 & \lambda \end{smallmatrix}\right)\right|A\in \mfh_0,\ \lambda\in \bR \right\}.
\end{equation*}
\end{itemize}
\end{example}

We note that all non-zero elements in $\mfh_2$ have rank two. Hence, if any non-zero element in $\mfh$ has at least rank three, then $\cF_{\mfh}=\tilde{\mfk}_{\mfh}$. We give such linear subalgebras a special name in analogy to so-called \emph{elliptic} subalgebras, whose definition we also recall here:
\begin{definition}
Let $\mfh\subseteq \mathfrak{gl}(n,\bR)$ be a Lie subalgebra. Then $\mfh$ is called \emph{elliptic} if $\mfh$ does not contain any non-zero element of rank at most one. Moreover, $\mfh$ is called \emph{super-elliptic} if $\mfh$ does not contain any non-zero element of rank at most two.
\end{definition}
With this definition at hand and noting that if $\mfh$ is super-elliptic, then surely all conjugated subalgebras are super-elliptic as well, Theorem \ref{th:totallyreal} implies:
\begin{corollary}\label{co:superelliptictotallyreal}
Let $\mfh$ be a super-elliptic totally real subalgebra. Then $\cF_{\mfh}=\tilde{\mfk}_{\mfh}$ and an $H$-structure is torsion-free if and only if it is left-invariantly flat.
\end{corollary}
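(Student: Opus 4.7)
The plan is to reduce everything to case (I) of Theorem \ref{th:totallyreal}. The essential observation, already highlighted in the remark preceding the corollary, is that every $F \in \mfh_2$ annihilates $\bR^{2m-1}_J$, which is a subspace of codimension two in $\bR^{2m}$, and hence has rank at most two. The codimension claim is a short dimension count: $\bR^{2m-1}$ is a hyperplane and $J\bR^{2m-1}$ is another hyperplane distinct from $\bR^{2m-1}$ (else the latter would be a $J$-invariant subspace of odd real dimension), so
\begin{equation*}
\dim \bR^{2m-1}_J = \dim \bR^{2m-1} + \dim J\bR^{2m-1} - \dim\bigl(\bR^{2m-1}+J\bR^{2m-1}\bigr) = 2m-2.
\end{equation*}
Super-ellipticity of $\mfh$ therefore forces $\mfh_2 = \{0\}$, so the chain $\mfh_2^{\bR^{2m-1}_J} \subseteq \mfh_2^{\bR^{2m-1}} \subseteq \mfh_2$ collapses to zero and $\mfh$ is of type (I). Theorem \ref{th:totallyreal} (a) then directly yields $\cF_{\mfh} = \tilde{\mfk}_{\mfh}$.

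To handle $H$-structures of arbitrary type (not just special ones), I would invoke Theorem \ref{th:arbrtyHstrtfflat}: both torsion-freeness and left-invariant flatness of an $H$-structure of type $[U]$ are controlled by the pair $\bigl(\cF_{T\mfh T^{-1}}, \tilde{\mfk}_{T\mfh T^{-1}}\bigr)$ for any $T \in \GL(n,\bR)$ mapping $U$ to $\bR^{n-1}$. Conjugation preserves super-ellipticity since ranks are conjugation-invariant, and it preserves total real-ness provided we also conjugate the complex structure: $T\mfh T^{-1}$ lies in $\mathfrak{gl}(TJT^{-1})$ and satisfies
\begin{equation*}
T\mfh T^{-1} \cap (TJT^{-1})(T\mfh T^{-1}) = T(\mfh\cap J\mfh)T^{-1} = \{0\}.
\end{equation*}
Hence the argument of the previous paragraph applies verbatim to $T\mfh T^{-1}$, giving $\cF_{T\mfh T^{-1}} = \tilde{\mfk}_{T\mfh T^{-1}}$ as well. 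Combining this with Remark \ref{re:flat=tf} yields the claimed equivalence of torsion-freeness and (left-invariant) flatness.

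There is no real obstacle here: once one has the codimension-two identity for $\bR^{2m-1}_J$, the corollary is almost a direct quotation of Theorem \ref{th:totallyreal} (a) together with the rank bound on $\mfh_2$. The only thing to be careful about is to pass to the conjugated subalgebra correctly when dealing with non-special types, which requires conjugating $J$ as well as $\mfh$.
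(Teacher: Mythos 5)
Your proposal is correct and follows essentially the same route as the paper: the paper likewise observes that every non-zero element of $\mfh_2$ has rank at most two (since it annihilates the codimension-two subspace $\bR^{2m-1}_J$), deduces $\mfh_2=\{0\}$ from super-ellipticity so that Theorem \ref{th:totallyreal} (a) gives $\cF_{\mfh}=\tilde{\mfk}_{\mfh}$, and notes that super-ellipticity is preserved under conjugation to cover non-special types. Your explicit dimension count for $\bR^{2m-1}_J$ and the verification that total realness survives conjugation are details the paper leaves implicit, but the argument is the same.
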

An important class of super-elliptic totally real subalgebras is provided by the following class of subalgebras:
\begin{definition}
\begin{itemize}
	\item  A \emph{hypercomplex} structure on $\bR^{4k}$ is a triple $(I,J,K)$ of complex structures on $\bR^{4k}$ satisfying $IJ=-JI=K$.
	\item
	An \emph{almost hypercomplex structure} on a manifold $M$ is a triple $(I,J,K)$ of almost complex structures on $M$ which is at each $p\in M$ a hypercomplex structure on $T_p M$.  Equivalently, an almost hypercomplex structure is a $\GL(I_0,J_0,K_0)$-structure for
	\begin{equation*}
		\GL(I_0,J_0,K_0):=\left\{\left.F\in \GL(4k,\bR)\right| [F,I_0]=[F,J_0]=[F,K_0]=0\right\},
	\end{equation*}
	where $I_0,J_0,K_0$ is the standard hypercomplex structure on $\bR^{4k}$.
	\item A \emph{hypercomplex structure} on a manifold $M$ is an almost hypercomplex structure $(I,J,K)$ such that the associated $\GL(I_0,J_0,K_0)$-structure $P$ is torsion-free.
	\item A \emph{hypercomplex subalgebra} $\mfh$ is a real subalgebra of $\mathfrak{gl}(I,J,K)$ for some almost hypercomplex structure $I,J,K$ on $\bR^{4k}$.
\end{itemize}
\end{definition}
\begin{corollary}\label{co:hypercomplex}
Let $\mfh$ be a hypercomplex subalgebra. Then $\mfh$ is super-elliptic and totally real with respect to any induced complex structure on $\bR^{4k}$. Consequently, $\cF_{\mfh}=\tilde{\mfk}_{\mfh}$ and an $H$-structure is torsion-free if and only if it is left-invariantly flat.
\end{corollary}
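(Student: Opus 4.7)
The plan is to verify the two structural claims about hypercomplex subalgebras and then invoke Corollary \ref{co:superelliptictotallyreal}. Throughout, I view $\bR^{4k}$ as a left $\bH$-module of rank $k$ via $i\cdot x:=I_0x$, $j\cdot x:=J_0x$, $k\cdot x:=K_0x$, so that $\mathfrak{gl}(I_0,J_0,K_0) = \End_{\bH}(\bR^{4k})$.

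For super-ellipticity, I would argue that if $F\in\mfh$, then $F$ commutes with $I_0,J_0,K_0$, so $\ker F$ is an $\bH$-submodule of $\bR^{4k}\cong\bH^k$. Since $\bH$ is a skew-field, every submodule of $\bH^k$ is free of rank at most $k$, so $\dim_{\bR}\ker F$ is a multiple of $4$; if $F\neq 0$, then $\ker F$ has real codimension at least $4$, giving $\mathrm{rank}\,F\geq 4>2$. Hence $\mfh$ is super-elliptic.

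For total reality with respect to an arbitrary induced complex structure $\tilde J = aI_0+bJ_0+cK_0$ (with $a^2+b^2+c^2=1$), I would first note that $\mfh\subseteq\mathfrak{gl}(\tilde J)$ by $\bR$-linearity of the commutation relations. Assuming $F,\tilde JF\in\mfh$, the element $\tilde JF$ must commute with each of $I_0,J_0,K_0$; combining with $[L,F]=0$ this rearranges to $[L,\tilde J]\,F = 0$ for $L\in\{I_0,J_0,K_0\}$. A direct quaternion-algebra computation (or equivalently, identifying $[L,\tilde J]$ with left multiplication by $[\ell, ai+bj+ck]$ for the corresponding $\ell\in\{i,j,k\}$) shows that these three commutators span, as a subspace of left multiplications, the two-plane in $\mathrm{Im}(\bH)$ orthogonal to $u:=ai+bj+ck$. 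Picking any nonzero $q$ in that plane, $q$ is a nonzero pure imaginary quaternion, hence invertible in $\bH$ since $q^{2}=-|q|^{2}<0$, so left multiplication by $q$ is an invertible operator on $\bR^{4k}$. From $qF=0$ we then conclude $F=0$, establishing $\mfh\cap\tilde J\mfh=\{0\}$.

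With both properties in hand, Corollary \ref{co:superelliptictotallyreal} applied to $\mfh$ (for instance with $\tilde J=I_0$) yields $\cF_{\mfh}=\tilde{\mfk}_{\mfh}$, and for special $H$-structures Proposition \ref{pro:specialHstrstf} together with Remark \ref{re:flat=tf} gives the equivalence of torsion-freeness and (left-invariant) flatness. To cover $H$-structures of arbitrary type, observe that conjugation by any $T\in\GL(4k,\bR)$ carries the hypercomplex subalgebra $\mfh\subseteq\mathfrak{gl}(I_0,J_0,K_0)$ to a hypercomplex subalgebra of $\mathfrak{gl}(TI_0T^{-1},TJ_0T^{-1},TK_0T^{-1})$; the same argument therefore yields $\cF_{T\mfh T^{-1}}=\tilde{\mfk}_{T\mfh T^{-1}}$, and Theorem \ref{th:arbrtyHstrtfflat} together with Lemma \ref{le:flatequivalent} closes the proof. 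The main obstacle I expect is the total reality calculation, since one has to rule out $F\neq 0$ uniformly for every unit vector $(a,b,c)$, including the degenerate cases where two of the coefficients vanish; this is exactly why all three commutator equations $[I_0,\tilde J]F=[J_0,\tilde J]F=[K_0,\tilde J]F=0$ are needed rather than only one.
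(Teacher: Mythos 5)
Your proof is correct and follows the same overall strategy as the paper: establish super-ellipticity and total reality, then invoke Corollary \ref{co:superelliptictotallyreal}. The details differ in both steps, though. For super-ellipticity the paper observes that the \emph{image} of a nonzero $F\in\mfh$ is a quaternionic subspace (if $Z=F(Y)$ lies in the image, so do $IZ=F(IY)$, $JZ$ and $KZ$), hence of real dimension at least $4$; you argue dually via the kernel being an $\bH$-submodule of $\bH^k$, which is equivalent. For total reality the paper verifies $\mfh\cap I\mfh=\{0\}$ by a direct chain of identities ($F(Y)=-F(K^2Y)=\dots=-F(Y)$) written out only for $I$, with the other cases left implicit ("e.g."); your argument --- deducing $[L,\tilde J]F=0$ for $L\in\{I_0,J_0,K_0\}$ and noting that these commutators realize left multiplication by a spanning set of the plane in $\mathrm{Im}(\bH)$ orthogonal to $ai+bj+ck$, any nonzero element of which is invertible --- treats an arbitrary induced complex structure $\tilde J=aI+bJ+cK$ uniformly, which is what the statement literally asserts. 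This is a modest but genuine strengthening over the paper's written computation. The concluding appeal to Corollary \ref{co:superelliptictotallyreal}, together with your conjugation remark for non-special types, matches what the paper does.
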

\begin{proof}
Any non-zero element $F\in\mfh$ has rank at least four since if $Z$ is in the image of $F$, then also $IZ$, $JZ$ and $KZ$ are in the image of $F$. Hence, $\mfh$ is super-elliptic. Moreover, $\mfh$ is totally real since if $F\in \mfh$ such that, e.g., also $IF\in \mfh$, then we obtain
\begin{equation*}
\begin{split}
F(Y)&=-F(K^2Y)=-KF(KY)=-IJF(KY)=J (IF)(KY) \\
&=IF(JKY)=IF(IY)=F(I^2 Y)=-F(Y),
\end{split}
\end{equation*}
i.e. $F(Y)=0$ for all $Y\in \bR^{4k}$ and so $F=0$, i.e. $\mfh\cap I\mfh=\{0\}$. But then the last assertions follow from Corollary \ref{co:superelliptictotallyreal}.
\end{proof}
\begin{remark}
Corollary \ref{co:cK1=0elliptic} below will give us another proof of the assertion in Corollary \ref{co:hypercomplex}
\end{remark}
Corollary \ref{co:hypercomplex} reproves two known results in the literature, where in both cases the group $H$ acts transitively on $\mathrm{Grass}_{4k-1}(\bR^{4k})$:
\begin{example}
\begin{itemize}
\item In the case $H=\GL(I,J,K)$, Corollary \ref{co:hypercomplex} reproves the characterisation of the almost Abelian Lie algebras admitting a hypercomplex structure in \cite[Theorem 3.2]{AB1} and also reproves the result that they are actually all flat, cf. \cite[Proposition 3.7]{AB1}.
\item In the case $H=\mathrm{Sp}(I,J,K)$, Corollary \ref{co:hypercomplex} gives back the classification of almost Abelian hyperk\"ahler Lie algebras from \cite[Proposition 3.2]{BDFi}.
\end{itemize}
\end{example}
\subsubsection{Hyperparacomplex subalgebras}
We begin with the main definition:
\begin{definition}
	\begin{itemize}
		\item  A \emph{hyperparacomplex} structure on a $2m$-dimensional vector space $V$ is a triple $(J,E,K)$ of endomorphisms of $V$ consisting of a complex structure $J$ and two para-complex structures $E$ and $K$ on $V$ satisfying $JE=-EJ=K$. The \emph{standard} hypercomplex structure on $\bR^{2m}$ is the hyperparacomplex structure  $J_0,E_0,K_0)$ uniquely defined by $J_0 e_i=e_{m+i}$, $E_0 e_i=e_i$ and $E_0 e_{m+i}=-e_{m+i}$ for $i=1,\ldots,m$.
		\item
		An \emph{almost hyperparacomplex structure} on a manifold $M$ is a triple $(J,E,K)$ of endomorphism fields which is pointwise a hyper-paracomplex structure on $T_p M$. Equivalently, an almost hyperparacomplex structure is a \linebreak $\GL(J_0,E_0,K_0)$-structure for
		\begin{equation*}
		\GL(J_0,E_0,K_0):=\left\{\left.F\in \GL(4k,\bR)\right| [F,J_0]=[F,E_0]=[F,K_0]=0\right\},
		\end{equation*}
		where $J_0,E_0,K_0$ is the standard hyperparacomplex structure on $\bR^{2m}$.
		\item 
		A \emph{hyperparacomplex structure} is an almost hyperparacomplex structure for which the associated $\GL(J_0,E_0,K_0)$-structure is torsion-free.
		\item A \emph{hyperparacomplex subalgebra} $\mfh$ is a real subalgebra of $\mathfrak{gl}(J,E,K)$ for some hyper paracomplex structure $J,E,K$ on $\bR^{4k}$.
	\end{itemize}
\end{definition}
\begin{remark}
	\begin{itemize}
		\item 
	Hyperparacomplex structures are also called \emph{product structures} since they may, alternatively, be defined as a pair $(J,E)$ of a complex structure $J$ and a product structure $E$ with $JE=-EJ$. In this case, $E$ is automatically a paracomplex structure on the underlying manifold $M$. Note that, in contrast to the hypercomplex case, the dimension of $M$ need not to be divisible by four but only by two.
	\item
	Any hyperparacomplex structure $(J,E,K)$ on $\bR^{2m}$ equips $\bR^{2m}$ with the splitting $\bR^{2m}=\bR^{2m}_+\oplus \bR^{2m}_-$ with $\bR^{2m}_{\pm}$ being the $\pm 1$-eigenspaces of $E$. Then $J$ identifies $\bR^{2m}_-$ with $\bR^{2m}_+$. Under this identification, we have
	 \begin{equation*}
	 	\mathfrak{gl}(J,E,K)=\left\{\left.\diag(A,A)\right|A\in \mathfrak{gl}(m,\bR)\right\}=\Delta \mathfrak{gl}(m,\bR).
	 \end{equation*}
	\end{itemize}
\end{remark}
Next, we show that hypercomplex subalgebras are totally real:
\begin{lemma}
Let $\mfh\subseteq \mathfrak{gl}(J,E,K)$ be a hyperparacomplex subalgebra. Then $\mfh$ is totally real with respect to $J$.
\end{lemma}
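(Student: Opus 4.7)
The plan is to exploit the splitting $\bR^{2m}=\bR^{2m}_+\oplus \bR^{2m}_-$ into the $\pm 1$-eigenspaces of the paracomplex structure $E$, which is already invoked in the preceding remark. The key observation I will use is a purely formal dichotomy: an endomorphism commuting with $E$ preserves each eigenspace, whereas the relation $JE=-EJ$ forces $J$ to swap the two eigenspaces. These two incompatible behaviours will directly yield totally realness.

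More concretely, I would first verify that $J(\bR^{2m}_\pm)\subseteq \bR^{2m}_\mp$: for $v_+\in \bR^{2m}_+$, one has $E(Jv_+)=-JEv_+=-Jv_+$, so $Jv_+\in \bR^{2m}_-$, and symmetrically for $\bR^{2m}_-$. Next, since $\mfh\subseteq \mathfrak{gl}(J,E,K)\subseteq \mathfrak{gl}(E)$, every $F\in\mfh$ satisfies $F(\bR^{2m}_\pm)\subseteq \bR^{2m}_\pm$.

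Now suppose $F\in \mfh\cap J\mfh$, say $F=JG$ with $G\in \mfh$. For any $v_+\in \bR^{2m}_+$, on one hand $F(v_+)\in \bR^{2m}_+$ because $F\in \mfh$ preserves eigenspaces; on the other hand $F(v_+)=J(G(v_+))$ with $G(v_+)\in \bR^{2m}_+$ (since $G\in\mfh$), so $F(v_+)=J(G(v_+))\in \bR^{2m}_-$. Thus $F(v_+)\in \bR^{2m}_+\cap \bR^{2m}_-=\{0\}$. The same argument on $\bR^{2m}_-$ gives $F|_{\bR^{2m}_-}=0$, so $F=0$, establishing $\mfh\cap J\mfh=\{0\}$.

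There is essentially no obstacle here — the whole proof rests on the one-line incompatibility that $E$-commuting endomorphisms preserve, while $J$ swaps, the two eigenspaces of $E$. The only small care needed is to note that the argument is purely linear-algebraic at a point, so no additional structure of $\mfh$ as a Lie algebra beyond being a linear subspace of $\mathfrak{gl}(J,E,K)$ is actually used; in particular, the conclusion applies to any linear subspace of $\mathfrak{gl}(J,E,K)$, which is stronger than but consistent with the stated lemma.
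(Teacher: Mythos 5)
Your proof is correct. It establishes exactly the required statement $\mfh\cap J\mfh=\{0\}$, and every step checks out: $JE=-EJ$ does force $J$ to interchange the $\pm 1$-eigenspaces of $E$, while any $F\in\mfh\subseteq\mathfrak{gl}(E)$ preserves them, so an element of $\mfh\cap J\mfh$ must map each eigenspace into $\bR^{2m}_+\cap\bR^{2m}_-=\{0\}$.

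Your route differs from the paper's. The paper argues by a single chain of identities, taking $F\in\mfh$ with $JF\in\mfh$ and computing
\begin{equation*}
F=-J^2F=-JFJ=-JFJE^2=-JFKE=-KJFE=-JEJFE=EJ^2FE=-EFE=-FE^2=-F,
\end{equation*}
using the commutation of $F$ and $JF$ with $J$, $E$, $K$ together with $JE=-EJ$; hence $F=0$. Your argument replaces this symbol manipulation by the structural observation about the eigenspace splitting $\bR^{2m}=\bR^{2m}_+\oplus\bR^{2m}_-$, which is arguably more transparent and reveals that only commutation with $E$ is used: the intersection condition $\mfh\cap J\mfh=\{0\}$ holds for any linear subspace of $\mathfrak{gl}(E)$ and any $J$ anticommuting with $E$, which is slightly more than the paper's computation makes visible (the paper also invokes commutation with $J$ and $K$). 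Both proofs are purely linear-algebraic and neither uses the Lie algebra structure of $\mfh$, so your closing remark is consistent with, and a mild sharpening of, the paper's treatment.
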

\begin{proof}
Let $F\in \mfh$ be given such that $JF\in \mfh$ as well. Then
\begin{equation*}
\begin{split}
F&=-J^2 F=-JFJ=-JFJE^2=-JFKE=-KJFE=-JEJFE\\
=&EJ^2FE=-EFE=-FE^2=-F
\end{split}
\end{equation*}
and so $F=0$. Hence, $\mfh$ is totally real.
\end{proof}
\begin{definition}
Let $\mfh$ be a hyperparacomplex subalgebra $\mfh$. Then
\begin{equation*}
\mfh=\left\{\left.\diag(A,A)\right|A\in \tilde{\mfh}\right\}=\Delta \tilde{\mfh}
\end{equation*}
 for some subalgebra $\tilde{\mfh}$ of $\mathfrak{gl}(m,\bR)$ and we say that $\mfh$ is \emph{induced by} $\tilde{\mfh}$.

If $\tilde{\mfh}=\mathfrak{so}(m)$, we call $\Delta \mathrm{O}(m)$-structures $P$ also \emph{almost K\"ahler-K\"unneth structures}, whereas if $m=2k$ and $\tilde{\mfh}=\mathfrak{sp}(2k,\bR)$, then an $\Delta\mathrm{Sp}(2k,\bR)$-structure $P$ is called \emph{almost hyper para-K\"ahler} or \emph{almost hypersymplectic} structure. As usual, the word ``almost'' is skipped if $P$ is torsion-free in both cases.
\end{definition}
Observing that if $\tilde{\mfh}$ is elliptic, then $\mfh$ is super-elliptic, Corollary \ref{co:superelliptictotallyreal} implies:
\begin{theorem}\label{th:hyperparacomplexelliptic}
Let $\mfh$ be a hyperparacomplex subalgebra induced by an elliptic subalgebra $\tilde{\mfh}$ of $\mathfrak{gl}(m,\bR)$. Then $\cF_{\mfh}=\tilde{\mfk}_{\mfh}$ and so an $H$-structure on an almost Abelian Lie algebra is torsion-free if and only if it is left-invariantly flat.
\end{theorem}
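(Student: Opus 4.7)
The plan is to reduce the statement directly to Corollary \ref{co:superelliptictotallyreal}, whose hypotheses on $\mfh$ are ``super-elliptic'' and ``totally real''. The second hypothesis is already in hand: the lemma immediately preceding this theorem shows that every hyperparacomplex subalgebra is totally real with respect to the complex structure $J$. So the only non-trivial task is to verify that $\mfh = \Delta\tilde{\mfh}$ is super-elliptic whenever $\tilde{\mfh}$ is elliptic.

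For super-ellipticity I would use the explicit description from the remark on hyperparacomplex subalgebras: under the splitting $\bR^{2m} = \bR^{2m}_{+} \oplus \bR^{2m}_{-}$ into $\pm 1$-eigenspaces of $E$ and the identification of $\bR^{2m}_{-}$ with $\bR^{2m}_{+}$ via $J$, every element of $\mfh$ has the block-diagonal form $F = \diag(A,A)$ for some $A\in\tilde{\mfh}$. If $F\neq 0$, then $A\neq 0$, and ellipticity of $\tilde{\mfh}$ gives $\rank(A)\geq 2$, hence
\begin{equation*}
\rank(F) = 2\,\rank(A) \geq 4 > 2.
\end{equation*}
Thus $\mfh$ contains no non-zero element of rank at most two, i.e., $\mfh$ is super-elliptic. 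Now Corollary \ref{co:superelliptictotallyreal} immediately yields $\cF_{\mfh} = \tilde{\mfk}_{\mfh}$ and, via Remark \ref{re:flat=tf}, that every special $H$-structure is torsion-free if and only if left-invariantly flat.

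To extend the torsion-free equals left-invariantly flat equivalence to $H$-structures $P$ of arbitrary type $[U]$, I would invoke Theorem \ref{th:arbrtyHstrtfflat}. Pick $T\in\GL(n,\bR)$ with $T(U) = \bR^{n-1}$ and observe that rank is a conjugation invariant, so $T\mfh T^{-1}$ is again super-elliptic, while $T\mfh T^{-1}$ is totally real with respect to the complex structure $TJT^{-1}$. Applying Corollary \ref{co:superelliptictotallyreal} to $T\mfh T^{-1}$ gives $\cF_{T\mfh T^{-1}} = \tilde{\mfk}_{T\mfh T^{-1}}$, and then parts (a) and (b) of Theorem \ref{th:arbrtyHstrtfflat} combine to show that $P$ is torsion-free if and only if $f\in \tilde{\mfk}_{T\mfh T^{-1}}$, if and only if $P$ is left-invariantly flat.

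There is essentially no hard step here; the only point requiring a moment's thought is the rank-doubling observation $\rank(\diag(A,A)) = 2\rank(A)$, and after that the conclusion is a mechanical assembly of Corollary \ref{co:superelliptictotallyreal}, Theorem \ref{th:arbrtyHstrtfflat}, and the totally-real lemma for hyperparacomplex subalgebras.
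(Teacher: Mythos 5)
Your proposal is correct and follows essentially the same route as the paper: the paper's proof is precisely the observation that ellipticity of $\tilde{\mfh}$ forces super-ellipticity of $\mfh=\Delta\tilde{\mfh}$ (your rank-doubling computation $\rank(\diag(A,A))=2\rank(A)\geq 4$ makes this explicit), combined with the preceding lemma that hyperparacomplex subalgebras are totally real and an appeal to Corollary \ref{co:superelliptictotallyreal}. Your additional paragraph on extending to arbitrary types via conjugation invariance and Theorem \ref{th:arbrtyHstrtfflat} is a correct (and welcome) elaboration of what the paper leaves implicit.
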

\begin{remark}
Theorem \ref{th:hyperparacomplexelliptic} show that, in particular, any K\"ahler-K\"unneth structure on an almost Abelian Lie algebra is flat, a result which is known to be true for any K\"ahler-K\"unneth structure on any manifold by \cite{HKo}.
\end{remark}
We do now concentrate on the case $H=\GL(J_0,E_0,K_0)$ and aim at getting a characterisation of the almost Abelian Lie algebras admitting a torsion-free $H$-structures, i.e. a hyperparacomplex structure. Instead of determining all possible orbits of the $\GL(J,E,K)$-action on $\mathrm{Grass}_{2m-1}(\bR^{2m})$, which would lead to an infinite number of such orbits, we only distinguish hyperparacomplex subalgebras $\mfh$ conjugated to $\mathfrak{gl}(J_0,E_0,K_0)$ according to whether $\bR^{2m-1}_J$ is $E$-invariant or not and first determine in both cases some properties of $\mfh$ and $\mfh_2$:
\begin{lemma}\label{le:hyperparacomplexmfh2}
Let $(J,E,K)$ be a hyperparacomplex structure on $\bR^{2m}$ and let $\mfh:=\mathfrak{gl}(J,E,K)$.
\begin{enumerate}[(a)]
	\item If $\bR^{2m-1}_J$ is $E$-invariant, then $\mfh$ is of type $(II)$ and any element $F\in \mfh$ with $F(\bR^{2m-1}_J)\subseteq \bR^{2m-1}_J$ satisfies $F(\bR^{2m-1})\subseteq \bR^{2m-1}$.
	\item If $\bR^{2m-1}_J$ is not $E$-invariant, then $\mfh_2=\{0\}$.
\end{enumerate}
\end{lemma}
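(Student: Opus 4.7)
The plan is to work in the splitting $\bR^{2m} = \bR^{2m}_+ \oplus \bR^{2m}_- \cong \bR^m \oplus \bR^m$ given by the $\pm 1$-eigenspaces of $E$, with $\bR^{2m}_-$ identified with $\bR^{2m}_+$ via $J$. Under this identification one has $J(x,y) = (-y,x)$, $E(x,y)=(x,-y)$, $K(x,y)=(y,x)$, and $\mfh = \{F_A : A \in \mathfrak{gl}(m,\bR)\}$ with $F_A(x,y) = (Ax,Ay)$. Write the defining functional of $\bR^{2m-1}$ as $\phi(x,y) = \langle a,x\rangle + \langle b,y\rangle$ for some $(a,b) \in (\bR^m)^2 \setminus \{0\}$; then $\phi \circ J(x,y) = \langle b,x\rangle - \langle a,y\rangle$. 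Since $J$ has no real eigenvalues, $\phi$ and $\phi \circ J$ are linearly independent, so $\bR^{2m-1}_J$ is their common kernel and has codimension two.

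The driving observation is that $\bR^{2m-1}_J$ is $E$-invariant if and only if $a$ and $b$ are linearly dependent: a short calculation computing $E^{\ast}\phi(x,y) = \langle a,x\rangle - \langle b,y\rangle$ shows that $\spa{\phi,\phi\circ J}$ is preserved by $E^{\ast}$ precisely in that case. So case (b) of the lemma is exactly the case that $a,b$ are linearly independent, and case (a) is the complementary one.

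For part (a), write $b = \lambda a$ (assuming $a \neq 0$; the case $a = 0$ is symmetric), fix $u_0 \in \bR^m$ with $\langle a,u_0\rangle = 1$, and set $\tilde{U} := \ker \langle a,\any\rangle \subseteq \bR^m$. Then $\bR^{2m-1}_J = \tilde{U} \oplus \tilde{U}$ and $\bR^{2m-1} = \bR^{2m-1}_J \oplus \spa{w}$ with $w := (-\lambda u_0, u_0)$. An element $F_A$ lies in $\mfh_2$ iff $A|_{\tilde{U}} = 0$. For any such $A$ one computes $\phi(F_A(w)) = \phi(-\lambda Au_0, Au_0) = -\lambda\langle a,Au_0\rangle + \langle b,Au_0\rangle = 0$, so $F_A(w) \in \bR^{2m-1}$ automatically, giving $\mfh_2^{\bR^{2m-1}} = \mfh_2$. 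On the other hand, $F_A \in \mfh_2^{\bR^{2m-1}_J}$ forces the additional condition $Au_0 \in \tilde{U}$, dropping the dimension by one, so $\mfh_2^{\bR^{2m-1}_J} \subsetneq \mfh_2$ and $\mfh$ is of type (II). The same identity $\phi(F_A(w)) = 0$ in fact holds for \emph{every} $F_A \in \mfh$, so whenever $F_A(\bR^{2m-1}_J) \subseteq \bR^{2m-1}_J$ (equivalently, $A\tilde{U} \subseteq \tilde{U}$) one automatically obtains $F_A(\bR^{2m-1}) \subseteq \bR^{2m-1}$, which is the second claim of (a).

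Part (b) is then immediate: when $a,b$ are linearly independent, for any $x \in \bR^m$ the system $\langle a,y\rangle = \langle b,x\rangle$, $\langle b,y\rangle = -\langle a,x\rangle$ in $y \in \bR^m$ admits a solution, so the first projection $\pi_1: \bR^{2m-1}_J \to \bR^m$ is surjective. Hence $F_A \in \mfh_2$ forces $A = 0$ on all of $\bR^m$, i.e.\ $F_A = 0$. The only structurally interesting step is the dichotomy in the second paragraph; everything else is direct linear algebra in the chosen coordinates.
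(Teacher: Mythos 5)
Your proof is correct, and it takes a visibly different route from the paper's. The paper argues invariantly: in case (a) it first produces a $J$- and $E$-invariant complement $U_2=\spa{w,Jw}$ of $\bR^{2m-1}_J$ in $\bR^{2m}$ by splitting a vector into its $E$-eigencomponents, then uses the commutation relations to show that every $F\in\mfh$ acts on $U_2$ as $a\,\id_{U_2}+G$ with $G(U_2)\subseteq \bR^{2m-1}_J$, from which both claims of (a) follow and the projection onto $U_2$ along $\bR^{2m-1}_J$ witnesses $\mfh_2^{\bR^{2m-1}}\neq\mfh_2^{\bR^{2m-1}_J}$; in case (b) it uses the one-line identity $F(w)=F(E^2w)=EF(Eu)=EF(u)=0$ for $u\in\bR^{2m-1}_J$ with $Eu\notin\bR^{2m-1}_J$. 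You instead fix the model $\mfh=\Delta\,\mathfrak{gl}(m,\bR)$ and encode the hyperplane by the covector pair $(a,b)$, so that the $E$-invariance dichotomy becomes exactly linear dependence versus independence of $a$ and $b$; part (a) then reduces to the identity $\phi(F_A(w))=-\lambda\langle a,Au_0\rangle+\lambda\langle a,Au_0\rangle=0$, and part (b) to the surjectivity of the projection $\pi_1:\bR^{2m-1}_J\to\bR^m$. Your version makes the case split completely transparent and yields, as a by-product, the explicit dimensions $\dim\mfh_2=\dim\mfh_2^{\bR^{2m-1}}=m$ and $\dim\mfh_2^{\bR^{2m-1}_J}=m-1$, which is more than the lemma asserts; the paper's version avoids coordinates and sets up the decomposition $F|_{U_2}=a\,\id+G$ that is reused in the proof of the subsequent classification theorem. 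Two minor points you gloss over, both harmless: to conclude $E$-invariance of $\bR^{2m-1}_J$ from $E^*\phi\in\spa{\phi,\phi\circ J}$ one should also note that this span is $J^*$-invariant, so $E^*(\phi\circ J)=-(E^*\phi)\circ J$ lands there automatically; and the linear independence of $\phi$ and $\phi\circ J$ is exactly the statement that $J^*$ has no real eigenvalue, which you do state.
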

\begin{proof}
\begin{enumerate}[(a)]
	\item Let $\bR^{2m-1}_J$ be $E$-invariant. We first show that then $\bR^{2m-1}_J$ admits an $E$- and 
	$J$-invariant complement in $\bR^{2m}$. To show this, take any $v\in \bR^{2m}\setminus \bR^{2m-1}_J$. We may 
	decompose $v=v_+ + v_-$ with $Ev_{\pm}=\pm v_{\pm}$ and must have $v_+\notin \bR^{2m-1}_J$ or
	$v_-\notin  \bR^{2m-1}_J$ as if both vectors are in $\bR^{2m-1}_J$, then $v=v_+ + v_-\in  \bR^{2m-1}_J$ as well, a contradiction. In any case, we have found $w\in  \bR^{2m}\setminus \bR^{2m-1}_J$ which is an eigenvector for $E$. But then $U_2:=\spa{w,Jw}$ is an $E$- and $J$-invariant complement of $\bR^{2m-1}_J$ in $\bR^{2m}$. Now since $F$ commutes with $J$, there are $a,b\in \bR$ and some $J$-equivariant map $G:U_2\rightarrow \bR^{2m-1}_J$ such that
	\begin{equation*}
	F(w)=aw+b Jw+G(w),\quad F(Jw)=-b w+a Jw +G(Jw).
	\end{equation*}
Since $F$ also commutes with $E$ and if $\epsilon\in\{1,-1\}$ denotes the eigenvalue of $w$ with respect to $E$, we have
\begin{equation*}
\begin{split}
a w+b\, Jw+ G(w)&=F(w)=F(\epsilon\, Ew)=\epsilon E F(w)=\epsilon E (aw+b Jw+G(w))\\
&=a w- b Jw+ \epsilon E G(w),
\end{split}
\end{equation*}
which yields that $G$ is $E$-invariant and $b=0$. Hence,
\begin{equation*}
F(u)=a u+G(u)
\end{equation*}
for all $u\in U_2$. In particular, $\mfh_2=\mfh_2^{\bR^{2m-1}}$ and any element $F\in \mfh$ with $F(\bR^{2m-1}_J)\subseteq \bR^{2m-1}_J$ satisfies $F(\bR^{2m-1})\subseteq \bR^{2m-1}$. Finally, $\mfh_2^{\bR^{2m-1}}\neq \mfh_2^{\bR^{2m-1}_J}$, and so $\mfh$ is of type $(II)$, since $\tilde{F}\in \End(\bR^{2m})$, defined by $\tilde{F}(\bR^{2m-1}_J)=0$ and $\tilde{F}(u)=u$ for $u\in U_2$ with $U_2$ chosen as above, is an element in $\mfh_2^{\bR^{2m-1}}\backslash \mfh_2^{\bR^{2m-1}_J}$.
\item Now assume that $\bR^{2m-1}_J$ is not $E$-invariant and let $F\in \mfh_2$ be given. Moreover, let $u\in \bR^{2m-1}_J$ be such that $w:=Eu\notin  \bR^{2m-1}_J$. Then $\spa{w,Jw}$ is a complement of $\bR^{2m-1}_J$ in $\bR^{2m}$ and 
\begin{equation*}
F(w)=F(E^2 w)=E F(Ew)=EF(u)=0.
\end{equation*}
and so also $F(Jw)=0$, which shows that $F=0$, i.e. $\mfh_2=\{0\}$.
\end{enumerate}
\end{proof}
\begin{theorem}\label{th:hyperparacomplexclass}
Let $\mfh$ be a $2m$-dimensional almost Abelian Lie algebra. Then:
\begin{enumerate}[(a)]
\item
 $\mfh$ admits a hyperparacomplex structure $(J,E,K)$ for which $\mfu_J$ is $E$-invariant if and only if 
 there exist $A\in \bR^{(m-1)\times (m-1)}$, $w_1,w_2\in \bR^{m-1}$ and $a\in \bR$ with
 \begin{equation*}
 f=\begin{pmatrix} A & 0 & w_1 \\ 0 & A & w_2 \\ 0 & 0 & a \end{pmatrix}
 \end{equation*}
 with respect to some basis $(X_1,\ldots,X_{m-1},Y_1,\ldots,Y_{m-1},V)$ of $\mfu$. 
 
In this case, there is  a hyperparacomplex structure $(J,E,K)$ for which $\mfu_J$ is $E$-invariant such $\spa{X_1,\ldots,X_{m-1}}$ is in the $+1$-eigenspace of $E$, $\spa{Y_1,\ldots,Y_{m-1}}$ is in the $-1$-eigenspace of $E$ and $J(X_i)=Y_i$ for $i=1,\ldots,m-1$.
\item 
$\mfh$ admits a hyperparacomplex structure $(J,E,K)$ for which $\mfu_J$ is not $E$-invariant if and only if 
there exist $A\in \bR^{(m-2)\times (m-2)}$, $u_1,u_2\in \bR^{m-2}$ and $u\in \bR$ such that
\begin{equation*}
	f=\begin{pmatrix}
		A & 0 & u_1 & -u_2 & u_1 \\
		0& A & u_2 & u_1 & -u_2 \\
		0 & 0 & a & 0 & 0 \\
		0 & 0 & 0 & a & 0 \\
		0 & 0 & 0 & 0 & a
	\end{pmatrix} 
\end{equation*}
with respect to some basis $(X_1,\ldots,X_{m-2},Y_1,\ldots,Y_{m-2},V_1,V_2,V_3)$ of $\mfu$. In this case, there is a hyperparacomplex structure $(J,E,K)$ for which $\mfu_J$ is not $E$-invariant such that $\spa{X_1,\ldots,X_{m-2}}$ is in the $+1$-eigenspace of $E$, $\spa{Y_1,\ldots,Y_{m-2}}$ is in the $-1$-eigenspace of $E$, $J(X_i)=Y_i$ for $i=1,\ldots,m-1$ and $J(V_1)=V_2$ and $E(V_1)=V_3$.
\end{enumerate}
\end{theorem}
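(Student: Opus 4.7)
The plan is, in both parts, to translate the existence of a hyperparacomplex structure on $\mfg$ of the given type into the condition $f\in\cF_{\mfh}$ for $\mfh=\mathfrak{gl}(J,E,K)$ via Theorem~\ref{th:almostAbeliantf}, and then to compute $\cF_{\mfh}$ explicitly by combining Theorem~\ref{th:totallyreal} with the type information supplied by Lemma~\ref{le:hyperparacomplexmfh2}. After identifying $\mfg$ with $\bR^{2m}$ and $\mfu$ with $\bR^{2m-1}$ via an adapted basis (so that $\mfu_J$ corresponds to $\bR^{2m-1}_J$), the ``if'' direction of both statements becomes immediate: given a basis realising the prescribed form of $f$, the explicit formulas for $J,E,K$ on the basis define a hyperparacomplex structure with the required $E$-invariance behaviour on $\mfu_J$, and the corresponding matrix lies in $\cF_{\mfh}$ by the computations outlined below, so Proposition~\ref{pro:specialHstrstf}(b) produces a torsion-free $H$-structure.

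For part~(a), Lemma~\ref{le:hyperparacomplexmfh2}(a) places $\mfh$ in type~(II) and shows that the extra hypothesis of Theorem~\ref{th:totallyreal}(b) is satisfied, whence $\cF_{\mfh}=\tilde{\mfk}_{\mfh}\oplus J\mfh_2^{\bR^{2m-1}_J}|_{\bR^{2m-1}}$. Using the $(J,E)$-invariant splitting $\bR^{2m}=\bR^{2m-1}_J\oplus U_2$ produced inside the proof of that lemma, I pick a basis $(X_1,\ldots,X_{m-1})$ of $\bR^{2m-1}_J\cap\bR^{2m}_+$, set $Y_i:=JX_i$, and take $V\in U_2$ to be an $E$-eigenvector, which up to switching the sign of $E$ may be assumed to satisfy $EV=V$. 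Via the canonical identification $\mfh\cong\mathfrak{gl}(\bR^{2m}_+)$ (every $F\in\mfh$ is determined by $F|_{\bR^{2m}_+}$ and satisfies $F|_{\bR^{2m}_-}=JF|_{\bR^{2m}_+}J^{-1}$), the condition $F(\bR^{2m-1})\subseteq\bR^{2m-1}$ forces the $V$-row of $F|_{\bR^{2m}_+}$ in the basis $(X_i,V)$ to vanish, producing the matrix form $\begin{pmatrix}A&0&w_1\\0&A&0\\0&0&a\end{pmatrix}$ for $\tilde{\mfk}_{\mfh}$. An analogous but shorter computation for $\mfh_2^{\bR^{2m-1}_J}$ shows that its image under $F\mapsto(JF)|_{\bR^{2m-1}}$ contributes exactly the $w_2$-entry in the $(Y,V)$-slot, yielding the matrix stated in the theorem.

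For part~(b), Lemma~\ref{le:hyperparacomplexmfh2}(b) gives $\mfh_2=\{0\}$, so $\mfh$ is (trivially) of type~(I) and Theorem~\ref{th:totallyreal}(a) reduces matters to computing $\tilde{\mfk}_{\mfh}$. The delicate step is the construction of a good basis. Setting $W:=\bR^{2m-1}_J$ and $W_\pm:=W\cap\bR^{2m}_\pm$, one has $\dim W_+=\dim W_-=:k$ because $J$ swaps $\bR^{2m}_\pm$ while $W$ is $J$-invariant; taking $W_0$ to be a $J$-invariant complement of $W_+\oplus W_-$ in $W$, the inclusion $W\subseteq\bR^{2m-1}$ together with the non-$E$-invariance of $W$ forces $k=m-2$ and $\dim W_0=2$ via a direct dimension count of $W+EW\subseteq\bR^{2m}$. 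An analysis of the map $\varphi\colon W\to\bR^{2m}/\bR^{2m-1}$, $w\mapsto[Ew]$, which is surjective precisely because $W$ is not $E$-invariant, then shows that $W_0\cap\ker\varphi$ is one-dimensional, so there is a nonzero $V_1\in W_0$ with $V_3:=EV_1\in\bR^{2m-1}\setminus W$. Setting $V_2:=JV_1$, $V_4:=JV_3=-EV_2$, and choosing a basis $(X_i)$ of $W_+$ with $Y_i:=JX_i$, I obtain an adapted basis of $\bR^{2m}$. Writing the defining functional of $\bR^{2m-1}$ in the $\pm$-eigenbasis (concretely $\alpha=V_-^*-W_+^*$ for suitable eigenvectors $V_\pm,W_\pm$ spanning the $E$-decomposition of $\spa{V_1,\ldots,V_4}$), the condition $F(\bR^{2m-1})\subseteq\bR^{2m-1}$ becomes a linear system on the matrix entries of $F|_{\bR^{2m}_+}$ that is solved by a block upper-triangular matrix with two equal diagonal scalars. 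Converting back via $V_1=(V_+-W_-)/2$, $V_2=(W_++V_-)/2$, $V_3=(V_++W_-)/2$ then produces the matrix displayed in the theorem.

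The main obstacle is the basis construction in part~(b), since both the coincidence $\dim W_0=2$ and the existence of a suitable $V_1\in W_0$ with $EV_1\in\bR^{2m-1}\setminus W$ rest on careful dimension counts of $W\cap EW$, $W+EW$ and $\ker\varphi$. After the basis is chosen, the remaining work is routine matrix bookkeeping, with only the sign conventions requiring care to match the precise matrix stated.
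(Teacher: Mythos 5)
Your overall strategy coincides with the paper's: both reduce the statement to computing $\cF_{\mfh}$ for $\mfh=\mathfrak{gl}(J,E,K)$ via Theorem~\ref{th:totallyreal} together with the type information from Lemma~\ref{le:hyperparacomplexmfh2}, and then work out $\tilde{\mfk}_{\mfh}$ and the correction summand in an adapted basis. Part~(b) of your argument is essentially sound: your construction of the basis $(X_i,Y_i,V_1,V_2,V_3)$ via the $E$-eigenspace decomposition of $W=\mfu_J$ and the map $\varphi$ is a legitimate, slightly different route to the same basis the paper obtains from $\mfu_J\cap E\mfu_J$ and a choice of $v\in\mfu_J\setminus(\mfu_J\cap E\mfu_J)$ with $Ev\notin\mfu_J$; since $\mfh_2=\{0\}$ there, the rest is indeed only the computation of $\tilde{\mfk}_{\mfh}$.

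Part~(a), however, has a genuine gap in the ``only if'' direction. You take $V\in U_2$ to be an $E$-eigenvector and use $(X_1,\ldots,X_{m-1},Y_1,\ldots,Y_{m-1},V)$ as a basis of $\mfu$. But $V$ must lie in $\mfu$, and $\mfu\cap U_2$ is a line in $U_2=\spa{w,Jw}$ (with $Ew=w$) that is dictated by the given structure: writing $\mfu\cap U_2=\spa{\lambda_1 w-\lambda_2 Jw}$, this line is an $E$-eigenline only when $\lambda_1\lambda_2=0$, and no other choice of $J$- and $E$-invariant complement $U_2$ of $\mfu_J$ repairs this. Indeed, the $\GL(J,E,K)$-action on hyperplanes with $E$-invariant $\mfu_J$ has infinitely many orbits, parametrised by $[\lambda_1:\lambda_2]$, and your argument only covers the two orbits $\lambda_1=0$ and $\lambda_2=0$. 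In the generic case your intermediate formula for $\tilde{\mfk}_{\mfh}$ is incorrect: the condition $F(\mfu)\subseteq\mfu$ imposes the extra linear relation $\lambda_2 u_1+\lambda_1 u_2=0$ on the two off-diagonal columns, so $\tilde{\mfk}_{\mfh}$ is strictly smaller than the set you display, and likewise $\left.J\mfh_2^{\bR^{2m-1}_J}\right|_{\bR^{2m-1}}$ is not simply ``the $w_2$-slot'' but the span of the vectors $(-u_2,u_1)$ with $\lambda_2u_1+\lambda_1u_2=0$. The theorem survives because these two defects cancel and the direct sum is again the full set of pairs $(w_1,w_2)$ --- but this cancellation is precisely the content of the paper's computation and is absent from your argument. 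To close the gap you must carry the parameters $(\lambda_1,\lambda_2)$ through the computation of both summands, as the paper does.
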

\begin{proof}
\begin{enumerate}[(a)]
	\item 
Let $(J,E,K)$ be a hyperparacomplex structure on $\mfg$ with $\mfu_J$ being $E$-invariant. Let $[U]$ be the type of $(J,E,K)$ and let $u$ be the adapted frame of $P$ identifying $\bR^{2m}$ with $\mfg$ and $U$ with $\mfu$. We note that under this identification, $(J,E,K)$ get the standard hyperparacomplex structure $(J_0,E_0,K_0)$ on $\bR^{2m}$. As $U_J$ endowed with the restrictions of the hypercomplex structure $(J_0,E_0,K_0)$ is a hyperparacomplex vector space of dimension $2m-2$, we may find an element $T\in \GL(2m,\bR)$ with $T(U_J)=\bR^{2m-2}$, $T(U)=\bR^{2m-1}$ and such that the hyperparacomplex structure $(\tilde{J}_0,\tilde{E}_0,\tilde{K}_0)$ defined by $\tilde{A}_0:=T A_0 T^{-1}$ for $A\in\{J,E,K\}$ satisfies $\tilde{J}_0 e_i=e_{m-1+i}$, $\tilde{E}_0 e_i=e_i$ and $\tilde{E}_0 e_{m-1+i}=-e_{m-1+i}$ for $i=1,\ldots,m-1$. Now $e_{2m-1}$ is, in general, not an eigenvector of $\tilde{E}_0$ but may be written as $e_{2m-1}=\lambda_1 w-\lambda_2 Jw$ for $w\in \bR^{2m}\setminus \bR^{2m-2}$ with $\tilde{E}_0 w=w$. We note that this is equivalent to $\lambda_1 e_{2m-1}+\lambda_2 J e_{2m-1}$ being an eigenvector of $\tilde{E}_0$ with eigenvalue $1$ and that then
\begin{equation*}
\tilde{\mfk}_{T\mfh T^{-1}}= \left\{\left.\left(\begin{smallmatrix} A & 0 & u_1 \\ 0 & A & u_2 \\ 0 & 0 & a \end{smallmatrix}\right)\right|A\in \GL(m-1,\bR), u_1,u_2\in \bR^{m-1} \textrm{ with } \lambda_2 u_1+\lambda_1 u_2=0, \, a\in \bR\right\}.
\end{equation*}
Recalling that $(T\mfh T^{-1})_2^{\bR^{2m-1}_J}|_{\bR^{2m-1}}$ is contained in $\tilde{\mfk}_{T\mfh T^{-1}}$, we see that $(T\mfh T^{-1})_2^{\bR^{2m-1}_J}|_{\bR^{2m-1}}$ consists of exactly the elements in $\tilde{\mfk}_{T\mfh T^{-1}}$ which satisfy $A=0$ and $a=0$. Thus
\begin{equation*}
J(T\mfh T^{-1})_2^{\bR^{2m-1}_J}|_{\bR^{2m-1}}=\left\{\left.\left(\begin{smallmatrix} 
	0 & 0 & -u_2 \\ 0 & 0 & u_1 \\ 0 & 0 & 0 \end{smallmatrix}\right)\right| u_1,u_2\in \bR^{m-1} \textrm{ with } \lambda_2 u_1+\lambda_1 u_2=0, \right\}
\end{equation*}
Hence, the result follows from Theorem \ref{th:totallyreal} and Lemma \ref{le:hyperparacomplexmfh2}, which imply
\begin{equation*}
\begin{split}
	\cF_{T\mfh T^{-1}}&=\tilde{\mfk}_{T\mfh T^{-1}}\oplus J(T\mfh T^{-1})_2^{\bR^{2m-1}_J}|_{\bR^{2m-1}}\\
	&=\left\{\left.\left(\begin{smallmatrix} A & 0 & w_1 \\ 0 & A & w_2 \\ 0 & 0 & a \end{smallmatrix}\right)\right|A\in \GL(m-1,\bR), w_1,w_2\in \bR^{m-1}, \, a\in \bR\right\}
\end{split}
\end{equation*}
\item 
Let $(J,E,K)$ be a hyperparacomplex structure on $\mfg$ with $\mfu_J$ not being $E$-invariant. Then 
$\mfu_{J,E}:=\mfu_J\cap E\mfu_J$ is $J$- and $E$-invariant and has dimension $2m-4$. Note that since $\mfu_J$ is not $E$-invariant, we may choose $v\in \mfu_J\setminus \mfu_{J,E}$ with $Ev\notin \mfu_J$. Then $v,Jv,Ev, Kv$ are linearly independent and $\spa{v,Jv,Ev,Kv}$ is a complement of $\mfu_{J,E}$ in $\mfg$. By choosing $v$ appropriately, we may assume that $\mfu_{J,E}\oplus \spa{v,Jv,Kv}=\mfu$.

Next, denote by $[U]$ be the type of 
$(J,E,K)$ and identify as in the proof of part (a) with an appropriate adatped frame $u$ the spaces $\bR^{2m}$ with $\mfg$, $\bR^{2m-1}$ with $\mfu$ and the hyperparacomplex structure $(J,E,K)$ with $(J_0,E_0,K_0)$. Here, we may choose $T\in \GL(2m,\bR)$ such that $T(U_{J,E})=\spa{e_1,\ldots,e_{2m-4}}$, $T(u^{-1}(v))=e_{2m-3}$, $T(u^{-1}(Jv))=e_{2m-2}$, $T(u^{-1}(Ev))=e_{2m-1}$, $T(u^{-1}(Kv))=e_{2m}$ and such that the hyperparacomplex structure $(\tilde{J}_0,\tilde{E}_0,\tilde{K}_0)$ defined as in part (a) satisfies $\tilde{J}_0 e_i=e_{m-2+i}$, $\tilde{E}_0 e_i=e_i$ and $\tilde{E}_0 e_{m-2+i}=-e_{m-2+i}$ for $i=1,\ldots,m-2$. We note that then $\tilde{J}_0 e_{2m-3}=e_{2m-2}$, $\tilde{E}_0(e_{2m-3})=e_{2m-1}$ and $\tilde{K}_0(e_{2m-3})=e_{2m}$.

Let now $F\in T\mfh T^{-1}$ with $F(\bR^{2m-1})\subseteq \bR^{2m-1}$. Then $F$ has to preserve $\bR^{2m-4}:=\spa{e_1,\ldots,e_{2m-4}}$ since this space equals $\bR^{2m-1}_{\tilde{E}_0,\tilde{J}_0}$ for the induced hyperparacomplex structure. Moreover, if we set $U_3:=\spa{e_{2m-3}, e_{2m-2},\linebreak e_{2m-1}}$ and denote by $\tilde{F}$ the projection of $F|_{U_3}:U_3\rightarrow \bR^{2m-1}$ to $U_3$ along $\bR^{2m-4}$, we get
\begin{equation*}
\tilde{F}(e_{2m-3})=a e_{2m-3}+b  e_{2m-2}+c e_{2m-1}
\end{equation*}
for certain $a,b,c\in \bR$. Then
\begin{equation*}
	\bR^{2m-1}\ni\tilde{F}(e_{2m-2})=\tilde{F}(\tilde{J}_0 e_{2m-3})=\tilde{J}_0\tilde{F}(e_{2m-3})=-b e_{2m-3}+a e_{2m-2}+c e_{2m}
\end{equation*}
forcing $c=0$. Moreover,
\begin{equation*}
	\bR^{2m-1} \ni \tilde{F}(e_{2m-1})=\tilde{F}(\tilde{E}_0 e_{2m-3})=\tilde{E}_0\tilde{F}(e_{2m-3})=a e_{2m-1}-b e_{2m}
\end{equation*}
forcing $b=0$. This shows that
\begin{equation*}
	F=\begin{pmatrix}
		A & 0 & u_1 & -u_2 & u_1 & u_2 \\
		0& A & u_2 & u_1 & -u_2 & u_1\\
		0 & 0 & a & 0 & 0 & 0\\
		0 & 0 & 0 & a & 0  & 0 \\
		0 & 0 & 0 & 0 & a & 0\\
		0 & 0 & 0 & 0 & 0 & a
	\end{pmatrix} 
\end{equation*}
for certain $A\in \bR^{(m-2)\times (m-2)}$, $u_1,u_2\in \bR^{m-2}$, and the assertion follows.
\end{enumerate}
\end{proof}
\begin{example}
	Let $m=2$. In \cite{BV}, all four-dimensional Lie algebras admitting a hyperparacomplex structure have been determined. Let us show that our classification of four-dimensional almost Abelian Lie algebras admitting a hyperparacomplex structure coincides with the classification of \cite{BV} in the almost Abelian case.
	
	For this, note that by Theorem \ref{th:hyperparacomplexclass}, a four-dimensional almsot Abelian Lie algebra admits a hyperparacomplex structure if and only if 
		\begin{equation*}
			f=\begin{pmatrix} a & 0 & w_1 \\ 0 & a & w_2 \\ 0 & 0 & b \end{pmatrix}
		\end{equation*}
		for certain $a,b,w_1,w_2\in \bR$ with respect to some basis of $\mfu$. The possible Jordan normal forms up to non-zero scaling of these endomorphisms of $\bR^3$ are	
		\begin{equation*}
		0,\ \diag(1,1,\lambda),\ \diag(0,0,1),\ \left(\begin{smallmatrix} 0 & 0 & 0 \\ 0 & 0 & 1 \\ 0 & 0 & 0 \end{smallmatrix} \right), \left(\begin{smallmatrix} 1 & 0 & 0 \\ 0 & 1& 1 \\ 0 & 0 & 1 \end{smallmatrix} \right).
	\end{equation*}
	and these correspond exactly to the Lie algebras called (PHC1), (PCH9) for $\lambda\neq 0$ where $c\neq 1$ or $c=1$ and $a=b=0$ in \cite{BV} and (PCH3) for $\lambda=0$, (PCH5), (PCH4) and (PCH9) for $c=1$ and $(a,b)\neq (0,0)$, respectively, which are exactly the almost Abelian Lie algebras admitting a hyperparacomplex structure according to \cite{BV}.
\end{example}
We end this section by identifying the \emph{flat} hyperparacomplex structures, i.e. those where the underlying $\GL(J_0,E_0,K_0)$-structure is flat, among all hyperparacomplex structures:
\begin{corollary}\label{co:flathyperparacomplex}
Let $\mfg$ be an almost Abelian Lie algebra and $(E,J,K)$ be a hyperparacomplex structure on $\mfg$. Then:
\begin{enumerate}[(a)]
	\item  Let $\mfu_J$ be $E$-invariant. Then $(J,E,K)$ is flat if and only if
	\begin{equation*}
f=\begin{pmatrix}
			A & 0 & w_1 \\
			0 & A & w_2 \\
			0 & 0 & a
		\end{pmatrix}
	\end{equation*}
for some $A\in \mathfrak{gl}(m-1,\bR)$, $w_1,w_2\in \bR^{m-1}$ and $a\in \bR$ such that if $(\lambda,\mu)\in \{(0,0)\}$ are chosen so that $\lambda V+ \mu JV$ is an eigenvector of $E$ with eigenvalue $1$ for some $V\in \mfu\setminus \mfu_J$, then $\mu w_1+\lambda w_2$ is either zero or an eigenvector of $A$ with eigenvalue $2a$.

\item If $\mfu_J$ is not $E$-invariant, then $(J,E,K)$ is flat.
\end{enumerate}
\end{corollary}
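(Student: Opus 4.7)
The strategy is to combine Theorem~\ref{th:hyperparacomplexclass} (characterising the form of $f$ needed for existence of a hyperparacomplex structure of each type) with the left-invariant flatness criterion of Theorem~\ref{th:arbrtyHstrtfflat}(a), which asserts that $f\in\tilde{\mfk}_{T\mfh T^{-1}}$ implies $P$ is left-invariantly flat.

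For part (b), the argument is short. Lemma~\ref{le:hyperparacomplexmfh2}(b) gives $\mfh_2=\{0\}$, so $\mfh$ is of type (I), and Theorem~\ref{th:totallyreal}(a) then yields $\cF_{T\mfh T^{-1}}=\tilde{\mfk}_{T\mfh T^{-1}}$. Since $(J,E,K)$ being a hyperparacomplex structure already forces $f\in\cF_{T\mfh T^{-1}}$ via Theorem~\ref{th:arbrtyHstrtfflat}(b), we get $f\in\tilde{\mfk}_{T\mfh T^{-1}}$ automatically, and Theorem~\ref{th:arbrtyHstrtfflat}(a) gives flatness with no case analysis required.

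For part (a), Lemma~\ref{le:hyperparacomplexmfh2}(a) places $\mfh$ in type (II), so Theorem~\ref{th:totallyreal}(b) gives
\begin{equation*}
\cF_{T\mfh T^{-1}}=\tilde{\mfk}_{T\mfh T^{-1}}\oplus J\mfh_2^{\bR^{2m-1}_J}|_{\bR^{2m-1}}.
\end{equation*}
Inspecting the proof of Theorem~\ref{th:hyperparacomplexclass}(a), the summand $\tilde{\mfk}_{T\mfh T^{-1}}$ consists precisely of those $f$ of the stated block form subject to the extra linear constraint $\mu w_1+\lambda w_2=0$. Hence the first alternative of the corollary follows at once from Theorem~\ref{th:arbrtyHstrtfflat}(a).

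For the remaining alternative, where $\mu w_1+\lambda w_2$ is a nonzero eigenvector of $A$ with eigenvalue $2a$, I would go beyond the specific left-invariant connection constructed in Proposition~\ref{pro:specialHstrstf}(a). I parametrise the space of left-invariant torsion-free $H$-connections by pairs $(G,H)$, with $G:\mfu\to\mfh$ linear satisfying the Codazzi-type symmetry $G(u_1)u_2=G(u_2)u_1$ and $H\in\mfh$ determined by $Hu-G(u)X=f(u)$, and analyse the remaining flatness equations $[G(u_1),G(u_2)]=0$ and $[H,G(u)]=G(f(u))$. A direct computation, exploiting the hyperparacomplex block structure of $\mfh$ and taking $G$ supported along the direction of $V$ modulo $\mfu_J$, reduces the solvability of these equations to the single condition $(A-2aI)(\mu w_1+\lambda w_2)=0$. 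The factor $2a$ arises from the identity $G(f(V))=aG(V)+G(w_1)+G(w_2)$ together with the $\ad(X)$-action yielding another $a$ through $[H,G(V)]$. Conversely, when the eigenvalue condition fails, the same system forces no compatible flat $H$-connection to exist, completing the iff.

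The main obstacle will be the detailed analysis of these flatness equations in the type (II) hyperparacomplex setting: one must verify that the eigenvalue is indeed exactly $2a$ rather than some other multiple of $a$, and must rule out any other obstruction to flatness once the eigenvalue condition has been imposed. Throughout this analysis one must also carefully keep track of the admissibility of the basis shift $V\mapsto V+Z$ with $Z\in\mfu_J$---since such a shift typically changes $(\lambda,\mu)$ unless $Z$ lies in a specific one-dimensional subspace of $\mfu_J$, the invariant $\mu w_1+\lambda w_2$ cannot be altered by left-invariant basis changes, which is precisely why the more general $(G,H)$-construction is needed.
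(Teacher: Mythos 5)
Your part (b) and the sufficiency half of part (a) are essentially the paper's argument, and your flatness equations $[G(u_1),G(u_2)]=0$, $[H,G(u)]=G(f(u))$ are the correct curvature conditions for a left-invariant connection; the computation you sketch does produce the eigenvalue $2a$ (in the paper it appears as $Au_i-au_i=au_i$). The genuine gap is in the necessity direction of part (a). Showing that the system for $(G,H)$ has no solution only proves that $\mfg$ admits no \emph{left-invariant} flat torsion-free $H$-connection, i.e.\ that $P$ is not left-invariantly flat. But the corollary asserts non-\emph{flatness}, and by Lemma \ref{le:flatequivalent} flatness only requires a \emph{local} flat torsion-free $H$-connection near each point, which need not be left-invariant. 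The paper itself stresses that flatness and left-invariant flatness are inequivalent in general (the $\GL(n,\bR)$-structure on a Lie algebra with no flat torsion-free connection is flat but not left-invariantly flat), so your ``conversely'' step does not close the iff as written.

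The missing ingredient, which is exactly what the paper's proof supplies, is that for a hyperparacomplex subalgebra one has $\mfh^{(1)}=\{0\}$ (cited from \cite[Proposition 5.1]{ASa}), so the torsion-free $H$-connection is \emph{unique}. Uniqueness forces any local flat torsion-free $H$-connection to coincide with the restriction of the global left-invariant one, so $P$ is flat if and only if that single connection is flat. This both closes your gap and makes your parametrisation of all torsion-free connections by pairs $(G,H)$ unnecessary: there is only one such pair, and the paper simply writes it down explicitly (using the normal form from the proof of Theorem \ref{th:hyperparacomplexclass}(a)) and computes its curvature. You should either invoke $\mfh^{(1)}=\{0\}$ at the outset, or otherwise justify why non-existence of a left-invariant flat torsion-free connection precludes flatness in the sense of the definition.
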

\begin{proof}	
Part (b) follows directly from the fact that by the proof of Theorem \ref{th:hyperparacomplexclass} (b), we have $\cF_{\mfh}=\tilde{\mfk}_{\mfh}$ for $\mfh:=\mathfrak{gl}(\tilde{J},\tilde{E},\tilde{K})$ with $(\tilde{J},\tilde{E},\tilde{K})$ being a hyperparacomplex structure on $\bR^	{2m}$ for which $\bR^{2m}_{\tilde{J}}$ is not $J$-invariant.

So let us assume that $\mfu_J$ is $J$-invariant. We note that by \cite[Proposition 5.1]{ASa}, $\mfg$ admits a unique torsion-free $H$-connection $\nabla$ on $\mfg$ and so $P$ is flat if and only if $\nabla$ is flat. By the proof of Theorem \ref{th:hyperparacomplexclass} (a), we may identify $\bR^{2m}$ with $\mfg$ and $\bR^{2m-1}$ with $\mfu$  and $\bR^{2m-2}$ with $\mfu_J$ in such a way that $\lambda e_{2m-1}+\mu J e_{2m-1}$ is an eigenvector of $E$ with eigenvalue $1$ and that then $\nabla$ is given by $\nabla_u=0$ for all $u\in \bR^{2m-2}$ and
\begin{equation*}
	\nabla_{e_{2m-1}}=
		\left(\begin{smallmatrix}
		0 & 0 & u_1 & -u_2 \\
		0 & 0 & u_2 & u_1 \\
		0 & 0 & 0 & 0 \\
		0 & 0 & 0 & 0
		\end{smallmatrix}\right),\quad
		\nabla_{Je_{2m-1}}=\left(\begin{smallmatrix}
			A & 0 & v_1 & -v_2 \\
			0 & A & v_2 & v_1 \\
			0 & 0 & a & 0 \\
			0 & 0 & 0 & a
		\end{smallmatrix}\right)
	\end{equation*}
for certain $A\in \mathfrak{gl}(m-1,\bR)$, $u_1,u_2,v_1,v_2\in \bR^{m-1}$, $a\in \bR$ with $\mu u_1+\lambda u_2=0$ and $\mu v_1+\lambda v_2=0$.

Note that so
\begin{equation*}
f=\left(\begin{smallmatrix}
	A & 0 & v_1+u_2 \\
	0 & A & v_2-u_1 \\
	0 & 0 & a
\end{smallmatrix}\right).
\end{equation*}
Moreover,
\begin{equation*}
[\nabla_{Je_{2m-1}},\nabla_{e_{2m-1}}]=
	\left(\begin{smallmatrix}
		0 & 0 & A u_1-a u_1 & - Au_2 +a u_2 \\
		0 & 0 & A u_2- au_2 & A u_1- a u_1 \\
		0 & 0 & 0 & 0 \\
		0 & 0 & 0 & 0
	\end{smallmatrix}\right)
\end{equation*}
and this has to be equal to
\begin{equation*}
\nabla_{[Je_{2m-1},e_{2m-1}]}=\nabla_{f(e_{2m-1})}=\nabla_{a e_{2m-1}}=\left(\begin{smallmatrix}
	0 & 0 & a u_1 & -a u_2 \\
	0 & 0 & a u_2 & a u_1 \\
	0 & 0 & 0 & 0 \\
	0 & 0 & 0 & 0
\end{smallmatrix}\right).
\end{equation*}
This is satisfied if and only if $u_1$ and $u_2$ are either zero or eigenvectors of $A$ with eigenvalue $2a$, from which the assertion directly follows.
\end{proof}
\begin{example}
The non-flatness of the hyperparacomplex structure given in \cite[Example 6.3]{ASa} may be explained by Corollary \ref{co:flathyperparacomplex}. In this example, the authors consider the four-dimensional almost Abelian Lie algebra with basis $(X_1,X_2,X_3,X_4)$ such that $(X_1,X_2,X_3)$ is a basis of $\mfu$,
\begin{equation*}
f:=\ad(X_4)|_{\mfu}=\begin{pmatrix}
	-1 & 0 & -2 \\
	0 & -1 &  0\\
	0 & 0 & 1
\end{pmatrix}
\end{equation*}
and the hyperparacomplex structure $(J,E,K)$ on $\mfg$ defined by $\mfg_+=\spa{X_1,X_4}$, $\mfg_-=\spa{X_2,X_3}$ and $JX_1=X_2$, $JX_3=-X_4$. In this case, we may choose $\lambda=0$ and $\mu=1$ so that $\nabla$ is flat only if $1\cdot -2=-2$ would be zero or an eigenvector of $-1$ with eigenvalue $2$. As this is not the case, $(J,E,K)$ is non-flat here.
\end{example}
\subsubsection{Unitary subalgebras}
Again, we begin with the basic definition:
\begin{definition}
Let $(g,J)$ be a Hermitian structure on $\bR^{2m}$. Then we define
\begin{equation*}
\mathfrak{u}(g,J):=\left\{\left.F\in \mathfrak{gl}(J)\right| g(Fv,w)=-g(v,Fw) \textrm{ for all }v,w\in \bR^{2m}\right\}.
\end{equation*}
A subalgebra $\mfh$ of $\mathfrak{u}(g,J)$ is called a \emph{unitary} subalgebra.
\end{definition}
Unitary subalgebras are totally real:
\begin{lemma}\label{le:unitarytotallyreal}
A unitary subalgebra $\mfh$ is totally real.
\end{lemma}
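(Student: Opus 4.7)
The goal is to show that if $F\in \mfh$ and also $JF\in \mfh$, then $F=0$, since $\mfh\subseteq \mathfrak{u}(g,J)$ with $(g,J)$ a Hermitian structure. The plan is to exploit that there are two independent skew-symmetry relations in play: skewness with respect to $g$ (coming from $\mathfrak{u}(g,J)$) and the fact that $J$ itself is skew with respect to $g$ (coming from the Hermitian compatibility $g(J\cdot,J\cdot)=g(\cdot,\cdot)$, equivalently $g(Jv,w)=-g(v,Jw)$).

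First, I would record the three basic identities available for all $v,w\in \bR^{2m}$: (i) $g(Jv,w)=-g(v,Jw)$ from the Hermitian compatibility; (ii) $g(Fv,w)=-g(v,Fw)$ from $F\in \mathfrak{u}(g,J)$; (iii) $g(JFv,w)=-g(v,JFw)$ from $JF\in \mathfrak{u}(g,J)$. Additionally, because $F\in \mathfrak{gl}(J)$, we have $FJ=JF$.

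The key computation is to evaluate $g(JFv,w)$ in a second way, using (i) and (ii) and the commutation $FJ=JF$:
\begin{equation*}
g(JFv,w)=-g(Fv,Jw)=g(v,FJw)=g(v,JFw).
\end{equation*}
Comparing with (iii) yields $g(v,JFw)=-g(v,JFw)$, so $g(v,JFw)=0$ for all $v,w$. Non-degeneracy of $g$ then forces $JFw=0$ for all $w$, i.e.\ $JF=0$, and applying $-J$ gives $F=0$. This shows $\mfh\cap J\mfh=\{0\}$.

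There is essentially no obstacle here; the argument is a direct manipulation of the defining identities, and it uses only that $g$ is non-degenerate (so it works equally well in the indefinite Hermitian case). The only thing to be careful about is to justify $g(Jv,w)=-g(v,Jw)$ from the assumed form of a Hermitian structure on $\bR^{2m}$, which is immediate from $g(Jv,Jw)=g(v,w)$ together with $J^2=-\id$.
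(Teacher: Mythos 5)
Your proof is correct and follows essentially the same route as the paper: the identical chain $g(JFv,w)=-g(Fv,Jw)=g(v,FJw)=g(v,JFw)$ shows $JF$ is $g$-symmetric, which is incompatible with $JF\in\mathfrak{u}(g,J)$ unless $JF=0$, i.e.\ $F=0$. You merely spell out the final non-degeneracy step that the paper leaves implicit.
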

\begin{proof}
Let $F\in \mfh$ be given. Then
\begin{equation*}
g(JFv,w)=-g(Fv,Jw)=g(v,FJw)=g(v,JFw)
\end{equation*}
for all $v,w\in \bR^{2m}$. Hence, $JF$ is not skew-symmetric and so not in $\mathfrak{u}(g,J)$ unless $F=0$. This shows that $\mfh$ is totally real.
\end{proof}
For a unitary subalgebra, we get the following explicit formula for $\cF_{\mfh}$ in the various different cases:
\begin{theorem}\label{th:unitarysubalgebras}
Let $\mfh$ be a unitary subalgebra. Then:
\begin{enumerate}[(a)]
	\item If $\bR^{2m-1}$ is non-degenerate and $v\in \bR^{2m-1}$ is orthogonal to $(\bR^{2m-1})_J$, then
	\begin{equation*}
\cF_{\mfh}=\tilde{\mfk}_{\mfh}
	\end{equation*}
	if $v\otimes (Jv)^b-Jv\otimes v^b\notin \mfh$ and 
		\begin{equation*}
	\cF_{\mfh}=\tilde{\mfk}_{\mfh}\oplus \spa{v\otimes v^b}
		\end{equation*}
	if $v\otimes (Jv)^b-Jv\otimes v^b\in \mfh$.
    \item If $(\bR^{2m-1})_J$ is degenerate and $0\neq v\in (\bR^{2m-1})^{\perp}$, then
    \begin{equation*}
    	\cF_{\mfh}=\tilde{\mfk}_{\mfh}
    \end{equation*}
    if $v^b\otimes Jv-(Jv)^b\otimes v\notin \mfh$ and 
    \begin{equation*}
    	\cF_{\mfh}=\tilde{\mfk}_{\mfh}\oplus \spa{(Jv)^b|_{\bR^{2m-1}}\otimes Jv}
    \end{equation*}
    if $v^b\otimes Jv-(Jv)^b\otimes v\in \mfh$.
\end{enumerate}
\end{theorem}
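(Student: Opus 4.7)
The strategy is to apply Theorem \ref{th:totallyreal}, which is available since $\mfh$ is totally real by Lemma \ref{le:unitarytotallyreal}. This reduces the task to identifying the type of $\mfh$ and computing the spaces $\mfh_2$, $\mfh_2^{\bR^{2m-1}}$, $\mfh_2^{\bR^{2m-1}_J}$, together with $J\mfh_2^{\bR^{2m-1}_J}|_{\bR^{2m-1}}$. The crucial preliminary observation is that for any $F\in\mfh_2$, the skew-symmetry of $F$ with respect to $g$ gives $g(Fw,u)=-g(w,Fu)=0$ for all $u\in(\bR^{2m-1})_J$ and $w\in\bR^{2m}$, so $F(\bR^{2m})\subseteq((\bR^{2m-1})_J)^{\perp}$, which is a $J$-invariant two-dimensional subspace.

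For case (a), the hypothesis that $\bR^{2m-1}$ is non-degenerate forces $(\bR^{2m-1})_J$ to be non-degenerate as well (the $g|_{\bR^{2m-1}}$-orthogonal of $(\bR^{2m-1})_J$ inside $\bR^{2m-1}$ cannot be $J$-invariant for dimensional reasons) and yields $((\bR^{2m-1})_J)^{\perp}=\spa{v,Jv}$ with $Jv\notin\bR^{2m-1}$ and $g(v,v)\neq 0$. Writing $F(v)=\alpha v+\beta Jv$ for $F\in\mfh_2$ and using $JF=FJ$, the skew-symmetry constraint $g(F(v),v)=0$ collapses to $\alpha g(v,v)=0$ and forces $\alpha=0$. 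Hence $\mathfrak{u}(g,J)_2$ is at most one-dimensional, and a direct check shows it is precisely spanned by $v\otimes(Jv)^b-Jv\otimes v^b$. Since every non-zero such $F$ sends $v$ to $\beta Jv\notin\bR^{2m-1}$, we obtain $\mfh_2^{\bR^{2m-1}}=\mfh_2^{\bR^{2m-1}_J}=\{0\}$. Thus, if $v\otimes(Jv)^b-Jv\otimes v^b\notin\mfh$ then $\mfh$ is of type $(I)$ and Theorem \ref{th:totallyreal}(a) gives $\cF_{\mfh}=\tilde{\mfk}_{\mfh}$; otherwise $\mfh$ is of type $(III)$, taking $F$ to be this generator, one checks that $\lambda=0$ works in Theorem \ref{th:totallyreal}(c), and a direct computation shows that $(JF)|_{\bR^{2m-1}}$ is a non-zero scalar multiple of $v\otimes v^b$.

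For case (b), the radical of $(\bR^{2m-1})_J$ is $J$-invariant, hence of even dimension; degeneracy then forces it to be two-dimensional and to coincide with $((\bR^{2m-1})_J)^{\perp}$. Using $g(Jv,u)=-g(v,Ju)$ for $u\in(\bR^{2m-1})_J$ one sees that $v,Jv\in((\bR^{2m-1})_J)^{\perp}$, so $((\bR^{2m-1})_J)^{\perp}=\spa{v,Jv}\subseteq(\bR^{2m-1})_J\subseteq\bR^{2m-1}$. This inclusion means every $F\in\mfh_2$ automatically satisfies $F(\bR^{2m-1})\subseteq(\bR^{2m-1})_J$, so $\mfh_2=\mfh_2^{\bR^{2m-1}_J}$ and $\mfh$ is of type $(I)$. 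Picking $u\in\bR^{2m-1}\setminus(\bR^{2m-1})_J$, writing $F(u)=\alpha v+\beta Jv$, and using that $g(Jv,u)\neq 0$ by non-degeneracy of $g$, the constraint $g(F(u),u)=0$ forces $\beta=0$; a comparison shows that $v^b\otimes Jv-(Jv)^b\otimes v$ is a non-zero scalar multiple of this one-parameter generator. Computing $(JF)|_{\bR^{2m-1}}$ and exploiting $g(v,\cdot)|_{\bR^{2m-1}}=0$ and $Jv\in\bR^{2m-1}$ then yields the claimed span $\spa{(Jv)^b|_{\bR^{2m-1}}\otimes Jv}$ via Theorem \ref{th:totallyreal}(a).

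The main technical hurdle will be the careful bookkeeping of the effect of skew-symmetry, which at first sight does not obviously constrain the two-parameter family of $J$-equivariant endomorphisms $F\in\mathfrak{gl}(J)$ sending $\bR^{2m}/(\bR^{2m-1})_J$ to $\spa{v,Jv}$; the non-trivial reduction to a one-dimensional family is what ultimately selects the single rank-two generator named in the statement. A subtlety is that the surviving direction differs between the two cases, reflecting the different isotropy behaviour of the distinguished vector $v$: in case (a) skew-symmetry kills the diagonal $\alpha$-direction, while in case (b) it kills the off-diagonal $\beta$-direction, producing the two distinct tensorial expressions appearing in the theorem.
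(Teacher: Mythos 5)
Your proposal is correct and follows essentially the same route as the paper: both arguments invoke Lemma \ref{le:unitarytotallyreal} to reduce to Theorem \ref{th:totallyreal}, use skew-symmetry to show that any $F\in\mfh_2$ maps into the two-dimensional $J$-invariant space $((\bR^{2m-1})_J)^{\perp}$ and is then pinned down to the single named rank-two generator, and finally read off the type ($(I)$ or $(III)$ in case (a), $(I)$ in case (b)) to obtain $\cF_{\mfh}$. Your bookkeeping of the cases, including the observation that $\spa{v,Jv}\subseteq(\bR^{2m-1})_J$ in the degenerate case and the computation of $(JF)|_{\bR^{2m-1}}$, matches the paper's proof in substance.
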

\begin{proof}
\begin{enumerate}[(a)]
	\item First of all, observe that if $\bR^{2m-1}$ is non-degenerate, then the same is true for $\bR^{2m-1}_J$ as otherwise, there would exist some $v\in \bR^{2m-1}_J$ with $g(v,u)=0$ for all $u\in bR^{2m-1}_J$. But then also $g(Jv,u)=-g(v,Ju)=0$ for all $u\in \bR^{2m-1}$ and so some non-zero linear combination of $v$ and $Jv$ would be orthogonal to $\bR^{2m-1}$ contradicting the non-degeneracy of $\bR^{2m-1}$.
	
Now let $F\in \mfh_2$ be given. As $F$ is zero on $(\bR^{2m-1})_J$ it preserves the orthogonal complement $\spa{v,Jv}$ of that space and so $\mfh_2=\spa{v\otimes (Jv)^b-Jv\otimes v^b}$ if $v\otimes (Jv)^b-Jv\otimes v^b\in \mfh$ and, otherwise, $\mfh_2=0$. Hence, the asserion follows from Theorem \ref{th:totallyreal} since in $\mfh_2^{\bR^{2m-1}_J}=\{0\}$ and in the first case, $\mfh$ is of type $(III)$, whereas in the second case, it is of type $(I)$.
	\item Here, we observe that $(\bR^{2m-1})^{\perp}\subseteq \bR^{2m-1}_J$, and so also $\bR^{2m-1}_J$ is degenerate since otherwise $\bR^{2m-1})^{\perp}\cap \bR^{2m-1}_J=\{0\}$ and then for any non-zero element $0\neq v\in \bR^{2m-1})^{\perp}$ we have $\bR^{2m-1}\oplus \spa{Jv}=\bR^{2m}$. However, since $v$ is also orthogonal to $Jv$, this would imply that $v$ is orthogonal to $\bR^{2m}$, contradicting that $g$ is a pseudo-Riemannian metric on $\bR^{2m}$.
	
Then the assertion is surely clear if $\mfh_2=\{0\}$. So assume that $\mfh_2\neq \{0\}$ and let $0\neq F\in \mfh_2$.  Moreover, let $0\neq v\in (\bR^{2m-1})^{\perp}$. Then $F=\alpha\otimes Jw+\alpha\circ J \otimes w$ for some $\alpha\in (\bR^{2m-1})_J^0$ and some $w\in \bR^{2m}$. As $(\bR^{2m-1})_J^0=\spa{v^b, (Jv)^b}$, we may, w.l.o.g., assume that $\alpha=v^b$. But then, since $F$ is skew-symmetric, we must have 
	\begin{equation*}
	F=\lambda \left(v^b\otimes Jv-(Jv)^b\otimes v\right)
	\end{equation*}
for some $\lambda\in \bR^*$. In particular, $\mfh_2=\mfh_2^{\bR^{2m-1}_J}=\spa{v^b\otimes Jv-(Jv)^b\otimes v}$, i.e. $\mfh$ is of type $(I)$. Since
\begin{equation*}
	J (v^b\otimes Jv-(Jv)^b\otimes v)=-v^b\otimes v-(Jv)^b\otimes Jv
\end{equation*}
preserves $\bR^{2m-1}$ and $v^b|_{\bR^{2m-1}}=0$,
 the assertion follows from Theorem \ref{th:totallyreal} (a).
\end{enumerate}
\end{proof}
\begin{remark}
If $\mfh$ is a unitary subalgebra with the associated metric being Riemannian, then $\cF_{\mfh}=\tilde{\mfk}_{\mfh}$ if $v\otimes (Jv)^b-Jv\otimes v^b\notin \mfh$
and $\cF_{\mfh}=\tilde{\mfk}_{\mfh}\oplus \spa{v\otimes v^b}$ if $v\otimes (Jv)^b-Jv\otimes v^b\in \mfh$, where $v$ is as in Theorem \ref{th:unitarysubalgebras} (a).
\end{remark}
\begin{example}\label{ex:upn-p}
\begin{itemize}
	\item For $\mfh=\mathfrak{u}(m)$, we may choose $v=e_{2m-1}$ and get $\cF_{\mfu(m)}=\mfu(m-1)\oplus \spa{e_{2m-1}\otimes \spa{e^{2m-1}\otimes e_{2m-1}}$. As $\mathrm{U}(m)$ acts transitively on $\mathrm{Grass}_{2m-1}(\bR^{2m})$, this reproves the classification of K\"ahler structures in \cite{LW}.
	\item For $\mfh=\mathfrak{su}(m)$, the element $e^{2m-1}\otimes e_{2m}-e^{2m}\otimes e_{2m-1}\notin \mathfrak{su}(m)$ and so $\cF_{\mathfrak{su}(m)}=\mathfrak{su}(m-1)}$. Note that this result also follows from the fact that $\mathfrak{su}(m)$ is a super-elliptic totally real subalgebra. Moreover, note that since $\mathrm{SU}(m)$ acts transitively on $\mathrm{Grass}_{2m-1}(\bR^{2m})$, an $2m$-dimensional almost Abelian Lie algebra $\mfg$ admits a Calabi-Yau structure if and only if $f\in  \mathfrak{su}(m-1)$.
    \item For $\mfh=\mathfrak{u}(p,m-p)$, the cases (a) and (b) in Theorem \ref{th:unitarysubalgebras} correspond to
    \begin{equation*}
    \begin{split}
    	\cF_{\mfh}&=\left\{\left.\diag(A,a)\right|A\in \mathfrak{u}(p-1,m-p)\cup\mathfrak{u}(p,m-p-1),\ a\in \bR \right\},\\
    	 \cF_{\mfh}&=\left\{\left.\left(\begin{smallmatrix} A &  &  & \\ & a & & b \\ & & a & c \\ & & & -a \end{smallmatrix}\right)\right|A\in \mathfrak{u}(p-1,m-p-1),\ a,b,c\in \bR \right\},
   \end{split}
\end{equation*}
respectively.
\end{itemize}
\end{example}
\section{Subalgebras $\mfh$ with $\cK_{\mfh}^{(1)}$ of special type}\label{sec:cK(1)specialtype}
In this section, we consider arbitrary linear subalgebras $\mfh$ for which the first prolongation
$\cK_{\mfh}^{(1)}$ of the associated tableau $\cK_{\mfh}$ is of a certain very restrictive form. We note that for totally real subalgebras $\mfh$, Lemma \ref{le:cK1totallyreal} shows that $\cK_{\mfh}^{(1)}$ is of a special form and used this special form of $\cK_{\mfh}^{(1)}$, namely $\cK_{\mfh}^{(1)}\subseteq\alpha_0\otimes \alpha_0\otimes \bR^{2m}$ for some $\alpha_0\in (\bR^{2m-1}_J)^0$, to arrive at the explicit description of $\cF_{\mfh}$ for a totally real subalgebra $\mfh$ in Theorem \ref{th:totallyreal}. However, note that the proof of Theorem \ref{th:totallyreal} used also many other particular properties of totally real subalgebras $\mfh$ and of a complex structure $J$ and we do not expect to arrive at a nice description of $\cF_{\mfh}$ for an arbitrary linear subalgebra $\mfh$ assuming simply $\cK_{\mfh}^{(1)}\subseteq\alpha_0\otimes \alpha_0\otimes \bR^{2m}$ for some $\alpha_0\in (\bR^{n-1})^*$, cf. already the great complexity of the result for totally real subalgebra, and so leave out the investigation of this case here.

Hence, we concentrate on linear subalgebras with other special types of $\cK_{\mfh}^{(1)}$, beginning with the case that this first prolongation is even zero and then concentrate on the case that $\cK_{\mfh}^{(1)}= S^2 \cU\otimes z$ for some subspace $\cU$ of $(\bR^{n-1})^*$ and some $z\in \bR^n$, distinguishing between the cases $z\in \bR^{n-1}$ or $z\notin \bR^{n-1}$. These two cases naturally generalise the case of a non-degenerate metric or a degenerate metric subalgebra, respectively, and we will, conversely, show that under mild assumptions, $\mfh$ contains a non-degenerate or degenerate metric subalgebra, respectively. We like to remark that the condition $\cK_{\mfh}^{(1)}= S^2 \cU\otimes z$ is more restrictive then condition $\cK_{\mfh}^{(1)}\subseteq S^2 (\bR^{n-1})^*\otimes z$ since not any subspace of $\cK_{\mfh}^{(1)}\subseteq S^2 (\bR^{n-1})^*\otimes z$ is of the form $S^2 \cU\otimes z$.

\subsection{$\cK_{\mfh}^{(1)}=\{0\}$}\label{subsec:cK1=0}
Before we state the main result in the case $\cK_\mfh^{(1)}=\{0\}$, we discuss the relation of the condition $\cK_\mfh^{(1)}=\{0\}$ to the vanishing of the \emph{first prolongation $\mfh^{(1)}$} of $\mfh$:
\begin{definition}
Let $\mfh$ be a subalgebra of $\mathfrak{gl}(n,\bR)$. Then the \emph{first prolongation} $\mfh^{(1)}$ of $\mfh$ is defined by
\begin{equation*}
\mfh^{(1)}:=\left(S^2 (\bR^n)^*\otimes \bR^n\right)\cap (\bR^n)^*\otimes \mfh.
\end{equation*}
\end{definition}
\begin{remark}
	We recall some well-known properties related to the first prolongation of $\mfh^{(1)}$:
\begin{itemize}
	\item A torsion-free $H$-structure admits a unique torsion-free $H$-connection if and only if $\mfh^{(1)}=\{0\}$.
	\item
	If $\mfh^{(1)}=\{0\}$, then $\mfh$ is elliptic.
	\item
	If $\mfh$ acts irreducibly, then $\mfh^{(1)}=\{0\}$ with the exception of those $\mfh$ mentioned in \cite[Table B]{Br2}.
\end{itemize}
\end{remark}
Assuming the condition $\cK_{\mfh}^{(1)}=\{0\}$, the vanishing of $\mfh^{(1)}$ can be reformulated as follows:
\begin{lemma}\label{le:cK1=0implymfh1=0}
Let $\mfh$ be a subalgebra with $\cK_{\mfh}^{(1)}=\{0\}$. Then $\mfh^{(1)}=\{0\}$ if and only if $\mfh$ is elliptic.
\end{lemma}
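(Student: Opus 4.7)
The plan is to split into the two implications. The forward direction---that $\mfh^{(1)}=\{0\}$ forces ellipticity---does not actually use the standing hypothesis $\cK_\mfh^{(1)}=\{0\}$ and is the standard observation: if $F=\alpha\otimes v\in\mfh$ were a rank-one element with $\alpha\in (\bR^n)^*\setminus\{0\}$ and $v\in\bR^n\setminus\{0\}$, then $\phi(x,y):=\alpha(x)\alpha(y)\,v$ is nonzero, symmetric, and satisfies $\phi(x,\cdot)=\alpha(x)F\in\mfh$, so $\phi\in\mfh^{(1)}\setminus\{0\}$, contradicting the assumption.

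For the substantive direction---that ellipticity together with $\cK_\mfh^{(1)}=\{0\}$ forces $\mfh^{(1)}=\{0\}$---I fix any $e_n\in\bR^n\setminus\bR^{n-1}$ and start with an arbitrary $\phi\in\mfh^{(1)}$. The plan is to first kill the restriction $\phi|_{\bR^{n-1}\times\bR^{n-1}}$ via the prolongation hypothesis, and then use ellipticity to kill the remaining components involving $e_n$. For the first step, observe that $\phi|_{\bR^{n-1}\times\bR^{n-1}}$ is symmetric, and for each $v\in\bR^{n-1}$ the endomorphism $F_v:=\phi(v,\cdot)$ belongs to $\mfh$, so that $F_v|_{\bR^{n-1}}\in\cK_\mfh$. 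This is exactly the content of Lemma \ref{le:cDincK1} applied to $\phi$ (viewed as an element of $\cD_\mfh$), and therefore $\phi|_{\bR^{n-1}\times\bR^{n-1}}\in\cK_\mfh^{(1)}=\{0\}$.

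Having established that $\phi(v,w)=0$ for all $v,w\in\bR^{n-1}$, the remaining step is straightforward. For every $v\in\bR^{n-1}$ the element $F_v\in\mfh$ vanishes on $\bR^{n-1}$, so its image is contained in the line $\bR\cdot F_v(e_n)$; hence $F_v$ has rank at most one, and ellipticity yields $F_v=0$. Finally, symmetry of $\phi$ gives $F_{e_n}(v)=F_v(e_n)=0$ for every $v\in\bR^{n-1}$, so $F_{e_n}$ also vanishes on $\bR^{n-1}$ and therefore has rank at most one; ellipticity again forces $F_{e_n}=0$. Combining, $\phi=0$.

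The only mildly subtle point---and the main obstacle---is the first step: one must recognize that the restriction of an element of $\mfh^{(1)}$ to $\bR^{n-1}\times\bR^{n-1}$ is in fact captured by the tableau prolongation $\cK_\mfh^{(1)}$, so that the hypothesis bites. Once that identification is made, the rest reduces to a two-step rank-one elimination driven by ellipticity.
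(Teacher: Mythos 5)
Your proof is correct and follows essentially the same route as the paper: the reverse direction is verbatim the paper's argument (restrict to $\bR^{n-1}\times\bR^{n-1}$, invoke $\cK_\mfh^{(1)}=\{0\}$, then eliminate the remaining rank-at-most-one pieces $\phi(v,\cdot)$ and $\phi(e_n,\cdot)$ via ellipticity and symmetry). The only difference is that you spell out the forward direction with the explicit witness $\alpha\otimes\alpha\otimes v\in\mfh^{(1)}$, whereas the paper simply cites the preceding remark that $\mfh^{(1)}=\{0\}$ implies ellipticity.
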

\begin{proof}
As remarked above, the condition $\mfh^{(1)}=\{0\}$ always implies that $\mfh$ is elliptic. So let us assume now that $\mfh$ is elliptic and let $\nabla\in \mfh^{(1)}$. Then $\nabla_u v=0$ for all $u,v\in \bR^{n-1}$ since $\cK_{\mfh}^{(1)}=\{0\}$. But so $\nabla_u\in \mfh$ has rank at most one for all $u\in \bR^{n-1}$ and $\mfh$ being elliptic implies $\nabla_u=0$. Consequently,
\begin{equation*}
\nabla_{e_n} u=\nabla_u e_n= 0,
\end{equation*}
for $u\in \bR^{n-1}$, and so $\nabla_{e_n}$ has rank at most one, implying again that actually $\nabla_{e_n}=0$ as well. Thus, $\nabla=0$ and so $\mfh^{(1)}=\{0\}$.
\end{proof}
Conversely, the condition $\mfh^{(1)}=\{0\}$ does, in general, not imply $\cK_{\mfh}^{(1)}=\{0\}$. We may even have $\cK_{\mfh}^{(1)}=\{0\}$ but $\cK_{\tilde{\mfh}}^{(1)}\neq \{0\}$ for some subalgebra $\tilde{\mfh}$ being conjugate to $\mfh$, so the condition $\cK_{\mfh}^{(1)}=\{0\}$ is not invariant under conjugation:
\begin{example}\label{ex:cK1h1}
\begin{itemize}
	\item Let $\mfh$ be a hypercomplex subalgebra.
	Then $\mfh^{(1)}=\{0\}$ and $\cK_{\mfh}^{(1)}=\{0\}$. These equalities follow directly from $\mfh_2=\{0\}$ (due to $\mfh$ being super-elliptic) and so $\cK_{\mfh}^{(1)}=0$. Then $\mfh^{(1)}=0$ follows from Lemma \ref{le:cK1=0implymfh1=0}.
	\item Let $\mfh$ be a hyperparacomplex subalgebra. By \cite[Proposition 5.1]{ASa}, we then have $\mfh^{(1)}=\{0\}$. Now if $\bR^{2m-1}_J$ is not $E$-invariant, then $\mfh_2=\{0\}$ and so we then also have $\cK_{\mfh}^{(1)}=\{0\}$. However, if $\bR^{2m-1}_J$ is $E$-invariant, we may have $\cK_{\mfh}^{(1)}\neq \{0\}$ if $\mfh_2\neq \{0\}$,  e.g. if $\mfh=\mathfrak{gl}(J_0,E_0, K_0)$, in which case one gets $\cK_{\mfh}^{(1)}=\spa{e^{2m-1}\otimes e^{2m-1}\otimes e_{2m-1}}$.
	\item
	If $\mfh=\mathfrak{so}(p,n-p)$, then $\mfh^{(1)}=\{0\}$ but 
	\begin{equation*}
	\cK^{(1)}_{\mfh}=S^2 (\bR^{n-1})^*\otimes e_n
	\end{equation*}
	Moreover, we may choose a conjugate $\tilde{\mfh}$ of $\mfh$ (if $p\geq 1$ and $n-p-1\geq 1$) for which $\bR^{n-1}$ is degenerate and such that
	\begin{equation*}
	\cK^{(1)}_{\tilde\mfh}=S^2 (\bR^{n-1})^*\otimes e_{n-1}.
	\end{equation*}
\end{itemize}
\end{example}
Allthough giving a general assertion on when $\mfh^{(1)}=\{0\}$ implies $\cK_{\mfh}^{(1)}=\{0\}$ seems not to be possible, we show that this implication is true when $\mfh$ is a super-elliptic subalgebra of $\mathfrak{so}(g)$ for some pseudo-Riemannian metric $g$ on $\bR^n$:
\begin{lemma}\label{le:mfhmetricsuperelliptic}
Let $\mfh$ be a \emph{metric} subalgebra, i.e. a subalgebra of $\mathfrak{so}(g)$ for some pseudo-Riemannian metric $g$ on $\bR^n$. If $\mfh$ is super-elliptic, then $\cK_{\mfh}^{(1)}=\{0\}$.
\end{lemma}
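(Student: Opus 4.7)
The plan is to exploit the two competing symmetries built into $\cK_{\mfh}^{(1)}$ (from $S^2(\bR^{n-1})^*$) and into $\mfh$ (from $g$-skew-symmetry), via a Koszul-type cyclic permutation, to force elements of $\cK_{\mfh}^{(1)}$ to take values in the one-dimensional annihilator $(\bR^{n-1})^{\perp_g}$; super-ellipticity then finishes the argument.

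Concretely, given $\nabla\in \cK_{\mfh}^{(1)}$, I fix a linear section $u\mapsto F_u$ of the restriction map $\mfh\to \cK_{\mfh}$, so that $F_u\in \mfh\subseteq \mathfrak{so}(g)$ with $F_u|_{\bR^{n-1}}=\nabla_u$ for every $u\in \bR^{n-1}$; set $T(u,v):=F_u v$ for $u\in \bR^{n-1}$, $v\in \bR^n$. The symmetry of $\nabla$ translates to $T(u,v)=T(v,u)$ on $\bR^{n-1}\times \bR^{n-1}$, while $F_u\in \mathfrak{so}(g)$ gives $g(T(u,v),w)=-g(v,T(u,w))$ for all $u\in \bR^{n-1}$, $v,w\in \bR^n$. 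Restricting to $u,v,w\in \bR^{n-1}$ and iterating the standard Koszul cyclic shuffle yields
\begin{equation*}
g(T(u,v),w) \;=\; -g(v,T(u,w)) \;=\; g(u,T(v,w)) \;=\; -g(w,T(u,v)) \;=\; -g(T(u,v),w),
\end{equation*}
so $T(u,v)\in (\bR^{n-1})^{\perp_g}$ for all $u,v\in \bR^{n-1}$.

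Since $g$ is non-degenerate on $\bR^n$, the subspace $(\bR^{n-1})^{\perp_g}$ is one-dimensional regardless of whether $g|_{\bR^{n-1}}$ is degenerate or not. Hence $F_u|_{\bR^{n-1}}=T(u,\cdot)$ has image in a line, so rank at most one; adding the contribution on any complement of $\bR^{n-1}$ in $\bR^n$, the full endomorphism $F_u\in \End(\bR^n)$ has rank at most two. Super-ellipticity of $\mfh$ then forces $F_u=0$, so $\nabla_u=F_u|_{\bR^{n-1}}=0$ for every $u\in \bR^{n-1}$, i.e. $\nabla=0$.

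The only technical point worth flagging is that the Koszul shuffle uses the symmetry of $T$ in its first two slots, which only holds on $\bR^{n-1}\times \bR^{n-1}$; one must therefore check that each vector appearing in the chain of equalities sits in $\bR^{n-1}$, which is indeed the case above. Everything else is bookkeeping: the existence of the linear section $u\mapsto F_u$ is elementary, the evenness of the skew-symmetric rank is not needed (rank $\le 2$ already contradicts super-ellipticity), and injectivity of the section guarantees $F_u=0 \Rightarrow \nabla_u=0$.
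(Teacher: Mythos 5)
Your proof is correct and follows essentially the same route as the paper: lift $\nabla$ to an $\mfh$-valued map, run the six-step Koszul cyclic identity using the symmetry on $\bR^{n-1}\times\bR^{n-1}$ and the $g$-skew-symmetry to conclude that the values on $\bR^{n-1}$ land in the line $(\bR^{n-1})^{\perp_g}$, and then invoke super-ellipticity via the rank-at-most-two bound. The only cosmetic difference is that you insist on a \emph{linear} section $u\mapsto F_u$ and on its injectivity, neither of which is actually needed since $F_u=0$ already gives $\nabla_u=F_u|_{\bR^{n-1}}=0$ pointwise.
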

\begin{proof}
Let $\tilde{\nabla}\in \cK_{\mfh}^{(1)}$ be given. Then there is some $\hat{\nabla}\in (\bR^{n-1})^*\otimes \mfh$ with
$\hat{\nabla}_u|_{\bR^{n-1}}=\tilde{\nabla}_u$ for all $u\in \bR^{n-1}$. Then
\begin{equation*}
\begin{split}
g(\hat{\nabla}_u v,w)&=-g(v,\hat{\nabla}_u w)=-g(v,\hat{\nabla}_w u)=g(\hat{\nabla}_w v,u)=g(\hat{\nabla}_v w,u)=-g( w,\hat{\nabla}_v u)\\
&=-g(\hat{\nabla}_u v,w)
\end{split}
\end{equation*}
for all $u,v,w\in \bR^{n-1}$. Consequently, $g(\hat{\nabla}_u v,w)=0$, i.e. $\hat{\nabla}_u v\in (\bR^{n-1})^{\perp}$ for any $v\in \bR^{n-1}$. As $\dim((\bR^{n-1})^{\perp})=1$, this implies that $\hat{\nabla}_u\in \mfh$ has rank at most two. As $\mfh$ is super-elliptic, this implies $\hat{\nabla}=0$ and so $\tilde{\nabla}=0$. Thus, $\cK_{\mfh}^{(1)}=\{0\}$.
\end{proof}
Next, we aim at proving a general result on the form of $\cF_{\mfh}$ in the case that $\cK_{\mfh}^{(1)}=\{0\}$. For this, we set
\begin{equation*}
\mfh_1:=\left\{F\in \mfh|F(\bR^{n-1})=\{0\}\right\},\quad
W_{\mfh}:=\left\{F(e_n)| F\in \mfh_1,\quad F(\bR^n)\subseteq \bR^{n-1}\right\}\subseteq \bR^{n-1}.
\end{equation*}
For the proof of the general result, the following simplification of the computation of $\tilde{\mfk}_{\mfh}$ in the case that not all elements of $\mfh_1$ map $\bR^n$ into $\bR^{n-1}$ will be useful:
\begin{lemma}\label{le:tildemfkequaltildecK}
Let $\mfh$ be a linear subalgebra of $\End(\bR^n)$ which contains an element $F_0\in \mfh_1$ with $v:=F_0(e_n)\notin \bR^{n-1}$. Denote by $\pi_{\bR^{n-1}}$ the projection of $\bR^n$ onto $\bR^{n-1}$ along $\spa{v}$. Then
\begin{equation*}
\tilde{\mfk}_{\mfh}=\left\{\left.\pi_{\bR^{n-1}}\circ F|_{\bR^{n-1}}\right|F\in \mfh\right\}
\end{equation*}
\end{lemma}
\begin{proof}
The inclusion $\tilde{\mfk}_{\mfh}\subseteq \left\{\left.\pi_{\bR^{n-1}}\circ F|_{\bR^{n-1}}\right|F\in \mfh\right\}$ is clear. So let $f\in \left\{\left.\pi_{\bR^{n-1}}\circ F|_{\bR^{n-1}}\right|F\in \mfh\right\}$, i.e. there is some $F\in \mfh$ and some $\alpha\in (\bR^{n-1})^*$ such that
\begin{equation*}
F|_{\bR^{n-1}}=f+\alpha\otimes v.
\end{equation*}
As $\mathrm{im}(F_0)=\spa{v}$ and $\bR^{n-1}=\ker(F_0)$, there exists some $\beta\in (\bR^n)^*$ with $F_0=\beta\otimes v$,
 $\beta(\bR^{n-1})=\{0\}$ and $\beta(v)\neq 0$.
We then set
\begin{equation*}
G:=F-\frac{1}{\beta(v)} [F_0,F]
\end{equation*}
and observe that $G\in \mfh$. Moreover,
\begin{equation*}
\begin{split}
G|_{\bR^{n-1}}&=F|_{\bR^{n-1}}-\frac{1}{\beta(v)} [F_0,F]|_{\bR^{n-1}}=f+\alpha\otimes v-\frac{1}{\beta(v)} F_0(f+\alpha\otimes v)\\
&=f+\alpha\otimes v-\frac{1}{\beta(v)} \beta(f+\alpha\otimes v)\otimes v=f+\alpha\otimes v-\frac{\beta(v)}{\beta(v)}\alpha\otimes v=f,
\end{split}
\end{equation*}
which implies that $G\in \mfk_{\mfh}$, and so $f\in \tilde{\mfk}_{\mfh}$, proving the stated assertion.
\end{proof}
\begin{theorem}\label{th:cK1=0}
Let $\mfh$ be a linear subalgebra with $\cK^{(1)}_{\mfh}=\{0\}$. Then 
\begin{equation*}
		\cF_{\mfh}=\tilde{\mfk}_{\mfh}+ (\bR^{n-1})^*\otimes W_{\mfh}.
	\end{equation*}
\end{theorem}
\begin{proof}
Let $\nabla\in \cT^{-1}(\End(\bR^{n-1}))\subseteq\cD_{\mfh}$ be given. Since $\cK^{(1)}_{\mfh}=\{0\}$, we have $\nabla_u v=0$ for all $u,v\in \bR^{n-1}$ and so $\nabla_u\in \mfh_1$ for all $u\in \bR^{n-1}$.
We distinguish now two cases:

\begin{itemize}
	\item Let us first assume that for any element $F\in \mfh_1$ we have $F(e_n)\in \bR^{n-1}$. In this case, $\nabla_u e_n\in W_{\mfh}\subseteq\bR^{n-1}$ for all $u\in \bR^{n-1}$ and so
	$\nabla\in \cT^{-1}(\End(\bR^{n-1}))$ implies $\nabla_{e_n}(\bR^{n-1})\subseteq \bR^{n-1}$, i.e. $\nabla_{e_n}|_{\bR^{n-1}}\in \tilde{\mfk}_{\mfh}$. Thus, 
	\begin{equation*}
	f:=\cT(\nabla)=\nabla_{e_n}|_{\bR^{n-1}}-\nabla e_n |_{\bR^{n-1}}\in \tilde{\mfk}_{\mfh}+ (\bR^{n-1})^*\otimes W_{\mfh},
	\end{equation*}
which shows $\cF_{\mfh}\subseteq \tilde{\mfk}_{\mfh}+ (\bR^{n-1})^*\otimes W_{\mfh}$. Conversely, let
	\begin{equation*}
	f=f_1+f_2\in \tilde{\mfk}_{\mfh}+ (\bR^{n-1})^*\otimes W_{\mfh}
\end{equation*}
be given. Defining $\nabla\in \cD_{\mfh}$ by
\begin{equation*}
\nabla_u v:=0,\quad \nabla_u e_n:=-f_2(u),\quad \nabla_{e_n}:=F_1.
\end{equation*}
for $F_1\in \mfh$ with $F_1(\bR^{n-1})\subseteq \bR^{n-1}$ and $F_1|_{\bR^{n-1}}=f_1$, we have $\cT(\nabla)=f\in \End(\bR^{n-1})$. This proves the other inclusion $\cF_{\mfh}\supseteq \tilde{\mfk}_{\mfh}+ (\bR^{n-1})^*\otimes W_{\mfh}$.
\item
Assume now that there is some element $F_0\in \mfh_1$ with $v:=F_0(e_n)\notin \bR^{n-1}\notin \bR^{n-1}$ and let $\pi_{\bR^{n-1}}$ denote the projection of $\bR^n$ onto $\bR^{n-1}$ along $\spa{v}$. We may then write $f_2:=\nabla e_n|_{\bR^{n-1}}$ as
\begin{equation*}
f_2=h_2+\alpha_2\otimes v
\end{equation*}	
for certain $h_2\in (\bR^{n-1})^*\otimes W_{\mfh}$ and $\alpha_2\in (\bR^{n-1})^*$ and 
$f_1:=\nabla_{e_n}|_{\bR^{n-1}}$ as
\begin{equation*}
f_1:=h_1+\alpha_1\otimes v
\end{equation*}
for $h_1:=\pi_{\bR^{n-1}}\circ f_1 \in \End(\bR^{n-1})$ and $\alpha_1\in (\bR^{n-1})^*$. Then $h_1\in \tilde{\mfk}_{\mfh}$ due to Lemma \ref{le:tildemfkequaltildecK} and
$\nabla\in \cT^{-1}(\End(\bR^{n-1}))$ forces $\alpha_2=\alpha_1$. But so
\begin{equation*}
f:=\cT(\nabla)=h_1+\alpha_1-h_2-\alpha_2=h_1-h_2\in \tilde{\mfk}_{\mfh}+ (\bR^{n-1})^*\otimes W_{\mfh}.
\end{equation*}
This shows $\cF_{\mfh}\subseteq\tilde{\mfk}_{\mfh}+ (\bR^{n-1})^*\otimes W_{\mfh}$ and the other inclusion follows similarly to the proof of the other inclusion in the first case.
\end{itemize}
\end{proof}
\begin{remark}
Notice that for \emph{any} linear subalgebra $\mfh$, i.e. also one with $\cK_{\mfh}^{(1)}\neq \{0\}$, one always has 
\begin{equation*}
(\bR^{n-1})^*\otimes W_{\mfh}\subseteq \cF_{\mfh}.
\end{equation*}
To prove this, let $f\in(\bR^{n-1})^*\otimes W_{\mfh}$ and define $\nabla\in  \cT^{-1}(\End(\bR^{n-1}))\subseteq \cD_{\mfh}$ by
\begin{equation*}
\nabla_{e_n}:=0,\quad \nabla_u v:=0,\quad \nabla_u e_n:=-f(u)
\end{equation*}
for all $u,v\in \bR^{n-1}$ and observe that $\cT(\nabla)=f$. We notice that then $\nabla$ is even flat and so the associated special $H$-structure on the associated almost Abelian Lie algebra is left-invariantly flat. In particular, if
\begin{equation*}
	\mfh:=\left\{\left.\begin{pmatrix} A & v \\
		0 & 0\\
	\end{pmatrix}\right|A\in \tilde{\mfh},\ v\in \bR^{n-1}
	\right\}
\end{equation*}
for any subalgebra $\tilde{\mfh}$ of $\mathfrak{gl}(n-1,\bR)$, then $\cF_{\mfh}=\End(\bR^{n-1})$ and any special $H$-structure is left-invariantly flat.
\end{remark}
\begin{example}
If $\tilde{\mfh}$ is a subalgebra of $\mathfrak{gl}(m,\bR)$ for some $m\leq n-1$ with $\tilde{\mfh}^{(1)}=\{0\}$, then consider
\begin{equation*}
\mfh:=\left\{\left.\begin{pmatrix} A & 0 & 0 \\
	                         0 & 0 & v \\
	                         0 & 0 & 0
	                        \end{pmatrix}\right|A\in \tilde{\mfh},\ v\in \bR^{n-1-m}
       \right\}.
\end{equation*}
Then Theorem \ref{th:cK1=0} implies
\begin{equation*}
\cF_{\mfh}=\left\{\left.\begin{pmatrix} A & 0 \\
	B & C \\
\end{pmatrix}\right|A\in \tilde{\mfh},\ B\in \bR^{(n-1-m)\times m},\ C\in \bR^{(n-1-m)\times (n-1-m)}
\right\}.
\end{equation*}
\end{example}
We note the following direct consequence of Theorem \ref{th:cK1=0} observing that the ellipticity of $\mfh$ implies $\mfh_1=\{0\}$:
\begin{corollary}\label{co:cK1=0elliptic}
Let $\mfh$ be an elliptic linear subalgebra with $\cK_{\mfh}^{(1)}=\{0\}$. Then $\cF_{\mfh}=\tilde{\mfk}_{\mfh}$ and so a special $H$-structure on an almost Abelian Lie algebra $\mfg$ is torsion-free if and only if it is left-invariantly flat, which is the case if and only if $f\in \tilde{\mfk}_{\mfh}$.
\end{corollary}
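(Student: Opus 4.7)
The plan is to derive this corollary as an immediate consequence of Theorem \ref{th:cK1=0}, using the ellipticity assumption to force both $\mfh_1 = \mfh_1^{\bR^{n-1}}$ and $W_{\mfh} = \{0\}$, which collapses the formula in part (a) of that theorem.

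First I would examine $\mfh_1 = \{F\in \mfh \mid F(\bR^{n-1}) = \{0\}\}$. Any $F \in \mfh_1$ satisfies $F(\bR^n) = F(\spa{e_n}) \subseteq \spa{F(e_n)}$, so $F$ has rank at most one. Since $\mfh$ is elliptic, $F = 0$. Hence $\mfh_1 = \{0\}$, and a fortiori $\mfh_1^{\bR^{n-1}} = \{0\}$ and $W_{\mfh} = \{0\}$. In particular, the hypothesis $\mfh_1 = \mfh_1^{\bR^{n-1}}$ of Theorem \ref{th:cK1=0}(a) is fulfilled trivially.

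Next, applying Theorem \ref{th:cK1=0}(a), I would conclude
\begin{equation*}
\cF_{\mfh} = \tilde{\mfk}_{\mfh} + (\bR^{n-1})^*\otimes W_{\mfh} = \tilde{\mfk}_{\mfh} + \{0\} = \tilde{\mfk}_{\mfh}.
\end{equation*}

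Finally, the equivalence of torsion-freeness, left-invariant flatness, and the condition $f \in \tilde{\mfk}_{\mfh}$ for a special $H$-structure on an almost Abelian Lie algebra $\mfg = \mfg_f$ follows directly from Proposition \ref{pro:specialHstrstf}: part (a) tells us that $f \in \tilde{\mfk}_{\mfh}$ implies that the structure is left-invariantly flat (hence torsion-free by Lemma \ref{le:flatequivalent}), while part (b) together with the equality $\cF_{\mfh} = \tilde{\mfk}_{\mfh}$ just established tells us that torsion-freeness is equivalent to $f \in \cF_{\mfh} = \tilde{\mfk}_{\mfh}$. This closes the loop, as already noted in Remark \ref{re:flat=tf}. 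There is no real obstacle here; the entire content has been built up in Theorem \ref{th:cK1=0} and Proposition \ref{pro:specialHstrstf}, and the only observation needed is that ellipticity forces the rank-one subspace $\mfh_1$ to vanish.
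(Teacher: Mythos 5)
Your proposal is correct and follows exactly the paper's route: the paper deduces the corollary from Theorem \ref{th:cK1=0} by noting that ellipticity forces $\mfh_1=\{0\}$ (any $F\in\mfh_1$ vanishes on $\bR^{n-1}$ and so has rank at most one), whence $W_{\mfh}=\{0\}$ and part (a) collapses to $\cF_{\mfh}=\tilde{\mfk}_{\mfh}$. The concluding equivalences via Proposition \ref{pro:specialHstrstf} and Remark \ref{re:flat=tf} are likewise the intended argument.
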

\begin{remark}
Corollary \ref{co:cK1=0elliptic} together with Example \ref{ex:cK1h1} reproves Corollary \ref{co:hypercomplex} and Corollary \ref{co:flathyperparacomplex} (b) and may easily be used to reprove Theorem \ref{th:hyperparacomplexclass} (b).
\end{remark}
Corollary \ref{co:cK1=0elliptic} and Lemma \ref{le:mfhmetricsuperelliptic} directly imply:
\begin{corollary}
Let $\mfh$ be a super-elliptic metric subalgebra. Then $\cF_{\mfh}=\tilde{\mfk}_{\mfh}$ and so an $H$-structure $P$ on an almost Abelian Lie algebra is left-invariantly flat if and only if it is torsion-free if and only if $f\in \tilde{\mfk}_{\mfh'}$ for some subalgebra $\mfh'$ which is conjugated to $\mfh$.
\end{corollary}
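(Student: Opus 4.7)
The plan is to recognise the statement as an immediate consequence of the two results preceding it, together with Theorem \ref{th:arbrtyHstrtfflat}, which handles the passage from special $H$-structures to arbitrary ones. First I would note that super-ellipticity trivially implies ellipticity, since a non-zero element of rank at most one in particular has rank at most two. Hence for a metric super-elliptic subalgebra $\mfh$, Lemma \ref{le:mfhmetricsuperelliptic} gives $\cK_{\mfh}^{(1)}=\{0\}$, and Corollary \ref{co:cK1=0elliptic} then yields $\cF_{\mfh}=\tilde{\mfk}_{\mfh}$. Combined with Proposition \ref{pro:specialHstrstf}, this already proves the claim for every \emph{special} $H$-structure on an almost Abelian Lie algebra.

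To promote this to $H$-structures of arbitrary type $[U]$, I would apply Theorem \ref{th:arbrtyHstrtfflat}: for a suitable $T\in\GL(n,\bR)$ with $T(U)=\bR^{n-1}$, the $H$-structure $P$ is torsion-free if and only if $f\in\cF_{T\mfh T^{-1}}$, and is left-invariantly flat whenever $f\in\tilde{\mfk}_{T\mfh T^{-1}}$. It therefore suffices to verify that the identity $\cF_{\tilde{\mfh}}=\tilde{\mfk}_{\tilde{\mfh}}$ persists for every conjugate $\tilde{\mfh}=T\mfh T^{-1}$ of $\mfh$. This reduces to two trivial conjugation-invariance facts: the rank of $TFT^{-1}$ equals the rank of $F$, so $\tilde{\mfh}$ inherits super-ellipticity; and if $\mfh\subseteq\mathfrak{so}(g)$, then $\tilde{\mfh}\subseteq\mathfrak{so}(\tilde g)$ for the pushforward $\tilde g(x,y):=g(T^{-1}x,T^{-1}y)$, so $\tilde{\mfh}$ remains metric. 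Applying the special case to each such $\tilde{\mfh}$ chains the equivalences together.

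There is essentially no substantive obstacle here; the result is a clean packaging of the two preceding statements. The only mild point to watch is that the metric hypothesis is not literally preserved (the metric changes to its pushforward), but this is harmless since both $\cF_{\tilde{\mfh}}$ and $\tilde{\mfk}_{\tilde{\mfh}}$ depend only on the linear subalgebra $\tilde{\mfh}$ and not on which metric it happens to skew-preserve. No new computation is required beyond observing these invariances.
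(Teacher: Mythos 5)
Your argument is exactly the paper's intended one: the paper states this corollary as a direct consequence of Lemma \ref{le:mfhmetricsuperelliptic} and Corollary \ref{co:cK1=0elliptic} (via super-elliptic $\Rightarrow$ elliptic), with the passage to arbitrary types handled by Theorem \ref{th:arbrtyHstrtfflat} and the conjugation-invariance of the metric and super-elliptic hypotheses, just as you spell out. The proposal is correct and follows the same route, merely making explicit the invariance checks the paper leaves implicit.
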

\begin{example}
\begin{itemize}
	\item 
	All of the special Berger holonomy algebras $\mathfrak{su}(m)$, $\mathfrak{sp}(k)$, $\mathfrak{sp}(k)\mathfrak{sp}(1)$ ($k\geq 2$), $\mfg_2$ and $\mathfrak{spin}(7)$ are super-elliptic metric subalgebras. The corresponding $\tilde{\mfk}_{\mfh}$, cf. also Table \ref{table:charsubalgs}, are given by $\mathfrak{su}(m-1)$, $\mathfrak{sp}(k-1)$, $\mathfrak{sp}(k-1)\mathfrak{sp}(1)$, $\mathfrak{su}(3)$ and $\mfg_2$, respectively. In the case $\mfh=\mfg_2$, this reproves the result of the author from \cite{Fr2}.
	\item
	Any subalgebra $\mfh$ of $\Delta \mathfrak{so}(m)\subseteq \mathfrak{so}(m,m)$ is a super-elliptic metric subalgebra. Hence, Corollary \ref{co:cK1=0elliptic} gives a second proof of Theorem \ref{th:hyperparacomplexelliptic} for the case of hyperparacomplex subalgebras induced by subalgebras of $\mathfrak{so}(m)$.
\end{itemize}
\end{example}
\subsection{$\cK_{\mfh}^{(1)}=S^2 \cU\otimes v$ for $v\notin \bR^{n-1}$}\label{subsec:cK1=S2Uv}
In this subsection, we look at the case that $\cK^{(1)}=S^2 \cU\otimes v$ for some subspace $\cU$ of $(\bR^{n-1})^*$ and some $v\in \bR^n\setminus \bR^{n-1}$. 
Note that so $\bR^n=\bR^{n-1}\oplus \spa{v}$ and we will use this identification to identify elements in $(\bR^{n-1})^*$ with elements in the annihilator $v^0\subseteq (\bR^n)^*$ of $v$. We will first show that \emph{non-degenerate metric} subalgebras satisfy the condition $\cK^{(1)}=S^2 \cU\otimes v$ for some $v\notin \bR^{n-1}$.Note that, conversely, Proposition \ref{pro:nusharp} below shows that any elliptic $\mfh$ with {$\cK_{\mfh}^{(1)}=S^2 \cU\otimes v$ for some $v\notin \bR^{n-1}$ contains a subalgebra which is non-degenerate metric:
\begin{lemma}\label{le:nondegmetriccK1}
Let $\mfh$ be a \emph{non-degenerate} metric subalgebra, i.e. $\mfh$ is a subalgebra of $\mathfrak{so}(g)$ for some pseudo-Riemannian metric $g$ on $\bR^n$ such that $g|_{\bR^{n-1}\times \bR^{n-1}}$ is non-degenerate. Then there exists a subspace $\cU\subseteq (\bR^{n-1})^*$ such that for $v\in (\bR^{n-1})^{\perp}$, $v\neq 0$, we have
\begin{equation*}
\cK_{\mfh}^{(1)}=S^2 \cU\otimes v.
\end{equation*}
\end{lemma}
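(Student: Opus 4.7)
The plan is to rerun the computation from Lemma \ref{le:mfhmetricsuperelliptic}, but this time keep track of \emph{where} the images $\hat\nabla_u w$ land rather than throwing that information away after using it only to bound ranks.

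Since $g$ is non-degenerate on $\bR^n$ and on $\bR^{n-1}$, the orthogonal $(\bR^{n-1})^\perp$ is one-dimensional and intersects $\bR^{n-1}$ trivially, so $(\bR^{n-1})^\perp=\spa{v}$ for any nonzero $v$ in this line, and each such $v$ lies outside $\bR^{n-1}$. Given $\tilde\nabla\in\cK_{\mfh}^{(1)}$, choose a lift $\hat\nabla\in(\bR^{n-1})^*\otimes\mfh$ with $\hat\nabla_u|_{\bR^{n-1}}=\tilde\nabla_u$ and $\hat\nabla_u w=\hat\nabla_w u$ for $u,w\in\bR^{n-1}$. The six-step chain $g(\hat\nabla_u v_1,v_2)=-g(v_1,\hat\nabla_u v_2)=-g(v_1,\hat\nabla_{v_2}u)=\cdots=-g(\hat\nabla_u v_1,v_2)$ from Lemma \ref{le:mfhmetricsuperelliptic} then forces $\hat\nabla_u w\in(\bR^{n-1})^\perp=\spa{v}$, so $\tilde\nabla=\phi\otimes v$ for a unique $\phi\in S^2(\bR^{n-1})^*$. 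This already gives the inclusion $\cK_{\mfh}^{(1)}\subseteq S^2(\bR^{n-1})^*\otimes v$.

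To pin down the correct subspace $\cU$, set
\begin{equation*}
\mfh_v:=\{F\in\mfh\mid F(\bR^{n-1})\subseteq\spa{v}\}
\end{equation*}
and let $\cU\subseteq(\bR^{n-1})^*$ be the image of the linear map $\mfh_v\to(\bR^{n-1})^*$, $F\mapsto\alpha_F$, where $F|_{\bR^{n-1}}=\alpha_F\otimes v$. For $\tilde\nabla=\phi\otimes v\in\cK_{\mfh}^{(1)}$ with lift $\hat\nabla$ as above, each $\hat\nabla_u$ sits in $\mfh_v$ and hence $\phi(u,\any)=\alpha_{\hat\nabla_u}\in\cU$ for all $u\in\bR^{n-1}$. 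Expanding $\phi$ in a basis of $(\bR^{n-1})^*$ obtained by completing a basis of $\cU$ shows that a symmetric bilinear form whose column space is contained in $\cU$ already lies in $S^2\cU$, so $\phi\in S^2\cU$ and $\cK_{\mfh}^{(1)}\subseteq S^2\cU\otimes v$.

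For the reverse inclusion I would fix a linear section $s\colon\cU\to\mfh_v$ of $F\mapsto\alpha_F$ and, for $\phi\in S^2\cU$, define $\hat\nabla_u:=s(\phi(u,\any))\in\mfh$ for $u\in\bR^{n-1}$. Then $\hat\nabla\in(\bR^{n-1})^*\otimes\mfh$ satisfies
\begin{equation*}
\hat\nabla_u w=s(\phi(u,\any))(w)=\phi(u,w)\,v=\phi(w,u)\,v=\hat\nabla_w u
\end{equation*}
for $u,w\in\bR^{n-1}$, exhibiting $\phi\otimes v=\hat\nabla|_{\bR^{n-1}\times\bR^{n-1}}$ as an element of $\cK_{\mfh}^{(1)}$. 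The whole argument is essentially a direct transcription of the Lemma \ref{le:mfhmetricsuperelliptic} calculation with extra bookkeeping, so no serious obstacle arises; the only point needing a moment of thought is the elementary linear-algebra observation that a symmetric bilinear form whose column space lies in $\cU$ already lies in $S^2\cU$.
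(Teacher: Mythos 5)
Your proof is correct and follows essentially the same route as the paper: the six-step skew-symmetry/symmetry chain forces $\tilde\nabla_{u_1}u_2\in(\bR^{n-1})^\perp=\spa{v}$, after which one identifies $\cU$ via $\mfh_v$. You simply make explicit what the paper compresses into ``the symmetry properties of $\cK^{(1)}$ yield the result'' (and what it records separately in Lemma~\ref{le:charofcU}), namely the observation that a symmetric form with all rows in $\cU$ lies in $S^2\cU$ and the verification of the reverse inclusion via a linear section of $\mfh_v\to\cU$.
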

\begin{proof}
Let $\tilde{\nabla}\in \cK^{(1)}$ be given. It suffices to show that $g(\tilde{\nabla}_{u_1} u_2, u_3)=0$ for all $u_1,u_2,u_3\in \bR^{n-1}$ as then $\tilde{\nabla}_{u_1} u_2\in  (\bR^{n-1})^{\perp}=\spa{v}$ and so the symmetry properties of $\cK^{(1)}$ yield the result. However, $g(\tilde{\nabla}_{u_1} u_2, u_3)=0$ follows by the same computation as in the proof of Lemma \ref{le:mfhmetricsuperelliptic}.
\end{proof}
We observe that for an arbitrary linear subalgebra $\mfh$ with $\cK_{\mfh}^{(1)}=S^2 \cU\otimes v$ the subspace $\cU\subseteq (\bR^{n-1})^*$ may be described as follows:
\begin{lemma}\label{le:charofcU}
Let $\mfh$ be a subalgebra with $\cK_{\mfh}^{(1)}=S^2 \cU\otimes v$ for some subspace $\cU\subseteq (\bR^{n-1})^*$ and some $v\in \bR^n\setminus \bR^{n-1}$. Setting
\begin{equation*}
\mfh_v:=\left\{F\in \mfh|F(\bR^{n-1})\subseteq \spa{v}\right\},
\end{equation*}
we have
\begin{equation*}
\cU=\left\{\alpha\in (\bR^{n-1})^*\left|F|_{\bR^{n-1}}=\alpha\otimes v \textrm{ for some } F\in \mfh_v\right.\right\}.
\end{equation*}
\end{lemma}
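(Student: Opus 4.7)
The plan is to prove the two inclusions separately, exploiting the fact that the given prolongation $\cK^{(1)}_{\mfh}$ has the very specific form $S^2\cU\otimes v$, which lets us move freely between degree-one covector information in $\cU$ and elements of $\mfh$ whose image on $\bR^{n-1}$ lies in $\spa{v}$.

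For the inclusion ``$\supseteq$'', suppose $\alpha\in (\bR^{n-1})^*$ is such that some $F\in \mfh_v$ satisfies $F|_{\bR^{n-1}}=\alpha\otimes v$. The idea is to manufacture from $F$ an element of $\cK^{(1)}_{\mfh}$ whose symbol forces $\alpha\in \cU$. Concretely, I would set $\hat\nabla_u:=\alpha(u)\,F\in \mfh$ for $u\in \bR^{n-1}$; then for $u_1,u_2\in \bR^{n-1}$ we have $\hat\nabla_{u_1}u_2=\alpha(u_1)\alpha(u_2)v$, which is symmetric in $u_1,u_2$ and lies in $\spa{v}\subseteq \bR^n$, so $\hat\nabla|_{\bR^{n-1}\times\bR^{n-1}}=(\alpha\otimes\alpha)\otimes v\in \cK^{(1)}_{\mfh}=S^2\cU\otimes v$. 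Hence $\alpha\otimes\alpha\in S^2\cU$, and the standard fact that $s\in S^2\cU\subseteq S^2(\bR^{n-1})^*$ implies $s(u,\any)\in\cU$ for every $u\in\bR^{n-1}$ yields, upon choosing some $u_0$ with $\alpha(u_0)\neq 0$ (the case $\alpha=0$ being trivial), that $\alpha(u_0)\alpha\in\cU$ and therefore $\alpha\in\cU$.

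For the inclusion ``$\subseteq$'', I would reverse the argument. Given $\alpha\in\cU\setminus\{0\}$, we have $\alpha\otimes\alpha\in S^2\cU$ and so $(\alpha\otimes\alpha)\otimes v\in S^2\cU\otimes v=\cK^{(1)}_{\mfh}$. By the very definition of $\cK^{(1)}_{\mfh}$ as the intersection $((\bR^{n-1})^*\otimes\cK_{\mfh})\cap(S^2(\bR^{n-1})^*\otimes\bR^n)$, there exists $\hat\nabla\in (\bR^{n-1})^*\otimes\mfh$ with $\hat\nabla_u|_{\bR^{n-1}}=\alpha(u)\,\alpha\otimes v$ for every $u\in\bR^{n-1}$. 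Choosing $u_0\in\bR^{n-1}$ with $\alpha(u_0)\neq 0$ and setting $F:=\alpha(u_0)^{-1}\hat\nabla_{u_0}\in\mfh$ then gives $F|_{\bR^{n-1}}=\alpha\otimes v$, which in particular implies $F(\bR^{n-1})\subseteq \spa{v}$ and hence $F\in\mfh_v$, as required.

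There is no genuinely hard step: the only substantive point is the elementary linear-algebra fact that a rank-one symmetric tensor $\alpha\otimes\alpha$ lies in $S^2\cU$ precisely when $\alpha\in\cU$, which follows by contracting with an arbitrary vector on which $\alpha$ does not vanish. Everything else is bookkeeping translating between $\hat\nabla\in(\bR^{n-1})^*\otimes\mfh$ and individual elements $F\in\mfh$ via evaluation at a single vector, and observing that the symmetry condition on $\hat\nabla|_{\bR^{n-1}\times\bR^{n-1}}$ is automatic for the particular $\hat\nabla=\alpha(\any)F$ built from an $F\in\mfh_v$.
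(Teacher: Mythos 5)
Your proposal is correct and follows essentially the same route as the paper: both directions pass through the observation that $\alpha\otimes v\in\cK_{\mfh}$ if and only if $\alpha\otimes\alpha\otimes v\in\cK_{\mfh}^{(1)}=S^2\cU\otimes v$, which in turn holds if and only if $\alpha\in\cU$. You merely spell out the contraction and the construction $\hat\nabla_u=\alpha(u)F$ that the paper leaves implicit.
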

\begin{proof}
Let $\alpha\in \cU$ be given. Then $\alpha\otimes \alpha\otimes v\in S^2 \cU\otimes v=\cK_{\mfh}^{(1)}$ and so $\alpha\otimes v\in \cK_{\mfh}$, i.e. there is some $F\in \mfh$ with $F|_{\bR^{n-1}}=\alpha\otimes v$. But then $F\in \mfh_v$ and so $\alpha$ in the space on the right hand side of the claimed equality.

Conversely, if $\alpha\in (\bR^{n-1})^*$ is such that there is some $F\in \mfh_v$ with $F|_{\bR^{n-1}}=\alpha\otimes v$, then $\alpha\otimes v\in \cK_{\mfh}$. Thus, $\alpha\otimes \alpha\otimes v\in \cK_{\mfh}^{(1)}$ and so $\alpha\in  \cU$. This proves the claimed equality.
\end{proof}
The general result on $\cF_{\mfh}$ in Theorem \ref{th:cK1=S2cUv} below distinguishes the case whether $\mfh$ is elliptic or not. To prove this result, we first start by showing some results on the form of $\mfh$ which distinguishes three different cases, two of them which are not elliptic:
\begin{lemma}\label{le:nu}
	Let $\mfh$ be an linear subalgebra with $\cK_{\mfh}^{(1)}=S^2 \cU\otimes v$ for some non-zero subspace $\cU\subseteq (\bR^{n-1})^*$ and some $v\in \bR^n\setminus \bR^{n-1}$. Moreover, let $\gamma\in (\bR^{n-1})^0\setminus \{0\}\setminus \{0\}$. Then:
	\begin{enumerate}
	\item[(i)]
	 Either $\cU\otimes v\subseteq \mfh$
	 \item[(ii)]
	 or $\dim(\cU)=1$, i.e. $\cU=\spa{\alpha}$ for some $\alpha\in (\bR^{n-1})^*\setminus \{0\}$ and there exists some $0\neq \lambda\in \bR$ such that $(\alpha+\lambda \gamma)\otimes v\in \mfh$
	 \item[(iii)]
	 or there exists an injective map $\nu:\cU\rightarrow \bR^{n-1}$ and a map $\rho:\cU\rightarrow \bR$ which are uniquely defined by the property that for $\alpha\in \cU$ the element $\nu(\alpha)\in \bR^{n-1}$ and the real number $\rho(\alpha)\in \bR$ satisfy
	 \begin{equation*}
	 \alpha\otimes v-\gamma\otimes (\nu(\alpha)+\rho(\alpha)\, v)\in \mfh. 
	\end{equation*}
	\end{enumerate}
	Moreover, $\mfh_1\subseteq \spa{\gamma\otimes v}$ in case (i) and $\mfh_1=\{0\}$ in case (ii) and (iii), and $\mfh$ is elliptic if and only if $\mfh$ is as in case (iii).
	\end{lemma}
\begin{proof}
Let us first assume that (i) holds and let $\alpha\in \cU\setminus \{0\}$. Then $F:=\alpha\otimes v\in \mfh$. Moreover, let $\tilde{F}\in \mfh_1$ be given. Then $\tilde{F}=\gamma\otimes (\tilde{u}+a\, v)\in \mfh_1$ for some $\tilde{u}\in \bR^{n-1}$ and $a\in \bR$ and
\begin{equation*}
[F,\tilde{F}](u)=-\tilde{F}(\alpha(u) v)=-\gamma(v)\, \alpha(u)\, (\tilde{u}+a\, v),
\end{equation*}
i.e. $[F,\tilde{F}]|_{\bR^{n-1}}=-\gamma(v)\, \alpha\otimes (\tilde{u}+a\, v)$. Hence $\alpha\otimes  \alpha\otimes (\tilde{u}+a\, v)\in \cK_{\mfh}^{(1)}$ and so we must have $\tilde{u}=0$. This shows $\mfh_1\subseteq \spa{\gamma\otimes v}$.

Let us now assume for the rest of the proof that (i) does not hold, i.e. that there is some element $0\neq\beta\in \cU$ such that $\beta\otimes v\notin \mfh$. Since $\beta\otimes v\in \cK_{\mfh}$, there exist $w_1\in \bR^{n-1}$, $a_1\in \bR$ such that $F_1:=\beta\otimes v-\gamma\otimes (w_1+a_1\, v)\in \mfh$ and such that $w_1+a_1\, v\neq 0$.

We first show the existence of maps $\nu:\cU\rightarrow \bR^{n-1}$ and $\rho:\cU\rightarrow \bR$ uniquely defined by the property that $\alpha\otimes v-\gamma\otimes (\nu(\alpha)+\rho(\alpha)\, v)\in \mfh$. We note that these maps are well-defined if and only if $\mfh_1=\{0\}$. So let us asssume that theses maps are not well-defined and so there is some element $0\neq F_2:=\gamma\otimes (w_2+a_2\, v)\in \mfh_1\subseteq \mfh$.
 But then
\begin{equation*}
[F_1,F_2](u)=-F_2(\beta(u) v)=-\beta(u)\, \gamma(v)\, (w_2+a_2\, v)
\end{equation*}
for all $u\in \bR^{n-1}$, i.e. $[F_1,F_2]|_{\bR^{n-1}}=-\gamma(v)\, \beta\otimes (w_2+a_2\, v)$. Since $[F_1,F_2]\in \mfh$, we get $\beta\otimes \beta\otimes (w_2+a_2\,v)\in \cK_{\mfh}^{(1)}$, and so the condition $\cK_{\mfh}^{(1)}=S^2\cU\otimes v$ forces $w_2=0$. But then $a_2\neq 0$ and $\beta\otimes v\in \mfh$. Hence, also
\begin{equation*}
G_1:=F_1-a_1\, \beta\otimes v=\beta\otimes v-\gamma\otimes w_1\in \mfh
\end{equation*}
 Moreover,
\begin{equation*}
\begin{split}
[F_1,F_2](v)&=F_1(a_2\gamma(v)\, v)+F_2(\gamma(v) (w_1+a_1\, v))\\
&=-a_2\gamma(v)\, w_1-\gamma(v)^2 a_1 a_2\, v+\gamma(v)^2 a_2 a_1\, v=-a_2\gamma(v)\, w_1
\end{split}
\end{equation*}
and so
\begin{equation*}
G_2:=-\frac{1}{a_2 \gamma(v)}[F_1,F_2]=\beta\otimes v+\gamma\otimes w_1\in \mfh.
\end{equation*}
Thus, $\tfrac{1}{2} (G_1+G_2)=\beta\otimes v\in \mfh$, a contradiction. Hence, both $\nu$ and $\rho$ are well-defined and $\mfh_1=\{0\}$.

Next, we want to show that either $\nu\equiv 0$ or $\nu$ is injective. For this, let $\alpha\in \ker(\nu)$. Then $H_1:=\left(\alpha-\rho(\alpha)\gamma\right)\otimes v\in \mfh$.
Moreover, let $H_2:=\beta\otimes v-\gamma\otimes (\nu(\beta)+\rho(\beta) v)\in \mfh$ be given. Then $H:=[H_1,H_2]+\rho(\alpha)\gamma(v) H_2\in \mfh$ and
\begin{equation*}
\begin{split}
H(u)&=([H_1,H_2]+\rho(\alpha)\gamma(v) H_2)(u)=H_1(\beta(u) v)-H_2(\alpha(u)v)+\rho(\alpha)\gamma(v)\beta(u) v\\
&=-\beta(u)\rho(\alpha)\gamma(v) v+\alpha(u)\gamma(v)\nu(\beta)+\alpha(u)\rho(\beta)\gamma(v) v+\rho(\alpha)\gamma(v)\beta(u) v\\
&=\gamma(v)\alpha(u)(\nu(\beta)+\rho(\beta) v)
\end{split}
\end{equation*} 
for all $u\in \bR^{n-1}$. Thus, $\alpha\otimes \alpha\otimes (\nu(\beta)+\rho(\beta)\, v)\in \cK_{\mfh}^{(1)}$ and so we must either have $\alpha=0$, i.e. $\ker(\nu)=\{0\}$ and then $\nu$ is injective, or $\nu(\beta)=0$, i.e. $\nu\equiv 0$.

Now if $\nu\equiv 0$, then $(\alpha-\rho(\alpha)\, \gamma)\otimes v\in \mfh$ for any $\alpha\in \cU$. Thus, if $\dim(\ker(\rho))\geq 1$ and so there is some $0\neq \beta\in \ker(\rho)$ such that $\beta\otimes v\in \mfh$, a contraditction. Hence $\ker(\rho)=\{0\}$, which forces $\dim(\cU)=1$ and that $(\alpha+\lambda \gamma)\otimes v\in \mfh$ for $\alpha\in \cU$ with $\cU=\spa{\alpha}$ and for $\lambda:=-\rho(\alpha)\neq 0$. This is case (ii).

Finally, if $\nu$ is injective, we are in case (iii) and to prove the assertion we only need to show that $\mfh$ is elliptic. For this, assume by contradiction that $F:=(\alpha+\lambda \gamma)\otimes (u+\mu v)$ is an element of rank one in $\mfh$ for certain $\alpha\in (\bR^{n-1})^*$, $u\in \bR^{n-1}$ and $a,b\in \bR$. Then $F|_{\bR^{n-1}}=\alpha\otimes (u+\mu v)$ and so $\alpha\otimes \alpha\otimes (u+\mu v)\in \cK_{\mfh}^{(1)}$, forcing $u=0$ and $\alpha\in \cU$. Since $F$ has rank one, we must have $\mu\neq 0$ and so $\alpha\otimes v+\gamma\otimes (-\lambda v)$. Since we are in case (iii), this implies $\nu(\alpha)=0$ and so, as $\nu$ is injective, $\alpha=0$, a contradiction. Thus, $\mfh$ is elliptic.
\end{proof} 
We concentrate now first on the case that $\mfh$ is elliptic, i.e,. on case (iii) in Lemma \ref{le:nu}. In this case, we set
\begin{equation*}
U:=\nu(\cU)\subseteq \bR^{n-1}.
\end{equation*}
We derive first some results on the characteristic subalgebra $\tilde{\mfk}_{\mfh}$ as well as on the bilinear form $(\alpha,\beta)\mapsto \alpha(\nu(\beta))$ on $\cU$:
\begin{lemma}\label{le:bilinearnusymmetric}
Let $\mfh$ be an elliptic linear subalgebra with $\cK_{\mfh}^{(1)}=S^2 \cU\otimes v$ for some non-zero subspace $\cU$ of $(\bR^{n-1})^*$ and some $v\in \bR^n\setminus \bR^{n-1}$. Moreover, let $\beta\in (\bR^{n-1})^0\setminus \{0\}$. Then
\begin{equation*}
\left\{\left. \alpha\otimes \nu(\beta)-\beta\otimes \nu(\alpha)\right|\alpha,\beta\in \cU\right\}\subseteq \tilde{\mfk}_{\mfh},
\end{equation*}
$\rho\equiv 0$ and the bilinear map
\begin{equation*}
\cU\times \cU \ni (\alpha,\beta)\mapsto h(\alpha,\beta):= \alpha(\nu(\beta))\in \bR
\end{equation*}
is symmetric.
\end{lemma}
\begin{proof}
Let linearly independent $\alpha,\beta\in \cU$ be given. W.l.o.g, we may assume that $\alpha\in \ker(\rho)$. Then the commutator $F_3:=[F_1,F_2]\in \mfh$ of
\begin{equation*}
F_1:=\alpha\otimes v-\gamma\otimes \nu(\alpha),\quad F_2:=\beta\otimes v-\gamma\otimes (\nu(\beta)+\rho(\beta)\, v)
\end{equation*}
reads
\begin{equation*}
F_3=\gamma(v)(\alpha\otimes \nu(\beta)-\beta\otimes \nu(\alpha))+\gamma(v)\rho(\beta)\, \alpha\otimes v+\gamma(v)\rho(\beta)\, \gamma\otimes \nu(\alpha)+\gamma\otimes (\beta(\nu(\alpha))-\alpha(\nu(\beta))\, v.
\end{equation*}
Then the element $G:=\frac{1}{\gamma(v)}F_3-\rho(\beta)F_1$ of $\mfh$ is given by
\begin{equation*}
G=\alpha\otimes \nu(\beta)-\beta\otimes \nu(\alpha)+2\rho(\beta)\, \gamma\otimes \nu(\alpha)+\frac{1}{\gamma(v)}\gamma\otimes (\beta(\nu(\alpha))-\alpha(\nu(\beta)))\,v.
\end{equation*}
Hence $G\in \mfk_{\mfh}$ and so $\alpha\otimes \nu(\beta)-\beta\otimes \nu(\alpha)\in \tilde{\mfk}_{\mfh}$. Moreover,
\begin{equation*}
\begin{split}
H\coloneqq & [G,F_1]\\
=&2\rho(\beta)\gamma(v)\,\alpha\otimes \nu(\alpha)+(\beta(\nu(\alpha))-2 \alpha(\nu(\beta)))\, \alpha\otimes v+\alpha(\nu(\alpha))\, \beta\otimes v\\
-&\alpha(\nu(\alpha))\, \gamma\otimes \nu(\beta)+(2\beta(\nu(\alpha))-\alpha(\nu(\beta)))\gamma\otimes \nu(\alpha)\\
-&2\rho(\beta)\alpha(\nu(\alpha))\, \gamma\otimes v.
\end{split}
\end{equation*}
Thus,
\begin{equation*}
(H-\alpha(\nu(\alpha))\, F_2)|_{\bR^{n-1}}=\alpha\otimes \left(2\rho(\beta)\gamma(v)\, \nu(\alpha)+(\beta(\nu(\alpha))-2 \alpha(\nu(\beta)))\, v\right),
\end{equation*}
and so
\begin{equation*}
\alpha\otimes\alpha\otimes \left(2\rho(\beta)\gamma(v)\, \nu(\alpha)+(\beta(\nu(\alpha))-2 \alpha(\nu(\beta)))\, v\right)\in \cK_{\mfh}^{(1)}.
\end{equation*}
As $\nu(\alpha)\neq 0$, this shows $\rho(\beta)=0$, i.e. $\rho\equiv 0$. But then
\begin{equation*}
\begin{split}
H-\alpha(\nu(\alpha))\, F_2&=(\beta(\nu(\alpha))-2 \alpha(\nu(\beta)))\, \alpha\otimes v-(\alpha(\nu(\beta))-2\beta(\nu(\alpha)))\gamma\otimes \nu(\alpha)\in \mfh_v.
\end{split}
\end{equation*}
This shows $\beta(\nu(\alpha))-2 \alpha(\nu(\beta))=\alpha(\nu(\beta))-2\beta(\nu(\alpha))$, i.e. $\alpha(\nu(\beta))=\beta(\nu(\alpha))$.
\end{proof}
The symmetric bilinear form $h$ is not a pseudo-metric on $(\bR^n)^*$ as it is only defined on $\cU\subseteq (\bR^{n-1})^*$ and it is, in general, even degenerate. However, we can extend it to a pseudo-metric $g$ on $(\bR^n)^*$ such that the map $\nu$ gets the associated musical isomorphism $(\cdot)^{\sharp_g}$. Moreover, then one may identify a certain subalgebra $\mfh_g$ of $\mfh$ which is non-degenerate metric with respect to $g$.
 \begin{proposition}\label{pro:nusharp}
 Let $\mfh$ be an elliptic linear subalgebra with $\cK_{\mfh}^{(1)}=S^2 \cU\otimes v$ for some non-zero subspace $\cU$ of $(\bR^{n-1})^*$ and some $v\in \bR^n\setminus \bR^{n-1}$ and let $\gamma\in (\bR^{n-1})^0\setminus \{0\}$ with $\gamma(v)=\epsilon \in \{-1,1\}$ be given. Then there exists a pseudo-metric $g$ on $\bR^n\cong (\bR^n)^*$ such that $g|_{\cU\times \cU}=h$, such that $\nu=(\cdot)^{\sharp_g}|_{\cU}$, such that $g(v,v)=\epsilon$, $v^{\sharp}=\gamma$ and such that
 \begin{equation*}
 \begin{split}
 \mfh_g\coloneqq&\left\{\left. \alpha\otimes \nu(\beta)-\beta\otimes \nu(\alpha)\right|\alpha,\beta\in \cU\right\}\oplus \mfh_v\\
 =&\left\{\left. \alpha\otimes \nu(\beta)-\beta\otimes \nu(\alpha),\ \alpha\otimes v-\gamma\otimes \nu(\alpha)\right|\alpha,\beta\in \cU\right\}\subseteq \mfh\cap \mathfrak{so}(g),
 \end{split}
 \end{equation*}
i.e. $\mfh_g$ is a subalgebra of $\mfh$ which is non-degenerate and metric with respect to $g$.
 \end{proposition}
 \begin{proof}
 By Sylvester's law of inertia, we may choose a basis $\alpha_1,\ldots,\alpha_m$ of $\cU$ such that there exists $l\in \{0,\ldots,m\}$ and $\epsilon_1,\ldots,\epsilon_l\in \{-1,1\}$ with
\begin{equation*}
\alpha_j(\nu(\alpha_i))=\alpha_i(\nu(\alpha_j))=h(\alpha_i,\alpha_j)=\delta_{ij} \epsilon_i,\quad  \alpha_r(\nu(\alpha_k))=\alpha_k(\nu(\alpha_r))=h(\alpha_r,\alpha_k)=0
\end{equation*}
for all $i,j=1,\ldots,l$, $r=l+1,\ldots,m$ and $k=1,\ldots,m$. We note that so $\cU^{\perp_h}=\spa{\alpha_{l+1},\ldots,\alpha_m}$ and that the space $\nu(\cU^{\perp_h})=\spa{\nu(\alpha_{l+1}),\ldots \nu(\alpha_m)}\subseteq \bR^{n-1}$ is annihilated by all elements in $\cU$. As the natural pairing between $(\bR^{n-1})^*$ and $\bR^{n-1}$ is non-degenerate, 
we may find elements $\alpha_{m+1},\ldots,\alpha_{2m-l}\in (\bR^{n-1})^*$ such that
\begin{equation*}
\alpha_{m+s}(\nu(\alpha_i))=0, \quad \alpha_{m+s}(\nu(\alpha_{l+j}))=\delta_{js}
\end{equation*}
for all $i=1,\ldots,l$, $j,s=1,\ldots,m-l$. Necessarily, then $\alpha_1,\ldots, \alpha_{2m-l}$ are linearly independent and the annihilator $U^0$ of $U=\nu(\cU)$ is a subspace complementary to $\spa{\alpha_1,\ldots,\alpha_l,\alpha_{m+1},\ldots \alpha_{2m-l}}$ in $(\bR^{n-1})^*$. Consequently, we may extend the linearly independent set $\alpha_1,\ldots, \alpha_{2m-l}$ to a basis $\alpha_1,\ldots,\alpha_{n-1}$ of $(\bR^{n-1})^*$ such that $\alpha_{l+1},\ldots,\alpha_m,\alpha_{2m-l+1},\ldots,\alpha_{n-1}$ is a basis of $U^0$. Finally we set $\alpha_n:=\gamma$ and define a bilinear symmetric form $g$ on $\bR^n$ by letting
 \begin{equation*}
 \begin{pmatrix}
 \epsilon_1 & & & & & & \\
 & \ddots & & & & & & \\
 & & \epsilon_l & & & &\\
 & & & 0 & I_{m-l} &  &\\
 & & & I_{m-l} & 0 & & \\
 & & & & & I_{n-1-2m+l} & \\
 & & & & & & \epsilon
 \end{pmatrix}
 \end{equation*}
 be the matrix of the bilinear form with respect to the basis $(\alpha_1,\ldots,\alpha_n)$. It is then immediate that $g$ is non-degenerate, i.e. $g$ is a pseudo-metric, and satisfies $g|_{\cU\times \cU}=h$.
 
 Next, we show that
 \begin{equation*}
 g(\alpha,\beta)=\alpha(\nu(\beta))
 \end{equation*}
  for all $\alpha\in (\bR^n)^*$ and $\beta\in \cU$. For this, we note that for $\alpha\in \cU$, we immediately have $g(\alpha,\beta)=h(\alpha,\beta)=\alpha(\nu(\beta))$. Moreover, if $\alpha\in \spa{\alpha_{2m-l+1},\ldots\alpha_n}\subseteq U^0$, then
  $\alpha(\nu(\beta))=0=g(\alpha,\beta)$. So we only have to consider the case that $\alpha\in \spa{\alpha_{m+1},\ldots,\alpha_{2m-l}}$, i.e. $\alpha=\alpha_{m+s}$ for some $s\in \{1,\ldots,m-l\}$. Furthermore, we may assume that $\beta=\alpha_i$ for some $i\in \{1,\ldots,m\}$. If $i\in \{1,\ldots,l\}$, then
  \begin{equation*}
  g(\alpha,\beta)=g(\alpha_{m+s},\alpha_i)=0=\alpha_{m+s}(\nu(\alpha_i))=\alpha(\nu(\beta)),
  \end{equation*}
  whereas if $i=l+j$ for some $j\in \{1,\ldots,m-l\}$, we get
    \begin{equation*}
  g(\alpha,\beta)=g(\alpha_{m+s},\alpha_{l+j})=\delta_{sj}=\alpha_{m+s}(\nu(\alpha_{l+j})=\alpha(\nu(\beta)).
  \end{equation*}
  This show the identity $g(\alpha,\beta)=\alpha(\nu(\beta))$ for all $\alpha\in (\bR^n)^*$ and $\beta\in \cU$. But then
  \begin{equation*}
  g(\beta^{\sharp},\alpha^{\sharp})=g(\beta,\alpha)=\alpha(\nu(\beta))=g(\alpha^{\sharp},\nu(\beta))=g(\nu(\beta),\alpha^{\sharp})
  \end{equation*}
 for all $\alpha\in (\bR^n)^*$ and $\beta\in \cU$, showing that $\beta^{\sharp}=\nu(\beta)$ for all $\beta\in \cU$. This shows that for all $\alpha,\beta\in \cU$ the element
 \begin{equation*}
 \alpha\otimes \nu(\beta)-\beta\otimes \nu(\alpha)=\alpha\otimes \beta^{\sharp}-\beta\otimes \alpha^{\sharp}
 \end{equation*}
 is in $\mathfrak{so}(g)$ and it is also in $\mfh$ by (the proof of) Lemma \ref{le:bilinearnusymmetric}. Moreover,
 \begin{equation*}
g(v,\alpha^{\sharp})=\alpha(v)=0=g(\gamma,\alpha)=g(\gamma^{\sharp},\alpha^{\sharp}),\quad g(v,\gamma^{\sharp})=\gamma(v)=\epsilon=g(\gamma,\gamma)=g(\gamma^{\sharp},\gamma^{\sharp})
 \end{equation*}
 for all $\alpha\in (\bR^{n-1})^*$ and so $\gamma^{\sharp}=v$ and $g(v,v)=g(\gamma^{\sharp},\gamma^{\sharp})=g(\gamma,\gamma)=\epsilon$. Thus, $\alpha\otimes v-\gamma\otimes \nu(\alpha)=\alpha\otimes \gamma^{\sharp}-\gamma\otimes \alpha^{\sharp}\in \mathfrak{so}(g)$ and so $\mfh_g\subseteq \mathfrak{so}(g)\cap \mfh$.
 \end{proof}
\begin{remark}\label{re:gnotunique}
Note that the choice of the pseudo-metric $g$ in Proposition \ref{pro:nusharp} is far from being unique. We also note that while the linear subalgebra $\mfh_g$ of $\mfh$ in Proposition \ref{pro:nusharp} is metric, this does not need to be the case for the entire $\mfh$. For the sake of an explicit counter-example, let $(I_0,J_0,K_0)$ be the standard hypercomplex structure on $\bR^{4k}$. Moreover, take any $n\in \bN$ with $n>4k+1$, let $\cU:=\spa{e^{4k+1},\ldots,e^{n-1}}$, $\nu:\cU\rightarrow \bR^{n-1}$ be defined by $\nu(e^j)=e_j$ for $j=4k+1,\ldots,n-1$ and set
\begin{equation*}
\mfh:=\mathfrak{gl}(I_0,J_0,K_0)\times \left\{\left. \alpha\otimes \nu(\beta)-\beta\otimes \nu(\alpha),\ \alpha\otimes e_n-e^n\otimes \nu(\alpha)\right|\alpha,\beta\in \cU\right\}
\end{equation*}
i.e. the first factor $\mathfrak{gl}(I_0,J_0,K_0)$ acts on $\spa{e_1,\ldots,e_{4k}}$ while the second one acts on $\spa{e_{4k+1},\ldots,e_n}$. Then $\mfh$ is not a metric subalgebra for any pseudo-metric $g$ on $\bR^n$. However, since $\mathfrak{gl}(I_0,J_0,K_0)^{(1)}=\{0\}$, we do, in fact, have
\begin{equation*}
\cK_{\mfh}=S^2 \cU\otimes e_n.
\end{equation*}
\end{remark}
We are finally in the position to determine $\cF_{\mfh}$ for a linear subalgebra $\mfh$ with $\cK_{\mfh}^{(1)}=S^2 \cU\otimes v$
\begin{theorem}\label{th:cK1=S2cUv}
Let $\mfh$ be a linear subalgebra with $\cK_{\mfh}^{(1)}=S^2 \cU\otimes v$ for some non-zero subspace $\cU$ of $(\bR^{n-1})^*$ and some $v\in \bR^{n}\setminus \bR^{n-1}$. Then:
\begin{enumerate}[(a)]
\item
If $\mfh$ is not elliptic, then $\cF_{\mfh}=\tilde{\mfk}_{\mfh}$.
\item
If $\mfh$ is elliptic, then
\begin{equation*}
\cF_{\mfh}=\tilde{\mfk}_{\mfh}+ \spa{\alpha\otimes \nu(\alpha)|\alpha\in \cU}.
\end{equation*}
In this case, $\cU\otimes U\subseteq \cF_{\mfh}$.
\end{enumerate}
\end{theorem}
\begin{proof}
For both cases, let $\nabla\in \cT^{-1}(\End(\bR^{n-1}))\subseteq \cD_{\mfh}$ be given and observe that
\begin{equation*}
	\nabla|_{\bR^{n-1}\times \bR^{n-1}}\in S^2 \cU\otimes v.
\end{equation*}
Now we distinguish the two cases:
\begin{enumerate}[(a)]
\item
If $\mfh$ is not elliptic, then we are either in case (i) or in case (ii) in Lemma \ref{le:nu}. 

In case (i), Lemma \ref{le:nu} yields $\nabla_u\in \spa{\alpha\otimes v|\alpha\in \cU}\oplus \mfh_1\subseteq \spa{\alpha\otimes v|\alpha\in \cU}\oplus \spa{\gamma\otimes v}$ for all $u\in \bR^{n-1}$. Consequently, if $\mfh_1=\{0\}$, we have
$\nabla v|_{\bR^{n-1}}=0$ and the condition $\nabla\in \cT^{-1}(\End(\bR^{n-1}))$ forces $\nabla_v\in \mfk_{\mfh}$ and so $\cT(\nabla)=\nabla_v|_{\bR^{n-1}}\in \tilde{\mfk}_{\mfh}$. Hence, $\cF_{\mfh}\subseteq \tilde{\mfk}_{\mfh}$ and so $\cF_{\mfh}=\tilde{\mfk}_{\mfh}$. 

If $\mfh_1=\spa{\gamma\otimes v}$, then
\begin{equation*}
\cT(\nabla)=\nabla_v|_{\bR^{n-1}}-\nabla v|_{\bR^{n-1}}=\nabla_v|_{\bR^{n-1}}-\beta\otimes v
\end{equation*}
for some $\beta\in (\bR^{n-1})^*$ and so
\begin{equation*}
\cT(\nabla)=\pi_{\bR^{n-1}}\circ\nabla_v|_{\bR^{n-1}}\subseteq  \left\{\left. \pi_{\bR^{n-1}}\circ F|_{\bR^{n-1}}\right|F\in \mfh\right\}=\tilde{\mfk}_{\mfh}
\end{equation*}
by Lemma \ref{le:tildemfkequaltildecK} and with the notations from this lemma, yielding again $\cF_{\mfh}=\tilde{\mfk}_{\mfh}$.

So let us now assume that we are in case (ii). Then $\nabla|_{\bR^{n-1}\times \bR^{n-1}}=\alpha\otimes \alpha\otimes v$ for some $\alpha$ in the one-dimensional space $\cU$, $\mfh_1=\{0\}$ and there exists $\lambda\in \bR^*$ such that
\begin{equation*}
\nabla|_{\bR^{n-1}\times \bR^n}=\alpha\otimes (\alpha+\lambda\gamma)\otimes v.
\end{equation*}
Then $\nabla v|_{\bR^{n-1}}=\lambda \gamma(v)\, \alpha\otimes v$. Setting $U_0:=\ker(\alpha)$, the condition $\nabla\in \cT^{-1}(\End(\bR^{n-1}))$ forces $F(U_0)\subseteq \bR^{n-1}$ for $F:=\nabla_v$. Let $u_0\in \bR^{n-1}\backslash U_0$. Then $\alpha(u_0)\neq 0$ and so $G:=F-\frac{F(u_0)}{\alpha(u_0)} (\alpha+\lambda \gamma)\otimes v\in \mfh$ preserves $\bR^{n-1}$ and so $G|_{\bR^{n-1}}\in \tilde{\mfk}_{\mfh}$. Consequently,
\begin{equation*}
\cT(\nabla)=F|_{\bR^{n-1}}-\lambda \gamma(v)\, \alpha\otimes v=G|_{\bR^{n-1}}+\frac{F(u_0)}{\alpha(u_0)}\alpha\otimes v-\lambda\gamma(v)\, \alpha\otimes v
\end{equation*}
and so the condition $\nabla\in \cT^{-1}(\End(\bR^{n-1}))$ forces $\frac{F(u_0)}{\alpha(u_0)}=\lambda\gamma(v)$ and so gives $\cT(\nabla)=G|_{\bR^{n-1}}\in \tilde{\mfk}_{\mfh}$. Thus, $\cF_{\mfh}\subseteq \tilde{\mfk}_{\mfh}$ and so $\cF_{\mfh}= \tilde{\mfk}_{\mfh}$.
\item
By Sylvester's law of inertia, we have a basis $\alpha_1,\ldots,\alpha_m$ of $\cU$ and $\lambda_1,\ldots,\lambda_m\in \{-1,0,1\}$ with
\begin{equation*}
	\nabla|_{\bR^{n-1}\times \bR^{n-1}}=\sum_{i=1}^m \lambda_i \alpha_i\otimes \alpha_i\otimes v.
\end{equation*}
As $\nabla_u\in \cU\otimes v$ and so $\nabla_u\in \mfh_v$ for all $u\in \bR^{n-1}$, Lemma \ref{le:nu} and Lemma \ref{le:bilinearnusymmetric} imply
\begin{equation*}
	\nabla|_{\bR^{n-1}\times \bR^n}=\sum_{i=1}^m \lambda_i \alpha_i\otimes \left(\alpha_i\otimes v-\gamma\otimes \nu(\alpha_i)\right).
\end{equation*}
Consequently,
\begin{equation*}
\nabla v|{\bR^{n-1}}=-\gamma(v)\sum_{i=1}^m \lambda_i\,\alpha_i\otimes \nu(\alpha_i)\in \spa{\alpha\otimes \nu(\alpha)|\alpha\in \cU}.
\end{equation*}
In particular, $\nabla v$ preserves $\bR^{n-1}$ and so also $\nabla_v$ has to preserve $\bR^{n-1}$. Thus, $\nabla_v|_{\bR^{n-1}}\in \tilde{\mfk}_{\mfh}$ and so
\begin{equation*}
\cT(\nabla)\in \tilde{\mfk}_{\mfh}+ \spa{\alpha\otimes \nu(\alpha)|\alpha\in \cU},
\end{equation*}
which proves the inclusion $\cF_{\mfh}\subseteq \tilde{\mfk}_{\mfh}+ \spa{\alpha\otimes \nu(\alpha)|\alpha\in \cU}$.

For the converse inclusion, let $f=f_1+f_2\in \tilde{\mfk}_{\mfh}+ \spa{\alpha\otimes \nu(\alpha)|\alpha\in \cU}$. Choose $F_1\in \mfh$ with $F_1(\bR^{n-1})\subseteq \bR^{n-1}$ and $F_1|_{\bR^{n-1}}=f_1$ and $\alpha_1,\ldots,\alpha_m\in \cU$ and $\lambda_1,\ldots,\lambda_m\in \bR$ such that
\begin{equation*}
f_2=\sum_{i=1}^m \lambda_i \alpha_i\otimes \nu(\alpha_i).
\end{equation*}
Define now $\nabla\in  \bR^n\otimes \mfh$ by
\begin{equation*}
\nabla|_{\bR^{n-1}\otimes \bR^n}:=-\frac{1}{\gamma(v)}\sum_{i=1}^m \lambda_i \alpha_i\otimes \left(\alpha_i\otimes v-\gamma\otimes \nu(\alpha_i)\right)
\end{equation*}
and $\nabla_v:=F_1$. Then one checks that $f=\cT(\nabla)\in \End(\bR^{n-1})$, i.e.
$f\in \cF_{\mfh}$, proving the other inclusion and so the equality $\cF_{\mfh}=\tilde{\mfk}_{\mfh}+ \spa{\alpha\otimes \nu(\alpha)|\alpha\in \cU}$.

Finally, we need to show that $\cU\otimes U\subseteq \cF_{\mfh}$. For this, let $F\in \cU\otimes U$. Then there exist $\beta_1,\ldots,\beta_l\in \cU$ and $u_1,\ldots,u_l\in U$ such that
\begin{equation*}
F=\sum_{i=1}^m \beta_i\otimes u_i.
\end{equation*}
Now setting $\tau_i:=\nu^{-1}(u_i)$, we may write
\begin{equation*}
\begin{split}
\beta_i\otimes u_i&=\frac{1}{2}\, \left(\beta_i\otimes u_i+\tau_i\otimes \nu(\beta_i)\right)+\frac{1}{2}\, \left(\beta_i\otimes u_i-\tau_i\otimes \nu(\beta_i)\right)\\
&=\frac{1}{2}\, \left(\beta_i\otimes \nu(\tau_i)+\tau_i\otimes \nu(\beta_i)\right)+\frac{1}{2}\, \left(\beta_i\otimes \nu(\tau_i)-\tau_i\otimes \nu(\beta_i)\right).
\end{split}
\end{equation*}
By Lemma \ref{le:bilinearnusymmetric}, we have $\beta_i\otimes \nu(\tau_i)-\tau_i\otimes \nu(\beta_i)\in \tilde{\mfk}_{\mfh}$. Moreover,
\begin{equation*}
\beta_i\otimes \nu(\tau_i)+\tau_i\otimes \nu(\beta_i)=(\beta_i+\tau_i)\otimes (\nu(\beta_i+\tau_i))-\beta_i\otimes \nu(\beta_i)-\tau_i\otimes \nu(\tau_i)\in  \spa{\alpha\otimes \nu(\alpha)|\alpha\in \cU},
\end{equation*}
implying $\beta_i\otimes u_i\in \cF_{\mfh}$ for all $i=1,\ldots,l$. Consequently, $F\in \cF_{\mfh}$ and so $\cU\otimes U\subseteq \cF_{\mfh}$.
\end{enumerate}
\end{proof}
For non-degenerate metric subalgebras, Theorem \ref{th:cK1=S2cUv} yields:
\begin{corollary}\label{co:nondegmetric}
Let $\mfh$ be a non-degenerate metric subalgebra. Then
\begin{equation*}
\cF_{\mfh}=\tilde{\mfk}_{\mfh}\oplus \spa{\alpha\otimes \alpha^{\sharp}|\alpha \in \cU}=\tilde{\mfk}_{\mfh}\oplus \spa{u^b\otimes u |u\in U}
\end{equation*}
for
\begin{equation*}
\cU:=\left\{F(v)^b|F\in \mfh_v\right\}\subseteq (\bR^{n-1})^*,\quad U:=\left\{F(v)|F\in \mfh_v\right\}\subseteq \bR^{n-1}.
 \end{equation*}
\end{corollary}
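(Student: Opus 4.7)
The plan is to show that Corollary \ref{co:nondegmetric} is essentially a direct application of Theorem \ref{th:cK1=S2cUv}, after verifying the hypothesis $\mfh_v = \mfh_v^{\bR^{n-1}}$ and carrying out the identifications of $\cU'$ (from Lemma \ref{le:charofcU}) and $\nu$ (from Lemma \ref{le:nu}) using the pseudo-Riemannian structure. First I would invoke Lemma \ref{le:nondegmetriccK1} to get $\cK_{\mfh}^{(1)} = S^2\cU'\otimes v$ for some subspace $\cU'\subseteq (\bR^{n-1})^*$, with $v\in (\bR^{n-1})^{\perp}$ chosen as in the statement. To verify $\mfh_v = \mfh_v^{\bR^{n-1}}$, note that if $F\in \mfh_v$, then $F$ is $g$-skew, so $\epsilon\, g(F(v),v) = -\epsilon\, g(v,F(v))$ forces $g(F(v),v)=0$; since $\bR^n = \bR^{n-1}\oplus\spa{v}$ is an orthogonal splitting with respect to $g$, the orthogonal complement of $v$ is precisely $\bR^{n-1}$, so $F(v)\in \bR^{n-1}$.

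Next I would identify the subspace $\cU'$ from Lemma \ref{le:charofcU} with the $\cU$ in the corollary. For $F\in \mfh_v$ with $F|_{\bR^{n-1}} = \alpha\otimes v$ and any $u\in \bR^{n-1}$, skew-symmetry gives
\begin{equation*}
\epsilon\,\alpha(u) = g(F(u),v) = -g(u,F(v)) = -F(v)^{b}(u),
\end{equation*}
so $\alpha = -\epsilon\,F(v)^{b}$. Since $\epsilon = \pm 1$, this shows $\cU' = -\epsilon\,\cU = \cU$ as subspaces of $(\bR^{n-1})^*$.

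To compute $\nu\colon \cU\to \bR^{n-1}$ from Lemma \ref{le:nu}, I would choose $\beta := \epsilon\,v^{b}$, which lies in $(\bR^{n-1})^{0}$ because $v\perp \bR^{n-1}$, and satisfies $\beta(v) = \epsilon\,g(v,v) = 1$. The unique representation $F = \alpha\otimes v - \beta\otimes \nu(\alpha)$ then evaluates at $v$ to give $F(v) = -\nu(\alpha)$, while from $\alpha = -\epsilon\,F(v)^{b}$ and the $\sharp$-isomorphism induced by the non-degenerate form $g|_{\bR^{n-1}}$ we have $F(v) = -\epsilon\,\alpha^{\sharp}$. Hence $\nu(\alpha) = \epsilon\,\alpha^{\sharp}$, so
\begin{equation*}
\spa{\alpha\otimes \nu(\alpha)\mid \alpha\in \cU'} \;=\; \spa{\alpha\otimes \alpha^{\sharp}\mid \alpha\in \cU},
\end{equation*}
and substituting $\alpha = u^{b}$ rewrites this as $\spa{u^{b}\otimes u \mid u\in U}$.

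Plugging into Theorem \ref{th:cK1=S2cUv} gives the claimed sum decomposition; the final step is to upgrade ``$+$'' to ``$\oplus$''. Any element of $\tilde{\mfk}_{\mfh}$ is the restriction of some $F\in \mfh\subseteq \mathfrak{so}(g)$ preserving $\bR^{n-1}$, so it is $g|_{\bR^{n-1}}$-skew. Conversely, for any $A = \sum_{i} c_i\,\alpha_i\otimes \alpha_i^{\sharp}$, the bilinear form $(u,w)\mapsto g(A(u),w) = \sum_i c_i\,\alpha_i(u)\alpha_i(w)$ is manifestly symmetric, so $A$ is $g|_{\bR^{n-1}}$-symmetric. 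Non-degeneracy of $g|_{\bR^{n-1}}$ then forces the intersection of these two subspaces to be zero, yielding directness. The only mildly delicate point, and the place where care is needed, is the bookkeeping of the $\flat/\sharp$ isomorphisms and the scalar $\epsilon$; once $\beta = \epsilon\,v^{b}$ is chosen everything falls out cleanly.
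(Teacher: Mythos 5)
Your proposal follows the same route as the paper's proof: Lemma \ref{le:nondegmetriccK1} gives $\cK^{(1)}_{\mfh}=S^2\cU\otimes v$, the skew-symmetry computation identifies the $\cU$ of Lemma \ref{le:charofcU} with $\left\{F(v)^b\mid F\in\mfh_v\right\}$ (up to the harmless scalar $-\epsilon$), the choice $\beta=\epsilon\, v^b$ gives $\nu(\alpha)=\epsilon\,\alpha^{\sharp}$, and Theorem \ref{th:cK1=S2cUv} then yields the formula. Your sign bookkeeping is correct, and you in fact supply two details the paper leaves implicit: the verification that $\mfh_v=\mfh_v^{\bR^{n-1}}$ (which the paper only asserts in the paragraph preceding Lemma \ref{le:nu}) and the directness of the sum via the skew/symmetric decomposition with respect to $g|_{\bR^{n-1}\times\bR^{n-1}}$.

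There is, however, one genuine gap: you never treat the case $\cU=\{0\}$. Theorem \ref{th:cK1=S2cUv} is stated only for a \emph{non-zero} subspace $\cU$, and Lemma \ref{le:nu} likewise assumes $\cU\neq\{0\}$, so neither can be invoked when $\mfh_v=\{0\}$. This case does occur in the intended applications (e.g.\ for $\mfh$ conjugate to $\mfg_2^*$ with non-degenerate codimension one ideal, where $\mfh_v=0$), and the conclusion $\cF_{\mfh}=\tilde{\mfk}_{\mfh}$ still needs an argument there. The paper closes this branch by observing that $\cU=\{0\}$ forces $\cK^{(1)}_{\mfh}=\{0\}$, that a metric subalgebra is elliptic, and then applying Corollary \ref{co:cK1=0elliptic}; you should add this case to make the proof complete.
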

\begin{proof}
First of all, note that by Lemma \ref{le:nondegmetriccK1}, we have $\cK_{\mfh}^{(1)}=\cU\otimes \spa{v}$ for some $v\perp \bR^{n-1}$ and that by Lemma \ref{le:charofcU}
\begin{equation*}
\cU=\left\{\alpha\in (\bR^{n-1})^*|F|_{\bR^{n-1}}=\alpha\otimes v \textrm{ for some } F\in \mfh_v\right\}.
\end{equation*}
As $F\in \mfh_v$ is of the form $F=\alpha\otimes v-v^b\otimes \alpha^{\sharp}$, we have
\begin{equation*}
\cU:=\left\{F(v)^b|F\in \mfh_v\right\}\subseteq (\bR^{n-1})^*
\end{equation*}
as claimed. Moreover, if $\cU\neq \{0\}$, we choose $\gamma:=v^b$ and then have $\nu(\alpha)=\alpha^{\sharp}$, Thus, the result follows directly from Theorem \ref{th:cK1=S2cUv}. Finally, if $\cU=\{0\}$, then $\cK_{\mfh}^{(1)}=\{0\}$ and the result follows from Corollary \ref{co:cK1=0elliptic} as $\mfh$ is elliptic.
\end{proof}
\begin{example}
\begin{itemize}
	\item Corollary \ref{co:nondegmetric} gives another proof of Theorem \ref{th:unitarysubalgebras} (a) and so of the characterisation of the almost Abelian Lie algebras which are K\"ahler in \cite{LW}.
	\item Corollary \ref{co:nondegmetric} also reproves the characterisation that the author has obtained in \cite{Fr2} of the almost Abelian Lie algebras $\mfg$ admitting a parallel $\G_2^*$-structure with non-degenerate codimension one Abelian ideal $\mfu$. In this case, one checks that always $\mfh_v=0$, hence $\cU=\{0\}$, and $\cF_{\mfh}=\tilde{\mfk}_{\mfh}=\mathfrak{su}(1,2)$ if the signature of $\mfu$ is $(2,4)$ and $\cF_{\mfh}=\tilde{\mfk}_{\mfh}=\mathfrak{sl}(3,\bR)$ if the signature of $\mfu$ is $(3,3)$.
\end{itemize}
\end{example}
\begin{remark}
We note that the flat pseudo-Riemannian almost Abelian Lie algebras with non-degenerate codimension Abelian ideal are known, cf., e.g., \cite[Proposition 4.21]{CFrG}. In this case, $f=f^A+\lambda u^b\otimes u$ with $f^A$ being skew-symmetric, $\lambda\in \bR$ and $u\in\mfu$ with $g(u,u)=0$ and $f^A(u)=0$. Note that so $\mathrm{O}(p,n-p)$-structures provide examples where for the subset $F_{\mfh}$ of endomorphisms of $\bR^{n-1}$ giving rise to special flat $H$-structures on the associate almost Abelian Lie algebra one has
\begin{equation*}
\mathfrak{so}(p,n-1-p)=\tilde{\mfk}_{\mfh}\subsetneq F_{\mfh}\subsetneq \cF_{\mfh}=\End(\bR^{n-1}).
\end{equation*}
Moreover, the result on flat pseudo-Riemannian almost Abelian Lie algebras with non-degenerate $\mfu$ implies that an arbitrary metric non-degenerate special $H$-structure is flat if and only if
\begin{equation*}
f=f_A+\lambda u^b\otimes u
\end{equation*}
for $f_A\in \tilde{\mfk}_{\mfh}$, $\lambda\in \bR$, $u\in \mfu$ with $g(u,u)=0$ and $f^A(u)=0$.
\end{remark}
\subsection{$\cK^{(1)}=S^2 \cU\otimes w$ for $w\in \bR^{n-1}$}\label{subsec:cK1=S2Uw}
In the last subsection, we have seen that the condition $\cK^{(1)}=S^2 \cU\otimes v$ for some $v\notin \bR^{n-1}$ naturally led to distinguishing the case that $\mfh$ is elliptic or not, with the elliptic case resembling some features of non-degenerate metric subalgebras. In this section, we restrict from the beginning to the elliptic case since firstly, the non-elliptic case seems to be much more difficult than the elliptic one and, secondly, we are mostly interested in \emph{degenerate} metric linear subalgebras, which fit into our set-up by Lemma \ref{le:degmetriccK1} below and are always elliptic. In fact, Proposition \ref{pro:cK1S2Uwdegmetricfirst} and Proposition \ref{pro:cK1S2Uwdegmetricsecond}  further below will show that assuming $\cK^{(1)}=S^2 \cU\otimes w$ for some $w\in \bR^{n-1}$ and some subspace $\cU\subseteq (\bR^{n-1})^*$ yields assuming one extra condition the existence of a subalgebra of $\mfh$ which is degenerate metric with respect to some pseudo-Riemannian metric $g$ on $\bR^n$. We note that this result is analogous to the result in Proposition \ref{pro:nusharp} in the previous subsection and note that examples of non-metric subalgebras $\mfh$ with $\cK^{(1)}=S^2 \cU\otimes w$ satisfying even the extra conditions may easily be obtained analogous to Remark \ref{re:gnotunique} and that the $g$ in Proposition \ref{pro:cK1S2Uwdegmetricfirst} and Proposition \ref{pro:cK1S2Uwdegmetricsecond}, is, in general, far from being unique.

Let us start with some basic definitions related to and some properties of the linear subalgebras $\mfh$ under consideration in this subsection. For this, we will from now on assume that $\cK_{\mfh}^{(1)}=S^2 \cU\otimes w$ for some subspace $\cU$ of $(\bR^{n-1})^*$ and some $w\in \bR^{n-1}\setminus \{0\}$. Moreover, we fix some $v\in \bR^n\setminus \bR^{n-1}$ so that we may split $\bR^n=\bR^{n-1}\oplus \spa{v}$ and so also identify $(\bR^{n-1})^*$ naturally with a subspace of $(\bR^n)^*$, namely with the annihilator of $v$. We note that we will surpress the dependence of some of the constructions below from the chosen $v$. First of all, we set
\begin{equation*}
\begin{split}
\mfh_w&:=\left\{F\in \mfh\left|F(\bR^{n-1})\subseteq \bR^{n-1}\right.\right\},\\
\mfh_w^0&:=\left\{F\in \mfh_w\left|\mathrm{im}(F)\subseteq \bR^{n-1}\right.\right\}.
\end{split}
\end{equation*}
and note the following alternative description of $\cU$ similar to Lemmma \ref{le:charofcU}:
\begin{lemma}\label{le:charofcU2}
Let $\mfh$ be a linear subalgebra with $\cK_{\mfh}^{(1)}=S^2 \cU\otimes w$ for some subspace $\cU$ of $(\bR^{n-1})^*$ and some $w\in \bR^{n-1}\setminus \{0\}$. Then
\begin{equation*}
\cU=\left\{\left.\alpha\in (\bR^{n-1})^*\right|F|_{\bR^{n-1}}=\alpha\otimes w \textrm{ for some } F\in \mfh_w\right\}.
\end{equation*}
\end{lemma}
Next, we note that the the proof of Lemma \ref{le:nondegmetriccK1} yields that degenerate metric subalgebras fit into our set-up
\begin{lemma}\label{le:degmetriccK1}
Let $\mfh$ be a \emph{degenerate} metric subalgebra, i.e $\mfh$ is a subalgebra of $\mathfrak{so}(g)$ for some pseudo-Riemannian metric on $\bR^n$ such that $\bR^{n-1}$ is a degenerate subspace. Then there exists some subspace $\cU$ of $(\bR^{n-1})^*$ sich that
\begin{equation*}
\cK_{\mfh}^{(1)}=S^2 \cU\otimes (\bR^{n-1})^{\perp}=S^2 \cU\otimes w
\end{equation*}
for all $w\in (\bR^{n-1})^{\perp}\setminus \{0\}$.
\end{lemma}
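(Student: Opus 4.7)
The plan is to mirror the proof of Lemma \ref{le:nondegmetriccK1}, observing that the key computation there relies only on skew-symmetry of elements of $\mfh$ with respect to $g$ together with the symmetry built into $\cK^{(1)}_\mfh$, and is therefore insensitive to whether $\bR^{n-1}$ is degenerate or not. The only structural change is that $(\bR^{n-1})^\perp$ is now a one-dimensional subspace \emph{of} $\bR^{n-1}$ rather than transverse to it.

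Concretely, I would take $\tilde\nabla \in \cK^{(1)}_\mfh$ with extension $\hat\nabla \in (\bR^{n-1})^* \otimes \mfh$ and run verbatim the chain of manipulations appearing in the proof of Lemma \ref{le:mfhmetricsuperelliptic} (also used in Lemma \ref{le:nondegmetriccK1}) to deduce $g(\tilde\nabla_{u_1} u_2, u_3) = 0$ for all $u_1,u_2,u_3 \in \bR^{n-1}$, and hence $\tilde\nabla_{u_1} u_2 \in (\bR^{n-1})^\perp$. Fixing any nonzero $v_0 \in (\bR^{n-1})^\perp$, I write $\tilde\nabla = T \otimes v_0$ for a unique $T \in S^2(\bR^{n-1})^*$.

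In analogy with Lemma \ref{le:charofcU}, I would then set
\[
\cU := \{\alpha \in (\bR^{n-1})^* : \alpha \otimes v_0 \in \cK_\mfh\}
\]
and claim $\cK^{(1)}_\mfh = S^2\cU \otimes (\bR^{n-1})^\perp$. For the forward inclusion, each slice satisfies $T(u, \cdot) \otimes v_0 = \tilde\nabla_u \in \cK_\mfh$, so $T(u,\cdot) \in \cU$; by symmetry of $T$ also $T(\cdot,w) \in \cU$, which forces $T \in \cU \otimes \cU$ and, being symmetric, $T \in S^2 \cU$. For the converse, given $T = \frac{1}{2}(\alpha \otimes \beta + \beta \otimes \alpha)$ with $\alpha,\beta \in \cU$, I would pick $F_\alpha, F_\beta \in \mfh$ whose restrictions to $\bR^{n-1}$ equal $\alpha \otimes v_0$ and $\beta \otimes v_0$, respectively, and set $\tilde\nabla_u := \frac{1}{2}(\beta(u) F_\alpha + \alpha(u) F_\beta)|_{\bR^{n-1}}$; by construction this lies in $(\bR^{n-1})^* \otimes \cK_\mfh$, is symmetric in its two arguments, and takes values in $\spa{v_0} \subseteq \bR^{n-1}$, so it belongs to $\cK^{(1)}_\mfh$ and represents $T \otimes v_0$.

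I do not anticipate a genuine obstacle. The only routine bookkeeping is the identity $\cU \otimes \cU \cap S^2(\bR^{n-1})^* = S^2\cU$, verified by completing a basis of $\cU$ to one of $(\bR^{n-1})^*$ and expanding $T$ coordinate-wise. Beyond this, the entire argument is an unchanged adaptation of the non-degenerate case.
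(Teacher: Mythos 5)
Your proposal is correct and follows exactly the route the paper takes: the paper's proof of this lemma is simply the remark that the cyclic skew-symmetry/symmetry computation from Lemma \ref{le:mfhmetricsuperelliptic} (used already for Lemma \ref{le:nondegmetriccK1}) still gives $g(\tilde\nabla_{u_1}u_2,u_3)=0$, hence values in the one-dimensional space $(\bR^{n-1})^{\perp}$, after which the symmetry of $\tilde\nabla$ and the slice condition $\tilde\nabla_u\in\cK_{\mfh}$ identify $\cK^{(1)}_{\mfh}$ as $S^2\cU\otimes(\bR^{n-1})^{\perp}$. You merely spell out the two inclusions and the identification of $\cU$ (which the paper leaves implicit, giving the explicit description of $\cU$ only later in Lemma \ref{le:charofcU}), and you correctly note that the only structural change in the degenerate case is that $(\bR^{n-1})^{\perp}$ now lies inside $\bR^{n-1}$.
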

We set now
\begin{equation*}
	\cU_0=\left\{\left.\alpha\in (\bR^{n-1})^*\right|F|_{\bR^{n-1}}=\alpha\otimes w \textrm{ for some } F\in \mfh_w^0\right\}
\end{equation*}
and prove, in analogy to Lemma \ref{le:nu}, the following result in our case:
\begin{lemma}\label{le:nu2}
Let $\mfh$ be an elliptic linear subalgebra with $\cK^{(1)}_{\mfh}=S^2 \cU\otimes w$ for some subspace $\cU$ of $(\bR^{n-1})^*$ and some $w\in \bR^{n-1}\setminus \{0\}$. Moreover, let $\gamma\in (\bR^n)^*$ be a one-form uniquely defined by $\gamma(\bR^{n-1})=\{0\}$ and $\gamma(v)=1$. Then there exists an injective linear map $\nu:\cU_0\rightarrow \bR^{n-1}$ uniquely defined by the property that
\begin{equation*}
\alpha\otimes w-\gamma\otimes \nu(\alpha)\in \mfh_w^0
\end{equation*}
for $\alpha\in \cU$. Moreover, $\alpha(w)=0$ for all $\alpha\in \cU_0$ and the bilinear form
\begin{equation*}
h(\alpha,\beta):=\alpha(\nu(\beta))
\end{equation*}
is symmetric.
\end{lemma}
\begin{proof}
The well-definedness and injectivity of the map $\nu$ follows as in the proof of Lemma \ref{le:nu2} directly from the ellipticity of $\mfh$. Next, let linearly independent $\alpha,\beta\in \cU_0$ be given and let
\begin{equation*}
F_1:=\alpha\otimes w-\gamma\otimes \nu(\alpha),\quad F_2:=\beta\otimes w-\gamma\otimes \nu(\beta)\in \mfh_w^0
\end{equation*}
be the associated elements of $\mfh_w^0$. Then
\begin{equation*}
\mfh\ni [F_1,F_2]=(\alpha(w)\beta-\beta(w)\alpha)\otimes w-(\alpha(\nu(\beta))-\beta(\nu(\alpha)))\gamma\otimes w
\end{equation*}
and the ellipticity of $\mfh$ forces $[F_1,F_2]=0$, i.e. $\alpha(w)=\beta(w)=0$ and $\alpha(\nu(\beta))=\beta(\nu(\alpha))$. This proves all claimed statements.
\end{proof}
In the following, we will always assume that $\mfh$ is elliptic and denote by $\gamma\in (\bR^n)^*$ the one-form and by $\nu:\cU_0\rightarrow \bR^{n-1}$ the injective linear map as in Lemma \ref{le:nu2}. We note that the natural linear map from $\mfh_w$ to $\cU$ is then, due to the ellipticity of $\mfh$, an isomorphim which also maps $\mfh_w^0$ isomorphically onto $\cU_0$.

We first consider the case $\mfh_w^0=\mfh_w$ or, equivalently, $\cU=\cU_0$. In this case, we obtain the existence of a pseudo-metric on $\bR^n$ such that $\nu$ is the sharp-operator and $\mfh_w$ is degenerate metric.
\begin{proposition}\label{pro:cK1S2Uwdegmetricfirst} 
Let $\mfh$ be an elliptic subalgebra with $\cK^{(1)}_{\mfh}=S^2 \cU\otimes w$ for some subspace $\cU$ of $(\bR^{n-1})^*$ and some $w\in \bR^{n-1}\setminus \{0\}$ such that $\mfh_w^0=\mfh_w$. Then there exists a pseudo-metric $g$ on $\bR^n$ such that $g|_{\cU\otimes \cU}=h$, such that $\nu=(\cdot)^{\sharp_g}|_{\cU}$ and such that $w$ and $v$ are null vectors with $g(v,w)=1$. In particular, then $\mfh_w$ is an Abelian degenerate metric subalgebra with respect to $g$.
\end{proposition}
\begin{proof}
Argueing as in the proof of Proposition \ref{pro:nusharp}, we get some $l\in \{1,\ldots,m\}$, $m:=\dim(\cU)$, $\epsilon_1,\ldots,\epsilon_l\in \{-1,1\}$ and linearly independent $\alpha_1,\ldots,\alpha_{2m-l}$ such that $\alpha_1,\ldots,\alpha_m$ is a basis of $\cU$, such that
\begin{equation*}
\alpha_i(\nu(\alpha_j))=\delta_{ij} \epsilon_j,\ \alpha_r(\nu(\alpha_k))=\alpha_k(\nu(\alpha_r))=0,\ \alpha_{s}(\nu(\alpha_i))=0,\ \alpha_s(\nu(\alpha_k))=\delta_{s-m,k-l}
\end{equation*}
for all $i,j=1,\ldots, l$, $k=l+1,\ldots,m$, $r=1,\ldots,m$, $s=m+1,\ldots, 2m-l$. We may arrange $\alpha_{l+1},\ldots,\alpha_{2m-l}$ in such a way that $\alpha_s(w)=0$ for all $s=m+1,\ldots,2m-l$ and may then extend $\alpha_1,\ldots,\alpha_{2m-l}$ to a basis $\alpha_1,\ldots,\alpha_{n-1}$of $(\bR^{n-1})^*$ such that $\alpha_t(w)=0$ for all $t=1,\ldots,n-2$ and such that $\alpha_{n-1}(w)=1$. We then finally set $\alpha_n:=\gamma$ and so have a basis $\alpha_1,\ldots,\alpha_n$ of $(\bR^{n-1})^*$. Finally, we define the pseudo-metric $g$ with respect to that basis by the matrix
 \begin{equation*}
	\begin{pmatrix}
		\epsilon_1 & & & & & & & \\
		& \ddots & & & & & & & \\
		& & \epsilon_l & & & & & \\
		& & & 0 & I_{m-l} &  & & \\
		& & & I_{m-l} & 0 & & & \\
		& & & & & I_{n-2-2m+l} & & \\
		& & & & & & 0 & 1\\
		& & & & & & 1 & 0
	\end{pmatrix}.
\end{equation*}
We then immediately see that $g|_{\cU\times \cU}=h$, $\alpha(\nu(\beta))=g(\alpha,\beta)$ for $\alpha\in (\bR^n)^*$ and $\beta\in \cU$ and $g(v,v)=g(w,w)=0$, $g(v,w)=1$. But then $\nu(\alpha)=\alpha^{\sharp_g}$ follows as in the proof of Proposition \ref{pro:nusharp} and similarly, we see that
$\gamma^{\sharp_g}=w$. Thus, the elements of $\mfh_w$ are of the form
\begin{equation*}
\alpha\otimes w-\gamma\otimes \nu(\alpha)=\alpha\otimes \gamma^{\sharp_g}-\gamma\otimes \alpha^{\sharp_g}
\end{equation*}
and so the elements are in $\mathfrak{so}(g)$, i.e. $\mfh_w$ is metric with respect to $g$. As $\bR^{n-1}$ is a degenerate subspace with respect to $g$, $\mfh_w$ is degenerate by definition and a direct computation, using $\alpha(w)=0$ for all $\alpha\in \cU_0=\cU$, yields that $\mfh_w$ is Abelian.
\end{proof}
We are now in the position to determine $\cF_{\mfh}$ in the case that $\mfh_w^0=\mfh_w$.
\begin{theorem}\label{th:cK1=S2cUwfirst}
Let $\mfh$ be an elliptic subalgebra with $\cK^{(1)}_{\mfh}=S^2 \cU\otimes w$ for some subspace $\cU$ of $(\bR^{n-1})^*$ and some $w\in \bR^{n-1}\setminus \{0\}$ such that $\mfh_w^0=\mfh_w$. Then
\begin{equation*}
\cF_{\mfh}=\tilde{\mfk}_{\mfh}+\spa{\alpha\otimes \nu(\alpha)|\alpha\in \cU}
\end{equation*}
\end{theorem}
\begin{proof}
Let $\nabla\in \cT^{-1}(\End(\bR^{n-1}))$ be given. Similar to the proof of Theorem \ref{th:cK1=S2cUv} we may use Sylvester's law of inertia to write
\begin{equation*}
\nabla|_{\bR^{n-1}\times \bR^n}=\sum_{i=1}^m \lambda_i \alpha_i\otimes (\alpha_i\otimes w-\gamma\otimes \nu(\alpha_i))
\end{equation*}
for a basis $\alpha_1,\ldots,\alpha_m$ of $\cU$ and certain $\lambda_i\in \{-1,1,0\}$. Thus,
\begin{equation*}
-\nabla v|_{\bR^{n-1}}=\sum_{i=1}^m \lambda_i \alpha_i\otimes \nu(\alpha_i)\in \spa{\alpha\otimes \nu(\alpha)|\alpha\in \cU}
\end{equation*}
In particular, $\nabla v$ preserves the subspace $\bR^{n-1}$ of $\bR^n$ and $\nabla\in \cT^{-1}(\End(\bR^{n-1}))$ implies that so $\nabla_v$ has to preserve $\bR^{n-1}$. Thus, $\nabla_v|_{\bR^{n-1}}\in \tilde{\mfk}_{\mfh}$ and so
\begin{equation*}
\cT(\nabla)=\nabla_v|_{\bR^{n-1}}-\nabla v|_{\bR^{n-1}}\in \tilde{\mfk}_{\mfh}+\spa{\alpha\otimes \nu(\alpha)|\alpha\in \cU}
\end{equation*}
as claimed. This shows $\cF_{\mfh}=\tilde{\mfk}_{\mfh}+\spa{\alpha\otimes \nu(\alpha)|\alpha\in \cU}$ and the other inclusion follows by directly constructing for a given element $F\in \tilde{\mfk}_{\mfh}+\spa{\alpha\otimes \nu(\alpha)|\alpha\in \cU}$ some $\nabla\in \cD_{\mfh}$ with $\cT(\nabla)=F$ by basically inverting the steps from above.
\end{proof}
We note that when $\mfh$ is a degenerate metric subalgebra for some pseudo-metric $g$, then $\cK_{\mfh}^{(1)}=S^2\cU\otimes w$ for $0\neq	w\in (\bR^{n-1})^{\perp_g}$. Choosing then $v\in \bR^n\setminus \bR^{n-1}$ such that $v$ is null and $g(v,w)=1$, we have $\gamma=w^{b_g}$ and so $\nu=(\cdot)^{\sharp_g}$. Thus, Theorem \ref{th:cK1=S2cUwfirst} yields the following result for $\cF_{\mfh}$ under the additional assumption that $\mfh_w=\mfh_w^0$:
\begin{corollary}\label{co:degmetricfirst}
Let $\mfh$ be a degenerate metric subalgebra for a pseudo-metric $g$ such that $\mfh_w=\mfh_w^0$, i.e. such that any $F\in \mfh$ with $F(\bR^{n-1})\subseteq (\bR^{n-1})^{\perp_g}$ satisfies $\im(F)\subseteq \bR^{n-1}$. Then
\begin{equation*}
	\cF_{\mfh}=\tilde{\mfk}_{\mfh}\oplus \spa{\alpha\otimes \alpha^{\sharp}|\alpha\in \cU}=\tilde{\mfk}_{\mfh}\oplus \spa{u^{b}\otimes u|u\in U}
\end{equation*}
for $U:=\{F(v)|F\in \mfh_w\}$, $\cU:=U^{b}$, with $v\in \bR^n$ chosen as above.
\end{corollary}
\begin{example}
	\begin{itemize}
		\item We show that Theorem \ref{co:degmetricfirst} gives another proof of Theorem \ref{th:unitarysubalgebras} (b). For this, let $\mfh$ be a degenerate unitary subalgebra. We first show that $\mfh$ satisfies $\mfh_w=\mfh_w^0$. For this, choose $w\in (\bR^{n-1})^{\perp_g}\setminus \{0\}$ and let $F\in \mfh_w$ be given. Then $F$ satisfies $F(\bR^{n-1})\subseteq (\bR^{n-1})^{\perp_g}$, $F|_{\bR^{n-1}}=\alpha\otimes w$ for some $\alpha\in (\bR^{n-1})^*$ and $Jw\in \bR^{n-1}=\spa{w}^{\perp_g}$ due to $g(Jw,w)=0$. Thus, $w\in (\bR^{2m-1})_J:=\bR^{2m-1}\cap J\bR^{2m-1}$. However, if $u\in (\bR^{2m-1})_J$, one has
		\begin{equation*}
		\alpha(Ju)\, w=F(Ju)=JF(u)=\alpha(u)\, Jw,
		\end{equation*}
        and so $\alpha(u)=\alpha(Ju)=0$. Thus, $\alpha(w)=0$, i.e. $F(w)=0$. However, if $v\in \bR^n$ is arbitrary, this implies $g(F(v),w)=-g(v,F(w))=0$, i.e. $F(v)\in \spa{w}^{\perp_g}=\bR^{n-1}$. Thus, $\im(F)\subseteq \bR^{n-1}$ and so $\mfh_w=\mfh_w^0$.
         
        Hence, Corollary \ref{co:degmetricfirst} applies to our situation. Now choose $v\notin \bR^{n-1}$ such that $v$ null, $g(v,w)=1$ and $Jv\in \bR^{n-1}$. Then 
        \begin{equation*}
        F(v)=-F(J^2 v)=-JF(Jv)=-\alpha(Jv)\, Jw\in \spa{Jw}.
        \end{equation*}
        Thus, either $U=\{0\}$ or $U=\spa{Jw}$ and Corollary \ref{co:degmetricfirst} implies the result of Theorem \ref{th:unitarysubalgebras} (b).
		\item 
		Let  $\mfh$ be a linear subalgebra conjugated to $\mathfrak{g}_2^*$ for which $\bR^6$ is degenerate. As $\G_2^*$ acts transitively on the degenerate codimension one subspaces in $\bR^7$ and we may assume that $\mfh$ is the stabiliser subalgebra of the three-form
		\begin{equation*}
		\varphi=-e^{156}-e^{236}+e^{245}-\tfrac{1}{2}e^{127}-e^{347}.	
		\end{equation*}
		cf. \cite{Fr2}. The induced pseudo-metric $g=g_{\varphi}$ is then explicitly given by
		\begin{equation*}
		g=-e^2\otimes e^2+e^1\odot e^7+2 e^3\odot e^6-2 e^4\odot e^5	
		\end{equation*}
	so that $(\bR^6)^{\perp_g}=\spa{e_1}$, i.e. we may choose $w:=e_1$. One may then compute that
		\begin{equation*}
			\tilde{\mfk}_{\mfh}=\left\{\left.\begin{pmatrix} -\tr(A) & -2 b & v^t & w^t \\ 0 & 0 & 0 & v^t \\ 0 & -J_0 v & A-\tr(A) I_2  & b I_2 \\ 0 & 0 & 0 & A
			\end{pmatrix}\right|A\in \mathfrak{gl}(2,\bR),\, v,w\in \bR^{2},\, b\in \bR\right\}
		\end{equation*}
		for $J_0:=\left(\begin{smallmatrix} 0 & -1 \\ 1 & 0 \end{smallmatrix} \right)$ and that
		\begin{equation*}
			\mfh_w=\spa{2 e^5\otimes e_1+e^7\otimes e_4, 2 e^6\otimes e_1- e^7\otimes e_3}=\mfh_w^0
		\end{equation*}
		holds. Thus,
		\begin{equation*}
			U=\spa{e_3,e_4}.
		\end{equation*}
		Moreover, $e_3^b=2 e^6$ and $e_4^b=-2 e^5$ and so Corollary \ref{co:degmetricfirst} yields
		\begin{equation*}
			\begin{split}
				\cF_{\mfh} &=\tilde{\mfk}_{\mfh}\oplus \spa{ u^b|_{\bR^6}\otimes u\left| u\in \tilde{U}\right. } \\
				&=\left\{\left.\begin{pmatrix} -\tr(A) & -\tr(B) & v^t & w^t \\ 0 & 0 & 0 & v^t \\ 0 & -J_0 v & A-\tr(A) I_2  & B \\ 0 & 0 & 0 & A
				\end{pmatrix}\right|A,B\in \mathfrak{gl}(2,\bR),\, v,w\in \bR^{2}\right\},
			\end{split}	                                                                                                                                                                
		\end{equation*}
		which is the result of \cite[Theorem 3.10]{Fr2}.
	\end{itemize}
\end{example}
We now turn to the case $\mfh_w^0\neq \mfh_w$. We first additionally assume that $\mfh_w^0\neq \{0\}$ or, equivalently, that $\cU_0\neq \{0\}$, and show that then we may choose $v\in \bR^n\setminus \bR^{n-1}$ in a particular nice way.
\begin{lemma}\label{le:appropriatev}
Let $\mfh$ be an elliptic linear subalgebra with $\cK^{(1)}_{\mfh}=S^2 \cU\otimes w$ for some subspace $\cU$ of $(\bR^{n-1})^*$ and some $w\in \bR^{n-1}\setminus \{0\}$ such that $\mfh_w^0\neq \mfh_w$ and such that $\mfh_w^0\neq \{0\}$. Then there exists some $F_0\in \mfh_w$ such that $\mfh_w=\mfh_w^0\oplus \spa{F_0}$ and some $v\in \bR^n\setminus \bR^{n-1}$ such that
\begin{equation*}
F_0=\alpha_0\otimes w-\gamma\otimes v
\end{equation*}
for some $\alpha_0\in \cU$ with $\cU=\cU_0\oplus \spa{\alpha_0}$ such that $\alpha_0(w)=1$ and such that $\alpha_0(\nu(\cU_0))=\{0\}$. Then, in particular, $F_0$ has eigenvector $w$ with eigenvalue $1$ and eigenvector $v$ with eigenvalue $-1$.
\end{lemma}
\begin{proof}
Choose first some arbitary element $F_0\in \mfh_w\setminus \mfh_w^0$ and note that then $\mfh_w=\mfh_w^0\oplus \spa{F_0}$. Moreover, since $F_0\notin \mfh_w^0$, there exists some element $\tilde{v}\in \im(F_0)$ with $\tilde{v}\notin \bR^{n-1}$. Now note that there is some $\alpha_0\in \cU$ with $\cU=\cU_0\oplus \spa{\alpha_0}$ such that
\begin{equation*}
F_0|_{\bR^{n-1}}=\alpha_0\otimes w
\end{equation*}
and that
\begin{equation*}
F_0(v)=\lambda \tilde{v}+\mu w
\end{equation*}
for certain $\lambda\in \bR^*$ and $\mu\in \bR$. By appropriately scaling $F_0$, we may assume that $\lambda=-1$, i.e. $F_0(v)=-\tilde{v}+\mu w$. Next, let $0\neq F\in \mfh_w^0$ be given. Then $F|_{\bR^{n-1}}=\alpha\otimes w$ for some $\alpha\in \cU$ and so
\begin{equation*}
[F,F_0](u)=F(\alpha_0(u)\,w)-F_0(\alpha(u)\,w)=\alpha_0(u)\alpha(w)\, w-\alpha_0(w)\alpha(u)\, w=-\alpha_0(w)\alpha(u)\, w
\end{equation*}
for all $u\in \bR^{n-1}$ due to $\alpha(w)=0$. We note that this shows that we must have $\alpha_0(w)\neq 0$ as otherwise
$[F,F_0]$ may have at most rank one, and so has to be zero, which yields, denoting by $\tilde{\nu}$ the map $\nu$ as in Lemma \ref{le:nu2} with respect to $\tilde{v}$,
\begin{equation*}
	0=[F,F_0](v)=F(-\tilde{v}+\mu w)+F_0(\tilde{\nu}(\alpha))=\tilde{\nu}(\alpha)+\mu \alpha(w) w+\alpha_0(\tilde{\nu}(\alpha))w=\tilde{\nu}(\alpha)+\alpha_0(\tilde{\nu}(\alpha))w
\end{equation*}
which is a contradiction since $\tilde{\nu}(\alpha)$ and $w$ are linearly independent due to $F$ having rank two.
Thus, $\alpha_0(w)\neq 0$ and so $[F,F_0]\in \mfh^0_w\setminus \{0\}$, which gives
\begin{equation*}
\alpha_0(w)\tilde{\nu}(\alpha)=\tilde{\nu}(\alpha_0(w)\alpha)=\tilde{\nu}(\alpha)+\alpha_0(\nu(\alpha))w,
\end{equation*}
leading to $\alpha_0(w)=1$ and $\alpha_0(\tilde{\nu}(\alpha))=0$. Now definining $v:=\tilde{v}-\tfrac{\mu}{2}w$, we observe that
\begin{equation*}
F_0(v)=-\tilde{v}+\mu w-\tfrac{\mu}{2} \alpha_0(w)\, w=-v.
\end{equation*}
Now observe that $\nu(\alpha)$ defined with respect to $v$ coinces with $\tilde{\nu}(\alpha)$ due to
\begin{equation*}
\nu(\alpha)=-F(v)=-F(\tilde{v}-\tfrac{\mu}{2}w)=-F(\tilde{v})=\tilde{\nu}(\alpha),
\end{equation*}
which proves that $\alpha_0(\nu(\cU))=\{0\}$.
\end{proof}
For the moment we stay with the case that $\mfh_w^0\neq \mfh_w$ and $\mfh_w^0\neq \{0\}$ and are then able to show the following helpful decomposition of $\mfh$:
\begin{lemma}\label{le:decompofh}
Let $\mfh$ be an elliptic linear subalgebra with $\cK^{(1)}_{\mfh}=S^2 \cU\otimes w$ for some subspace $\cU$ of $(\bR^{n-1})^*$ and some $w\in \bR^{n-1}\setminus \{0\}$ such that $\mfh_w^0\neq \mfh_w$ and such that $\mfh_w^0\neq \{0\}$. Choose $F_0\in \mfh_w$, $\alpha_0\in \cU$ and $v$ as in Lemma \ref{le:appropriatev}. Moreover, set
 \begin{equation*}
 \mfh_v^0:=\left\{\left.F\in \mfh\right|F(\ker(\alpha_0))\subseteq \spa{v},\ F(v)=0,\ F(w)\in \ker(\alpha_0)\right\},
 \end{equation*}
\begin{equation*}
\mfa_0:=\left\{\left.F\in \mfh\right|F(\ker(\alpha_0))\subseteq \ker(\alpha_0),\ F|_{\spa{v,w}}=\lambda\cdot \id_{\spa{v,w}} \textrm{ for some }\lambda\in \bR\right\}
\end{equation*}
and
\begin{equation*}
	\tilde{\mfa}_0:=\left\{\left.H\in \End(\bR^{n-1})\right|H=F|_{\bR^{n-1}} \textrm{ for some } F\in \mfa_0\right\}.
\end{equation*}
Then $\mfh$ decomposes as a vector space as
	\begin{equation*}
		\mfh=\mfa_0\oplus \mfh_w^0\oplus \spa{F_0}\oplus \mfh_v^0=\mfk_{\mfh}\oplus \mfh_v^0
	\end{equation*}
	and $\mfk_{\mfh}$ decomposes as a vector space as
	\begin{equation*}
		\mfk_{\mfh}=\mfa_0\oplus \mfh_w^0\oplus \spa{F_0}
	\end{equation*}
so that
	\begin{equation*}
	\tilde{\mfk}_{\mfh}=\tilde{\mfa}_0\oplus \cU\otimes w
\end{equation*}
\end{lemma}
\begin{proof}
Let $F\in \mfh$ be given.
We use the decomposition $\bR^n=\ker(\alpha_0)\oplus \spa{v}\oplus \spa{w}$ to write $F$ as a matrix
\begin{equation*}
F=\left(\begin{smallmatrix}
	 A & u_1 & u_2 \\
	 \alpha_2 & a & b \\
	 \alpha_1 & c & d
\end{smallmatrix}\right)	
\end{equation*}
with $A\in \End(\ker(\alpha_0))$, $\alpha_1,\alpha_2\in \ker(\alpha_0)^*$, $u_1,u_2\in \ker(\alpha_0)$, $a,b,c,d\in \bR$.
Writing $F_0=\alpha_0\otimes w-\gamma\otimes v$ in the same way, we have
\begin{equation*}
	F_0=\left(\begin{smallmatrix}
		0 & 0 & 0 \\
		0 & 1 & 0 \\
		0 & 0 & -1
	\end{smallmatrix}\right).	
\end{equation*}
We compute
\begin{equation*}
\begin{split}
G_1&:=[F,F_0]=\left(\begin{smallmatrix}
	0 & u_1 & -u_2 \\
	-\alpha_2 & 0 & -2 b \\
	\alpha_1 & 2c & 0
\end{smallmatrix}\right),\quad G_2:=[[F,F_0],F_0]=\left(\begin{smallmatrix}
0 & u_1 & u_2 \\
\alpha_2 & 0 & 4 b \\
\alpha_1 & 4 c & 0
\end{smallmatrix}\right),\\
G_3 &:=[[[F,F_0],F_0],F_0]=\left(\begin{smallmatrix}
	0 & u_1 & -u_2 \\
	-\alpha_2 & 0 & - 8 b \\
	\alpha_1 & 8 c & 0
	\end{smallmatrix}\right), \quad G_4:=[[[[F,F_0],F_0],F_0],F_0]=\left(\begin{smallmatrix}
	0 & u_1 & u_2 \\
	\alpha_2 & 0 & 16 b \\
	\alpha_1 & 16 c & 0
	\end{smallmatrix}\right),
\end{split}
\end{equation*}
and $G_i\in \mfh$ for $i=1,2,3,4$. Then there are linear combinations of $G_1,G_2,G_3,G_4$ representing the endomorphisms
\begin{equation*}
\begin{split}
\left(\begin{smallmatrix}
	0 & u_1 & 0 \\
	0 & 0 & 0 \\
	\alpha_1 & 0 & 0
\end{smallmatrix}\right),
\left(\begin{smallmatrix}
	0 & 0 & u_2 \\
  \alpha_2 & 0 & 0 \\
	0 & 0 & 0
\end{smallmatrix}\right),
\left(\begin{smallmatrix}
	0 & 0 & 0 \\
	0 & 0 & b \\
0 & 0 & 0
\end{smallmatrix}\right),\left(\begin{smallmatrix}
0 & 0 & 0 \\
0 & 0 & 0 \\
0 & c & 0
\end{smallmatrix}\right)
\end{split}
\end{equation*}
of $\bR^n$ and so all these endomorphism are in $\mfh$. However, since $\mfh$ was assumed to be elliptic, we need to have $b=c=0$. Now note that
\begin{equation*}
H_1:=\left(\begin{smallmatrix}
	0 & 0 & u_2 \\
	\alpha_2 & 0 & 0 \\
	0 & 0 & 0
\end{smallmatrix}\right) \in \mfh_w^0, \quad 	H_2:=\left(\begin{smallmatrix}
0 & u_1 & 0 \\
0 & 0 & 0 \\
\alpha_1 & 0 & 0
\end{smallmatrix}\right)\in \mfh_v^0
\end{equation*}
and with $H_3:=\tfrac{a-d}{2} F_0\in \spa{F_0}$, we get that
\begin{equation*}
H_4:=F-H_1-H_2-H_3=\left(\begin{smallmatrix}
	A & 0 & 0 \\
	0 & \tfrac{a+d}{2} & 0 \\
	0 & 0 & \tfrac{a-d}{2}
\end{smallmatrix}\right)\in \mfh_a^0
\end{equation*}
and so
\begin{equation*}
F=H_1+H_2+H_3+H_4\in  \mfh_w^0\oplus  \mfh_v^0\oplus \spa{F_0}\oplus \mfa_0,
\end{equation*}
proving the claimed decomposition of $\mfh$. We see that the condition $F\in \mfk$ is equivalent to $\alpha_1=0$, which implies $\mfh=\mfk_{\mfh}\oplus \mfh_v^0$ and so the claimed decomposition of $\mfk$, and then also the one of $\tilde{\mfk}$.
\end{proof}
Let us describe the subspace $\mfh_v^0$ in some more detail. We note that the ellipticity of $\mfh$ and the definition of the space $\mfh_v^0$ implies the existence of a subspace $\tilde{\cU}_0$ of $(\bR^{n-1})^*$ with $\alpha_0\notin \tilde{\cU}_0$ and an injective linear map $\tilde{\nu}:\tilde{\cU}_0\rightarrow \bR^{n-1}$ such that $F\in \mfh_v^0$ is of the form
\begin{equation*}
F=\tilde{\alpha}\otimes v-\alpha_0\otimes \tilde{\nu}(\tilde{\alpha})
\end{equation*}
for some $\tilde{\alpha}\in \tilde{\cU}_0$.

Similar to the bilinear form $h$, we may define a bilinear form $\tilde{h}$ by
\begin{equation*}
\tilde{h}:\tilde{\cU}_0\times \tilde{\cU}_0\rightarrow \bR,\quad \tilde{h}(\tilde{\alpha},\tilde{\beta}):=\tilde{\alpha}(\tilde{\nu}(\tilde{\beta})).
\end{equation*}
Argueing as in the proof of Lemma \ref{le:nu2}, we see that $\tilde{h}$ is symmetric. 

We now prove some relations between the spaces $\cU_0$ and $\tilde{\cU}_0$ and the maps $\nu$ and $\tilde{\nu}$:
\begin{lemma}\label{le:nutildenu}
Let $\mfh$ be an elliptic linear subalgebra with $\cK^{(1)}_{\mfh}=S^2 \cU\otimes w$ for some subspace $\cU$ of $(\bR^{n-1})^*$ and some $w\in \bR^{n-1}\setminus \{0\}$ such that $\mfh_w^0\neq \mfh_w$ and such that $\mfh_w^0\neq \{0\}$ and choose $F_0\in \mfh_w$, $\alpha_0\in \cU$ and $v$ as in Lemma \ref{le:appropriatev}. Using the notations from above, we have:
\begin{enumerate}[(a)]
	\item
	$\nu|_{\cU_0\cap \tilde{\cU}_0}=\tilde{\nu}|_{\cU_0\cap \tilde{\cU}_0}$.
	\item
	$\tilde{\cU}_0\subseteq \cU_0$ if $h\neq 0$ and $\cU_0\subseteq \tilde{\cU}_0$ if $\tilde{h}\neq 0$.
	\item 
	$\alpha(\tilde{\nu}(\tilde{\alpha}))=\tilde{\alpha}(\nu(\alpha))$ for all $\alpha\in \cU_0$, $\tilde{\alpha}\in \tilde{\cU}_0$.
\end{enumerate}
\end{lemma}
\begin{proof}
\begin{enumerate}[(a)]
	\item 
	Let $\alpha\in 	\cU_0\cap \tilde{\cU}_0$. Then, using again the above introduce matrix notation, we have
	\begin{equation*}
F_1:=\left(\begin{smallmatrix}
	0 & 0 & -\nu(\alpha) \\
	\alpha & 0 & 0 \\
	0 & 0 & 0
\end{smallmatrix}\right),\ F_2:=\left(\begin{smallmatrix}
0 & -\tilde{\nu}(\alpha) &  0 \\
0 & 0 & 0 \\
\alpha & 0 & 0
\end{smallmatrix}\right)\in \mfh,
\end{equation*}
and so
	\begin{equation*}
	[F_1,F_2]=\left(\begin{smallmatrix}
		\alpha\otimes (\tilde{\nu}(\alpha)-\nu(\alpha)) & 0 & 0 \\
		0 & 0 & 0 \\
		0 & 0 & 0
	\end{smallmatrix}\right)\in \mfh.
\end{equation*}
The ellipticity of $\mfh$ then forces $[F_1,F_2]=0$, i.e. $\nu(\alpha)=\tilde{\nu}(\alpha)$.
\item
Let arbitrary $\tilde{\alpha}_0\in \tilde{\cU}_0$ and $\alpha,\beta\in \cU_0$ be given. Then
\begin{equation*}
\mfh \ni F:=\left[\left(\begin{smallmatrix}
	0 & 0 & -\nu(\alpha) \\
	\alpha & 0 & 0 \\
	0 & 0 & 0
\end{smallmatrix}\right),\left(\begin{smallmatrix}
0 & -\tilde{\nu}(\tilde{\alpha}) &  0 \\
0 & 0 & 0 \\
\tilde{\alpha} & 0 & 0
\end{smallmatrix}\right)\right]=\left(\begin{smallmatrix}
\alpha\otimes \tilde{\nu}(\tilde{\alpha})-\tilde{\alpha}\otimes \nu(\alpha) & 0 & 0 \\
0 & -\alpha(\tilde{\nu}(\tilde{\alpha})) & 0 \\
0 & 0 & \tilde{\alpha}(\nu(\alpha))
\end{smallmatrix}\right)
\end{equation*}
and so
\begin{equation*}
	\mfh \ni \tilde{F}:=\left[F,\left(\begin{smallmatrix}
		0 & 0 & -\nu(\beta) \\
		\beta & 0 & 0 \\
		0 & 0 & 0
	\end{smallmatrix}\right)\right]
	=\left(\begin{smallmatrix}
		0 & 0 & \tilde{\alpha}(\nu(\beta)) \nu(\alpha)+\tilde{\alpha}(\nu(\alpha))\nu(\beta)-h(\alpha,\beta)\tilde{\nu}(\tilde{\alpha})\\
		-\alpha(\tilde{\nu}(\tilde{\alpha}))\beta-\beta(\tilde{\nu}(\tilde{\alpha}))\alpha+h(\alpha,\beta)\tilde{\alpha}& 0 & 0 \\
		0 & 0 & 0
	\end{smallmatrix}\right)
\end{equation*}
as well. However, $\tilde{F}$ is actually in $\mfh_w^0$. So if $h\neq 0$, then we may choose $\alpha,\beta\in \cU_0$ in such a way that $h(\alpha,\beta)\neq 0$ and then the condition that $\tilde{F}$ is in $\mfh_w^0$ yields $\tilde{\alpha}\in \cU_0$, and so $\tilde{\cU}_0\subseteq \cU_0$ in this case.

The inclusion $\cU_0\subseteq \tilde{\cU}_0$ under the assumption that $\tilde{h}\neq 0$ follows similarly.
\item
Considering the element $\tilde{F}$ as in the proof of part (b), choosing $\alpha=\beta$ and subtracting from $F$ the element $h(\alpha,\alpha)(\tilde{\alpha}\otimes w-\gamma\otimes \tilde{\nu}(\tilde{\alpha}))$, which is always in $\mfh$ even when $h=0$, we obtain that the element
$\alpha(\tilde{\nu}(\tilde{\alpha})\alpha\otimes w-\tilde{\alpha}(\nu(\alpha))\gamma\otimes \nu(\alpha)$
is in $\mfh_w$ and so $\alpha(\tilde{\nu}(\tilde{\alpha})=\tilde{\alpha}(\nu(\alpha))$
\end{enumerate}
\end{proof}	
We note that Lemma \ref{le:nutildenu} shows that $\cU_0=\tilde{\cU}_0$ if $h\neq 0$ and $\tilde{h}\neq 0$. Moreover, the same lemma shows that
the bilinear form
\begin{equation*}
H(\alpha,\beta):=\begin{cases} \alpha(\nu(\beta)) & \textrm{ if } \beta\in \cU_0,\\
	                            \alpha(\tilde{\nu}(\beta)) & \textrm{ if } \beta\in \tilde{\cU}_0,\\
	             \end{cases}        
\end{equation*} 
on $\cV_0:=\cU_0+\tilde{\cU}_0$ is well-defined and symmetric. We show now that we may extend $H$ to a pseudo-metric on $(\bR^n)^*$ with nice properties:
\begin{proposition}\label{pro:cK1S2Uwdegmetricsecond}
	Let $\mfh$ be an elliptic subalgebra with $\cK^{(1)}_{\mfh}=S^2 \cU\otimes w$ for some subspace $\cU$ of $(\bR^{n-1})^*$ and some $w\in \bR^{n-1}\setminus \{0\}$ such that $\mfh_w^0\neq \mfh_w$ and such that $\mfh_w^0\neq \{0\}$. Then there exists a pseudo-metric $g$ on $\bR^n$ such that $h|_{\cV_0\otimes \cV_0}=H$, such that $\alpha_0\perp_g \cV_0$, such that $\nu=(\cdot)^{\sharp_g}|_{\cU_0}$, $\tilde{\nu}=(\cdot)^{\sharp_g}|_{\cV_0}$ and such that $w$ and $v$ are null vectors with $g(v,w)=1$. Moreover,
\begin{equation*}
\mfh_g:=\spa{\alpha\otimes \tilde{\nu}(\tilde{\alpha})-\tilde{\alpha}\otimes \nu(\alpha)|\alpha\in \cU_0,\tilde{\alpha}\in \tilde{\cU}_0}\oplus \mfh_w\oplus \mfh_v^0
\end{equation*}
is a subalgebra of $\mfh$ which is degenerate metric with respect to $g$.
\end{proposition}
\begin{proof}
The proof of the existence of the pseudo-metric $g$ with the desired properties extending $H$ follows the same lines as the proof of the existence of the corresponding pseudo-metric in Proposition \ref{pro:cK1S2Uwdegmetricfirst} with $h$ being replaced by $H$. The construction then also gives that $\alpha_0=v^{\sharp_g}$ is orthogonal to $\cV_0$ and that $\mfg_g$ is a subspace of $\mathfrak{so}(g)$. Moreover, we immediately see that $\mfh_w=\mfh_w^0\oplus \spa{F_0}$ is a subalgebra of $\mfh$ with $\mfh_w^0$ being Abelian and that $\mfh_v^0$ is an Abelian subalgebra which is preserved by $[F_0,\cdot]$. Moreover,we computed in the proof of Lemma \ref{le:nutildenu} (b) that for $F_1:=\alpha\otimes w-\gamma\otimes \nu(\alpha)\in \mfh_w^0$, $F_2:=\tilde{\alpha}\otimes v-\alpha_0\otimes \tilde{\nu}(\tilde{\alpha})\in \mfh_v^0$, we have
\begin{equation*}
[F_1,F_2]=\alpha\otimes \tilde{\nu}(\tilde{\alpha})-\tilde{\alpha}\otimes \nu(\alpha)-\alpha(\tilde{\nu}(\tilde{\alpha}))\, F_0,
\end{equation*}
so that all elements in $\mfs:=\spa{\alpha\otimes \tilde{\nu}(\tilde{\alpha})-\tilde{\alpha}\otimes \nu(\alpha)|\alpha\in \cU_0,\tilde{\alpha}\in \tilde{\cU}_0}$ actually lie in $\mfh$. It is easy to see that $\mfs$ preserves both $\mfh_w$ and $\mfh_v^0$. Thus, we are left with computing the commutator of two elements in $\mfs$ and showing that it lies in $\mfh_g$. We will actually show that it lies again in $\mfs$ and for that compute
\begin{equation*}
\begin{split}
&\left[\alpha\otimes \tilde{\nu}(\tilde{\alpha})-\tilde{\alpha}\otimes \nu(\alpha),\beta\otimes \tilde{\nu}(\tilde{\beta})-\tilde{\beta}\otimes \nu(\beta)\right]\\
&=H(\alpha,\tilde{\beta})\, \beta\otimes \tilde{\nu}(\tilde{\alpha})-h(\alpha,\beta)\,\tilde{\beta}\otimes \tilde{\nu}(\tilde{\alpha})-\tilde{h}(\tilde{\alpha},\tilde{\beta})\, \beta\otimes \nu(\alpha)+H(\tilde{\alpha},\beta)\tilde{\beta}\otimes \nu(\alpha)\\
&-H(\tilde{\alpha},\beta)\, \alpha\otimes \tilde{\nu}(\tilde{\beta})+h(\alpha,\beta)\,\tilde{\alpha}\otimes \tilde{\nu}(\tilde{\beta})+\tilde{h}(\tilde{\alpha},\tilde{\beta})\, \alpha\otimes \nu(\beta)-H(\alpha,\tilde{\beta})\tilde{\alpha}\otimes \nu(\beta).
\end{split}
\end{equation*}
Now observe that $H(\alpha,\tilde{\beta}) (\beta\otimes \tilde{\nu}(\tilde{\alpha})-\tilde{\alpha}\otimes \nu(\beta))$ and
$H(\tilde{\alpha},\beta)(\tilde{\beta}\otimes \nu(\alpha)- \alpha\otimes \tilde{\nu}(\tilde{\beta}))$ both lie in $\mfs$.
Moreover, $h(\alpha,\beta)\,(\tilde{\alpha}\otimes \tilde{\nu}(\tilde{\beta})-\tilde{\beta}\otimes \tilde{\nu}(\tilde{\alpha}))$ either equals zero or $h\neq 0$ and then $\tilde{\alpha},\tilde{\beta}$ also lie in $\cU_0$ by Lemma \ref{le:nutildenu} (b). Thus also that summand is always in $\mfs$. A similar argumentation shows that also $\tilde{h}(\tilde{\alpha},\tilde{\beta})\, (\alpha\otimes \nu(\beta)-\beta\otimes \nu(\alpha))$
lies in $\mfs$, giving $[\mfs,\mfs]\subseteq \mfs$ and so finishing the proof.
\end{proof}
We provide an example that shows that the assertion of Proposition \ref{pro:cK1S2Uwdegmetricsecond} and also the one of Lemma \ref{le:appropriatev} does not hold in the case that $\mfh_w^0=\{0\}$:
\begin{example}\label{ex:counterextoprop}
Let $n=3$ and $\mfh=\spa{e^1\otimes e_2+e^3\otimes e_3}$. Then $\cK_{\mfh}^{(1)}=S^2 \spa{e^1}\otimes e_2$, i.e. $\cK_{\mfh}^{(1)}=S^2 \cU\otimes w$ with $\cU=\spa{e^1}$ and $w=e_2$. We have $\mfh_w=\mfh\neq \{0\}=\mfh_w^0$ and so also $\cU_0=0$. We note that the non-zero elements in $\mfh$ are not diagonalisable so Lemma \ref{le:appropriatev} does not hold here. Moreover, the non-zero elements in $\mfh$ have non-zero trace and so cannot be skew-symmetric with respect to some pseudo-metric on $\bR^3$. Hence, also Proposition \ref{pro:cK1S2Uwdegmetricsecond} does not hold in this case.
\end{example}
We are now finally in the position to compute $\cF_{\mfh}$:
\begin{theorem}\label{th:cK1=S2cUwsecond}
Let $\mfh$ be an elliptic subalgebra with $\cK^{(1)}_{\mfh}=S^2 \cU\otimes w$ for some subspace $\cU$ of $(\bR^{n-1})^*$ and some $w\in \bR^{n-1}\setminus \{0\}$ such that $\mfh_w^0\neq \mfh_w$.Then:
\begin{enumerate}[(a)]
\item
If $\mfh_w^0\neq \{0\}$, then
\begin{equation*}
\begin{split}
	\cF_{\mfh}&=\tilde{\mfk}_{\mfh}+\spa{\alpha\otimes \nu(\alpha)|\alpha\in \cU_0}+\alpha_0\otimes (U_0\cap \tilde{U}_0)\\
	&=(\tilde{\mfa}_0+\spa{\alpha\otimes \nu(\alpha)|\alpha\in \cU_0})\oplus \cU\otimes w\oplus \alpha_0\otimes (U_0\cap \tilde{U}_0)
\end{split}
\end{equation*}
for $U_0:=\nu(\cU_0)\subseteq \bR^{n-1}$, $\tilde{U}_0:=\tilde{\nu}(\tilde{\cU}_0)\subseteq \bR^{n-1}$. Moreover,
\begin{equation*}
\bigl((\cU_0\cap \tilde{\cU}_0)\oplus \spa{\alpha_0}\bigr)\otimes \bigl((U_0\cap \tilde{U}_0)\oplus \spa{w}\bigr)\subseteq \cF_{\mfh}.
\end{equation*}
\item
If $\mfh_w^0=\{0\}$ and $F_0(w)=0$ or $F_0(w)\neq 0$, respectively, for some $F_0\in \mfh_w\setminus \{0\}$, then $\cF_{\mfh}=\tilde{\mfk}_{\mfh}$ or
\begin{equation*}
\cF_{\mfh}=\left\{\left. \pi\circ F|_{\bR^{n-1}}\right|F(\ker(F_0))\subseteq \bR^{n-1}\right\}=\left\{\left. \pi\circ F|_{\bR^{n-1}}\right|F(\ker(F_0))\subseteq \ker(F_0),\ F(w)\subseteq \spa{v,w}\right\}
\end{equation*}
respectively, where $\pi:\bR^n\rightarrow \bR^{n-1}$ is the projection to $\bR^{n-1}$ along $\spa{v}$ for some $v\in \mathrm{im}(F_0)\setminus \{0\}$.
\end{enumerate}
\end{theorem}
\begin{proof}
\begin{enumerate}[(a)]
\item
Let $F\in \cF_{\mfh}$ be given and let $\nabla\in \cT^{-1}(\End(\bR^{n-1})$ be such that $F=\cT(\nabla)$. Then, set $F_1:=\nabla_v$ and observe that $F_1\in \mfh=\mfk_{\mfh}\oplus \mfh_v^0$. Hence, there exist $H_1\in \mfk_{\mfh}$, $G_1\in \mfh_v^0$ such that $F=H+G$. Consequently,
\begin{equation*}
F|_{\bR^{n-1}}=H|_{\bR^{n-1}}+\tilde{\alpha}\otimes v
\end{equation*}
for some $\tilde{\alpha}\in \tilde{\cU}_0$. Moreover, $\nabla|_{\bR^{n-1}\times \bR^{n-1}}\in S^2\cU\otimes w$ and using the decomposition $\cU=\cU_0\oplus \spa{\alpha_0}$, we may first write
\begin{equation*}
\nabla|_{\bR^{n-1}\times \bR^{n-1}}=\tilde{\nabla}+(\alpha\otimes \alpha_0+\alpha_0\otimes \alpha+\lambda\, \alpha_0\otimes \alpha_0)\otimes w
\end{equation*}
for $\tilde{\nabla}\in S^2 \cU_0\otimes w$, $\alpha\in \cU_0$ and $\lambda\in \bR$ and then Sylvester's law of inertia applied to $\tilde{\nabla}$ to get a basis $\alpha_1,\ldots,\alpha_m$ of $\cU_0$ and $\lambda_1,\ldots,\lambda_m\in \{-1,1,0\}$ to arrive at
\begin{equation*}
	\nabla|_{\bR^{n-1}\times \bR^{n-1}}=\left(\sum_{i=1}^m \lambda_i\, \alpha_i\otimes \alpha_i+(\alpha\otimes \alpha_0+\alpha_0\otimes \alpha+\lambda\, \alpha_0\otimes \alpha_0\right)\otimes w.
\end{equation*}
Thus,
\begin{equation*}
\begin{split}
	\nabla|_{\bR^{n-1}\times \bR^n}&=\sum_{i=1}^m \lambda_i\, \alpha_i\otimes (\alpha_i\otimes w-\gamma\otimes \nu(\alpha_i))+\alpha\otimes (\alpha_0\otimes w-\gamma\otimes v)\\
	&+\alpha_0\otimes (\alpha\otimes w-\gamma\otimes \nu(\alpha))+\lambda\, \alpha_0\otimes (\alpha_0\otimes w-\gamma\otimes v),
\end{split}
\end{equation*}
so that
\begin{equation*}
F_2:=-\nabla v|_{\bR^{n-1}}=\sum_{i=1}^m\lambda_i \alpha_i\otimes \nu(\alpha_i)+\alpha\otimes v+\alpha_0\otimes \nu(\alpha)+\lambda\, \alpha_0\otimes v
\end{equation*}
Thus,
\begin{equation*}
F=\cT(\nabla)=F_1|_{\bR^{n-1}}+F_2=H_1|_{\bR^{n-1}}+\sum_{i=1}^m\lambda_i \alpha_i\otimes \nu(\alpha_i)+\alpha_0\otimes \nu(\alpha)+(\tilde{\alpha}+\alpha+\lambda\, \alpha_0)\otimes v,
\end{equation*}
which forces, due to $F\in \End(\bR^{n-1})$, that $\tilde{\alpha}=-\alpha$, i.e. $\alpha\in \cU_0\cap \tilde{\cU}_0$, and $\lambda\in \bR$. Hence,
\begin{equation*}
	F=H_1|_{\bR^{n-1}}+\sum_{i=1}^m\lambda_i \alpha_i\otimes \nu(\alpha_i)+\alpha_0\otimes \nu(\alpha)\in \tilde{\mfk}_{\mfh}+\spa{\alpha\otimes \nu(\alpha)|\alpha\in \cU_0}+\alpha_0\otimes(\cU_0\cap \tilde{\cU}_0).
\end{equation*}
proving
\begin{equation*}
	\cF_{\mfh}\subseteq\tilde{\mfk}_{\mfh}+\spa{\alpha\otimes \nu(\alpha)|\alpha\in \cU_0}+\alpha_0\otimes (\cU_0\cap \tilde{\cU}_0).
\end{equation*}
Reversing the arguments, we may construct, for a given endomorphism $F$ in the right-hand side of the equation above an element $\nabla\in \cD_{\mfh}$ with $\cT(\nabla)=F$, which gives the converse inclusion and proves the statement.
\item
Let $F\in \cF_{\mfh}$ be given and let $\nabla\in \cT^{-1}(\End(\bR^{n-1})$ be such that $F=\cT(\nabla)$. Moreover, choose $v\in \im(F_0)\setminus \bR^{n-1}$. By appropriately scaling $F_0$, we may assume that $F_0(v)=v+\mu w$ for some $\mu\in \bR$. Thus,
\begin{equation*}
F_0=\alpha_0\otimes w+\gamma\otimes v+\mu\, \gamma\otimes w
\end{equation*}
Moreover, $\cU=\spa{\alpha_0}$ and so
\begin{equation*}
\nabla|_{\bR^{n-1}\otimes \bR^{n-1}}=\lambda\, \alpha_0\otimes \alpha_0\otimes w
\end{equation*}
for some $\lambda\in \bR$, yielding
\begin{equation*}
\nabla|_{\bR^{n-1}\otimes \bR^n}=\lambda\, \alpha_0\otimes (\alpha_0\otimes w+\gamma\otimes v-\mu\, \gamma\otimes w),
\end{equation*}
which implies
\begin{equation*}
G:=\nabla v |_{\bR^{n-1}}=\lambda\, \alpha_0\otimes v-\lambda \mu\, \alpha_0\otimes w.
\end{equation*}
We set
\begin{equation*}
H:=\nabla_v\in \mfh
\end{equation*}
and remark that
\begin{equation*}
F=\cT(\nabla)=H|_{\bR^{n-1}}-G.
\end{equation*}
As $G$ maps the subspace $\ker(F_0)$ of $\bR^{n-1}$ to zero, we must have $H(\ker(F_0))\subseteq \bR^{n-1}$.  We choose some $u_0\in \bR^{n-1}$ such that $\alpha_0(u_0)=1$ and will do this choice in the case that $w\notin \ker(F_0)$ so that $u_0=\tau w$ for some $\tau\in \bR^*$.

Then $\bR^{n-1}=\ker(F_0)\oplus \spa{u_0}$ and there is $\beta_0\in \ker(F_0)^*$ such that
\begin{equation*}
H|_{\ker(F_0)}=H_0+\beta_0\otimes u_0
\end{equation*}
for some $H_0\in \End(\ker(F_0))$. We identify $\beta_0$ with an element in the annihilator of $u_0$, and so with an element in $(\bR^{n-1})^*$. As such, we see that $\beta_0$ and $\alpha_0$ are necessarily linearly independent as long as $\beta_0\neq 0$. We compute now
\begin{equation*}
[F_0,H]|_{\ker(F_0)}=F_0\circ H|_{\ker(F_0)}=\beta_0\otimes F_0(u_0)=\beta_0\otimes w.
\end{equation*}
Writing $H(u_0)=\tilde{u}_0+a u_0+b v$ for $\tilde{u}_0\in \ker(F_0)$, $a,b\in \bR$, we obtain
\begin{equation*}
[F_0,H](u_0)=F_0(H(u_0))-H(w) =a w+ b v+\mu b w- H(w),
\end{equation*}
so that
\begin{equation*}
[F_0,H]|_{\bR^{n-1}}=\beta_0\otimes w+\alpha_0\otimes ((a+\mu b)w+b v+H(w))
\end{equation*}
Thus, the element
\begin{equation*}
\alpha_0\otimes [F_0,H]|_{\bR^{n-1}}+\beta_0\otimes F_0|_{\bR^{n-1}}=(\alpha_0\otimes \beta_0+\beta_0\otimes \alpha_0)\otimes w+ \alpha_0\otimes \alpha_0\otimes ((a-+\mu b)w+ b v-H(w))
\end{equation*}
is in $\cK_{\mfh}^{(1)}=\spa{\alpha_0\otimes \alpha_0\otimes w}$, which forces $\beta_0=0$. 

If now $F_0(w)=0$, then $H(w)\in \bR^{n-1}$ and we must have $b=0$ and so $H(u_0)\in \bR^{n-1}$.  But then $H(\bR^{n-1})\subseteq \bR^{n-1}$, which forces $G(\bR^{n-1})\subseteq \bR^{n-1}$ and so $G=0$. Thus, $F=H|_{\bR^{n-1}}\in \tilde{\mfk}_{\mfh}$ and so $\cF_{\mfh}=\tilde{\mfk}_{\mfh}$ in this case.

 If $F_0(w)\neq 0$, then $w=\lambda u_0$ and so $H(w)=\frac{1}{\tau} \tilde{u}_0+ a w+ \tfrac{b}{\tau} v$, implying
\begin{equation*}
(a+\mu b)w+b v-H(w)=\frac{1}{\tau} \tilde{u}_0+\mu b w+\tfrac{b}{\tau} (\tau-1) v,
\end{equation*}
which forces $\tau=1$ and $\tilde{u}_0=0$. Hence, $\alpha_0(w)=1$ and
\begin{equation*}
H\in \left\{\left. \tilde{F}\in \mfh \right|\tilde{F}(\ker(F_0))\subseteq \ker(F_0),\ \tilde{F}(w)\subseteq \spa{v,w}\right\}.
\end{equation*}
But then
\begin{equation*}
\bR^{n-1}\ni H(w)-G(w)=a w+ bv-\lambda v-\lambda \mu w
\end{equation*}
forces $b=\lambda$ and so we get
\begin{equation*}
F=H|_{\bR^{n-1}}-G=\pi|_{\bR^{n-1}}\circ H|_{\bR^{n-1}}+\lambda \mu \alpha_0\otimes w=\pi|_{\bR^{n-1}}\circ (H+\lambda \mu F_0)|_{\bR^{n-1}},
\end{equation*}
giving
\begin{equation*}
\cF_{\mfh}\subseteq \left\{\left. \pi\circ F|_{\bR^{n-1}}\right|F(\ker(F_0))\subseteq \bR^{n-1}\right\}=\left\{\left. \pi\circ F|_{\bR^{n-1}}\right|F(\ker(F_0))\subseteq \ker(F_0),\ F(w)\subseteq \spa{v,w}\right\}
\end{equation*}
since
\begin{equation*}
H+\lambda \mu F_0\in \left\{\left. \tilde{F}\in \mfh \right|\tilde{F}(\ker(F_0))\subseteq \ker(F_0),\ \tilde{F}(w)\subseteq \spa{v,w}\right\}.
\end{equation*}
The other inclusion follows by inverting all steps.
\end{enumerate}
\end{proof}
We illustrate the content of Theorem \ref{th:cK1=S2cUwsecond} (a) and some of the previous results in this subsection by an explicit example
and also give an example for Theorem \ref{th:cK1=S2cUwsecond} (b) with $F_0(w)\neq 0$ which shows that in this case we do not necessarily have $\cF_{\mfh}=\tilde{\mfk}_{\mfh}$:
\begin{example}\label{ex:notdegmetric}
\begin{itemize}
\item
Let $n=8$ and consider
\begin{equation*}
\begin{split}
\mfh:=\mathrm{span}\left(e^1\otimes e_5-e^2\otimes e_4,e^1\otimes e_6-e^3\otimes e_4, e^2\otimes e_6-e^3\otimes e_5, e^1\otimes e_7-e^8\otimes e_4,\right.\\ \left. e^2\otimes e_7-e^8\otimes e_5, e^1\otimes e_8-e^7\otimes e_4, e^3\otimes e_8-e^7\otimes e_6, e^7\otimes e_7-e^8\otimes e_8, I_8\right)
\end{split}
\end{equation*}
One checks that $\mfh$ is closed under the commutator, i.e. forms, indeed, a linear subalgebra of $\End(\bR^8)$. We note that $\mfh$ cannot be metric with respect to some pseudo-metric on $\bR^8$ since $I_8\in \mfh$ has non-zero trace.

Moreover, one checks that
\begin{equation*}
\cK_{\mfh}^{(1)}=S^2 \spa{e^1,e^2,e^4}\otimes e_7,
\end{equation*}
i.e. $\cK_{\mfh}^{(1)}=S^2\cU\otimes w$ for $\cU=\spa{e^1,e^2,e^7}$ and $w=e_7\in \bR^7$. Then $\cU_0=\spa{e^1,e^2}$ and
\begin{equation*}
\mfh_w^0=\spa{e^1\otimes e_7-e^8\otimes e_4,e^2\otimes e_7-e^8\otimes e_5},\ \mfh_w=\mfh_0^w\oplus \spa{F_0}
\end{equation*}
for $F_0:=e^7\otimes e_7-e^8\otimes e_8$. Thus, $\alpha_0=e^7$ and $v:=e_8$ makes $F_0$ look like in Lemma \ref{le:appropriatev}. Furthermore, the map $\nu$ is explicitly given by
\begin{equation*}
\nu: \cU_0=\spa{e^1,e^2}\rightarrow \bR^7,\quad \nu(e^1)=e_5,\quad \nu(e^2)=e_5
\end{equation*}
and $U_0:=\spa{e_4,e_5}$.
Furthemore,
\begin{equation*}
\mfh_v^0=\spa{e^1\otimes e_8-e^7\otimes e_4, e^3\otimes e_8-e^7\otimes e_6},
\end{equation*}
so that $\tilde{\cU}_0=\spa{e^1,e^3}$ and the map $\tilde{\nu}$ is explicitly given by
\begin{equation*}
\tilde{\nu}:\spa{e^1,e^3}\rightarrow \bR^7,\quad \tilde{\nu}(e^1)=e_5,\quad \tilde{\nu}(e^3)=e_6,
\end{equation*}
and so $\tilde{U}_0:=\spa{e_4,e_6}$. We observe that, indeed $\nu$ equals $\tilde{\nu}$ on $\cU_0\cap \tilde{\cU}_0=\spa{e^1}$ and that $U_0\cap \tilde{U}_0=\spa{e_4}$. However, neither $\cU_0\subseteq \tilde{\cU}_0$ nor $\tilde{\cU}_0\subseteq \cU_0$, reflecting the fact that $h=0$ and $\tilde{h}=0$, and even, $H=0$.

Moreover,
\begin{equation*}
\begin{split}
\mfa_0&=\spa{(e^1\otimes e_5-e^2\otimes e_4,e^1\otimes e_6-e^3\otimes e_4, e^2\otimes e_6-e^3\otimes e_5,I_8},\\
\tilde{\mfa}_0& =\spa{e^1\otimes e_5-e^2\otimes e_4,e^1\otimes e_6-e^3\otimes e_4, e^2\otimes e_6-e^3\otimes e_5,I_7}\subseteq \End(\bR^7).
\end{split}
\end{equation*}
Hence, Theorem \ref{th:cK1=S2cUwsecond} yields
\begin{equation*}
\begin{split}
\cF_{\mfh}&=\mathrm{span}\left(e^1\otimes e_5-e^2\otimes e_4,e^1\otimes e_6-e^3\otimes e_4, e^2\otimes e_6-e^3\otimes e_5,I_7,e^1\otimes e_4,e^2\otimes e_5,\right.\\
 & \qquad\quad\ \  \left. e^1\otimes e_7,e^2\otimes e_7,e^7\otimes e_7,e^7\otimes e_4\right)\\
&=\left\{\left.\left(\begin{smallmatrix} a_1 & 0 & 0 & 0 & 0 & 0 & 0 \\
                                   0 & a_1 & 0 & 0 & 0 & 0 & 0 \\
                                   0 & 0 & a_1 & 0 & 0 & 0 & 0 \\
                                   a_2 & -a_3 & -a_5 & a_1 & 0 & 0 & 0 \\
                                   a_3 & a_4 & -a_6 & 0& a_1 & 0 & 0 \\
                                   a_5 & a_6 & 0  & 0& 0 & a_1 & 0 \\
                                    a_7 & a_8 & 0  & a_9 & 0 & 0 & a_1+a_{10}
\end{smallmatrix}\right)\right|a_1,\ldots,a_{10}\in \bR\right\},
\end{split}
\end{equation*}
and we see that, indeed, $((\cU_0\cap \tilde{\cU}_0)\oplus \spa{\alpha_0})\otimes ((U_0\cap \tilde{U}_0)\oplus \spa{w})=\spa{e^1,e^7}\otimes \spa{e_4,e_7}\subseteq \cF_{\mfh}$.
\item
Let $n=4$ and $\mfh=\spa{F_1,F_2}$ with
\begin{equation*}
F_1:=e^1\otimes e_2-e^2\otimes e_1+e^3\otimes e_4,\quad F_2:=e^3\otimes e_3+e^4\otimes e_4.
\end{equation*}
Then $\mfh$ is an Abelian subalgebra with $\cK_{\mfh}^{(1)}=\spa{e^3\otimes e^3\otimes e_3}$, i.e. $\cK_{\mfh}^{(1)}=S^2\cU\otimes \spa{w}$ with $\cU=\spa{e^3}$, $w=e_3$. Then
$\mfh_w^0=\{0\}$, $\mfh_w=\spa{F_2}$ and $F_2(w)=F_2(e_3)=e_3$. Hence, we are in the situation of the second case in Theorem \ref{th:cK1=S2cUwsecond} (b). Thus,
\begin{equation*}
\begin{split}
\tilde{\mfk}_{\mfh}&=\{0\}\neq \{e^1\otimes e_2-e^2\otimes e_1\}=\left\{\left. \pi\circ F|_{\bR^{n-1}}\right|F(\ker(F_0))\subseteq \ker(F_0),\ F(w)\subseteq \spa{v,w}\right\}
\\
&=\cF_{\mfh}.
\end{split}
\end{equation*}
\end{itemize}
\end{example}
Finally, we note what Theorem \ref{th:cK1=S2cUwsecond} yields us in the case of a degenerate metric linear subalgebra with $\mfh_w\neq \mfh_w^0$:
\begin{corollary}\label{co:degmetricsecond}
Let $\mfh$ be a degenerate metric subalgebra such that $\mfh_w\neq \mfh_w^0$. Then
\begin{equation*}
\begin{split}
	\cF_{\mfh}&=\tilde{\mfk}_{\mfh}\oplus \spa{\alpha\otimes \alpha^{\sharp}|\alpha\in \cU_0}\oplus v^{b}\otimes (U_0\cap \tilde{U}_0)\\
	&=\tilde{\mfk}_{\mfh}\oplus \spa{u^{b}\otimes u|u\in U_0}\oplus v^{b}\otimes (U_0\cap \tilde{U}_0)\\
	&=\tilde{\mfa}_0\oplus \spa{u^{b}\otimes u|u\in U_0}\oplus \cU_0\otimes w\oplus v^{b}\otimes (U_0\cap \tilde{U}_0)\oplus \spa{v^b\otimes w}
\end{split}
\end{equation*}
for $U_0:=\{F(v)|F\in \mfh_w^0\}$, $\tilde{U}_0:=\{F(w)|F\in \mfh_v^0\}$,
$\cU_0:=(U_0)^{b}$ and 
\begin{equation*}
\tilde{\mfa}_0:=\left\{F|_{\bR^{n-1}}\left|F\in \mfh,\ F(v)=0,\ F(w)=0, \, \ F(\spa{v,w}^{\perp})\subseteq  \spa{v,w}^{\perp}\right.\right\}
\end{equation*}
with $v\in \bR^n$ chosen so that $v$ is null, not in $\bR^{n-1}$ and $g(v,w)=1$.
\end{corollary}

\end{document}